\renewcommand\normalsize{%
    \@setfontsize\normalsize{11.7}{14pt plus .3pt minus .3pt}%
    \abovedisplayskip 10\p@ \@plus4\p@ \@minus4\p@
    \abovedisplayshortskip 6\p@ \@plus2\p@
    \belowdisplayshortskip 6\p@ \@plus2\p@
    \belowdisplayskip \abovedisplayskip}
\renewcommand\small{%
    \@setfontsize\small{9.5}{12\p@ plus .2\p@ minus .2\p@}%
    \abovedisplayskip 8.5\p@ \@plus4\p@ \@minus1\p@
    \belowdisplayskip \abovedisplayskip
    \abovedisplayshortskip \abovedisplayskip
    \belowdisplayshortskip \abovedisplayskip}
\renewcommand\footnotesize{%
    \@setfontsize\footnotesize{8.5}{9.25\p@ plus .1pt minus .1pt}
    \abovedisplayskip 6\p@ \@plus4\p@ \@minus1\p@
    \belowdisplayskip \abovedisplayskip
    \abovedisplayshortskip \abovedisplayskip
    \belowdisplayshortskip \abovedisplayskip}
\newtheorem{theoremm}{Theorem}
\newtheorem{thm}{Theorem}[section]
\newtheorem{definition}[thm]{Definition}
\newtheorem{theorem}[thm]{Theorem}
\newtheorem{cor}[thm]{Corollary}
\newtheorem{claim}[thm]{Claim}
\newtheorem{lemma}[thm]{Lemma}
\newtheorem{prop}[thm]{Proposition}
\newtheorem*{mainthm*}{Main Theorem}
\newtheorem*{theorem*}{Theorem}
\def\moverlay{\mathpalette\mov@rlay}
\def\mov@rlay#1#2{\leavevmode\vtop{%
   \baselineskip\z@skip \lineskiplimit-\maxdimen
   \ialign{\hfil$\m@th#1##$\hfil\cr#2\crcr}}}
\newcommand{\charfusion}[3][\mathord]{
    #1{\ifx#1\mathop\vphantom{#2}\fi
        \mathpalette\mov@rlay{#2\cr#3}
      }
    \ifx#1\mathop\expandafter\displaylimits\fi}
\newcommand{\nocontentsline}[3]{}
\newcommand{\tocless}[2]{\bgroup\let\addcontentsline=\nocontentsline#1{#2}\egroup}
\def\Jac{\ensuremath{\mathrm{Jac}}}
\def\Vol{\ensuremath{\mathrm{Vol}}}
\begin{document}

\title{Generalized $u$-Gibbs measures for $C^\infty$ diffeomorphisms}
\author{S. Ben Ovadia, D. Burguet}
\date{}

\begin{abstract}
We show that for every $C^\infty$ diffeomorphism of a closed Riemannian manifold, if there exists a positive volume set of points which admit some expansion with a positive Lyapunov exponent (in a weak sense) then there exists an invariant probability measure with a disintegration by absolutely continuous conditionals on smoothly embedded disks subordinated to unstable leaves. As an  application, we prove a strong version of the Viana conjecture in any dimension. 

Our methods include developing a quantitative approach to high-dimensional Yomdin theory which allows to control the geometry of disks, and introducing a notion of ``measured disks" in order to provide a disintegration by absolutely continuous conditionals. In particular, we provide also a new proof for the case of surfaces (a previous result by the second author) proving directly the absolute continuity of conditionals rather than mere entropy estimates.
\end{abstract}

\newcommand{\Addresses}{{
  \bigskip
  \footnotesize

  S.~Ben Ovadia, \textsc{The Einstein Institute of Mathematics, The Hebrew University of Jerusalem, 91904 Jerusalem, Israel}.\\ \textit{E-mail address}: \texttt{Snir.BenOvadia@mail.huji.ac.il}
  
D.~Burguet, \textsc{LAMFA, CNRS - 
Universit\'{e} Picardie Jules Vernes, 80000 Amiens, France}.\\
\textit{E-mail address}: \texttt{David.Burguet@u-picardie.fr}
}}

\maketitle

\tableofcontents

\section{Introduction, main results, and key steps of proof}

\subsection{Introduction}\label{intro}

A dynamical system is a pair $(X,T)$ composed of a space $X$ endowed with a transformation $T:X\to X$. One of the key problems in Dynamical Systems is to study the statistics of a chaotic system at a steady state. This heuristic is interpreted as studying the collection of invariant probability measures which govern chaotic orbits (``chaos" can be interpreted as positive entropy, or exponential sensitivity to initial conditions in Smooth Dynamical Systems- that is positive Lyapunov exponents- or both). An invariant probability measure is a probability $p\in \mathbb{P}(X)$ s.t.  $p(E)=p(T^{-1}[E])$ for all $E\in \mathcal{B}(X)$, the Borel sigma algebra.

The first problem with which one is faced is then, which measure should we single out? Uniquely ergodic systems, which admit a unique invariant measure, often do not answer all criteria for chaos. When a system is not uniquely ergodic, there are natural candidates for reference measures in which we are interested.

In Smooth Dynamical Systems- that is differentiable maps of Riemannian manifolds- the most natural reference measure is the Riemannian volume of the manifold. This is due to several reasons: The first reason is the underlying physical assumption on the mathematical model, which asserts that when we carry out an experiment or a simulation of the system with a random initial condition, this initial condition is chosen randomly w.r.t. the Riemannian volume of the manifold (also sometimes called the Liouville measure).

The second reason which makes the Riemannian volume a natural candidate for a reference measure is the obvious relationship with the geometry of the space. 

The third reason is the fundamental Liouville theorem, which asserts that whenever the system admits no dissipation/heat loss/friction (i.e a closed Hamiltonian system), the Riemannian volume is an invariant measure. This fact is very useful, as invariant measures admit many statistical properties, which are studied in the field of Ergodic Theory.

However, it is natural to wish to extend the scope of the systems which we study beyond the non-dissipative systems. What happens in a system which has friction? Or for example the earth's atmosphere which constantly gets heat insertion from the sun? In those cases we do not expect the Riemannian volume to be an invariant measure, while we still wish to find natural reference measures for the system which are invariant (as we are interested in the unique statistical properties that invariant measures govern).

Uniformly Hyperbolic systems are differentiable maps which admit a continuous decomposition of the tangent bundle into two invariant sub-bundles, such that on one sub-bundle the differential uniformly contracts tangent vectors, and on the other the differential uniformly expands tangent vectors. For Uniformly Hyperbolic systems, the groundbreaking works of Sinai, Ruelle, and Bowen give an answer to the question in the paragraph above (see \cite{Si1,Ruelle76,B4,BowenRuelle}). They study a class of measures which are called {\em SRB measures} (named after them), which are invariant measures with compatibility with the Riemannian structure, even in cases where the Riemannian volume is not preserved. 

From now on, we only consider $C^{1+\epsilon}$, $\epsilon>0$, smooth diffeomorphisms. In this setting, SRB measures are invariant probability measures, which when disintegrated on unstable leaves\footnote{Unstable leaves are embedded disks which are tangent to the asymptotically expanding direction of the tangent space of points with positive Lyapunov exponents (i.e exponential sensitivity to initial conditions). Their existence is given in \cite{Pesin, RuelleFoliations}. } (in the sense of the Rokhlin disintegration theorem), admit conditional measures which are absolutely continuous w.r.t. the induced Riemannian volume on the unstable leaves. This produces the first instance of compatibility with the Riemannian volume.

The two more reasons which make SRB measures an object of importance are:

\begin{enumerate}\item {\em Physicality:} Every ergodic and hyperbolic SRB measure $\nu$ is physical \cite{EntropyFormulaSRB}: its basin $$\mathcal B(\nu):=\left\{x\in \mathbf M: \ \forall \phi: \mathbf{M}\to \mathbb R \text{ cont.}, \frac{1}{n}\sum\limits_{k=0}^{n-1}\phi(f^k(x))\xrightarrow[n\rightarrow\infty]{}\int \phi\, d\nu\right\}$$ has positive  Riemannian volume. This was first shown for Uniformly Hyperbolic diffeomorphisms by Ruelle \cite{Ruelle76}. Physicality does not imply the SRB property. For the gap between the two properties, see \cite{Tsuji}.
 \item {\em Entropic variational principle:} SRB measures satisfy the entropy formula: $h_\mu(f)=\int \sum_{i:\chi_i>0}\chi_i(x)\,d\mu(x)$
where $\chi_i(x)$ is the $i$-th Lyapunov exponent of $x$ (with multiplicity),\cite{Ruelle76,Pesin77,EntropyFormulaSRB, StrelcynEntropyFormulaSRB}; and the l.h.s. is strictly smaller than the r.h.s. for all other measures \cite{Ruelle76,MargulisRuelleIneq,LedrappierYoungI}.
\end{enumerate}
In particular, note that the physicality property admits an additional notion of compatibility with the Riemannian volume. For more properties of SRB measures, see \cite{YoungSRBsurvey}. 

\medskip
Unfortunatelly, SRB measures do not always exist, even for ``nice" systems (see \cite{YoungCounterExample} for example). This fact pushed forwards the field of Smooth Dynamical Systems for almost three decades, in trying to understand which systems admit SRB measures, or a corresponding object of interest.

For unimodal interval  maps with a negative Schwarzian derivative, Keller   showed that there is a positive Lebesgue measure of points with a positive Lyapunov exponent if and only if there exists an absolutely continuous invariant measure \cite{keller}.
 
In the non-uniformly hyperbolic setting, in her celebrated result \cite{young}, Young showed the existence of an SRB measure for non-uniformly hyperbolic maps with Young towers, subject to an assumption of integrability of the return-time to the base of the tower w.r.t. the Riemannian volume. In \cite{CDP}, the authors study the existence of an SRB measure through the positive volume of ``effectively hyperbolic" points. Another important approach and body of works is the study of existence via parameter families. For example, such as in the setting of quadratic families and H\' enon maps (see \cite{jak, BC,BY,berger}).

Indeed, most approaches to constructing SRB measures rely on a hyperbolic structure (which holds for surfaces for every measure with positive entropy, by the Ruelle inequality \cite{MargulisRuelleIneq}). Very few results address the construction of SRB measures in high dimension, in a general setup which allows $0$ Lyapunov exponents. In \cite{ExpConvSRB}, the authors construct SRB measures for a general diffeomorphism whose volume converges exponentially fast to a limit under the dynamics.

The {\em partially hyperbolic} setting is where the tangent space splits continuously everywhere into three invariant sub-bundles. An {\em unstable bundle $E^\mathrm{u}$} which expands uniformly, a {\em stable bundle $E^\mathrm{s}$} which contracts uniformly, and a {\em central bundle $E^\mathrm{c}$}, which is uniformly dominated by, and dominates, the unstable and stable bundles respectively. One often bunches together the unstable and central bundles or the stable and central bundles (denoted by $E^\mathrm{cu}$ or $E^\mathrm{cs}$ respectively), which uniformly dominate $E^s$ or dominated by $E^u$ respectively. In this setting, where $E^s$ is allowed to be trivial but not $E^{cs}$, in their celebrated work \cite{Pesin_Sinai_1982}, Pesin and Sinai construct {\em $u$-Gibbs measures}. That is, measures with an absolutely continuous disintegration on strong unstable leaves. For such partially hyperbolic systems which admit only negative Lyapunov exponents in the central bundle volume-a.e., these $u$-Gibbs measures are in fact SRB \cite{mostlycontracting} measures.

More in the partially hyperbolic setting, allowing $E^u$ to be trivial but not $E^{cu}$, in \cite{ABV,ADLP} the authors showed that volume-a.e. point which admits only positive Lyapunov exponents in the central bundle (for a certain notion of Lyapunov exponent, see the discussion below), lies in the basin of an ergodic hyperbolic SRB measure.

Then M. Viana  posed the following  conjecture in his famous  ICM talk. 

\medskip
\noindent\textbf{Conjecture} (Viana \cite{VianaConj})\textbf{.} {\em  If a smooth map has only non-zero Lyapunov exponents at Lebesgue almost every point then it admits some SRB measure.}
 
\medskip 

The importance of Viana's conjecture is two-fold. First, it is stated in a very elementary and natural way, and second the condition it suggests is important in terms of physical testability, as one would have to check random initial conditions and observe whether expansion happens or not.

Given a point, its {\em Lyapunov exponents} estimate exponential expansion / contraction rates associated to the action of the differential. These exponents are commonly used when $x$ is typical for an invariant measure or satisfy some {\em regularity}. 
By using Markov partitions, the first author \cite{BenOvadiaLeafCond}  proved a version  of Viana conjecture for $C^{1+\epsilon}$ diffeomorphisms of manifolds in any dimension   by assuming a positive volume for a collection of hyperbolic regular points (see also \cite{PesinVaughnLuzzatto} for a related result for surfaces, using Young towers and \cite{Burguetphysical} for another notion of regularity for $C^\infty$ systems).

In the present work we do not assume any kind of regularity
, and so Lyapunov exponents may be defined in different ways. For example, we may consider the {\em top-upper exponent} 
\begin{equation}\label{WeakExps}
\overline{\chi}(x):=\limsup_{n\to \infty}\frac{1}{n}\log \|d_x f^n\|,    
\end{equation}
or  the {\em top-lower exponent}  $\underline{\chi}(x):=\liminf_{n\to \infty}\frac{1}{n}\log \|d_x f^n\|$. In any case, any such notion should coincide with  the common definition of Lyapunov exponents for typical points with respect to invariant measures.  

\medskip 
Recently some progress has been made regarding the Viana Conjecture for $C^\infty$ diffeomorphisms of surfaces  (\cite{BCSVianaConj,BurguetViana}). In \cite{BurguetViana} the second author proved more precisely that for a $C^r$ diffeomorphism, $r>1$, Lebesgue a.e. $x$ with $\overline{\chi}(x)>\frac{\log \|d_\cdot f\|}{r}$ lies in the bassin of an ergodic hyperbolic  SRB measure (see also  \cite{delplanque}  for $C^r$ interval maps).   In these settings, the lower bound on $\overline{\chi}$ is sharp \cite{Burguetphysical, Burpre}. Morally, $C^r$ smoothness allows one to bound the distortion in the dynamics of disks, similar to the role of domination in partially hyperbolic systems. 

\medskip 
Before moving to the description of our main results (which indeed apply to manifolds in any dimension), we wish to describe the limits of the general Viana Conjecture in high dimension. Consider the famous ``Bowen's eye" example (see \cite{BowenEyeEx}), which admits a single hyperbolic fixed point $p$. 
It can always be made to be $C^\infty$ smooth. Consider the product dynamics of this map, times a hyperbolic linear toral automorphism $A$ of the 2-torus, where we assume w.l.o.g. that the exponents at the fixed point on Bowen's eye dominate 
the derivative bounds of the 
toral map. In this product dynamics which is $C^\infty$ smooth, every point admits a positive Lyapunov exponent, and there are no measures with $0$ exponents. However, the system admits no SRB measures. This can be seen by the fact that the Bowen's eye example admits only $4$ ergodic invariant measures, all Dirac delta measures- three are on fixed points which admit only positive Lyapunov exponents, and one on the hyperbolic fixed point. By making the exponents at the hyperbolic fixed point in the Bowen's eye dominate all exponents in the toral automorphism, we can guarantee that there is even no invariant measure with absolutely continuous conditionals on strong unstable leaves (i.e no $u$-Gibbs).
\medskip 

Therefore we can see that in dimension larger than two, expansion alone is not enough to conclude the existence of an SRB measure, nor a $u$-Gibbs measure. But still in the above example there is a measure with some absolutely continuous property, which is the product $\delta_p\times \mu_A^{SRB}$ of the Dirac measure $\delta_p$ at $p$ with the SRB measure $\mu_A^{SRB}$ of $A$ (just the Lebesgue measure on $\mathbb T^2$ in this case). In the present paper we prove a strong version of the Viana Conjecture for $C^\infty$ diffeomorphisms, by showing that volume almost every  point with non-zero Lyapunov exponents lies in the basin of a hyperbolic SRB measure (see \textsection \ref{SVCinMain}). This will be a consequence  of our main result which implies the existence of a measure with absolutely continuous properties assuming  only expansion at Lebesgue typical points. It addresses in particular the above example. More precisely, if the $k$-th exponent is positive on a set of positive volume, then there is an invariant  measure which disintegrates into smooth measures on $k$-disks which are contained in local unstable manifolds (see \textsection \ref{GuGibbsSection1}). 

In \textsection \ref{mainResultsSection} we describe our %
results and in \textsection \ref{keyStepsOfProof} we give a detailed description of the steps of our proof, which follows a geometric approach. However, allow us to give a brief overview of these steps first. 

\begin{enumerate}
    \item First we wish study the geometry of a $k$-disk which is pushed forwards by the $C^\infty$-dynamics, using Yomdin theory. This poses several new challenges, compared to previous works which study the geometry of curves, e.g \cite{BCSVianaConj,BurguetViana}.
    \item The refined Yomdin theory is then used to find points with a positive density of times with {\em bounded geometry}, which is adapted to the notion of expansion on disks.
    \item Finally, we construct an invariant measure using a notion of {\em Measured Disks} (morally similar to standard pairs \cite{pairs}). These disks do not come equipped with a compact space of densities, and so proving that the limiting measure is absolutely continuous requires a new approach of comparing measures via atoms of the {\em Yomdin partition}.
\end{enumerate}

\subsection{Main results}\label{mainResultsSection}
Let $\mathbf{M}$ be a closed Riemannian manifold of dimension $d\geq 2$, and let $f\in \mathrm{Diff}^\infty(\mathbf{M})$.  We  denote the Riemannian volume of $\mathbf M$ by $\Vol$. Below we extend the notion of a Lyapunov exponent in the weak sense from \eqref{WeakExps}.

\subsubsection{Generalized $u$-Gibbs measures}\label{GuGibbsSection1}
Let $\wedge^kT\mathbf M$ be the $k$-th exterior power bundle of the tangent space $T\mathbf M$ endowed with the Riemannian structure inherited from $\mathbf M$. We denote by $\wedge^kdf$ the map induced by $df$ on $T\mathbf M$. 

\begin{definition}[$k$-th exponent]\label{kthExp} For a point $x\in \mathbf{M}$, its {\em $k$-th exponent}, where $k\in\{1,\ldots,d\}$, is defined by $df$ on $\wedge^kT\mathbf M$.

    \noindent$\lambda_k(x):=\lim_{p\to\infty}\lambda_{k,p}(x)=\sup_{p\in\mathbb{N}}\lambda_{k,p}(x)$ and $$\lambda_{k,p}(x):=\limsup_{n\to\infty}\frac{1}{n}\left(\log \|\wedge^kd_xf^n\|-\frac{1}{p}\sum_{j=0}^{n-1}\log^+\max_{1\leq s\leq k-1}\|\wedge^{s}
	d_{f^j x} f^p\|\right).$$
    
\end{definition}

\medskip
\noindent\textbf{Remark:} \text{ }
\begin{enumerate}
\item $\lambda_k$ is defined for every point $x\in \mathbf M$, is invariant (i.e. $\lambda_k\circ f=\lambda_k$) and for every invariant ergodic probability measure $\mu$, $\int \lambda_k d\mu$ coincides with the $k$-th Lyapunov exponent of $\mu$ whenever it is non-negative. For a general formula, see the remark after Definition \ref{defOfLmabdaPK}.
\item    $\forall x\in \mathbf M, \ \lambda_1(x)=\overline{\chi}(x)=\limsup_{n\to\infty}\frac{1}{n}\log\|d_xf^n\|$ with the convention $\max \varnothing=0$.  
\item $\forall 1\leq k\leq d, \ \forall x\in \mathbf M, \  \lambda_k(x)\leq \limsup_n\frac{1}{n}\log \sigma_k(d_xf^n)\leq \lambda_1(x)$ with $\sigma_k(d_xf^n)$ being the $k$-th singular value of $d_xf^n$.   
    \item To see that the definition is proper and that the limit on $p$ exists, see Lemma \ref{limOnP}.
    \item $\lambda_d(x)\leq 0$ for $\mathrm{Vol}$-a.e. $x$. See for example the remark following Lemma \ref{limOnP}.
\end{enumerate}

For a smooth embedded disk $D$ we let $\Vol_D$ be the Riemannian probability volume on $D$. 

\begin{definition}[Generalized $u$-Gibbs measure]\label{GuGibbs}
Let $f$ be a $C^{1+\epsilon}$ diffeomorphism of $\mathbf{M}$, $\epsilon>0$. 
    Let $k\in\{1,\ldots,d-1\}$
. An $f$-invariant Borel probability measure $\widehat{\mu}$ is called a {\em generalized $u$-Gibbs measure  over 
    $k$-disks} (G-$u$-Gibbs for short), if  it can be written as
    $$\widehat{\mu}=\int \mu_\varpi d\widehat{\mathbf{p}}(\varpi),$$
where $\widehat{\mathbf{p}}$ is a probability on the space of $C^{1+\epsilon}$ embedded $k$-disks, and 
\begin{enumerate}
\item for $\widehat{\mathbf{p}}$-a.e. $\varpi$, $$\mu_\varpi\ll \mathrm{Vol}_{\mathrm{Im}(\varpi)},$$
    \item for $\widehat{\mathbf{p}}$-a.e. $\varpi$, for $\mu_\varpi$-a.e. $x$, $$\mathrm{Im}(\varpi)\subseteq V^u(x)$$
    where, $V^u(x):=\{y\in\mathbf{M}: \limsup\frac{1}{m}\log d(f^{-m}(x),f^{-m}(y))<0\}$ is the Pesin unstable manifold of $x$.
\end{enumerate}
\end{definition}

\medskip
\noindent\textbf{Remark:}\text{ }
\begin{enumerate}    
    \item Notice that almost every ergodic component of a G-$u$-Gibbs measure is also a G-$u$-Gibbs measure, as different ergodic components are carried by disjoint collections of unstable leaves (see Lemma \ref{ergGuGibbs}).
    \item In Corollary \ref{everyGuGibbsIsSmooth}, we in fact prove that for every G-$u$-Gibbs measure, the conditionals $\mu_\varpi$ are equivalent to $\mathrm{Vol}_{\mathrm{Im}(\varpi)}$, with an explicit density given the by the product of Jacobians.
    \item Note that every G-$u$-Gibbs measure given by a disintegration by $(k+1)$-disks, is in particular a G-$u$-Gibbs with a disintegration by $k$-disks.
    \item If $\widehat{\mu}$ is a G-$u$-Gibbs for $f^p$, then $\frac{1}{p}\sum_{i=0}^{p-1}\widehat{\mu}\circ f^{-i}$ is a G-$u$-Gibbs for $f$.
\end{enumerate}

\medskip
Our main theorem is as follows:

\begin{theoremm}[Main Theorem]\label{mainThm2026}
Let $f$ be a $C^\infty$ diffeomorphism of a closed Riemannian manifold. Then if $\Vol([\lambda_k>0])>0$, then there exists a generalized $u$-Gibbs measure over 
$k$-disks, $\widehat{\mu}$.
\end{theoremm}

\medskip
\noindent\textbf{Remark:}\text{ } 
\begin{enumerate}
\item In fact, we construct for every $r>0$ a G-$u$-Gibbs measure where the disks are of regularity $C^{r}$. 
    \item Later (see Proposition \ref{EisConstProp}), if $\Vol([\lambda_k>a>0])>0$, we construct a G-$u$-Gibbs measure which satisfies also, 
    $$\text{ for }\widehat{\mathbf{p}}\text{-a.e. }\varpi\text{, for }\mu_\varpi\text{-a.e. }x\text{, }\lim_{p\to \infty}\chi_{A_{k,p}}(x, T_x\varpi)>a,$$ where $A_{k,p}$ is a linear cocycle over $(\mathbf{M},f,\wedge^k T\mathbf{M})$ which induces $\lambda_{k,p}$ (see Definition \ref{linCocyc}). 
    \item Our main theorem is optimal in the sense that it applies to the example which is illustrated in the second-to-last paragraph of \textsection \ref{intro}. While that example does not admit an SRB measure, nor a $u$-Gibbs measure, it admits a G-$u$-Gibbs measure.
    \item The condition in the main theorem is sufficient, but not necessary. To see this, consider the skew-product on $\mathbb{S}^1\times \mathbb{T}^2$ given by $F(t,x)=(g(t),f_t(x))$, where $g:\mathbb{S}^1\to\mathbb{S}^1$ is a North-South dynamics, fixing $0$ and $1$, where $0$ is an indifferent repelling point ($0$ Lyapunov exponent) and $1$ is an attracting fixed point (negative exponent). Let $f_0$ be an Anosov map, and $f_t$ be a homotopy to $f_1$, which is a DA map with an attracting fixed point $p$ and a repeller (see for example \cite{SmaleDA}). Thus almost every point w.r.t. to the volume converges to $\delta_1\times\delta_p$, and hence has no positive exponents. However, the system admits the SRB measure $\delta_0\times \mu^{SRB}_{f_0}$.
    \item While the condition of the main the theorem is not necessary, the proof quickly reduces to finding a disk with a positive disk-volume of points which see some expansion tangent to the disk. Such a ``leaf condition" is clearly necessary, and through our construction also sufficient.
    \item $u$-Gibbs measures (and in particular G-$u$-Gibbs measures) do not have to be physical, as can be observed by $f=A\times \mathrm{Id}: \mathbb{T}^2\times \mathbb{T}^2\to \mathbb{T}^2\times \mathbb{T}^2$, where $A$ is a linear Anosov map. The diffeomorphism $f$ admits SRB measures, and satisfies the condition of our main theorem, but admits no physical measures.
    \item Our proof uses tools from Yomdin theory. Yomdin theory is sufficiently robust, as it applies to the dynamics of a disk under a sequence of maps, rather than a single map (with uniform regularity bounds). This implies potential applications to the setting of random dynamics.
\end{enumerate}

\medskip
\noindent\textbf{Conjecture} (G-$u$-Gibbs rigidity)\textbf{.} {\em Given an ergodic generalized $u$-Gibbs measure $\widehat{\mu}=\int \mu_\varpi d\widehat{\mathbf{p}}(\varpi)$, either $\widehat{\mu}$ is a $u$-Gibbs measure, or for some $k$, for $\widehat{\mathbf{p}}$-a.e. $\varpi$, for $\mu_\varpi$-a.e. $x$, $\mathrm{Im}(\varpi)$ is tangent to a sum of Oseledec directions of dimension $k$ at $x$, and the conditional measures on the stronger unstable leaves are atomic. }

\medskip
The idea is to study the distribution of the tangent spaces to $\varpi$ within the projective tangent space of a larger unstable leaf. The invariance of the measure should imply that either the distribution of directions is atomic (hence allowing an invariant integration, or the larger unstable leaf is absolutely continuous ``in the direction of $\varpi$" where these directions are well spread (hence implying that the conditionals on the large unstable lead should be absolutely continuous as well).\footnote{In \cite{BEFRH} the authors study a related notion of ``generalized $u$-Gibbs measures": if the unstable conditionals are invariant under the action of an adapted group (given by the normal forms), then either the conditionals are invariant under the action of a larger group, or the  QNI condition \cite{asaf}  is violated.}

The example which is illustrated in the second-to-last paragraph of \textsection \ref{intro} falls within the case where the conditionals on the stronger unstable leaves are atomic.

\medskip
\noindent\textbf{Remark:}
Given the  G-$u$-Gibbs measure $\widehat \mu=\int \mu_\varpi d\widehat{\mathbf{p}}(\varpi)$ which is constructed in Theorem  \ref{mainThm2026}, we describe the following measure:  $$\mu^\star:=\int \mu_\varpi^\star d\widehat{\mathbf{p}}(\varpi) \text{  with }\mu_\varpi^\star=\int \delta_{(x,T_x\varpi)}d\mu_\varpi(x)$$ is an $F_k$-invariant lift of $\widehat \mu$, where 
   $F_k:C_k(\mathbf M)\to C_k(\mathbf M)$ is the map induced by $f$ on the the $k$-th contact bundle $C_k(\mathbf M)$.

In the case where the Lyapunov spectrum of $\widehat\mu$ is simple we get that any ergodic component of $\mu^\star$ gives full weight to  a direct sum of Oseledec unstable subspaces. Thus, in this case,  
for $\widehat{\mathbf{p}}$-a.e. $\varpi$, for $\mu_\varpi$-a.e. $x$, $\mathrm{Im}(\varpi)$ is tangent to a sum of Oseledec directions of dimension $k$ at $x$.

\medskip
In \cite{BurguetViana}, the author conjectures the following:

\medskip
\noindent\textbf{Conjecture} (\cite{BurguetViana})\textbf{.} {\em Let $f : \mathbf M\to \mathbf M$ be a $C^\infty$ diffeomorphism of a closed manifold. If $\Vol([\Sigma^k(x)>\Sigma^{k-1}(x)\geq 0])>0$, then there exists an ergodic measure with at least $k$ positive Lyapunov exponents, such that its entropy is larger than or equal to the sum of its $k$ smallest positive Lyapunov exponents.}

\medskip
In the conjecture above, $ \Sigma^k(x):=\limsup \frac{1}{n}\log\|\wedge^k d_xf^n\|$.

\medskip
We prove that almost every ergodic component of the measure $\widehat{\mu}$ from the main theorem satisfies the following (recall, almost every ergodic component of a G-$u$-Gibbs over $k$-disks is a G-$u$-Gibbs over $k$-disks):

\begin{theoremm}\label{entropy}
Let $\widehat{\mu}$ be an ergodic generalized $u$-Gibbs measure  over $k$-disks. Then 
$$h_{\widehat{\mu}}(f)\geq \sum_{k\text{-smallest positive }\chi_i(\widehat{\mu})}\chi_i(\widehat{\mu}).$$
\end{theoremm}

\medskip
\noindent\textbf{Remark:}\text{ }
\begin{enumerate}
\item Note that this statement is optimal, as can be seen by the example which is illustrated in the second-to-last paragraph of \textsection \ref{intro}, where the inequality is an equality.
    \item In particular, this implies that $\widehat{\mu}$ has positive entropy (by the affinity of entropy), and that $\widehat{\mu}$ admits also at least $1$ negative Lyapunov exponent a.e. (by the Ruelle inequality for $f^{-1}$).
\end{enumerate}

\subsubsection{The Strong Viana Conjecture}\label{SVCinMain} Finally, as an application to the main theorem of \textsection \ref{GuGibbsSection1}, we prove a  strong version of Viana conjecture in any dimension.

\begin{definition}[Negative exponents]\label{newNegExp0}
For $k\in \{1,\ldots,d-1\}$ we set $$\varkappa^-_{k+1}(x):=\lim_{\Delta\to 0}\limsup_{q\to\infty}\liminf_{n\to\infty} \varkappa_{k+1,q,n,\Delta}^-(x),$$
where
	$$\varkappa_{k+1,q,n,\Delta}^-(x):=\min_{\overset{\mathcal{L}\subseteq \{1,\ldots,n\}}{\Delta n\leq \#\mathcal{L}\leq n}}\frac{1}{\#\mathcal{L}}\sum_{\ell\in\mathcal{L}}\Phi_{k+1}^{(q)}\circ f^{\ell }(x),$$
	and $$\Phi_{k+1}^{(q)}(x):=\frac{1}{q}\log^+\frac{\|\wedge^k d_xf^q\|}{\|\wedge^{k+1} d_xf^q\|}.$$

\end{definition}

\medskip
\noindent\textbf{Remark:}\text{ }
\begin{enumerate}
    \item For an $f$-invariant probability $\nu$, for $\nu$-a.e. $x$,\\ $\varkappa^-_{k+1}(x)=\max\{0,-\chi_{k+1}(x)\}$, where $\chi_{k+1}(x)$ is the $(k+1)$-th Lyapunov exponent (with multiplicity) at $x$. For proof, see Lemma \ref{negExpCoincide}.
   \item  $\varkappa^-_{k+1}(x)\leq \max\{-\beta_{k+1}(x),0\}$, where
   \begin{align*}
       \beta_{k+1}(x):=&\sup_{\nu\in p\omega(x)}\mathrm{ess \, sup}_\nu \ \chi_{k+1},
   \end{align*} where $p\omega(x)$ denotes the set of empirical measures of $x$ and $\mathrm{ess \, sup}_\nu \ \chi_{k+1}$ is the essential supremum of $y\mapsto \chi_{k+1}(y)$ w.r.t. $\nu$. For proof, see Lemma \ref{limsupToLiminf}.    
\end{enumerate}

\begin{definition}[Points with non-zero Lyapunov exponents]
	For $\chi\geq 0$ and $k\in \{1,\ldots,d-1\}$, we let
    \begin{align*}
		\mathrm{Hyp}_\chi^k:=\Big\{x\in\mathbf{M}:\lambda_{k}(x),\varkappa^-_{k+1}(x)>\chi\Big\}.
	\end{align*}
\end{definition}

\begin{theoremm}[Strong Viana Conjecture]\label{strongv} Let $f\in \mathrm{Diff}^{\infty}(\mathbf M)$ and let $\chi>0$. Then 
    $\mathrm{Vol}$-a.e. $x\in\mathrm{Hyp}_\chi^k$  lies in the basin of attraction of an ergodic $\chi$-hyperbolic SRB measure with exactly $k$ positive Lyapunov exponents.
\end{theoremm}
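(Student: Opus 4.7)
The plan is to argue by contradiction. Let $B\subseteq\mathbf{M}$ denote the union of the basins of attraction of all ergodic $\chi$-hyperbolic SRB measures admitting exactly $k$ positive Lyapunov exponents, and assume that $X:=\mathrm{Hyp}_\chi^k\setminus B$ has positive volume. The goal is to construct, from $X$, an invariant probability whose ergodic components are measures of this type and whose basin meets $X$ in positive volume, which will contradict the definition of $X$.

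First I would apply the construction underlying the Main Theorem to the set $X$ in place of $[\lambda_k>0]$. Since $X\subseteq[\lambda_k>\chi]$ has positive volume, the construction produces a G-$u$-Gibbs measure $\widehat\mu$; inspecting the construction summarized in \S\ref{keyStepsOfProof}, $\widehat\mu$ is obtained as a subsequential empirical limit of pushforwards of the Riemannian volume restricted to a positive-volume subset $Y\subseteq X$. By a Fubini-type argument, for Lebesgue-a.e.\ $x\in Y$ a subsequence of the Birkhoff averages $\tfrac{1}{n}\sum_{i<n}\delta_{f^i x}$ converges weakly to an invariant probability whose ergodic decomposition is absolutely continuous with respect to that of $\widehat\mu$; in particular, a positive-volume subset of $Y$ is generic for $\widehat\mu$-a.e.\ ergodic component $\widehat\mu_x$.

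Next I would determine the Lyapunov spectrum of $\widehat\mu_x$ using the pointwise conditions on $Y$. A Kingman-plus-Birkhoff computation based on Definition \ref{kthExp} shows that on an ergodic measure $\nu$ with $\chi_k(\nu)>0$ one has $\lambda_k(y)=\chi_k(\nu)$ for $\nu$-a.e.\ $y$; combined with the inequality $\lambda_k>\chi$ on $Y$ and the empirical-limit property above, this forces $\chi_k(\widehat\mu_x)\geq\chi$. Using instead the first remark following Definition \ref{newNegExp0}, $\varkappa^-_{k+1}(y)=\max\{0,-\chi_{k+1}(\widehat\mu_x)\}$ for $\widehat\mu_x$-a.e.\ $y$; the inequality $\varkappa^-_{k+1}>\chi$ on $Y$ then propagates to $\chi_{k+1}(\widehat\mu_x)\leq-\chi$. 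Hence $\widehat\mu_x$ is $\chi$-hyperbolic with exactly $k$ positive Lyapunov exponents.

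Finally I would upgrade $\widehat\mu_x$ from G-$u$-Gibbs to SRB and close the loop. Theorem \ref{entropy} gives
\[
h_{\widehat\mu_x}(f)\ \geq\ \sum_{i=1}^{k}\chi_i(\widehat\mu_x),
\]
while Ruelle's inequality supplies the reverse; equality in Pesin's entropy formula combined with the Ledrappier--Young characterization implies that $\widehat\mu_x$ is SRB. Being an ergodic hyperbolic SRB measure, $\widehat\mu_x$ is physical, so the positive-volume set of $Y$-points generic for $\widehat\mu_x$ lies in $\mathcal{B}(\widehat\mu_x)\subseteq B$, contradicting $Y\subseteq X$. The main obstacle is the rigorous transfer of the pointwise bounds $\lambda_k>\chi$ and $\varkappa^-_{k+1}>\chi$ on $Y$ to $\widehat\mu_x$-a.s.\ bounds on $\chi_k$ and $\chi_{k+1}$: one must argue that both functionals behave appropriately semi-continuously under weak-$*$ empirical limits, and that the construction of $\widehat\mu$ indeed places $Y$-typical points in the generic set of some ergodic component.
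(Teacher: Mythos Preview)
Your overall contradiction strategy matches the paper's, but the argument has a genuine gap at the step you call ``Fubini-type'': you assert that a positive-volume subset of $Y$ consists of points \emph{generic} for some ergodic component $\widehat\mu_x$. This is precisely the content of the basin property you are trying to prove, and it does not follow from the construction of $\widehat\mu$. The measure $\widehat\mu$ is built as a weak-$*$ limit of \emph{averages over $Y$} (and, in the paper, over a restricted set of BG times), which in no way implies that individual points of $Y$ are generic for $\widehat\mu$ or any of its components. Physicality of a hyperbolic SRB measure tells you its basin has positive volume \emph{somewhere} in $\mathbf M$, not that it meets $Y$; so your closing step ``$Y$-points generic for $\widehat\mu_x$ lie in $\mathcal B(\widehat\mu_x)\subseteq B$'' collapses without the unjustified genericity claim.

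The paper separates the two tasks. First it proves existence of the SRB measure (Theorem~\ref{SVC}): the transfer of the $(k+1)$-th exponent bound is done not by semicontinuity of $\varkappa^-_{k+1}$ but via Proposition~\ref{limsupToLiminf}, passing through $\beta_{k+1}$ and the compact set $p\omega(E')$ of empirical limits; one restricts to a subset where $x\mapsto p\omega(x)$ is continuous, shows the limiting measure is a barycenter of measures in $p\omega(E')$, and reads off $\chi_{k+1}<-\chi$ from the definition of $\beta_{k+1}$. Second, the basin property is obtained by a geometric argument (the Corollary following Theorem~\ref{SVC}): at BG times the pushed-forward disk $\varpi_\ell(x)$ spans across a Pesin block $\Lambda_T^{(\chi,\chi/2)}$ with uniformly transverse angle, and Pesin's absolute continuity of the stable lamination forces a positive-volume subset of the original disk to lie on stable leaves of the SRB measure, hence in its basin. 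This is where the paper actually closes the loop you tried to close by assertion.
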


\medskip 
\noindent\textbf{Remark:}\text{ }
\begin{enumerate}\item  We do not know if the number of SRB measures whose basins cover $\mathrm{Hyp}_\chi^k$  is finite or not (see also the last item of the remark following Theorem \ref{codim}). 
\item In fact, $\Vol\Big(\mathrm{Hyp}_\chi^k\Big)>0$ if and only if there exists a $\chi$-hyperbolic SRB measure with exactly $k$ positive Lyapunov exponents.
\item Moreover, our proof shows that if for some $\chi>0$, $\Vol\Big([\lambda_k>\chi \geq \alpha\geq \beta_{k+1}]\Big)>0$ implies that there exists a $u$-Gibbs measure (i.e. absolutely continuous conditionals along strong unstable manifolds of dimension $k$), whose $(k+1)$-th Lyapunov exponent (with multiplicity) is less or equal to $\alpha$ (note, $\alpha$ may be non-negative).

\end{enumerate}

\begin{cor}\label{viana}
Let $f$ be a $C^\infty$ system with a partially hyperbolic attractor $\Lambda$ of the form 
$T_\Lambda M=E^u\oplus E^c\oplus E^s$ with $\mathrm{dim}(E^c)=2$. Moreover, assume that
$$\forall x\in \Lambda, \ \limsup_{n\to \infty}\frac{1}{n}\log \mathrm{Jac}(d_xf^n|_{E^c(x)})\leq 0.$$
Then Lebesgue a.e. $x$ in the topological basin of $\Lambda$ with $\limsup_n\frac{1}{n}\log \|df^n|_{E^c(x)}\|>\chi>0$ lies in the basin of a $\chi$-hyperbolic ergodic SRB measure.
\end{cor}
In particular, Corollary \ref{viana} applies to the examples by Viana from his celebrated paper \cite{Viana97}.

\subsubsection{Hyperbolic SRB measure for co-dimension $1$}
In the case where $k=d-1$ (i.e the co-dimension $1$ case), we do not need any condition on the exponent $\varkappa_{d}^-$:

\begin{theoremm}\label{codim}
Let $f$ be a $C^\infty$ diffeomorphism of a closed manifold of dimension $d$. Then $\Vol$-a.e. $x$ with $\lambda_{d-1}(x)>0$ lies in the basin of a  hyperbolic SRB measure which admits exactly $d-1$ positive Lyapunov exponents almost everywhere.
\end{theoremm}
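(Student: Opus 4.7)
The plan is to apply the Main Theorem with $k=d-1$, argue by entropy and the Ruelle inequality that every ergodic component of the resulting generalized $u$-Gibbs measure is automatically a hyperbolic SRB measure with exactly $d-1$ positive Lyapunov exponents, and then saturate by re-running the construction on the complement of the union of the basins of all such SRB measures.

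Since $\mathrm{Vol}([\lambda_{d-1}>0])>0$, the Main Theorem yields a G-$u$-Gibbs measure $\widehat{\mu}$ with absolutely continuous conditionals on $(d-1)$-dimensional disks contained in local Pesin unstable manifolds. Let $\nu$ be an ergodic component of $\widehat{\mu}$; by the first remark after Definition~\ref{GuGibbs} it is itself a G-$u$-Gibbs, and by Theorem~\ref{entropy} it has at least $d-1$ positive Lyapunov exponents, with
\[
h_\nu(f)\;\geq\;\sum_{i=1}^{d-1}\chi_i^+(\nu)\;>\;0.
\]
If all $d$ Lyapunov exponents of $\nu$ were positive, the Ruelle inequality applied to $f^{-1}$ would force $h_\nu(f)\leq 0$, a contradiction; hence exactly $d-1$ exponents are positive and the remaining one $\chi_d(\nu)$ is non-positive. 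Applying Ruelle to $f^{-1}$ once more gives $h_\nu(f)\leq -\chi_d(\nu)$, so combined with $h_\nu(f)>0$ we must have $\chi_d(\nu)<0$, and $\nu$ is $(d-1,1)$-hyperbolic. Because the unstable subbundle of $\nu$ is then $(d-1)$-dimensional, the $(d-1)$-disks in the disintegration must coincide, up to $\nu$-null sets, with open subsets of local unstable leaves; the absolute continuity of the conditionals therefore upgrades $\nu$ to an ergodic hyperbolic SRB measure in the classical sense, with exactly $d-1$ positive Lyapunov exponents.

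To obtain basin coverage, let $\mathcal F$ be the collection of all ergodic hyperbolic SRB measures of $f$ with exactly $d-1$ positive Lyapunov exponents, and set $\mathcal{B}:=\bigcup_{\mu\in \mathcal F}\mathcal{B}(\mu)$. Each $\mu\in\mathcal F$ is physical by the classical result of Pugh--Shub / Ledrappier--Strelcyn, so $\mathrm{Vol}(\mathcal B(\mu))>0$; since distinct ergodic basins are disjoint up to $\mathrm{Vol}$-null sets, $\mathcal F$ is at most countable. Suppose for contradiction that $E:=[\lambda_{d-1}>0]\setminus\mathcal B$ has positive volume. Re-running the Main Theorem with the starting points restricted to $\mathrm{Vol}$-density points of $E$ (rather than of $[\lambda_{d-1}>0]$) produces a new G-$u$-Gibbs measure whose ergodic components lie in $\mathcal F$ by the previous paragraph, and whose basins must meet $E$ on a positive-volume set, contradicting $E\cap\mathcal B=\varnothing$. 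Hence $\mathrm{Vol}(E)=0$, as required.

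The main obstacle is the last step: verifying that the Main Theorem really can be localized to an arbitrary positive-volume subset $E\subseteq[\lambda_{d-1}>0]$ in a way that guarantees the resulting G-$u$-Gibbs charges the stable set of $E$. Concretely, one must trace the density points of $E$ through the Yomdin partition, the bounded-geometry selection, and the passage to the Measured-Disks limit, and show that the limiting absolutely continuous conditionals live over orbits that $\mathrm{Vol}|_E$-typically accumulate on the ergodic components. Once this localization is established, the entropy/Ruelle argument above closes the saturation and gives the theorem.
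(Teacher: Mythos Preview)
Your first paragraph — existence of a hyperbolic SRB with exactly $d-1$ positive exponents via Theorem~\ref{entropy} and the Ruelle inequality for $f^{-1}$ — is correct and is exactly the paper's Claim~\ref{excodim}. The overall contradiction strategy for basin coverage (restrict to $E=[\lambda_{d-1}>0]\setminus\mathcal B$ and re-run the construction) is also the same as the paper's.

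The genuine gap is the step you yourself flag: ``whose basins must meet $E$ on a positive-volume set''. This does not follow from the construction as you present it. The measure $\widehat\mu$ is built as a weak-$*$ limit of averaged empirical measures $\int_{E_n}\frac{1}{n}\sum_\ell\delta_{g^\ell x}\,d\lambda_n$; nothing in that limiting procedure tells you that any individual $x\in E$ is generic for an ergodic component of $\widehat\mu$. Your suggested route — trace density points of $E$ through the Yomdin partition to the Measured-Disks limit — is not the mechanism the paper uses and, as stated, does not supply the missing link: the limit conditionals live on disks in $\mathbf M$, not on $E$, and there is no a priori reason the limiting disks remember the $E$-orbits that produced them.

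The paper closes the gap geometrically, not measure-theoretically. Having built $\mu$ from $E$, one picks a Pesin block $\Lambda_T^{(\chi,\chi/2)}$ of large $\mu$-measure. Then for infinitely many $n$ a positive $\lambda_n$-proportion of $E_n$ has orbits visiting a neighborhood of $\Lambda_T$ with frequency $\gtrsim\beta$, so for such $x$ there is a $\mathrm{BG}$ time $\ell$ with the Yomdin disk $\varpi_\ell(x)$ spanning across $B(x',1/T)$, $x'\in\Lambda_T$. The uniform backward contraction at $\mathrm{BG}$ times (Lemma~\ref{BGtimesAreHyp}) forces $\varpi_\ell(x)$ to meet the local stable foliation of $\Lambda_T$ transversally with angle $\geq\alpha_T>0$. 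Pesin's absolute continuity of the stable holonomy, combined with a density-point argument on $\Lambda_T$, then shows that a positive-volume subset of $\varpi_\ell(x)$ lies on stable leaves of $\Lambda_T$; pulling back by $g^{-\ell}$ (bounded distortion at a $\mathrm{BG}$ time) puts a positive-volume subset of $E$ in the basin of an ergodic component of $\mu$. That is the contradiction. The ingredients you need to add are therefore: (i) the uniform transversality of $\mathrm{BG}$-time disks to the stable lamination over Pesin blocks, and (ii) the absolute continuity of the stable holonomy.
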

\newpage
\medskip
\noindent\textbf{Remark:}\text{ }
\begin{enumerate}
\item Moreover, this is an ``if and only if", as the existence of a hyperbolic SRB measure with $d-1$ positive exponents implies $\Vol([\lambda_{d-1}>0])>0$, as a classical application of Pesin's absolute continuity theorem for stable leaves.
\item In particular for $d=2$ we recover the main result of \cite{BurguetViana} for $C^\infty$ surface diffeomorphisms.
\item In dimension $2$, the number of SRB measures with entropy larger than $a>0$ is finite. This follows from the finiteness of homoclinic classes with entropy larger than $a>0$ which is proved by using a  Sard  argument (firstly appeared in \cite{hertzunique}). We hope  that  
by a similar approach one could show that   $\Vol$-a.e. $x$ with $\lambda_{d-1,p}(x)>0$ lies in the basin of finitely  many SRB measures (see Definition \ref{defOfLmabdaPK} for the definition  of the exponent $\lambda_{d-1,p}$). 
\end{enumerate}

\subsubsection{$C^r$ statements with $1<r<+\infty$}\label{crstatessection}

The  results stated in the above \textsection \ref{SVCinMain} and \textsection \ref{GuGibbsSection1} for  $C^\infty$ diffeomorphisms follow immediately from the following  general $C^r$ version of our Main Theorem. Fix $1<r<+\infty$.
We let $$\forall n\in \mathbb N, \ M_{f^n}:=\max\{\|d_\cdot f^n\|,\|d_\cdot f^{-n}\|\}$$
then 
$$R(f)=\lim_{n\to \infty}\frac{1}{n}\log M_{f^n}.$$

\begin{theoremm}[$C^r$ version]\label{crcr}
Let $f$ be a $C^r$ diffeomorphism of a closed manifold. Then if $\Vol([\lambda_k>\frac{3k^2}{r-1}R(f)])>0$, then there exists a generalized $u$-Gibbs measure over $C^{r-1,1}$ $k$-disks, $\widehat{\mu}$.
\end{theoremm}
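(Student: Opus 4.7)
The plan is to follow the three-step geometric scheme announced in the introduction, with all distortion/covering estimates sharpened so that the critical rate $\chi := \frac{3k^2}{r-1}R(f)$ appears as the natural threshold.

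\textbf{Step 1 (Fubini reduction to a leaf condition).} First I would pass from the volume-positive hypothesis to a single disk. Using a finite atlas of coordinate charts and Fubini's theorem, the assumption $\mathrm{Vol}([\lambda_k>\chi])>0$ implies the existence of a smoothly embedded $k$-disk $D_0\subset \mathbf{M}$ (a coordinate slice) on which $\mathrm{Vol}_{D_0}([\lambda_k>\chi])>0$. This is the minimal ``leaf condition'' flagged in the remarks after the Main Theorem and will be the only input used from the hypothesis.

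\textbf{Step 2 (Refined Yomdin decomposition for $k$-disks).} For each $n$, I would cover $f^n(D_0)$ by a family of $C^{r-1,1}$-reparametrized pieces of uniformly bounded geometry, using a version of Yomdin's reparametrization lemma adapted from curves to $k$-dimensional disks. The expected counting estimate is that the number of pieces grows at most like $e^{\frac{3k^2}{r-1}R(f)\,n+o(n)}$: the factor $k^2/(r-1)$ is the product of the dimension $k$ of the disk and the $k/(r-1)$ subdivision rate per iterate coming from the $C^r$ Yomdin bound on each tangent direction, and the $3$ absorbs a $C^{r-1,1}$ normalization and a factor for passing from singular values to $\wedge^k$. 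Comparing with $\|\wedge^k d_xf^n\|\gtrsim e^{(\chi+\varepsilon)n}$ on a set of positive $\mathrm{Vol}_{D_0}$-measure, a Pliss-type argument then produces, for a positive density set of times $\mathcal{L}_n\subseteq\{1,\dots,n\}$, pieces $\varpi_{n,\ell}$ of $f^{\ell}(D_0)$ of bounded geometry carrying a uniformly positive fraction of the expanded $k$-volume.

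\textbf{Step 3 (Empirical measures on measured disks and the limit $\widehat{\mu}$).} With the pieces from Step 2, I would form the measured-disk empirical averages
\[
\widehat{\mu}_n := \frac{1}{\#\mathcal{L}_n}\sum_{\ell\in \mathcal{L}_n} (f^\ell)_*\nu_0,
\]
where $\nu_0$ is the normalized restriction of $\mathrm{Vol}_{D_0}$ to the good subset identified in Step 2. By compactness of $\mathbb{P}(\mathbf{M})$ one extracts a weak-$*$ limit $\widehat{\mu}$; invariance under $f$ is standard from the Krylov--Bogolyubov averaging. The disintegration claimed in the definition of G-$u$-Gibbs is obtained by lifting $\widehat{\mu}_n$ to a measure on the space of $C^{r-1,1}$-$k$-disks (the measured disks $\{\varpi_{n,\ell}\}$ weighted by their masses) and passing to a subsequential limit $\widehat{\mathbf{p}}$; this uses the $C^{r-1,1}$ compactness of bounded-geometry disks, hence the $r-1$ rather than $r$ in the definition. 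Subordination of each limit disk to a local Pesin unstable manifold follows because, on $\mathcal{L}_n$, the tangent spaces $T f^{\ell}(D_0)$ align with the dominated $k$-dimensional expanding directions at typical points, so the limit disks are tangent to a fast-expanding $k$-plane field at $\widehat{\mu}$-a.e.\ point.

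\textbf{Main obstacle and resolution.} The crucial new difficulty, and where I expect the bulk of the work to lie, is proving that the conditional measures $\mu_\varpi$ of the limit are absolutely continuous on $\mathrm{Im}(\varpi)$. Unlike in the curve setting of \cite{BCSVianaConj,BurguetViana}, the densities of the pre-limit measured disks do not live in a precompact family, so one cannot just extract a limit density. The plan is the comparison strategy announced in the introduction: use the atoms of the Yomdin partition of the ambient manifold as test sets. On each Yomdin atom, bounded-distortion estimates along $f^\ell$, combined with the $C^{r-1,1}$-control of the reparametrizations from Step 2, give two-sided bounds
\[
C^{-1}\,\mathrm{Vol}_{\varpi_{n,\ell}}(A)\ \le\ \nu_{n,\ell}(A)\ \le\ C\,\mathrm{Vol}_{\varpi_{n,\ell}}(A),
\]
uniformly in $n,\ell$, for $A$ inside a Yomdin atom. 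Because these bounds are phrased directly between the measure and the Riemannian volume on the disk (bypassing densities), they are stable under the weak-$*$ limit and yield $\mu_\varpi \ll \mathrm{Vol}_{\mathrm{Im}(\varpi)}$ with an explicit density expressible as a product of Jacobians, matching Corollary~\ref{everyGuGibbsIsSmooth}. Once absolute continuity is in hand, the disintegration statement completes the proof that $\widehat{\mu}$ is a G-$u$-Gibbs measure with $k$-dimensional absolutely continuous conditionals.
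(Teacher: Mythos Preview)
Your outline follows the same broad architecture as the paper, but there are three genuine gaps, two of which are serious.

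\textbf{Step 1 misses the alignment issue.} Knowing $\lambda_k(x)>\chi$ on a coordinate slice $D_0$ only gives growth of the \emph{operator norm} $\|\wedge^k d_xf^n\|$; it says nothing about $|\wedge^k d_xf^n|_{T_xD_0}|$, which is what the Yomdin/Pliss argument in Step~2 actually needs. A generic Fubini slice can be tangent to the lower Lyapunov flag of the cocycle $A_{k,p}$ and see no expansion at all. The paper handles this in Lemma~\ref{theDiskD} by first passing to a subset where the flag $F_{s-1}$ is nearly constant and then choosing the slice transverse to it.

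\textbf{Step 3's invariance argument fails.} Krylov--Bogolyubov gives an invariant limit only when you average over \emph{all} times $0\le \ell<n$. Your $\mathcal L_n$ is merely a positive-density subset, and the limit of $\frac{1}{\#\mathcal L_n}\sum_{\ell\in\mathcal L_n}(f^\ell)_*\nu_0$ need not be $f$-invariant (take $f$ a two-point swap and $\mathcal L_n$ the even integers). The paper's workaround is the $M$-parameter construction: one defines $\mathrm{BG}^M_n(x)$ as the set of $\ell$ admitting a BG time in $[\ell,\ell+M]$, proves the monotonicity $\mu_n^{M+1}\circ g^{-1}\ge \mu_n^M-\tfrac1n$ (Theorem~\ref{dom}), and obtains invariance only for the increasing limit $\mu=\lim_M\mu^M$. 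This is not a cosmetic point; without it the construction produces nothing invariant.

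\textbf{The absolute continuity argument is where the real work hides, and your version does not go through.} You assert uniform two-sided bounds $C^{-1}\mathrm{Vol}_{\varpi_{n,\ell}}(A)\le \nu_{n,\ell}(A)\le C\,\mathrm{Vol}_{\varpi_{n,\ell}}(A)$ on Yomdin atoms. But $\nu_{n,\ell}$ is the push-forward of the \emph{restriction} of disk volume to the good set $E_n$, and within a Yomdin atom $P_n^\ell(x)$ the density $\lambda(P_n^\ell(x)\cap E_n)/\lambda(P_n^\ell(x))$ can be arbitrarily small; there is no uniform $C$. The paper's mechanism (Proposition~\ref{keyAC} and the surrounding lemmas) is different and substantially harder: one shows that the set of $x\in E_n$ for which the ratio $\frac{\lambda(P_n^{\ell+N}(x)\cap E_n)}{\lambda(P_n^\ell(x)\cap E_n)}$ is far from $\frac{\lambda(P_n^{\ell+N}(x))}{\lambda(P_n^\ell(x))}$ on a $\delta$-fraction of times has $\lambda$-measure at most $e^{-c_\delta n/N}$, and then exploits the sub-exponential decay $\lambda(E_n)\ge 1/n^2$ to beat this. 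Absolute continuity is then obtained for $\check p^{M,\delta}$-a.e.\ disk with a $\delta$-dependent constant, and one takes $\delta\downarrow 0$ monotonically. Your shortcut skips exactly the step the paper identifies as the main new difficulty.
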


\medskip
\noindent\textbf{Remark:}\text{ }
\begin{enumerate}
\item We write $C^{r-1,1}$ (see definition in \textsection \ref{bddGeoSection}) for the case where $r\in \mathbb{N}$, and for simplicity we keep this assumption throughout the paper. However, for $r\notin \mathbb{N}$, the proof easily adapts by using  $C^{\lfloor r\rfloor,r-\lfloor r\rfloor}$. 
\item By the aforementioned examples in dimension two, one can not expect to replace the lower bound  $\frac{3k^2}{r-1}R(f)$ by $0$. However we do not claim  that the lower bound in the above statement is sharp.
\item The $C^r$ smoothness property together with the assumption\\ $\Vol([\lambda_k>\frac{3k^2}{r-1}R(f)])>0$ is used in \textsection \ref{posDensOfGeoTimes} to construct a disk with a positive density of {\em geometric times} on a subset of positive disk-volume, using Yomdin theory. Given such a disk, the construction of the G-$u$-Gibbs measure and its properties is done in \textsection \ref{constMu}, \textsection \ref{smoothCondSect}, and \textsection \ref{propOfMuSect}. All arguments in those sections assume only $f\in \mathrm{Diff}^{1+}(\mathbf{M})$.
\item In \textsection \ref{SVCSect}, by using the construction of the SRB measure from the previous sections we show in addition that $\Vol$-a.e. $x\in [\lambda_k>\frac{3k^2}{r-1}R(f)]\cap [\varkappa^-_{k+1}>\chi ]$ lies in the basin of a $\min\{\chi, \frac{3k^2}{r-1}R(f)\}$-hyperbolic SRB measure. The  proofs in this section assumes only $C^{1+}$ regularity as well. In addition, $\Vol$-a.e. $x\in [\lambda_{d-1}>\frac{3k^2}{r-1}R(f)]$ lies in the basin of a hyperbolic SRB measure.
\end{enumerate}

\subsection{Key steps of proof}\label{keyStepsOfProof}

The proof is composed of seven key steps, which we describe below.

\begin{enumerate}
    \item \textbf{A disk with positive disk-volume for expanding points:} The first step is reducing the dynamics to dynamics of an ``almost expanding" map. This is achieved by  an embedded $k$-dimensional disk $D$, such that the induced Riemannian volume $\Vol_D$ gives a positive measure  to points whose expansion of $\lambda_k(\cdot)$ is achieved on $T_\cdot D$. In particular it requires using an iterative power of the dynamics of $f$, to observe the expansion more directly.
    \item \textbf{Refined high-dimensional Yomdin theory:} The second key step is developing a quantitative approach to Yomdin theory which applies to high-dimensional disks. We expand here a bit about this step, as it is crucial.

    Yomdin theory allows to partition and reparameterize semi-algebraic sets into components whose image under the dynamics of a smooth map remain with a bounded derivative. The idea is that if the image of a disk has to ``bend" a lot, it forces the number of components to be larger. The strength of Yomdin theory is controlling the number of such components under a $C^\infty$ map, by approximating the map with its Taylor expansion, and proving that the number is exponentially small. The usefulness of Yomdin theory is demonstrated for example in \cite{Yomdin87, NewhouseVolGrowth,BCSexp,Burguetphysical}.

    In \cite{BurguetViana} the second author develops an alternative approach, which allows to better control the geometry of curves. The geometry of a curve is simple, as it entails of only its derivative and length. The geometry of high-dimensional disks is much more complicated, as they could be very narrow in some directions, but large in other direction, or even wind above themselves like a staircase. Moreover, the volume of the disk can increase under the dynamics, while some directions demonstrate contraction.

    In this paper we develop an applicable high-dimensional quantitative approach to Yomdin theory, which allows us to control the geometry of the image of a disk under a smooth map. 
    Moreover, we are able to do so in a manner where the reparameterized disks (called {\em Yomdin charts}) form a partition. We call this partition the {\em Yomdin partition}, and it serves an important role later when we construct and compare measures by the atoms of this partition.
\item\textbf{Tree dynamics of the disk, and positive frequency of ``good" times:}  The third key step is to construct the tree of the dynamics of the disk $D$. This refers to the tree structure which is achieved by applying the Yomdin subdivision iteratively to the images of $D$ under the dynamics. In particular, this tree description is crucial in order to study the points on the disk $D$ whose corresponding position on the tree at time $k$ has the {\em bounded geometry} property, for a uniformly positive portion of $k\leq n$. This is the only part in the paper where we require the regularity of the diffeomorphims to be possibly greater than $C^{1+}$. The notion of bounded geometry is how we keep enough control of the geometry of the Yomdin subdivision, without being too restrictive so the tree structure becomes insufficient. Here we makes use of the quantitative Yomdin theory for high-dimensional disks.

\item\textbf{A limiting measure on the space of Measured Disks:} In the fourth step, we use the positive density of times with bounded geometry for a subset of positive Lebesgue measure on $D$ in order to construct a measure on the space of {\em Measured Disks}- that is the space of embedded $k$-dimensional disks in $\mathbf M$, endowed with a probability measure on them. The positive density of times with bounded geometry allows us to restrict to a pre-compact subset of the space of Measured Disks, in which we can take limits.

The main idea here is to construct a sequence of measures which are not invariant, but correspond to the restriction of the desired invariant measure onto increasing subsets (similarly to Pesin blocks). The subsets are parameterized by the bounded geometry of the disks.

\item\textbf{Absolutely continuous conditionals:}
In the fifth step, we prove that the invariant measure which we construct in the fourth step is given by a disintegration into absolutely continuous measures on disks which are subordinated to the unstable foliation. In fact, we prove that these conditional measures are equivalent to the induced Riemannian volume on the disks.

We decompose each conditional measure in the limiting process into atoms of the Yomdin partition, and bound the total mass of all atoms on which the conditional measure does not compare with the respective disk volume. The idea is to make sure that the conditional measure and the respective disk volume compare over finer and finer atoms, as we take the limit, guaranteeing that the limiting measures will coincide over all continuous functions. 

Such analysis, and a choice for an increasing degree of atoms which still cover a big portion of the measure, relies on the sub-exponential decay of the measure of sets which we push forward.

\item\textbf{Dynamical properties of the invariant measure:} In the sixth step, we prove the variety of dynamical properties which the invariant measure holds. That is, we prove the estimates of the exponents, the entropy; And we prove the fact that the limiting measures on the space of {\em Measured Disks} concentrate on disks which are contained in unstable leaves. This gives an indirect construction of disks which are contained in unstable leaves, and an indirect construction of their densities. By ``indirect", we mean without the constructive approach of a graph transform or a Perron-Hadamard method.

\item\textbf{Controlling the transverse Lyapunov exponents:} In the seventh and final step, we estimate the Lyapunov exponents of the G-$u$-Gibbs measure which we construct, in the direction transverse to the disks on which it disintegrates with absolutely continuous conditionals.

In general Lyapunov exponents may not be continuous, and using the fact that our G-$u$-Gibbs measure is constructed by pushing forwards sets, we are able to prove that the negative exponents for the empirical measures translate to the limit. In fact, our methods allow to control the Lyapunov spectrum of the limiting measure, through the spectrum of the emprical measures. Controlling the spectrum of a measure which is constructed this way is meaningful even in the Anosov setting.
\end{enumerate}

\section{Setup and definitions}\label{setupAndDefs}

\subsection{Lyapunov exponents}

\subsubsection{Standard Lyapunov exponent}

For a dynamical system $T:X\circlearrowleft$  and a normed vector bundle $\pi:V\to X$, a linear cocyle  $A:V\circlearrowleft$ is  a map satisfying $\pi\circ G=T$ such that the restriction of $A$ to the vector space $V_x:=\pi^{-1}(x)$ to $V_{Tx}$ is a linear isomorphism. 
Then one defines the Lyapunov exponent  of $A$ at $v=(x,v_x)\in V$ as follows
$$\chi_A(v)=\limsup_n\frac{1}{n}\log \|A^n(v)\|.$$
We may define the Lyapunov exponent $\chi_A$ on the projective bundle $\mathbb P V$ by letting $\chi_A([v])=\chi_A(v)$ for any $0\neq v\in V$  with its associated class $[v]\in \mathbb PV$. 
Consider a $C^1$ diffeomorphism $f$  on a compact smooth Riemannian manifold $\mathbf M$ of dimension $d$. For any $x\in \mathbf M$ we  let $(\chi_k(x))_{1\leq k\leq d}$ be the usual Lyapunov exponent associated to the derivative cocycle of $f$. It is well-known that  $$\chi_1(x)=\overline{\chi}(x)=\limsup_n\frac{1}{n}\log \|d_xf^n\|.$$
When $\nu$ is a $f$-invariant probability measure on $\mathbf M$ we let $\chi_k(\nu)=\int \chi_k(x)\, d\mu(x)$ for any $1\leq k\leq d$.

\subsubsection{New positive exponents}
We introduce now the new positive exponents $\lambda_k$, $1\leq k\leq d$. 

\begin{definition}\label{defOfLmabdaPK} For a point $x\in \mathbf{M}$ and  $k\in\{1,\ldots,d\}$, we let for all $p\in \mathbb N\setminus \{0\}$,\ $\lambda_{k,p}(x):=\limsup_{n\to\infty}\lambda_{k,p,n}(x)$, where 
$$\lambda_{k,p,n}(x):=\frac{1}{n}\left(\log \|\wedge^kd_xf^n\|-\frac{1}{p}\sum_{\ell=0}^{n-1}\log^+\max_{1\leq j\leq k-1}\|\wedge^{j}
	d_{f^\ell x} f^p\|\right).$$
\end{definition}

\medskip
\noindent\textbf{Remark:}\text{ }
\begin{enumerate}
\item For $k=1$ and for all $p$, we have \\ $\lambda_{1,p}(x)=\lambda_1(x)=\limsup_n\frac{1}{n}\log \|d_xf^n\|=\overline\chi(x)$.
    \item We always have $\overline\chi(x)\geq \lambda_{k,p}(x)$.
    \item  When $x$ is a typical point for an ergodic measure $\nu$, then 
$$\lambda_{k,p}(x)=\sum_{1\leq l\leq k }\chi_l(\nu)-\frac{1}{p} \int \log^+\max_{j\leq k-1}\|\wedge^{j}
	d_{x} f^p\| d\nu(x).$$
\end{enumerate}

\begin{lemma}\label{limOnP}
For any $x\in \mathbf M$ and $1\leq k\leq d$ the sequence  $\left(\lambda_{k,p}(x)\right)_p$ is converging to $\lambda_k(x):=\sup_p\lambda_{k,p}(x)$.
\end{lemma}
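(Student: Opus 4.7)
Write $B_n(p):=\sum_{\ell=0}^{n-1}\log^+\max_{1\leq j\leq k-1}\|\wedge^jd_{f^\ell x}f^p\|$ and set $R:=\log^+\|df\|_\infty<\infty$, so that $\lambda_{k,p,n}(x)=\frac{1}{n}\log\|\wedge^kd_xf^n\|-\frac{B_n(p)}{np}$ and $\lambda_{k,p}(x)\leq kR$. In particular the supremum $L:=\sup_p\lambda_{k,p}(x)$ is well-defined and finite. The plan is to prove $\liminf_{p\to\infty}\lambda_{k,p}(x)\geq L$; together with the trivial $\lambda_{k,p}(x)\leq L$, this yields convergence to $L$. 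Two applications of submultiplicativity of exterior powers will suffice.

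The first ingredient is monotonicity along multiples: $\lambda_{k,mp}(x)\geq\lambda_{k,p}(x)$ for every $m,p\geq 1$. From $\|\wedge^jd_yf^{mp}\|\leq\prod_{i=0}^{m-1}\|\wedge^jd_{f^{ip}y}f^p\|$, taking $\log$, then $\max_j$, then using $\max_j\sum_ia_{ij}\leq\sum_i\max_ja_{ij}$, and finally $(\sum_ic_i)^+\leq\sum_ic_i^+$, one gets
$$\log^+\max_j\|\wedge^jd_yf^{mp}\|\leq\sum_{i=0}^{m-1}\log^+\max_j\|\wedge^jd_{f^{ip}y}f^p\|.$$
Summing with $y=f^\ell x$ over $\ell\in\{0,\ldots,n-1\}$ and noting that each orbit index $s\in\{0,\ldots,n+(m-1)p-1\}$ is counted at most $m$ times gives $B_n(mp)\leq mB_{n+(m-1)p}(p)\leq mB_n(p)+m(m-1)kp^2R$. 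Dividing by $nmp$ and taking $\limsup_n$ yields the desired inequality.

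The second ingredient is continuity in $p$: for any $p_0$ and $p=p_0q+r$ with $0\leq r<p_0$, I compare $\lambda_{k,p}(x)$ to $\lambda_{k,p_0q}(x)$. Writing $f^p=f^r\circ f^{p_0q}$ and using the uniform estimate $\log\|\wedge^jd_\cdot f^r\|\leq jrR\leq kp_0R$, the same $\log^+$-$\max$ argument gives $B_n(p)\leq B_n(p_0q)+nkp_0R$. Dividing by $np$, using $p_0q\leq p$, and taking $\limsup_n$ yields
$$\lambda_{k,p}(x)\geq\lambda_{k,p_0q}(x)-\frac{kp_0R}{p}\geq\lambda_{k,p_0}(x)-\frac{kp_0R}{p},$$
where the last inequality is the monotonicity from the first step. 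Fixing $p_0$ and letting $p\to\infty$ gives $\liminf_p\lambda_{k,p}(x)\geq\lambda_{k,p_0}(x)$; taking the supremum over $p_0$ completes the proof.

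The delicate point is the first step, where the interaction between $\log^+$, $\max_j$, and the submultiplicative cocycle bound must be handled carefully so that the reindexing cost (the discrepancy between $B_{n+(m-1)p}(p)$ and $B_n(p)$ coming from boundary terms) contributes only a constant $O_{m,p}(1)$, negligible after normalization by $\frac{1}{n}$ and taking $\limsup_n$. The second step is then routine, since compactness of $\mathbf M$ uniformly controls the constant $R$.
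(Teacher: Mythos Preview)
Your proof is correct and follows essentially the same approach as the paper's: both decompose $f^p$ into blocks of length $q_0$ (or $p_0$) plus a remainder, use submultiplicativity of exterior norms to bound $B_n(p)$ in terms of $B_n(q_0)$, and observe that the error terms vanish after normalizing by $n$ and sending $p\to\infty$. Your organization into two steps---first the clean monotonicity $\lambda_{k,mp}\geq\lambda_{k,p}$ along multiples, then the remainder estimate---is slightly more modular than the paper's single combined computation, but the underlying argument is the same.
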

\begin{proof} Fix $x\in \mathbf M$ and $1\leq k\leq d$. 
For all $n\in \mathbb N\setminus \{0\}$, we set $$\alpha_p(n,x):=\sum_{\ell=0}^{n-1}\log^+\max_{j\leq k-1}\|\wedge^{j}
	d_{f^\ell x} f^p\|.$$
We also let $C_f:=\log^+\max_{1\leq j\leq k-1}\sup_{y\in \mathbf M}\|\wedge^{j}
	d_{y} f\|$. Observe that 
   \begin{align}\label{limompun}\forall p,n\in \mathbb N\setminus\{0\} \, \forall t\in \mathbb N ,\   \alpha_p(n,f^tx)&\leq \alpha_p(n,x)+tp C_f.
   \end{align}

Fix $\epsilon>0$.  Take $q_0$ such that $\lambda_{k,q_0}(x)>\sup_q\lambda_{k,q}(x)-\epsilon/2$ and let $p_0=\lceil\frac{2}{\epsilon}q_0C_f\rceil$. Let us show that  $\lambda_{k,p}(x)\geq \lambda_{k,q_0}(x)-\epsilon/2> \sup_q\lambda_{k,q}(x)-\epsilon$ for any $p\geq p_0$. Observe that for $p=tq_0+r$, $0\leq r<q_0$, we have 
\begin{align*}
\alpha_p(n,x)\leq& \sum_{\ell=0}^{n-1}\Big(\log^+\max_{1\leq j\leq k-1}\|\wedge^{j}
	d_{f^{tq_0+\ell}x} f^{r}\|\\
    &+ \sum_{0\leq t'<t}\log^+\max_{1\leq j\leq k-1}\|\wedge^{j}
	d_{f^{t'q_0+\ell}x} f^{q_0}\|\Big)\\
    \leq& n rC_f+\sum_{0\leq t'<t}\alpha_{q_0}(n,f^{t'q_0}x)\\
    \leq& n q_0C_f+t\alpha_{q_0}(n,x)+tq_0C_f \text{  by using (\ref{limompun})}\\
    \leq& n p\frac{\epsilon}{2}+t\alpha_{q_0}(n,x)+tq_0C_f \text{ according to the choice of $p_0(\leq p)$}.
\end{align*}

Therefore,
\begin{align}\label{forSection9}
    \lambda_{k,p}(x) & =\limsup_n\frac{1}{n}\left(\log \|\wedge^kd_xf^n\|-\frac{1}{p}\alpha_p(n,x)\right)\nonumber\\
    &\geq \limsup_n\frac{1}{n}\left(\log \|\wedge^kd_xf^n\|-\frac{t}{p}\alpha_{q_0}(n,x)\right)-\frac{\epsilon}{2}\nonumber\\
    &\geq \limsup_n\frac{1}{n}\left(\log \|\wedge^kd_xf^n\|-\frac{1}{q_0}\alpha_{q_0}(n,x)\right)-\frac{\epsilon}{2}\nonumber\\
    &\geq \lambda_{k,q_0}(x)-\frac{\epsilon}{2}.
\end{align}

\end{proof}

\medskip
\noindent\textbf{Remark:}\text{ }
\begin{enumerate}
    \item Lemma \ref{limOnP} together with the remark following Definition \ref{defOfLmabdaPK} implies that for a typical point $x$ for an ergodic measure $\nu$, $\lambda_k(x)=\chi_k(\nu)+\sum\limits_{1\leq l\leq k-1}\min\{\chi_l(\nu),0\}$. In particular, when $\chi_k(x)=\chi_k(\nu)>0$, one gets $\lambda_k(x)=\chi_k(x)$.
    \item for a $C^1$ partially hyperbolic splitting of the form $E^\mathrm{u}\oplus E^\mathrm{cs}$ the exponent $\lambda_{k+1}(x)$ for $k=\mathrm{dim}(E^\mathrm{u})$ is  equal to the top Lyapunov  exponent of $df|_{E^\mathrm{cs}}$. Indeed, for an adapted metric, we have $\max\{\max_{l\leq k}\|\wedge d_xf^m\|,1\}=\|\wedge^kd_xf^m|_{E^\mathrm{u}}\|$ and $\| \wedge^{k+1}d_xf^m\|=\|\wedge^kd_xf^m|_{E^\mathrm{u}}\|\times \|d_xf^m|_{E^\mathrm{cs}}\|$ for all $m\in \mathbb N$, so that we get for any $p\in \mathbb N$:  
    \begin{align*}
    &\lambda_{k+1,p}(x)=\\
    &=\limsup_n\frac{1}{n}\left(\log \|\wedge^{k+1}d_xf^n\|-\frac{1}{p}\sum_{\ell=0}^{n-1}\log^+\max_{1\leq j\leq k}\|\wedge^{j}
	d_{f^\ell x} f^p\|\right)\\
    &=\limsup_n\frac{1}{n}\left(\log \|\wedge^{k+1}d_xf^n\|-\frac{1}{p}\sum_{\ell=0}^{n-1}\log \|\wedge^{k}
	d_{f^\ell x} f^p|_{E^\mathrm{u}}\|\right)\\
    & =\limsup_n\frac{1}{n}\left(\log \|\wedge^{k+1}d_xf^n\|-\log \|\wedge^{k}
	d_{x} f^n|_{E^\mathrm{u}}\|\right)\\
    &=\limsup_n\frac{1}{n}\log \|d_xf^n|_{E^\mathrm{cs}}\|. 
    \end{align*}
\item Observe that $\lambda_d(x)\leq \limsup \frac{1}{n}\log \Jac_x(f^n)$. In particular $\Vol([\lambda_d>0])=0$.  
 If not, then there would exist a $\chi>0$ and sets $E_{n_j}$ s.t.   $\Vol(E_{n_j})\geq \frac{1}{n_j^2}$ with $E_{n_j}:=\{x:\Jac_x(f^{n_j})\geq e^{n_j\chi}\}$, $n_j\uparrow\infty$ (by the Borel-Cantelli lemma). Then $1\geq \Vol(f^{n_j}[E_{n_j}])\geq \frac{e^{\chi n_j}}{n_j^2}$, which is a contradiction.
\end{enumerate}

\subsection{A disk and a positive disk-volume subset}\label{theDiskAndSubset}

\begin{definition}\label{linCocyc} Fix $p\in\mathbb N$ and let $k\leq \dim \mathbf M$, we consider the linear cocycle $A_{k,p}$ on $\wedge^k T\mathbf M$ over $f$ defined as 

$$\forall x\in \mathbf{M}, \ v\in \wedge^k T_x\mathbf M, \ A_{k,p}(x,v):=\frac{\wedge^k d_xf (v)}{\max\{1, \max_{j\leq k-1}\|\wedge^{j}
	d_xf^p\|^\frac{1}{p}\}}.$$
\end{definition}

\medskip
\noindent\textbf{Remark:}\text{ }
\begin{enumerate}
    \item The top Lyapunov exponent of $A_{k,p}$ is equal to $\lambda_{k,p}$. 
    \item 
When $p$ divides $q$, we have $\chi_{A_{k,q}}(v)\geq \chi_{A_{k,p}}(v)$ for all $v\in\wedge^k T_x\mathbf M $.
\end{enumerate}

\medskip

\begin{definition}[$k$-Volume]\text{ }
\begin{enumerate}
    \item For a smooth embedded $k$-disk $D$, we denote by $\Vol_{D}$ the restriction of the Riemannian volume form to the tangent space of $D$ normalized to induce a probability measure.  
        \item For a subset $E$ of a smooth embedded $k$-disk $D$ we denote by $\Vol_k(E)$ the measure of $E$ w.r.t. the
$k$-volume of $D$ induced by the Riemannian structure of $\mathbf{M}$.\footnote{ The notation does not mention the dependence on the disk $D$, as two $C^{1}$ embedding  disks, intersect transversely on at most a set of zero measure with respect to the volumes on these disks.}
\end{enumerate}
\end{definition}

Let $\iota$ be the Pl\"ucker embedding of the Grassmanian $\mathrm{Grass}( T\mathbf M)$ to the projective space of the exterior bundle $\mathbb{P}\wedge^*T\mathbf M $.

\begin{lemma}\label{theDiskD}
	Write $E:=[\lambda_k>a]$ with $a>0$, then if $\Vol(E)>0$, then $\exists \chi>a$, $p_0\in\mathbb N$, and a $k$-disk $D$ s.t.   $\forall p\in p_0\mathbb{N}$, $$\Vol_D([\chi_{A_{k,p}}(x,\iota(T_xD))> \chi])>0.$$
\end{lemma}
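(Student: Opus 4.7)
The plan is to first identify a good modification parameter $p_0$ and a constant $\chi>a$ via pointwise convergence of $\lambda_{k,p}\to \lambda_k$, then to show via a concentration-on-the-Grassmannian argument that at each expanding point the cocycle $A_{k,p}$ achieves its top exponent along a full-measure set of directions, and finally to extract the desired disk $D$ by two applications of Fubini. For the first step, since $\lim_p \lambda_{k,p}(x)=\lambda_k(x)$ by Lemma~\ref{limOnP}, one has $E=\bigcup_{p_0\in\mathbb N,\ \delta\in\mathbb Q_{>0}}\{x\in E:\lambda_{k,p}(x)>a+\delta\ \forall p\geq p_0\}$; $\sigma$-additivity combined with $\Vol(E)>0$ yields $p_0$ and $\chi:=a+\delta>a$ such that the set $E':=\{x:\lambda_{k,p}(x)>\chi\ \forall p\geq p_0\}$ has positive volume, and hence in particular $\lambda_{k,p}>\chi$ on $E'$ for every $p\in p_0\mathbb N$.

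For the second step, for fixed $x\in E'$ and $p\in p_0\mathbb N$ I would prove that $\chi_{A_{k,p}}(x,\iota(V))=\lambda_{k,p}(x)>\chi$ for $\sigma_x$-a.e.\ $V\in \mathrm{Grass}_k(T_x\mathbf M)$, where $\sigma_x$ is the $O(d)$-invariant probability on the fiber. The crucial input is the concentration estimate
\[
\sigma_x\bigl(\{V:\|\wedge^k d_xf^n(\iota V)\|<\epsilon\|\wedge^k d_xf^n\|\}\bigr)\leq C\epsilon^\alpha,
\]
valid uniformly in $n$ for constants $C,\alpha>0$ depending only on $d,k$; this follows by Cauchy--Binet, giving $\|\wedge^k M(\iota V)\|^2\geq |\det P_IV|^2\cdot \|\wedge^k M\|^2$ for $I$ the top-$k$ singular indices of $M$, combined with the classical fact that the Pl\"ucker coordinate $|\det P_IV|^2$ under $\sigma_x$ has a density with polynomial vanishing at $0$. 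Taking $\epsilon=e^{-\eta n}$ and summing over $n$, Borel--Cantelli produces, for every $\eta>0$, a $\sigma_x$-full-measure set of $V$ with $\chi_{A_{k,p}}(x,\iota V)\geq \lambda_{k,p}(x)-\eta$; sending $\eta\downarrow 0$ along rationals and intersecting over the countable family $p\in p_0\mathbb N$ preserves full $\sigma_x$-measure.

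For the third step, pick a chart $\phi:U\to \mathbb R^d$ with $\Vol(E'\cap U)>0$ and trivialize $\mathrm{Grass}_k(TU)\cong U\times \mathrm{Grass}_k(\mathbb R^d)$ via $d\phi$; the pulled-back fiber measures are uniformly comparable to a fixed $O(d)$-invariant probability $\sigma_0$. The set
\[
\widetilde E:=\bigl\{(x,W)\in(E'\cap U)\times\mathrm{Grass}_k(\mathbb R^d):\chi_{A_{k,p}}(x,\iota(d\phi_x^{-1}W))>\chi\ \forall p\in p_0\mathbb N\bigr\}
\]
has positive $(\Vol\otimes\sigma_0)$-measure by Step~2, so some fixed $V_0\in\mathrm{Grass}_k(\mathbb R^d)$ satisfies $\Vol(\{x\in U:(x,V_0)\in\widetilde E\})>0$. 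Foliating $\phi(U)$ by affine translates $V_0+c$, $c\in V_0^\perp$, produces smooth $k$-disks $D_c:=\phi^{-1}(\phi(U)\cap(V_0+c))$ with $T_xD_c\equiv d\phi_x^{-1}(V_0)$; a second Fubini along $\phi(U)\subseteq V_0\times V_0^\perp$ selects some $c$ with $\Vol_{D_c}(\{x\in D_c:\chi_{A_{k,p}}(x,\iota(T_xD_c))>\chi\ \forall p\in p_0\mathbb N\})>0$, and $D:=D_c$ is the required disk.

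The main obstacle is the concentration estimate in the second step; although classical in random matrix theory, it requires explicit control on the density near zero of $|\det P_IV|^2$ under the invariant Grassmannian measure. The cleanest route represents $V$ as the column span of a standard Gaussian $d\times k$ matrix, identifying $|\det P_IV|^2$ with a ratio of Gaussian determinants whose density vanishes polynomially at $0$ via a Jacobi/Wishart-type computation, from which the summable tail bound needed for Borel--Cantelli follows.
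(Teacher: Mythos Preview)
Your argument is correct, but it takes a genuinely different route from the paper's. The paper works directly with the Lyapunov flag: it fixes $p_0$ with $\Vol(E')>0$ for $E'=[\lambda_{k,p_0}>\chi]$, applies Lusin to make $x\mapsto F_{s-1}(x)$ continuous on some $E''\subset E'$ of positive volume, localizes near a density point of $E''$ so that $F_{s-1}$ is nearly constant, and then foliates by affine $k$-planes $H$ with $\iota(H)\notin\{F_{s-1}(y):y\in E''\cap U\}$; a single Fubini then produces the disk $D$. The case of general $p\in p_0\mathbb N$ follows from the monotonicity $\chi_{A_{k,q}}\geq \chi_{A_{k,p}}$ for $p\mid q$ (the remark preceding the lemma), so only the single $p_0$ needs to be treated.

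Your approach trades Lusin and the density-point localization for a probabilistic statement on the Grassmannian and a double Fubini. This is perfectly valid, but your Step~2 is heavier than necessary: once one knows that the bad vectors at $x$ form the proper subspace $F_{s-1}(x)\subsetneq\wedge^k T_x\mathbf M$, the set $\{V:\iota(V)\in F_{s-1}(x)\}$ is a proper algebraic subvariety of $\mathrm{Grass}_k(T_x\mathbf M)$ (the Pl\"ucker image spans $\wedge^k$, since each $e_{i_1}\wedge\cdots\wedge e_{i_k}$ lies in it), hence has $\sigma_x$-measure zero. This yields immediately that $\chi_{A_{k,p}}(x,\iota V)=\lambda_{k,p}(x)$ for $\sigma_x$-a.e.\ $V$, with no Borel--Cantelli or singular-value concentration needed. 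So what you flag as the ``main obstacle'' is in fact avoidable; your quantitative estimate is correct (it follows e.g.\ from the Jacobi-ensemble density or, more softly, from the Łojasiewicz inequality applied to the polynomial $V\mapsto \det P_IV$), but it is overkill here. The paper's route is shorter; yours has the mild advantage of not invoking Lusin and of being somewhat more robust, since it never needs $F_{s-1}$ to vary continuously.
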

\begin{proof}
	Fix $\chi>a$ and $p_0\in\mathbb N$ s.t.   $\Vol(E')>0$ where we set  $$E':=\left\{x\in E: \lambda_{k,p_0}(x)> \chi\right\}.$$ 
    Let $\{0\}=F_0\subset F_1\subset \ldots\subset F_s=\wedge^k T_\cdot \mathbf M$, $s\geq 1$ be the Lyapunov flag associated to $A_{k,p_0}$. By definition the  Lyapunov space $F_{s-1}(x)$ is   given by  $F_{s-1}(x)=\{v\in \wedge^k T_xM: \chi_{A_{k,p_0}}(x,v)< \lambda_{k,p_0}(x):=\sup_{w\in \wedge^kT_M}\chi_{A_{k,p_0}}(x,w)\}$. The  map $x\mapsto F_{s-1}(x)$ is measurable (for the Borel algebra associated to the usual topology on Grassmanian), and so it is continuous on a subset $E''\subseteq E'$ with $\Vol(E'')>0$. We may choose a Lebesgue density point  $x$ of $E''$. Let $U$ be a small neighborhood of $x$, such that  $F_{s-1}(y)$ is almost constant for $y\in U\cap E''$. 

We may disintegrate the volume by a family of affine $k$-planes $H$ (in some local chart at $x$ which can be assumed containing $U$)  satisfying   $\iota(H)\notin \{F_{s-1}(y), \ y\in U\cap E''\}$. By Fubini's theorem, there is a $k$-disk disk $D$ contained in one of these planes with $\Vol_D(E'')>0$. This completes the proof as  we have  for all $x\in D\cap E''$, $$\chi_{A_{k,p_0}}(x,\iota(T_xD))> \chi.$$ 
\end{proof}

\subsection{Sequence of finite-time expanding subsets  with a sub-exponential volume}

\begin{lemma}\label{preBC}
Let $D$ and $p_0$ be given by Lemma \ref{theDiskD}, and let $p\in p_0\mathbb N$. There exist $\chi'>\chi>a$, $\mathcal{N}\subseteq \mathbb N$ with $\#\mathcal{N}=\infty$ and subsets of $D$, $(B_n^p)_{n\in\mathcal{N}}$, s.t.   for all $n\in\mathcal{N}$,
\begin{enumerate}
	\item $\Vol_D(B_n^p)\geq \frac{2}{n^2}$,
	\item $\forall n\in\mathcal{N}, \forall x\in B_n^p$, $|A_{k,p}^{n}(x,\iota(T_xD))|\geq e^{\chi' n}$.
\end{enumerate}
\end{lemma}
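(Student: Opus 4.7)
The plan is to translate the asymptotic (i.e.\ $\limsup$) exponent lower bound provided by Lemma~\ref{theDiskD} into a family of finite-time expansion subsets of $D$ by invoking the contrapositive of the first Borel--Cantelli lemma, exploiting that the target sub-exponential bound $2/n^2$ is the term of a convergent series.

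First I would denote by $\chi^{\ast}>a$ the exponent produced by Lemma~\ref{theDiskD} and fix any $p\in p_0\mathbb{N}$; Lemma~\ref{theDiskD} then ensures that the set
$$E:=\bigl\{x\in D:\chi_{A_{k,p}}(x,\iota(T_xD))>\chi^{\ast}\bigr\}$$
has positive $\Vol_D$-measure. I would choose the two exponents demanded by the statement as $\chi':=\chi^{\ast}$ and any $\chi\in(a,\chi')$, and for each $n$ introduce the candidate set
$$B_n^p:=\bigl\{x\in E: |A_{k,p}^n(x,\iota(T_xD))|\geq e^{\chi'n}\bigr\}.$$
By the very definition of $\chi_{A_{k,p}}$ as a $\limsup$, every $x\in E$ satisfies $\frac{1}{n}\log|A_{k,p}^n(x,\iota(T_xD))|>\chi'$ for infinitely many $n$, so $E\subseteq\limsup_n B_n^p$ and hence $\Vol_D(\limsup_n B_n^p)\geq\Vol_D(E)>0$.

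The contrapositive of the first Borel--Cantelli lemma on the probability space $(D,\Vol_D)$ therefore forces $\sum_n\Vol_D(B_n^p)=+\infty$. Since $\sum_n 2/n^2<+\infty$, the set $\mathcal{N}:=\{n\in\mathbb{N}:\Vol_D(B_n^p)\geq 2/n^2\}$ must be infinite, for otherwise the tail of $\sum_n\Vol_D(B_n^p)$ would be dominated by a convergent series. The family $(B_n^p)_{n\in\mathcal{N}}$ with these choices of $\chi,\chi'$ then evidently satisfies both conclusions (1) and (2).

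I do not expect any serious obstacle here: this is essentially a bookkeeping lemma whose sole purpose is to convert the asymptotic expansion produced by Lemma~\ref{theDiskD} into a sequence of subsets with polynomially-decaying disk-volume on which a uniform finite-time expansion bound holds, as required by the later Yomdin-theoretic and measure-constructive arguments. The only mild subtlety is that one must keep the three inequalities $a<\chi<\chi'\leq \chi^{\ast}$ strict in the right places, which is why we take $\chi'=\chi^{\ast}$ (not a strictly smaller value) and exploit that $\chi_{A_{k,p}}>\chi^{\ast}$ is strict on $E$.
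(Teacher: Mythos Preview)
Your proposal is correct and follows essentially the same approach as the paper, which simply states that the lemma is a consequence of the Borel--Cantelli lemma applied to the positive-measure set produced in Lemma~\ref{theDiskD}. The only cosmetic difference is labeling: the paper keeps $\chi$ as the exponent from Lemma~\ref{theDiskD} and picks a slightly larger $\chi'$ on which a positive-measure subset still has $\chi_{A_{k,p}}>\chi'$, whereas you rename that exponent $\chi'$ and insert a fresh $\chi\in(a,\chi')$; both readings satisfy the existential statement.
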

\begin{proof}
	This is a consequence of the Borel-Cantelli lemma applied to the set $E''$ from the proof of Lemma \ref{theDiskD}.
\end{proof}

\section{The tree description of the dynamics of a disk}\label{treeSect}
In this section we aim to describe the dynamics of a disk using a tree associated with its Yomdin partitions. In \cite{BurguetViana} it was done for curves, and here we introduce an extension to high-dimensional disks (which may exhibit much more complicated geometry, lack of conformality, and non-trivial boundary).
We first generalize the notion of bounded geometry to higher-dimensional disks.

\subsection{Bounded geometry}\label{bddGeoSection}\text{ }

\medskip
\noindent\textbf{Notation:} By a $C^{r-1,1}$ map $F$, we mean a $C^{r-1}$ map $F$ with Lipschitz $C^{r-1}$ derivatives and we define the following semi-norm for $k\leq r$: $$\|d^{k}_\cdot F \|:=\max_{|\underline{\alpha}|= k-1}\mathrm{Lip}(|\partial^{\underline{\alpha}}_\cdot F|),$$ 
where $\mathrm{Lip}(\cdot)$ denotes the Lipschitz constant. We also denote $\|F\|_{r}=\max\limits_{1\leq k\leq r} \|d^k_\cdot F\|$.

\begin{definition}[Bounded couple]\label{bddCouple}
A couple of $C^{r-1,1}$ map $(\sigma,\theta)$, where $\sigma: [0,1]^k\to \mathbf M$ is a $C^{r}$ embedding  and $\theta:[0,1]^k\to[0,1]^k$ is a $C^{r-1,1}$ map, is called {\em a bounded couple} if $$\forall s=1,\cdots, r-1,\ \|d^{s}_\cdot\left(t\mapsto \wedge^k d_{\theta(t)}\sigma \right)\| \leq \frac{1}{10 k}\sup_t|\wedge^k d_{\theta(t)} \sigma|.$$
\end{definition}

\begin{lemma}\label{densityJuy} Let $(\sigma,\theta)$ be a bounded couple, then $$\forall t,t'\in [0,1]^k, \ |\wedge^k d_{\theta(t)}\sigma-\wedge^k d_{\theta(t')}\sigma|\leq \frac{1}{10}|\wedge^k d_{\theta(t)}\sigma|.$$
\end{lemma}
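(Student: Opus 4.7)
The plan is to unpack the $s=1$ case of the bounded couple condition applied to the vector-valued map $G(t):=\wedge^{k}d_{\theta(t)}\sigma$ and combine it with the compactness of $[0,1]^{k}$ to upgrade a supremum estimate to the pointwise comparison stated in the lemma. By Definition \ref{bddCouple} at $s=1$, the map $G$ has a Lipschitz-type control with constant at most $\tfrac{1}{10k}\sup_{t}|G(t)|$; once we fix a local chart in which a trivialization of $\wedge^{k}T\mathbf M$ identifies $G$ with a map into a Euclidean space, this yields an honest Lipschitz estimate on the vector-valued $G$ (the definition of the semi-norm $\|d^1_\cdot G\|=\mathrm{Lip}(|G|)$ is designed precisely so that it governs the coordinate Lipschitz constants of $G$, but if needed the higher-$s$ bounded couple hypotheses provide the complementary control on directions of the partial derivatives).

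The argument then has two steps. First, I would fix a maximizer $t_{0}\in[0,1]^{k}$ of $|G|$ and set $M:=|G(t_{0})|=\sup_{t}|G(t)|$. Applying the Lipschitz bound between $t_{0}$ and an arbitrary point $t$, with $|t-t_{0}|$ at most the diameter of $[0,1]^{k}$, the reverse triangle inequality gives
\[
|G(t)| \ge M - \tfrac{\mathrm{diam}([0,1]^{k})}{10k}M,
\]
so that $M\le (1+O(1/k))\,|G(t)|$. Second, for any pair $t,t'\in[0,1]^{k}$ the Lipschitz bound directly yields $|G(t)-G(t')|\le \tfrac{\mathrm{diam}([0,1]^{k})}{10k}M$, and substituting the first step produces a bound of the form $|G(t)-G(t')|\le \tfrac{1}{10}|G(t)|$. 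The factor $\tfrac{1}{10k}$ in the bounded couple definition is calibrated exactly to absorb the diameter of $[0,1]^{k}$ (which is $\le\sqrt{k}$ in the Euclidean norm, or $1$ in the $\ell^{\infty}$ norm---the latter is what keeps the constants tight in low dimension) together with the $O(1/k)$ loss incurred in passing from $M$ to $|G(t)|$.

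The main---and really only---subtlety I anticipate is bookkeeping the normalization so that the final constant comes out to exactly $\tfrac{1}{10}$ uniformly in $k$. Conceptually, however, the lemma is a soft statement: once the bounded couple definition provides Lipschitz control of $G$ proportional to its own supremum with a small constant, the function $G$ must oscillate by at most that small constant (up to domain-diameter factors), so the supremum and any pointwise value of $|G|$ agree up to a factor close to $1$, and the vector increments are uniformly dominated by $\tfrac{1}{10}|G(t)|$.
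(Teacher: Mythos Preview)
Your approach is essentially identical to the paper's: fix a maximizer $t_*$ of $|G|$ with $G(t)=\wedge^k d_{\theta(t)}\sigma$, use the $s=1$ bounded-couple hypothesis as a Lipschitz bound $|G(t)-G(t')|\le |t-t'|\cdot\tfrac{1}{10k}\,|G(t_*)|$, set $t'=t_*$ to get $|G(t)|\ge(1-\tfrac{1}{10\sqrt{k}})|G(t_*)|$, and bootstrap. One clarification on your hedge about the semi-norm: the paper does treat $\|d^1_\cdot G\|$ as bounding $\sup_\zeta\|d_\zeta G\|$ (the full vector-valued derivative), not merely $\mathrm{Lip}(|G|)$, so the mean-value step $|G(t)-G(t')|\le |t-t'|\cdot\tfrac{1}{10k}M$ is immediate and no appeal to the higher-$s$ hypotheses is needed.
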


\begin{proof}
    Let $t_*$ be such that $|\wedge^k d_{\theta(t_*)} \sigma|=\sup_{s'}|\wedge^k d_{\theta(s')} \sigma|$. Then for any $t,t'\in [0,1]^k$,
\begin{align}\label{bbdDistort1.5July5}\Big||\wedge^k d_{\theta(t')} \sigma|-|\wedge^k d_{\theta(t)} \sigma|\Big|\leq&
\Big|\wedge^k d_{\theta(t)} \sigma-\wedge^k d_{\theta(t')} \sigma\Big|\nonumber\\
\leq& |t-t'|\cdot\sup_\zeta\|d_\zeta\left(\cdot\mapsto \wedge^k d_{\theta(\cdot)}\sigma \right)\|\nonumber\\
\leq& |t-t'|\cdot \frac{1}{10k} |\wedge^k d_{\theta(t_*)} \sigma|\nonumber\\
\leq &\sqrt{k}\cdot \frac{1}{10k} |\wedge^k d_{\theta(t_*)} \sigma|.
\end{align}
Then, by letting $t'=t_*$, we obtain 
\begin{equation}\label{bbdDistort1.5July25}
\left(1-\frac{1}{10\sqrt{k}}\right)|\wedge^k d_{\theta(t_*)} \sigma|
\leq |\wedge^k d_{\theta(t)} \sigma|
\leq |\wedge^k d_{\theta(t_*)} \sigma|.
\end{equation}
Therefore, by bootstrapping \eqref{bbdDistort1.5July25} and plugging it back into \eqref{bbdDistort1.5July5}, we get that for every $t,t'\in [0,1]^k$,
\begin{equation}\label{bbdDistort1.5July35}
    \Big|\wedge^k d_{\theta(t)} \sigma-\wedge^k d_{\theta(t')} \sigma\Big|\leq|t-t'|\cdot  |\wedge^k d_{\theta(t)} \sigma|\cdot \frac{1}{\sqrt{k}(10\sqrt{k}-1)}.
\end{equation}
\end{proof}

We may control the oscillation of the tangent space of a bounded couple as follows. We refer to Appendix \ref{anglee} for the definition of the angle $\angle H,H'$ between two  vector spaces $H$ and $H'$.  
\begin{lemma}\label{angle}
Let $(\sigma, \theta)$ be a bounded couple. Then for any $x,y\in \mathrm{Im}(\theta)$ we have $$\angle T_{x}\sigma, T_y\sigma<\pi/6.$$ 
\end{lemma}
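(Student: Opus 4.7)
The plan is straightforward: convert the multiplicative closeness of the unnormalized Plücker $k$-vectors $\wedge^k d_{\theta(t)}\sigma$, already supplied by Lemma \ref{densityJuy}, into a closeness of their normalizations, and then read off the Grassmannian angle bound via the Plücker--Grassmannian dictionary recalled in Appendix \ref{anglee}.

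First, pick $t, t' \in [0,1]^k$ with $\theta(t) = x$ and $\theta(t') = y$, and set $v(s) := \wedge^k d_{\theta(s)}\sigma$ and $\omega(s) := v(s)/|v(s)|$; this is well-defined since $\sigma$ is an embedding. The unit decomposable $k$-vector $\omega(s)$ is the Plücker representative of the tangent $k$-plane $T_{\theta(s)}\sigma$ with its $\sigma$-induced orientation, and this orientation varies continuously in $s$ over the connected domain $[0,1]^k$. Hence the angle between $T_x\sigma$ and $T_y\sigma$ is controlled by $|\omega(t) - \omega(t')|$, and this is what I would estimate.

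Second, by Lemma \ref{densityJuy} we have $|v(t) - v(t')| \leq \tfrac{1}{10}|v(t)|$, and combining this with the elementary normalization estimate $|u/|u| - w/|w|| \leq 2|u - w|/|u|$ (which follows from writing $u/|u| - w/|w| = (u-w)/|u| + w(|w|-|u|)/(|u||w|)$ and $||u|-|w||\leq |u-w|$) yields $|\omega(t) - \omega(t')| \leq 1/5$. Third, I would invoke the Appendix's dictionary: for two oriented $k$-planes with unit decomposable representatives $\omega, \omega'$ and principal angles $\theta_1, \ldots, \theta_k$, one has $\langle \omega, \omega'\rangle = \prod_i \cos\theta_i$, so
$$|\omega - \omega'|^2 = 2 - 2\prod_{i=1}^k \cos\theta_i.$$
Our bound then forces $\prod_i \cos\theta_i \geq 49/50$, and since every factor is at most one, each $\cos\theta_i \geq 49/50 > \sqrt{3}/2 = \cos(\pi/6)$, giving $\theta_i < \pi/6$ for every $i$, and hence $\angle T_x\sigma, T_y\sigma < \pi/6$.

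The only delicate point is matching the Plücker--Grassmannian dictionary to the exact definition of $\angle$ chosen in Appendix \ref{anglee}; however the $1/(10k)$-Lipschitz constant in Definition \ref{bddCouple} provides comfortable slack for a $\pi/6$ bound under any standard convention (largest principal angle, operator norm of the difference of orthogonal projections, or gap distance), so I expect the argument to go through unchanged.
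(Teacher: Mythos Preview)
Your proof is correct and follows essentially the same route as the paper: apply Lemma \ref{densityJuy} to obtain $|\wedge^k d_{\theta(t)}\sigma-\wedge^k d_{\theta(t')}\sigma|\leq \frac{1}{10}|\wedge^k d_{\theta(t)}\sigma|$, then convert this into an angle bound via the Pl\"ucker--Grassmannian correspondence. The only difference is cosmetic: the paper packages the second step as Lemma \ref{angl} in the appendix (which works directly with the unnormalized $k$-vectors and bounds the Cayley distance $d_c$, then uses $\angle \leq d_c$), whereas you normalize first and bound each principal angle individually---the numerics are equivalent and both land comfortably below $\pi/6$.
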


\begin{proof}It follows directly from Lemma \ref{densityJuy} and  Lemma \ref{angl} in the appendix.
\end{proof}

\begin{lemma}[Bounded distortion property]\label{bounddis}
Let $(\sigma, \theta)$ be a bounded couple. Then for any $x,y\in \mathrm{Im}(\theta)$ we have $$\frac{|\wedge^k d_{x} \sigma|}{|\wedge^k d_{y} \sigma|}<\sqrt 2.$$ 
\end{lemma}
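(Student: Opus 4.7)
The statement follows almost immediately from Lemma \ref{densityJuy}, which already provides the key additive control of the Jacobian $k$-form along the image of $\theta$. My plan is simply to convert that additive bound into a multiplicative bound via the reverse triangle inequality.

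Concretely, I would write $x=\theta(t)$ and $y=\theta(t')$ for some $t,t'\in[0,1]^k$ (possible since $x,y\in\mathrm{Im}(\theta)$). Applying Lemma \ref{densityJuy} gives
\[
\bigl||\wedge^k d_{x}\sigma|-|\wedge^k d_{y}\sigma|\bigr|\;\leq\;\bigl|\wedge^k d_{x}\sigma-\wedge^k d_{y}\sigma\bigr|\;\leq\;\tfrac{1}{10}|\wedge^k d_{x}\sigma|,
\]
so that $|\wedge^k d_{y}\sigma|\geq \tfrac{9}{10}|\wedge^k d_{x}\sigma|$, hence
\[
\frac{|\wedge^k d_{x}\sigma|}{|\wedge^k d_{y}\sigma|}\;\leq\;\frac{10}{9}\;<\;\sqrt{2}.
\]

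There is no real obstacle here: the bounded couple hypothesis was already exploited in Lemma \ref{densityJuy} to pass from a bound on the derivative of $t\mapsto \wedge^k d_{\theta(t)}\sigma$ to a bound on its oscillation; the present lemma merely repackages the oscillation estimate as a distortion estimate. The constant $\tfrac{1}{10k}$ in Definition \ref{bddCouple} was clearly calibrated so that, after the bootstrap carried out in the proof of Lemma \ref{densityJuy}, the resulting relative oscillation $\tfrac{1}{10}$ is comfortably smaller than $1-\tfrac{1}{\sqrt{2}}\approx 0.293$, which is exactly what is needed for the $\sqrt{2}$ bound. If one wanted a sharper constant, the same argument would yield $\tfrac{10}{9}$ as the optimal ratio obtainable from the hypotheses, but $\sqrt{2}$ is the form in which the estimate will be convenient in later sections.
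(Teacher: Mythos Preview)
Your proof is correct and takes essentially the same approach as the paper: convert the oscillation bound of Lemma \ref{densityJuy} into a ratio bound. The paper cites the intermediate inequality \eqref{bbdDistort1.5July25} (comparison with the maximizer $t_*$) to obtain $\frac{1}{1-1/(10\sqrt{k})}<\sqrt{2}$, while you use the final statement of Lemma \ref{densityJuy} directly to obtain $\tfrac{10}{9}<\sqrt{2}$; both are the same idea and both suffice.
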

\begin{proof}
By \eqref{bbdDistort1.5July25}, we have for all $t,t'\in [0,1]^k$ 
$$\frac{|\wedge^k d_{\theta(t)} \sigma|}{|\wedge^k d_{\theta(t')} \sigma|}\leq \frac{1}{1-\frac{1}{10\sqrt k}}<\sqrt 2.$$ 
\end{proof}

\begin{definition}[strongly $\epsilon$-bounded couple]\label{StrBddCpl}
Let $\epsilon
>0$. A bounded couple $(\sigma,\theta)$ is called a {\em strongly $\epsilon$-bounded couple} if $\|\sigma\circ \theta \|_{r}\leq \frac{\epsilon}{\sqrt k}$.
\end{definition}

\medskip

\begin{definition}[Admissible family]\label{admissibleFamily} Let $\epsilon>0$ and let $(\sigma,\theta')$ be a bounded couple.  A finite family $\Theta$ of maps $\left\{\theta:[0,1]^k\circlearrowleft\right\}_{\theta\in \Theta}$ is said to be a $(\sigma, \theta',\epsilon)$-admissible family  when
\begin{enumerate}
\item  $(\sigma, \theta)$ is a strongly $\epsilon$-bounded couple for any $\theta\in \Theta$,
\item  $\coprod^k_{\theta\in \Theta}\mathrm{Im}(\theta)\subset\mathrm{Im}(\theta') $, where $\coprod^k$ denotes a union of $k$-manifolds which is disjoint up to 
to their boundaries,
\item There is  $x\in \mathrm{Im}(\sigma\circ \theta')$ and a $1$-Lipschitz map $\psi:H\to H^\bot$ with $H\subset T_x\mathbf M$ being the tangent space  of $\sigma$ at $x$  such that $$\Gamma_\psi(\epsilon)\subset \sigma(\Theta):=\coprod\nolimits^k_{\theta\in \Theta} \mathrm{Im}(\sigma\circ \theta)\subset \Gamma_\psi(2\epsilon),$$
 where $\Gamma_\psi(\delta):=\exp_x\{(v,\psi(v))\in H\oplus H^{\bot}: \ | v|\leq \delta\}$ for $\delta>0$ with $\exp_x$ being the exponential map at $x$ (such a set $\Gamma_\psi(\delta)$ is called a $\delta$-graph). 
\end{enumerate}
\end{definition}

\medskip
\noindent\textbf{Remark:} There is a constant $C_d>1$ such that: 
\begin{enumerate}
\item For any strongly $\epsilon$-bounded couple, 
$$\Vol_k(\mathrm{Im}(\sigma\circ \theta))\leq C_d\epsilon^k.$$
\item For any $(\sigma, \theta',\epsilon)$-admissible family $\Theta$ of a bounded couple $(\sigma, \theta')$,  
$$C_d^{-1}\epsilon^k\leq\Vol_k(\sigma(\Theta))\leq  C_de^{k}\sharp \Theta.$$
\end{enumerate}

\subsection{The tree of the disk dynamics  and the Yomdin partitions}\label{YomdinPtns}

We consider a $C^r$ smooth  diffeomorphism $g:\mathbf M\circlearrowleft$ and a $C^r$ smooth  embedded disk $\sigma:[0,1]^k\rightarrow \mathbf M$ with $ \mathbb N\ni r>1$. 
We state a global reparametrization lemma to describe the dynamics on  the image of $\sigma$ by generalizing the case $k=1$ which was established in \cite{BurguetViana}. We will apply this lemma to $g=f^p$ for large $p$ with $f$ being the $C^r$ smooth system under study.\\

\medskip
\noindent\textbf{Notation:} 
\begin{enumerate}
\item We denote the image of $\sigma$ by $D$. 
 \item For any $n\in\mathbb N$ we let $\sigma_n$ (resp. $D_n$) be the $C^r$ embedded disk defined as 
$\sigma_n=g^n\circ \sigma$ (resp. $D_n=g^n[D]$). 
\item For two maps $\theta, \theta':[0,1]^k\circlearrowleft$ we write $\theta>\theta'$ when there is a map $\phi:[0,1]^k\circlearrowleft$ with $\|d_\cdot\phi\|<1/2$ and $\theta' =\theta\circ \phi$. 
\item We denote by $G$ (resp. $F$) the map induced by $g$ (resp. $f$) on the projective space $\mathbb P \wedge^k T\mathbf M$ of the $k$-th exterior tangent bundle.
\item For $\hat x=(x,w_x)\in \mathbb P\wedge^k T\mathbf M$ we also write $w_x$ for a representative with unit norm. Then we  let $\mathfrak l(\hat x)$, $\mathfrak l'(x)$ be the following integers:
 \begin{align*}
\mathfrak l ( \hat x):=&\lfloor\log |\wedge^kd_xg(w_x)|\rfloor,\\
\mathfrak l'(x):=&\lceil\log^+\max_{k'<k}\|\wedge^{k'}d_x g \|\rceil.     
 \end{align*}
 \item For $x\in D$, we let  $\widetilde x\in \mathbb P\wedge^k T_x\mathbf M$ denote the class of the   tangent space to $D$ at $x$, formally $\widetilde x=\iota (T_x D)$.
 \end{enumerate}

\subsubsection*{The tree description} We code the dynamics of $g$ on the image of $\sigma$ by a   directed rooted tree $\mathcal T$, with all edges pointing away from the root. Moreover, the nodes of our tree will be colored, either in blue or in red. The level of a node is the number of edges along the unique path between it and the root node. We let $\mathcal T_n$ (resp. $\underline{\mathcal T}_n$, $\overline{\mathcal T}_n$) be the set of nodes  (resp. blue, red nodes) of level $n$. For all $\ell\leq n-1$  and for all $\mathbf i^n\in \mathcal T_n$, we also let  $\mathcal T_\ell\ni \mathbf i^n_{\ell}$ be the node of level $\ell$ leading to $\mathbf i^{n}$. 
 We assign to each node $\mathbf i^n\in \mathcal T_n$ a family of maps 
 $\Theta_{\mathbf i^n}$ such that we have  for some constant $A_{r,d}$ depending  only on $r$ and $d$, which is specified afterwards:
 \begin{itemize}
 \item either $\mathbf i^n\in \overline{\mathcal T}_n$, then $\Theta_{\mathbf i^n}$ is a $(\sigma_n, \theta'_{\mathbf i^n},\epsilon)$-admissible family with $\# \Theta_{\mathbf i^n}<A_{r,d}$ where $\theta'_{\mathbf i^n}$ satisfies $\theta_{\mathbf i^{n-1}}>\theta'_{\mathbf i^n}>\theta_{\mathbf i^n}$  for some $\theta_{\mathbf i^{n-1}}\in \Theta_{\mathbf i^n_{n-1}}$ and for all $\theta_{\mathbf i^n}\in \Theta_{\mathbf i^n}$,
\item or $\mathbf i^n\in \underline{\mathcal T}_n$, then   $\Theta_{\mathbf i^n}=\{\theta_{\mathbf i^n} \}$ is a singleton with   $(\sigma_n, \theta_{\mathbf i^n}) $ being a strongly  $\epsilon$-bounded couple with $\theta_{\mathbf i^{n-1}}>\theta_{\mathbf i^n}$ for some $\theta_{\mathbf i^{n-1}}\in \Theta_{\mathbf i^n_{n-1}}$. In this case we put $\theta'_{\mathbf i^n}=\theta_{\mathbf i^n}$.

\end{itemize}

\subsubsection*{Choice of the scale $\epsilon$}
Recall that  $\exp_x$ denotes the exponential map at $x$ and let $R_\mathrm{inj}$ be the radius of injectivity of $(\mathbf M, \|\cdot\|)$. For $\frac{R_\mathrm{inj}}{2}>\epsilon>0$ we let $g^x_{2\epsilon}= g\circ  \exp_x(2\epsilon\cdot): \{w\in T_x\mathbf M, \ \|w\|\leq 1\}\rightarrow \mathbf M$. Then   $\|d^s g^x_{2\epsilon}\|_\infty \leq (2\epsilon)^{s} \sup_{\stackrel{w\in T_x\mathbf M,}{  |w|\leq 2\epsilon}}\|d^s_w (g\circ \exp_x)\|$.  In particular there is $\epsilon_0=\epsilon_0(g)<\frac{R_\mathrm{inj}}{2}$  depending only on $\mathbf M$ and $\|d^s_\cdot g\|_\infty$, $s=1, \ldots, r$, such that  $\|d^s_\cdot g_{2\epsilon}^x\|_\infty\leq 3\epsilon \|d_xg\|$
 and $\|d^{s'}_\cdot \wedge^kd g_{2\epsilon}^x\|_\infty\leq 3\epsilon^k \|\wedge^kd_xg\|$
 for all $s=1,\ldots,r$ (resp. $s'=1,\cdots, r-1$), all $x\in \mathbf M$ and all $\epsilon\leq \epsilon_0(g)$.  We may also choose $\epsilon_0(g)>0$ sufficiently small so that for any $j=1,\ldots,d$,
\begin{equation}\label{smallosc}\frac{\|\wedge^{j}d_xg\|}{\|\wedge^{j}d_yg\|}\leq 2
 \end{equation} whenever $x$ and $y$ are $\epsilon_0(g)$-close.

\begin{prop}\label{firstLemmaOfSection} Let $\epsilon_0(g)\geq \epsilon>0$  and let  $\sigma:[0,1]^k\rightarrow \mathbf{M}$ such that $(\sigma, \mathrm{Id}_{[0,1]^k})$ is a strongly $\epsilon$-bounded couple. Then there is a tree $\mathcal T$ as above 
such that we  have for some  other universal constant  $C_{r,d}$: 
\begin{enumerate}
\item $\Theta_{\mathbf i^0}=\left\{\theta_{\mathbf i^0}\right\}=\mathrm{Id}_{[0,1]^k}$
and for any $\mathbf i^{n-1}\in \mathcal T_{n-1}$ we have 
$$\sigma_n\left( \Theta_{\mathbf i^{n-1}}\right)=\coprod\nolimits^k_{\mathbf i^n\in \mathcal T_n, \ \mathbf i^n_{n-1}=\mathbf i^{n-1}}\sigma_n\left( \Theta_{\mathbf i^n}\right).$$ 
In particular
$$\coprod\nolimits^k_{\mathbf i^n\in \mathcal T_n} \sigma_n(\Theta_{\mathbf i^n})=\mathrm{Im}(\sigma_n).$$

 \item $\forall  \mathbf i^{n-1}\in \mathcal T_{n-1}$ and for all $(\mathfrak l_n,\mathfrak l'_n)\in \mathbb Z\times \mathbb N$  we have
\begin{align*}
    \# \Big\{ \mathbf i^n\in \overline{\mathcal T}_n: &\ \mathbf i^n_{n-1}= \mathbf i^{n-1} 
    \text{ and } \\
    & \exists x\in \sigma(\Theta_{\mathbf i^n}), \ s.t.    \ \mathfrak l(G^{n-1}\widetilde x) =\mathfrak l_n\Big\}\leq 2C_{d}^2e^{\mathfrak l_n}, \\
    \# \Big\{ \mathbf i^n\in \underline{\mathcal T}_n:& \  \mathbf i^n_{n-1}= \mathbf i^{n-1}
    \text{ and } \\
    &\exists x\in \sigma(\Theta_{\mathbf i^n}),  \ s.t.    \ \mathfrak l'(g^{n-1} x )=\mathfrak l'_n \Big\}\leq C_{r,d} M_g^{\frac{3k^2}{r-1}}e^{\mathfrak l'_n}.
\end{align*}
\end{enumerate} 
\end{prop}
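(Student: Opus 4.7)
The plan is to construct the tree $\mathcal T$ by induction on the level $n$. The case $n=0$ is given, so assume we have built $\mathcal T$ up to level $n-1$, and fix a node $\mathbf i^{n-1}\in \mathcal T_{n-1}$ together with a $\theta\in \Theta_{\mathbf i^{n-1}}$. Since $(\sigma_{n-1},\theta)$ is a strongly $\epsilon$-bounded couple by induction, the disk $\mathrm{Im}(\sigma_{n-1}\circ\theta)$ is, by Lemmas \ref{densityJuy}--\ref{bounddis}, essentially an $\epsilon$-graph over its tangent plane at some reference point $x$. Then I would study $\sigma_n\circ\theta=g\circ\sigma_{n-1}\circ\theta$ through the localized map $g^x_{2\epsilon}$, whose $C^r$-norm is controlled thanks to the choice $\epsilon\leq \epsilon_0(g)$. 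This sets the stage for separating a ``geometric'' part of the image that is already spread out at scale $\epsilon$ (red children) from a ``distorted'' part that must be subdivided (blue children).

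The red step is a covering argument. Let $\widetilde x\in \mathbb P\wedge^k T_x\mathbf M$ represent $T_{x}D_{n-1}$; by Lemma \ref{bounddis} the Jacobian $|\wedge^k d_\cdot g(\cdot)|$ is essentially constant over $\mathrm{Im}(\sigma_{n-1}\circ\theta)$, of size $e^{\mathfrak l(G^{n-1}\widetilde x)}$ up to a factor $\sqrt 2$. I would cover the portion of $\sigma_n\circ\theta([0,1]^k)$ that contains $\epsilon$-graphs (in the sense of Definition \ref{admissibleFamily}) by $(\sigma_n,\theta',\epsilon)$-admissible families. Each such family occupies $k$-volume of order $\epsilon^k$ by the remark following Definition \ref{admissibleFamily}, while the total pushed-forward $k$-volume is at most $C_d \epsilon^k e^{\mathfrak l(G^{n-1}\widetilde x)}$; hence the number of admissible families corresponding to a prescribed value $\mathfrak l_n=\mathfrak l(G^{n-1}\widetilde x)$ is bounded by $2C_d^2 e^{\mathfrak l_n}$, as claimed. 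Each admissible family has at most $A_{r,d}$ elements, which is absorbed in the local packing constant for $\epsilon$-graphs.

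The blue step is the Yomdin reparametrization of the complement. Where the image $\sigma_n\circ\theta([0,1]^k)$ fails to contain an $\epsilon$-graph, it must fit inside a small tube transverse to some direction where the differential is smaller; this is exactly the regime controlled by the lower exterior powers $\wedge^{k'}dg$ for $k'<k$. I would apply the high-dimensional $C^r$ Yomdin reparametrization of Section \ref{keyStepsOfProof}(2) (the refined step promised in the introduction) to subdivide this residual region into singletons $\{\theta_{\mathbf i^n}\}$ such that $(\sigma_n,\theta_{\mathbf i^n})$ is strongly $\epsilon$-bounded and $\theta_{\mathbf i^{n-1}}>\theta_{\mathbf i^n}$. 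Bounding the $C^{r-1,1}$-semi-norms of $\sigma_n\circ\theta_{\mathbf i^n}$ via the Taylor expansion of $g^x_{2\epsilon}$ inserts a factor $\|dg\|^{s/(r-1)}$ for each $s\leq r-1$ and each of the $k$ exterior-power directions that must be simultaneously controlled, which accumulates to $M_g^{3k^2/(r-1)}$. Once the upper bound on $\max_{k'<k}\|\wedge^{k'}d_\cdot g\|$ is fixed to $e^{\mathfrak l'_n}$, Yomdin's semi-algebraic counting gives the desired bound $C_{r,d} M_g^{3k^2/(r-1)}e^{\mathfrak l'_n}$.

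Finally, I would verify the tiling property (1): the red admissible families together with the blue singletons form a disjoint-up-to-boundary cover of $\mathrm{Im}(\theta)$, which, by iterating over $\theta\in\Theta_{\mathbf i^{n-1}}$, yields the claimed decomposition of $\sigma_n(\Theta_{\mathbf i^{n-1}})$. The hardest step I anticipate is the blue one: obtaining the sharp exponent $3k^2/(r-1)$ requires the refined high-dimensional Yomdin machinery rather than a naive iteration of the curve case, since one must track distortion in every intermediate exterior power while maintaining the partition property and the strongly $\epsilon$-bounded estimate, and simultaneously keep the red count clean at order $e^{\mathfrak l_n}$ rather than $e^{k\mathfrak l_n}$ — which is why the decomposition of the Lyapunov exponent into $\lambda_k$ and the lower-order correction enters so naturally at this combinatorial level.
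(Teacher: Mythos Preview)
Your inductive architecture and the red count are essentially the paper's: each red child carries an $\epsilon$-graph of $k$-volume $\geq C_d^{-1}\epsilon^k$, while the total pushed-forward volume of $\sigma_{n-1}\circ\theta$ is $\leq C_d\epsilon^k e^{\mathfrak l_n}$, so the number of red children with a given $\mathfrak l_n$ is $\leq 2C_d^2 e^{\mathfrak l_n}$. That part is fine.

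The gap is in your blue step. You describe the blue region as the locus where the image ``fails to contain an $\epsilon$-graph'' because it is \emph{thin}---collapsed in some transverse direction---and you propose to attack it directly with Yomdin theory. That is not the mechanism, and it is not clear your picture would ever produce the clean factor $e^{\mathfrak l'_n}$. In the paper (Lemma~\ref{mai}) the order is reversed: one \emph{first} applies the Algebraic Lemma to build intermediate reparametrizations $\phi$ so that $(g\circ\sigma,\phi)$ is already a bounded couple with $\|d^r(g\circ\sigma\circ\phi)\|\leq 1/4$; this preliminary step is where the entire factor $M_g^{3k^2/(r-1)}$ is spent (roughly $M_g^{2k^2/(r-1)}$ to tame the $(r-1)$-jet of $\wedge^k d(g\circ\sigma)$ and another $M_g^{k/r}$ for the $r$-jet of $g\circ\sigma$). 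Once $(g\circ\sigma,\phi)$ is bounded, Lemma~\ref{angle} and Lemma~\ref{graph} force the image to be \emph{everywhere} locally a $1$-Lipschitz graph over its tangent plane---there is no ``thin'' region. The only obstruction to assembling admissible families is proximity to the \emph{boundary} $\partial\mathrm{Im}(g\circ\sigma\circ\phi)$: one covers $\mathbb R^d$ by unit cubes, declares cubes within bounded combinatorial distance of the boundary to be blue, and the remaining interior cubes yield red admissible families. The blue count $e^{\mathfrak l'_n}$ then comes from covering the $(k-1)$-dimensional boundary image, via Lemma~\ref{cover}, by $\lesssim \max_{l<k}\|\wedge^l d_\cdot g\|$ unit cubes. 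So the factor $e^{\mathfrak l'_n}$ is a boundary-covering number, not a degeneracy count, and the Yomdin machinery is used \emph{before} the red/blue split, not on the blue residue.
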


This statement is a global version of Proposition 7 of \cite{Burguetphysical}. The proof is the content of \textsection \ref{prop37proof}.

\begin{definition}[Yomdin partition]\label{yomd} For $x\in D$ and $0\leq\ell\leq n$ we let 
$\varpi_\ell(x)$ be the unique set of the form $\mathrm{Im}(\sigma_\ell\circ \theta_{\mathbf i^\ell})$, $\theta_{\mathbf i^\ell}\in \Theta_{\mathbf i^\ell}$, containing $g^\ell x$.   When it is clear we also write $\varpi_\ell(x)$ for the  map $\sigma_\ell\circ \theta_{\mathbf i^\ell}$.
\end{definition}

\medskip
\noindent\textbf{Remarks:}\text{ }
\begin{enumerate}
\item The Yomdin partition is in fact only a partition up to a set of zero disk volume, as charts may intersect on their boundaries, a subset of co-dimension at least $1$. This is enough for us, as the only reference measure which we will use will be the disk volume. We will continue to treat it as a partition henceforth.
\item By applying Lemma \ref{bounddis} to the bounded couples $(\sigma_0, \theta_{\mathbf i^0})=(\sigma, \mathrm{Id}_{[0,1]^k})$ and $(\sigma_\ell, \theta_{\mathbf i^l})$ we get the following bounded distortion property for any $x\in D$ and any $\ell\in \mathbb N$:
\begin{equation}\label{bdist}\forall y,z\in g^{-\ell}[\varpi_\ell(x)]\subset D, \ \frac{|\wedge^k dg^\ell(\iota(T_yD) )|}{|\wedge^k dg^\ell(\iota(T_zD) )|}\leq 2.\end{equation}

\end{enumerate}

\begin{lemma}\label{YomdinPtnShrinksUni}
   For every $\ell\leq n$ and $x\in D$,
   $$\mathrm{diam}(g^{-(n-\ell)}[\varpi_{n}(x)])\leq \epsilon 2^{\ell-n}.$$
\end{lemma}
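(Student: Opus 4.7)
The plan is to trace $\varpi_n(x)$ back to level $\ell$ through the tree structure and combine the contraction factor built into the relation $\theta > \theta'$ with the strongly $\epsilon$-bounded property of the couple at level $\ell$.

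First, I would unpack the definition. Since $\sigma_n = g^n \circ \sigma$ and $g$ is a diffeomorphism, and $\varpi_n(x) = \mathrm{Im}(\sigma_n \circ \theta_{\mathbf{i}^n})$ for the unique $\mathbf{i}^n \in \mathcal{T}_n$ and $\theta_{\mathbf{i}^n} \in \Theta_{\mathbf{i}^n}$ associated to $x$, we have
$$g^{-(n-\ell)}[\varpi_n(x)] \;=\; \sigma_\ell \circ \theta_{\mathbf{i}^n}\bigl([0,1]^k\bigr),$$
where $\sigma_\ell = g^\ell \circ \sigma$. So the task reduces to bounding the diameter of this image.

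The key step is to control $\theta_{\mathbf{i}^n}$ relative to $\theta_{\mathbf{i}^\ell}$, where $\mathbf{i}^\ell := \mathbf{i}^n_\ell$ is the ancestor of $\mathbf{i}^n$ at level $\ell$. At each edge from level $j-1$ to $j$ in the path $\mathbf{i}^\ell \to \mathbf{i}^{\ell+1} \to \cdots \to \mathbf{i}^n$, the construction of the tree yields either $\theta_{\mathbf{i}^{j-1}} > \theta_{\mathbf{i}^j}$ (blue case) or $\theta_{\mathbf{i}^{j-1}} > \theta'_{\mathbf{i}^j} > \theta_{\mathbf{i}^j}$ (red case), for some $\theta_{\mathbf{i}^{j-1}} \in \Theta_{\mathbf{i}^{j-1}}$. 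Unrolling the definition of $>$, each step writes $\theta_{\mathbf{i}^j} = \theta_{\mathbf{i}^{j-1}} \circ \phi_j$ for some $\phi_j : [0,1]^k \to [0,1]^k$ with $\|d_\cdot \phi_j\| < 1/2$ (the red case even gives $<1/4$ via composition of two such contractions). Composing telescopically, I obtain $\Psi_{\ell,n}:[0,1]^k \to [0,1]^k$ with $\theta_{\mathbf{i}^n} = \theta_{\mathbf{i}^\ell} \circ \Psi_{\ell,n}$ and $\|d_\cdot \Psi_{\ell,n}\| \leq 2^{\ell-n}$.

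Finally, since $(\sigma_\ell, \theta_{\mathbf{i}^\ell})$ is a strongly $\epsilon$-bounded couple, Definition \ref{StrBddCpl} gives $\|\sigma_\ell \circ \theta_{\mathbf{i}^\ell}\|_r \leq \epsilon/\sqrt{k}$, and in particular $\sigma_\ell \circ \theta_{\mathbf{i}^\ell}$ is $(\epsilon/\sqrt{k})$-Lipschitz. Composing, $\sigma_\ell \circ \theta_{\mathbf{i}^n} = (\sigma_\ell \circ \theta_{\mathbf{i}^\ell}) \circ \Psi_{\ell,n}$ is Lipschitz with constant at most $(\epsilon/\sqrt{k}) \cdot 2^{\ell-n}$. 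Since $[0,1]^k$ has diameter $\sqrt{k}$, the image has diameter at most $\epsilon \cdot 2^{\ell-n}$, which is exactly the claim.

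I do not expect any serious obstacle here; the statement is essentially a bookkeeping consequence of how contractions in the Yomdin tree compose with the strongly $\epsilon$-bounded property. The only mild subtlety is handling the red and blue cases uniformly, but in both cases each tree step contributes a contraction factor of at least $1/2$, so the estimate goes through uniformly and the exponential decay in $n-\ell$ follows immediately.
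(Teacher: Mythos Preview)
Your proposal is correct and follows essentially the same approach as the paper: both arguments identify $g^{-(n-\ell)}[\varpi_n(x)]=\mathrm{Im}(\sigma_\ell\circ\theta_{\mathbf i^n})$, factor $\theta_{\mathbf i^n}=\theta_{\mathbf i^\ell}\circ\Psi$ with $\|d_\cdot\Psi\|\leq 2^{\ell-n}$ by composing the contractions along the tree path, and then combine with $\|d_\cdot(\sigma_\ell\circ\theta_{\mathbf i^\ell})\|\leq \epsilon/\sqrt{k}$ and $\mathrm{diam}([0,1]^k)=\sqrt{k}$. The paper phrases the chain via the intermediate maps $\theta'_{\mathbf i^j}$, but this is the same bookkeeping you carried out.
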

\begin{proof}
Let $\theta_{\mathbf i^{n}_\ell}\in \Theta_{{\mathbf i^{n}_\ell}} $ (resp. $\theta_{\mathbf i^{n}}\in \Theta_{\mathbf i^{n}})$  with $x\in \mathrm{Im}\left(\sigma \circ \theta_{\mathbf i^{n}_\ell} \right)$ (resp. $x\in \mathrm{Im}\left(\sigma \circ \theta_{\mathbf i^{n}} \right)$). We have  $\theta_{\mathbf i^n_\ell}>\theta'_{\mathbf i^n_{\ell +1}}>\cdots>\theta'_{\mathbf i^{n}}>\theta_{\mathbf i^{n}}$. Therefore there is $\phi:[0,1]^k\circlearrowleft$ with $\|d_\cdot \phi\|\leq 2^{\ell-n}$ such that $\theta_{\mathbf i^{n}}= \theta_{\mathbf i^{n}_\ell}\circ \phi$. Observe that $g^{-(n-\ell)}[\varpi_{n}(x)]=\mathrm{Im}(\sigma_{\ell}\circ \theta_{\mathbf{i}^{n}})$.  In particular 
\begin{align*}
\mathrm{diam}(g^{-(n-\ell)}[\varpi_{n}(x)])& \leq \sqrt k\|d_\cdot(\sigma_{\ell}\circ \theta_{\mathbf{i}^{n}})\|\\
&\leq \sqrt k \|d_\cdot(\sigma_{\ell}\circ \theta_{\mathbf{i}^{\ell}})\|\cdot \|d_\cdot\phi\|\\
&
\leq \epsilon   2^{\ell-n}.
\end{align*}

\end{proof}

\section{Proof of Proposition \ref{firstLemmaOfSection}}\label{prop37proof}

Proposition  \ref{firstLemmaOfSection} follows by induction on $n$ from the following lemma.

\begin{lemma}\label{mai}Let $(\sigma, \theta)$ be a strongly $\epsilon$-bounded couple. Then there are maps $\phi:[0,1]^k\circlearrowleft$, disjoint families $\overline{\Theta}_{\phi}$  and $\underline{\Theta}_\phi$  (maybe empty) of maps $\widetilde\theta:[0,1]^k\circlearrowleft$  such that 
\begin{enumerate}
\item $(g\circ \sigma, \phi)$ is a bounded couple for any $\phi$,
\item $(g\circ \sigma, \phi\circ \widetilde\theta)$ is a strongly $\epsilon$-bounded couple for any $\phi$ and any $\widetilde\theta\in \overline{\Theta}_{\phi}\cup\underline{\Theta}_\phi $,
\item $\widetilde \theta<\phi<\theta$ for any $\phi$ and any $\widetilde\theta\in \overline{\Theta}_{\phi}\cup\underline{\Theta}_\phi $, 
\item $\overline{\Theta}_{\phi}$ is a disjoint union of  $(g\circ \sigma, \phi,\epsilon)$-admissible families, each with cardinality less than $ A_{r,d}$,
\item $\coprod^k_{\phi, \  \theta'\in \overline{\Theta}_{\phi}\cup  \underline{\Theta}_\phi } \mathrm{Im}(\phi\circ \widetilde\theta)=\mathrm{Im}(\theta)$,
\item $\# \left(\bigcup_{\phi}\underline{\Theta}_\phi\right)\leq B_{r,d}M_g^{\frac{3k^2}{r-1}} \max_{0\leq l<k}\|\wedge^ld_\cdot  g\|_{\mathrm{Im}(\sigma\circ \theta)}$, for some universal constant $B_{r,d}>0$.
\end{enumerate}
\end{lemma}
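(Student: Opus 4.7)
The plan is as follows. The lemma is one step of the high-dimensional Yomdin reparametrization: starting from a strongly $\epsilon$-bounded couple $(\sigma,\theta)$, I must produce a two-level refinement of $\mathrm{Im}(\theta)$, with intermediate maps $\phi<\theta$ making $(g\circ\sigma,\phi)$ bounded couples and terminal families $\overline\Theta_\phi\cup\underline\Theta_\phi$ of further reparametrizations $\widetilde\theta<\phi$ making $(g\circ\sigma,\phi\circ\widetilde\theta)$ strongly $\epsilon$-bounded. The overall strategy has two sub-levels. First, I use the uniform smallness of $\sigma\circ\theta$ together with the rescaled estimates $\|d^s_\cdot g^x_{2\epsilon}\|_\infty\leq 3\epsilon\|d_xg\|$ and $\|d^{s'}_\cdot\wedge^k dg^x_{2\epsilon}\|_\infty\leq 3\epsilon\|\wedge^k d_xg\|$ from \textsection \ref{YomdinPtns} to reduce everything to a normalized $C^r$-problem in the exponential chart. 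Second, I case-split on the size of $|\wedge^k d_\cdot(g\circ\sigma)|$ on $\mathrm{Im}(\theta)$: where it is small, the chart survives as a blue singleton in $\underline\Theta_\phi$; where it is large, the image (a graph with tangent oscillation $<\pi/6$ by Lemma \ref{angle}) must be tiled by $\epsilon$-graphs forming a red admissible family $\overline\Theta_\phi$ in the sense of Definition \ref{admissibleFamily}.

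To build the intermediate $\phi$'s I apply the semi-algebraic $C^r$ Yomdin reparametrization theorem to the normalized Jacobian field $J(t):=\wedge^k d_{\theta(t)}(g\circ\sigma)/\sup_s|\wedge^k d_{\theta(s)}(g\circ\sigma)|$, viewed in coordinates on the Plücker-embedded Grassmannian. The resulting $\frac{1}{2}$-contractions $\psi_\alpha:[0,1]^k\circlearrowleft$, composed as $\phi_\alpha=\theta\circ\psi_\alpha$, make each $(g\circ\sigma,\phi_\alpha)$ a bounded couple in the sense of Definition \ref{bddCouple} since Yomdin's output ensures $\|d^s(J\circ\psi_\alpha)\|\leq \frac{1}{10k}$ for $1\leq s\leq r-1$. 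The number of $\psi_\alpha$ is controlled by the $C^{r-1}$ norm of $J$ (before normalization), which by a Leibniz expansion of $\wedge^k d(g\circ\sigma)$ is dominated by the lower-order wedges $\|\wedge^{k'}dg\|$, $k'<k$; this yields the bound of item (6), with the exponent $3k^2/(r-1)$ arising from the standard Yomdin accounting applied in the $k$-dimensional semi-algebraic setting, lifted to the tangent-plane data in $\mathbb P\wedge^k T\mathbf M$.

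For each $\phi_\alpha$ the second sub-level is geometric: by Lemma \ref{angle}, $\mathrm{Im}(g\circ\sigma\circ\phi_\alpha)$ is a graph over some $k$-plane $H\subset T_x\mathbf M$ after centering at a point $x$ of the image. If the $H$-projection has diameter at most $2\epsilon$, I add one further $\frac{1}{2}$-contraction to enforce $\|g\circ\sigma\circ\phi_\alpha\circ \widetilde\theta\|_r\leq \epsilon/\sqrt{k}$ (using the already-controlled rescaled $C^r$ norm produced in sub-level one) and place the resulting singleton into $\underline\Theta_{\phi_\alpha}$. Otherwise, I tile the large graph by disjoint $\epsilon$-graphs as in Definition \ref{admissibleFamily}, pull each back via $g\circ\sigma\circ\phi_\alpha$ to a region of $[0,1]^k$, and collect them into admissible families each of cardinality bounded by a universal $A_{r,d}$, since only boundedly many $\epsilon$-graphs are required to tile a fixed $2\epsilon$-graph in dimension $d$. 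Bounded distortion (Lemma \ref{bounddis}) ensures that this tiling in image translates to a legitimate disjoint decomposition of preimage. Properties (1)--(5) are then direct from this construction, and property (6) is exactly the Yomdin bound on $\#\{\psi_\alpha\}$, since each blue node corresponds to at most one $\phi_\alpha$.

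The hardest part, I expect, will be the quantitative bound (6) with its precise scaling $M_g^{3k^2/(r-1)}\max_{l<k}\|\wedge^l dg\|_{\mathrm{Im}(\sigma\circ\theta)}$: this requires a refined Yomdin estimate that simultaneously tracks the smooth map $g\circ\sigma$ and its projective tangent-plane field, a high-dimensional refinement of the curve case of \cite{BurguetViana} and the cited Proposition 7 of \cite{Burguetphysical}. The routine geometric sub-level steps (the graph tiling in sub-level two and the verification of the bounded-couple inequality after reparametrization) are comparatively straightforward once the Yomdin covering of sub-level one is in hand; a secondary technical issue is harmonizing the precise normalization constant $\frac{1}{10k}$ appearing in Definition \ref{bddCouple} with the standard output of Yomdin's theorem, which affects the universal constant $B_{r,d}$ but not the stated exponent.
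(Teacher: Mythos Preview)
Your proposal has a genuine structural gap: you misidentify the blue/red dichotomy and consequently the source of the factor $\max_{l<k}\|\wedge^l d_\cdot g\|$ in item (6).

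In the paper's proof, the intermediate maps $\phi$ are built exactly along the lines you describe (level-set the normalized Jacobian via the Algebraic Lemma, then affine subdivision), and indeed $\#\{\phi\}\lesssim M_g^{3k^2/(r-1)}$ with \emph{no} lower-wedge factor. Your claim that ``the number of $\psi_\alpha$ is controlled by the $C^{r-1}$ norm of $J$, which by a Leibniz expansion is dominated by the lower-order wedges $\|\wedge^{k'}dg\|$, $k'<k$'' is incorrect: the paper's estimate is $\|d^{r-1}\Gamma\|\lesssim M_g^{2k}\inf_t\|\Gamma(t)\|$, coming from $\|\wedge^k dg\|\|dg^{-1}\|^k$, and involves no $k'<k$ wedges at all.

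The real blue charts are not the pieces with small image; they are the pieces whose image is near the \emph{boundary} $\partial\mathrm{Im}(g\circ\sigma\circ\phi)$. After Step~1 one covers the image by unit cubes and applies the Algebraic Lemma cube-by-cube; a cube $C$ is declared blue when $C^3$ meets the boundary, and the number of such cubes is bounded by the covering number of the $(k-1)$-dimensional boundary disk, which by Lemma~\ref{cover} is $\lesssim \max_{l<k}\|\wedge^l d_\cdot\widetilde\sigma\|\lesssim \max_{l<k}\|\wedge^l d_\cdot g\|_{\mathrm{Im}(\sigma\circ\phi)}$. This is where the lower-wedge factor actually comes from. Your scheme---``if the $H$-projection has diameter $\leq 2\epsilon$, make it blue; otherwise tile into $\epsilon$-graphs and make those red''---fails precisely at the boundary: when the image of $\phi_\alpha$ is large, the tiles abutting $\partial\mathrm{Im}(g\circ\sigma\circ\phi_\alpha)$ will \emph{not} contain a full $\epsilon$-graph $\Gamma_\psi(\epsilon)$ as Definition~\ref{admissibleFamily} requires, so they cannot be placed in any admissible family $\overline\Theta_\phi$. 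You provide no mechanism to absorb them, and no bound on their number. Once you reinstate the boundary criterion, the counting and the graph-tiling of the interior cubes (Steps~3--4 in the paper) go through essentially as you sketch.
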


\begin{proof}[Proof of Proposition \ref{firstLemmaOfSection} assuming Lemma \ref{mai}]
 The proof is by induction, where the case $n=0$ is trivial. Assume that the statement holds for the index $n$, and we prove it for $n+1$. Let $\mathbf i^n\in \mathcal T_n$ and let $\theta_{\mathbf i^n}\in \Theta_{\mathbf i^n}$. We apply Lemma \ref{mai} to the strongly $\epsilon$-bounded couple $(\sigma_n, \theta_{\mathbf i^n})$. Let $\phi$, $\overline{\Theta}_\phi$, $\underline\Theta_\phi$ be the corresponding reparametrizations. Then the red children of $\mathbf i^n$ (i.e., nodes $\mathbf i^{n+1}$ in $\overline{\mathcal T}_{n+1}$ with $\mathbf i^{n+1}_n=\mathbf i^n$) are given by the $(\sigma_{n+1}, \phi,\epsilon)$-admissible families in $\overline{\Theta}_\phi$ and, in this case, we set $\theta'_{\mathbf i^{n+1}}=\phi$. Then, to any $\theta'\in \underline{\Theta}_\phi$, $\phi$,  we associate a blue child $\mathbf i^{n+1}$ of $\mathbf i^n$ with $\Theta_{\mathbf i^{n+1}}=\{\theta'\}$. Note that $\mathrm{diam}(\sigma_n(\Theta_{\mathbf i^n}))<\epsilon$, in particular $e^{\mathfrak l'(g^nx)}\geq \|\wedge^ld_{g^nx} g\|\geq  \|\wedge^ld_\cdot g\|_{\mathrm{Im}(\sigma_n( \Theta_{\mathbf{i}^n})}/2$
for any $l\leq k$ and for any $x\in \sigma(\Theta_{\mathbf i^n})$. It follows from the last item of Lemma \ref{mai} that 
 \begin{align*}\# \Big\{ \mathbf i^{n+1}\in \underline{\mathcal T}_{n+1}:& \  \mathbf i^{n+1}_{n}= \mathbf i^{n}
    \text{ and } \\
    &\exists x\in \sigma(\Theta_{\mathbf i^{n+1}}),  \ s.t.    \ \mathfrak l'(g^{n} x )=\mathfrak l'_{n+1} \Big\}\\
    &\leq \sum_{\overset{\theta_{\mathbf i^{n}}\in \Theta_{\mathbf i^n}, }{\max_{l<k}\|\wedge^ld g\|_{\mathrm{Im}(\sigma_n\circ \theta_{\mathbf i^n})}\leq 2 e^{\mathfrak l'_{n+1}} }}\# \left(\bigcup_{\phi(\theta_{\mathbf i^{n}})}\underline{\Theta}_{\phi(\theta_{\mathbf i^{n}}) } \right)\\
    &\leq 2A_{r,d} B_{r,d} M_g^{\frac{3k^2}{r-1}}e^{\mathfrak l'_{n+1}}.
 \end{align*}
The upper bound on the number of red children $\mathbf i^{n+1}$ in Proposition \ref{firstLemmaOfSection} follows from the fact that the volume of $\sigma_{n+1}(\Theta_{\mathbf i^{n+1}})$ is bounded from below by $C_d^{-1}\epsilon^k$ as it contains an $\epsilon$-graph in this case (see the remark after Definition \ref{admissibleFamily}).

\end{proof}

The proof of Lemma \ref{mai} involves the following form of Yomdin-Gromov algebraic lemma. Recall $\|\phi\|_r=\max_{1\leq k\leq r}\|d^k_\cdot \phi\|$. 

\begin{lemma}[Algebraic Lemma]\label{alg}\cite{GromovLemma,buryom,wilkie,novikov} Let $P:[0,1]^k\to \mathbb R^d$ be a polynomial map with total degree less than or equal to $r$ and let $Y$ be  a bounded semi-algebraic set of $\mathbb{R}^d$. 
Then there is a constant $B_{r,d}$ depending only $r,d,\mathrm{deg}(Y)$, and $\mathrm{diam}(Y)$, and there are semi-algebraic analytic injective maps $\theta_i:(0,1)^{k_i}\to [0,1]^k$, $k_i\leq k$,  $i\in I$, such that 
\begin{enumerate}
\item $\# I\leq B_{r,d}$, 
\item $\|\theta_i\|_r, \|P\circ \theta_i\|_r\leq \frac{1}{100d}$, 
\item $\coprod_{i\in I}^k\mathrm{Im}(\theta_i)=P^{-1}[Y]$. 
\end{enumerate}
\end{lemma}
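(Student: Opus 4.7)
The statement is the classical Yomdin--Gromov algebraic lemma (in the variant with target a semi-algebraic set $Y$ of controlled complexity rather than just a ball). The plan is to proceed by a double induction: an outer induction on the dimension $k$ of the domain, and an inner induction on the ``complexity'' of the semi-algebraic set $P^{-1}[Y]$ (measured by the number and degrees of polynomial (in)equalities needed to define it). The base case $k=0$ is trivial, as $P^{-1}[Y]$ is then a finite set of points of cardinality bounded by the complexity.

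\textbf{Step 1 (cell decomposition).} Since $P$ has total degree $\leq r$ and $Y$ is semi-algebraic, the preimage $S := P^{-1}[Y] \subseteq [0,1]^k$ is semi-algebraic, with defining data bounded in terms of $r, d, \deg(Y)$ only. Applying the cylindrical semi-algebraic cell decomposition (Collins, \L ojasiewicz), we decompose $S$ into finitely many open cells $C_j$, each semi-algebraically analytically diffeomorphic to $(0,1)^{k_j}$ via a Nash parameterization $\widetilde\theta_j$ of complexity depending only on $r, d, \deg(Y)$. The number of cells is bounded by some $B_{r,d}^{(1)}$ depending only on $r, d, \deg(Y), \mathrm{diam}(Y)$.

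\textbf{Step 2 (derivative reduction).} Given a Nash parameterization $\widetilde\theta_j:(0,1)^{k_j}\to[0,1]^k$ of controlled complexity, we must subdivide and reparameterize to achieve $\|\widetilde\theta_j\|_r \leq \tfrac{1}{100d}$. Order the derivative orders $s=1,\dots,r$ lexicographically. For each coordinate component, apply the following \emph{key subdivision principle}: if a bounded-complexity Nash function on $(0,1)^{k_j}$ has $s$-th derivative of fixed sign on the cell, then by a Markov-type inequality (valid for algebraic functions of bounded degree) one can partition the cell into a controlled number of subcells on which, after affine rescaling, $|d^s(\cdot)| \leq \tfrac{1}{100d}$. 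The sign stratification is itself semi-algebraic of controlled complexity, so it fits into the inductive scheme. Carrying this out successively for $s=1,\dots,r$ (each step preserving the bound achieved at earlier steps, up to a constant factor absorbable by a further bounded subdivision) gives reparameterizations $\theta_i$ with $\|\theta_i\|_r \leq \tfrac{1}{100d}$, in total number bounded by $B_{r,d}^{(2)}$.

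\textbf{Step 3 (composition and closure).} To control $\|P\circ\theta_i\|_r$, note that $P\circ\theta_i$ is a Nash map whose complexity is bounded in terms of $r$, $d$, and $\deg\theta_i$. Two equivalent devices work here: either apply Fa\`a di Bruno to express $d^s(P\circ\theta_i)$ as a polynomial in $d^{\leq s}P$ and $d^{\leq s}\theta_i$ (with $\|d^{\leq s}P\|_\infty$ bounded uniformly on $[0,1]^k$, since $P$ is polynomial of degree $\leq r$) and absorb the resulting constants by a further bounded subdivision; or apply the subdivision principle of Step 2 to $P\circ\theta_i$ directly, in parallel with $\theta_i$, so that both derivatives are reduced simultaneously. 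Finally, each $\theta_i$ is injective (inherited from the cell decomposition), semi-algebraic, analytic on its open domain, and $\coprod^k_{i\in I}\mathrm{Im}(\theta_i)=P^{-1}[Y]$ up to lower-dimensional strata. Setting $B_{r,d}:=B_{r,d}^{(1)}\cdot B_{r,d}^{(2)}$ completes the proof.

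\textbf{Main obstacle.} The delicate point is Step 2. Reducing $\|d^s\theta\|$ for a specific $s$ by affine rescaling in one coordinate affects lower-order derivatives mildly but can disturb higher-order bounds, so one must order the reductions carefully and verify that each reduction preserves the bounds already achieved. This is the heart of Yomdin's original argument and the place where all the combinatorial complexity is absorbed into the constant $B_{r,d}$; making it uniform in $k_j$ requires the Markov-type inequality for Nash functions, whose constant depends only on the degree and on $k_j$, hence ultimately only on $r$ and $d$.
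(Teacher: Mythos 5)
The paper does not prove this lemma; it cites it (to Gromov, Burguet, Pila--Wilkie, and Binyamini--Novikov) and uses it as a black box. So there is no paper-internal proof to compare against, and the right question is whether your sketch would constitute an acceptable proof of the cited result. It would not, and you have half-noticed this yourself in your ``Main obstacle'' remark.

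The structure you describe -- cell decomposition, then inductive derivative reduction -- is indeed the standard skeleton of the Yomdin--Gromov $C^r$-reparameterization theorem. But Step~2 is not a lemma you can invoke; it is the theorem. The ``key subdivision principle'' you state (subdivide a Nash function with bounded complexity into boundedly many subcells on which, after rescaling, the $s$-th derivative is small, preserving lower-order bounds) is precisely the content of Gromov's Algebraic Lemma, and establishing it rigorously with a constant depending only on the algebraic complexity took roughly three decades and several substantially different arguments (Yomdin--Gromov's original sketch, Burguet's completion for $k=1,2$, Pila--Wilkie's o-minimal version, and Binyamini--Novikov's uniform and mild versions). The appeal to ``a Markov-type inequality for algebraic functions of bounded degree'' is the wrong tool as stated: Markov's inequality bounds a derivative norm above by a lower-order norm, which is the opposite direction to what you need, and it does not by itself tell you that the number of rescaled subcells is bounded independently of the coefficients of the defining polynomials. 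Likewise, the claim that reducing $\|d^s\theta\|$ by affine rescaling ``preserves the bounds achieved at earlier steps, up to a constant factor absorbable by a further bounded subdivision'' is exactly the nontrivial induction that must be carried out, and the bookkeeping there is where all proofs of this lemma spend their effort (for instance, one must track how a single coordinate rescaling interacts with mixed partials, and justify why the stratification by higher-derivative sign changes has complexity bounded in terms of the original data alone). In short, your Step~1 and Step~3 are routine once Step~2 is available, but Step~2 is asserted, not proved, and it is not a reduction to simpler known facts -- it is the heart of the result. If you want a self-contained proof, you should follow one of the cited references rather than attempt to rediscover the subdivision machinery in a paragraph.
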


\medskip
\noindent\textbf{Remark:}\text{ }
\begin{enumerate}
\item The maps $\theta_i$ may be continuously extended on $[0,1]^{k_i}$ as $\theta_i$ satisfies $\|d_\cdot \theta_i\|\leq \|\theta_i\|_r\leq 1$. 
\item In the following we may only focus on the reparametrizations $\theta_i$ with $k_i=k$ as the image of the others have zero $k$-volume. 
\item By the invariance of domain theorem the image of each map $\theta_i:(0,1)^k\to \mathbb R^k$ is open  and each $\theta_i$ is a homeomorphism onto its image.
\item The boundary of a semi-algebraic set  has zero Lebesgue measure, therefore so does $\theta_i(\partial [0,1]^{k})=\partial \theta_i\left((0,1)^{k}\right)$.
\end{enumerate}

We will make use of the two following well-known multivariate formulas for the derivatives of a product and a composition of $C^r$  functions on $\mathbb R^d$. For  positive integers $m,p,q$ we let $M_{p,q}(\mathbb R)$ be the set of real valued $p\times q$ matrices  and we denote $A\cdot B\in M_{p,m}(\mathbb R)$ the product of two matrices $A\in M_{p,q}(\mathbb R)$ and $B\in  M_{q,m}(\mathbb R)$. We have with the standard multi-index notations:
\begin{itemize}
\item \textit{General Leibniz rule:} Let $u:\mathbb R^d\rightarrow M_{p,q}(\mathbb R)$ and $v:\mathbb R^d\rightarrow M_{q,m}(\mathbb R)$ be $C^r$ maps, then for any $\alpha=(\alpha_1, \cdots,\alpha_d )\in \mathbb N^d$ with $|\alpha|:=\sum_i\alpha_i\leq r$, we have 
\begin{eqnarray}\label{leib}
 \partial^\alpha (u\cdot v)=\sum_{\beta : \beta\leq \alpha}\binom{\alpha}{\beta}(\partial^\beta u)\cdot(\partial^{\alpha-\beta}v).
\end{eqnarray}
\item \textit{ Fa\` a di Bruno's formula:}  Let $u:\mathbb R^d\rightarrow \mathbb R$ and $v=(v_1,\cdots,v_d):\mathbb R^e\rightarrow \mathbb R^d$ be $C^r$ maps, then for any $\alpha\in \mathbb N^e$ with $|\alpha|\leq r$, we have 
\begin{eqnarray}\label{bruno} \partial^\alpha (u\circ v)=\sum_{\beta\in \mathbb N^d, \ |\beta|\leq |\alpha|}(\partial^\beta u)\circ v \times P_\beta\left(\left(\partial^{\gamma}v_i\right)_{ \gamma, i}\right), 
\end{eqnarray}
where $P_\beta\left(\left(\partial^{\gamma}v_i\right)_{ \gamma, i}\right)$ is a universal polynomial, in $\partial^{\gamma}v_i$ for  $i=1,\cdots, d$ and $\gamma \in \mathbb N^e$ with $|\gamma|\leq |\alpha|$, of total degree less than or equal to $|\alpha|$. \\
\end{itemize}

\begin{lemma}\label{compbounded}
Let $(\sigma, \theta)$ be a bounded couple. There is a constant $E_{r,d}>1$ such that  
for any $C^r$ map $\psi:[0,1]^k\circlearrowleft$ with $\|\psi\|_r\leq \frac{1}{E_{r,d}}$, the pair $(\sigma, \theta\circ\psi )$ is a bounded couple.
\end{lemma}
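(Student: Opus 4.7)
The plan is as follows. Set $G(t) := \wedge^k d_{\theta(t)}\sigma$, so that the bounded-couple property required for $(\sigma,\theta\circ\psi)$ is precisely $\|d^s_\cdot(G\circ\psi)\| \leq \frac{1}{10k}\sup|G\circ\psi|$ for $s=1,\ldots,r-1$. Since $\psi$ maps $[0,1]^k$ into itself, Lemma \ref{densityJuy} applied to the bounded couple $(\sigma,\theta)$ gives $|G(t)|\geq \tfrac{9}{10}\sup|G|$ for every $t$, hence $\sup|G|\leq \tfrac{10}{9}\sup|G\circ \psi|$. It therefore suffices to produce a constant $C_{r,d}$ depending only on $r,d$ such that
$$\|d^s_\cdot(G\circ\psi)\| \leq C_{r,d}\|\psi\|_r\cdot \tfrac{1}{10k}\sup|G|,\qquad 1\leq s\leq r-1,$$
for then taking $E_{r,d}:=2C_{r,d}$ closes the argument.

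Next, I would expand $\partial^\alpha(G\circ\psi)$ for every multi-index $\alpha$ with $1\leq|\alpha|\leq r-2$ using Faà di Bruno's formula \eqref{bruno}, obtaining
$$\partial^\alpha(G\circ\psi)(t)=\sum_{1\leq|\beta|\leq|\alpha|}(\partial^\beta G)(\psi(t))\cdot P_\beta\bigl((\partial^\gamma\psi_i)_{\gamma,i}\bigr),$$
where each polynomial $P_\beta$ has total degree at most $|\alpha|$ in the derivatives $\partial^\gamma\psi_i$ with $1\leq|\gamma|\leq|\alpha|$, and crucially each of its monomials contains at least one such derivative factor. Since $\|\psi\|_r\leq 1$, every $P_\beta$ is bounded by $C_{r,d}\|\psi\|_r$. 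On the other hand, the bounded-couple hypothesis gives $\mathrm{Lip}(|\partial^{\beta'}G|)\leq \tfrac{1}{10k}\sup|G|$ for $0\leq|\beta'|\leq r-2$, which by mean-value comparison against a reference point (where $|G|\leq \sup|G|$) yields the pointwise bound $|\partial^\beta G|\leq C'_{r,d}\cdot \tfrac{1}{10k}\sup|G|$ for $1\leq|\beta|\leq r-1$. Summing over $\beta$ gives the required $L^\infty$ estimate.

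To upgrade from an $L^\infty$ bound on $\partial^\alpha(G\circ\psi)$ to the Lipschitz-constant bound $\mathrm{Lip}(|\partial^\alpha(G\circ\psi)|)$ required by the definition of $\|d^{|\alpha|+1}_\cdot(G\circ\psi)\|$, I would differentiate the Faà di Bruno expansion one further time using the Leibniz rule \eqref{leib} together with $\mathrm{Lip}(uv)\leq\|u\|_\infty\mathrm{Lip}(v)+\mathrm{Lip}(u)\|v\|_\infty$ and the chain rule $\mathrm{Lip}(\partial^\beta G\circ\psi)\leq \mathrm{Lip}(\partial^\beta G)\cdot\mathrm{Lip}(\psi)$. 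The newly introduced derivatives are controlled by the bounded-couple inequality for $G$ at order $|\alpha|+1\leq r-1$, and by $\|\psi\|_r$ for $\psi$. Each resulting monomial still carries at least one factor of a derivative of $\psi$ of order $\geq 1$, so the full expression is again bounded by $C_{r,d}\|\psi\|_r\cdot \tfrac{1}{10k}\sup|G|$, which combined with the supremum comparison closes the estimate.

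The only real obstacle is the combinatorial bookkeeping in Faà di Bruno: one must verify that the polynomial $P_\beta$ in each term has no constant term in the variables $\partial^\gamma\psi_i$, so that the entire sum is genuinely dominated by $\|\psi\|_r$ rather than being $O(1)$. This is a standard feature of the formula (the sum runs over $|\beta|\geq 1$, and for each such $\beta$ every monomial in $P_\beta$ is a product of derivatives of $\psi$ of order $\geq 1$), so once it is noted the remaining estimates reduce to routine product- and chain-rule manipulations that only fix the value of $E_{r,d}$.
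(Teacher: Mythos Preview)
Your proposal is correct and follows essentially the same route as the paper: apply Fa\`a di Bruno to $G\circ\psi$ with $G(t)=\wedge^k d_{\theta(t)}\sigma$, use the bounded-couple hypothesis to control the derivatives of $G$, and use the distortion estimate (you invoke Lemma~\ref{densityJuy}, the paper uses the equivalent Lemma~\ref{bounddis}) to pass from $\sup|G|$ to $\sup|G\circ\psi|$. The paper compresses all of this into the single semi-norm inequality $\|G\circ\psi\|_{r-1}\leq C\|G\|_{r-1}\|\psi\|_r$, whereas you unpack it into pointwise and Lipschitz estimates on individual $\partial^\alpha$; the only minor wrinkle is your ``mean-value comparison against a reference point'' step, which is unnecessary since the pointwise bound $|\partial^\beta G|\leq \tfrac{1}{10k}\sup|G|$ for $|\beta|\geq 1$ already follows directly from $\|d^{|\beta|}G\|=\max_{|\beta'|=|\beta|-1}\mathrm{Lip}(\partial^{\beta'}G)\leq \tfrac{1}{10k}\sup|G|$.
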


\begin{proof}
By Fa\`a di Bruno's formula, we have for some constant $E_{r,d}>1$ when $\|\psi\|_r\leq 1$
\begin{align*}
\|\wedge^kd_{\phi\circ \psi(\cdot)}\sigma\|_{r-1}&\leq
\frac{E_{r,d}}{\sqrt 2}\|\wedge^kd_{\phi(\cdot)}\sigma\|_{r-1}\|\psi\|_r \\
&\leq 
\frac{E_{r,d}}{10k\sqrt 2}\|\wedge^kd_{\phi(\cdot)}\sigma\|\|\psi\|_r.
\end{align*}
By Lemma \ref{bounddis} we have $$\|\wedge^kd_{\phi(\cdot)}\sigma\|\leq \sqrt 2 \|\wedge^kd_{\phi\circ \psi(\cdot)}\sigma\| $$
Therefore $(\sigma, \theta\circ\psi )$ is a bounded couple whenever $\|\psi\|_r\leq \frac{1}{E_{r,d}}$

\end{proof}
\begin{proof}[Proof of Lemma \ref{mai}]
Without loss of generality we may assume 
$\mathbf M=\mathbb R^d$, $\|g\|_{r}\leq \|d_\cdot g\|$,  $\|\wedge^kd_\cdot g\|_{r-1} \leq \| \wedge^kd_\cdot g\|$ and $\epsilon=1$  by an appropriate rescaling, namely $g\to \epsilon^{-1}g(\epsilon\cdot)$ and $\sigma\to \epsilon^{-1}\sigma$. \\

\fbox{%
\begin{minipage}{0.75\textwidth}
\noindent\textbf{Step 1} : We first explicit a family of maps $\phi$ such that \begin{enumerate}
\item $\coprod_{\phi}^k\mathrm{Im}(\phi)=\mathrm{Im}(\theta)$,
\item $(g\circ \sigma,\phi)$ is a bounded couple, 
\item $\|d^r_\cdot(g\circ \sigma\circ \phi)\|\leq 1/4$,
\item  $\# \{\phi\}\leq C_{r,d}M_g^{\frac{3k^2}{r-1}}$. 
\end{enumerate}
\end{minipage}
}\\
In the next computations  we write $a\lesssim b$ to mean $a\leq C_{r,d}\, b$ for some constant $C_{r,d}$ depending only on $r$ and $d$. Let 
$\Gamma:t\mapsto \wedge^k d_{\theta(t)}(g\circ \sigma)$ 
and let $s$ with $|\Gamma(s)|=\inf_t| \Gamma(t)|$.  We have 
\begin{align*}
\|d^{r-1}_\cdot \Gamma\|&\leq \|d^{r-1}_\cdot (\wedge^k d_{\sigma\circ \theta(t)}g\cdot  \wedge^k d_{\theta(t)}\sigma)\|\\
&\lesssim \|\wedge^k d_{\sigma\circ \theta(\cdot)}g\|_{r-1}\| \wedge^k d_{\theta(\cdot)}\sigma\|_{r-1}\text{ by Leibniz rule \eqref{leib}}\\
&\lesssim \|\wedge^k d_{\sigma\circ \theta(\cdot)}g\|_{r-1} | \wedge^k d_{\theta(s)}\sigma| \text{, $\because$ $(\sigma, \theta)$ is a bounded couple}\\
&\lesssim \|\wedge^k d_\cdot g\|_{r-1}\max\{1,\|\sigma\circ \theta\|_r\}^{r-1} | \wedge^k d_{\theta(s)}\sigma| \text{ by 
\eqref{bruno}}\\
&\lesssim \|\wedge^k d_\cdot g\|_{r-1}| \wedge^k d_{\theta(s)}\sigma| \text{,  $\because$ $(\sigma, \theta)$ is a strongly  $1$-bounded couple} \\
&\lesssim \|\wedge^k d_\cdot g\| | \wedge^k d_{\theta(s)}\sigma| \  \text{by the choice of $\epsilon$}\\
&\lesssim \|d_\cdot g\|^{k} | \wedge^k d_{\theta(s)}\sigma|\\
&\lesssim \|d_\cdot g\|^{k}\|d_\cdot g^{-1}\|^k |\wedge^k d_{\theta(s)}(g\circ \sigma)|\\
&\lesssim M_g^{2k}\inf_t| \Gamma(t)|.
\end{align*}

We partition $[0,1]^k$ into subcubes of size $\lesssim \left(M_g^{2k}\right)^{\frac{-1}{r-1}}$. By composing $\theta$ with an affine reparametrization of one of these sub-cubes, we get  maps $\psi$ satisfying $\|d^r(g\circ \sigma\circ \psi)\|\leq 1/4$ and 
$\|d^{r-1}(\Gamma\circ \psi)\|\lesssim \inf_t| \Gamma(t)|$. Let $P$ be the interpolation polynomial of $\Gamma\circ \psi$ at $0$. Note that
$$\|\Gamma\circ \psi-P\|_{r-1}\lesssim \inf_t| \Gamma(t)|.$$

We apply the Algebraic Lemma (Lemma \ref{alg}) to each $P_\mathfrak l:=\frac{P(t)}{e^\mathfrak l\|\wedge^k d_{\psi(\cdot)} \sigma)\|} $,  $\mathfrak l\in \mathbb N^*$, with $Y=B(0,1) \setminus B(0,1/e)$ (recall $B(0,a)$ denotes the Euclidean ball of radius $a>0$). Let $\theta^\mathfrak l_i$ be the obtained reparametrizations. 
We have
\begin{align}\label{appor} \|(\Gamma\circ \psi-P)\circ\theta^{\mathfrak l}_i\ \|_{r-1}&\lesssim \|\Gamma\circ \psi-P\|_{r-1}\max\{1, \|\theta_i^\mathfrak l\|_r\}^{r-1}
\\
&\leq \frac{\inf_t| \Gamma(t)|}{100d}\end{align}

and 
\begin{align}\label{appor1}\forall t, \ e^{\mathfrak l-1}\|\wedge^k d_{\psi(\cdot)}\sigma\|\leq |P\circ \theta_{i}^\mathfrak l(t)|\leq e^\mathfrak l\| \wedge^k d_{\psi(\cdot)}\sigma\|.
\end{align}
From \eqref{appor} we get $$ \forall t, \ \frac{10}{11}|P\circ \theta_{i}^\mathfrak l(t)|\leq |\Gamma\circ \psi\circ \theta_i^\mathfrak l(t)| \leq \frac{10}{9} |P\circ \theta_{i}^\mathfrak l(t)|$$
then by using \eqref{appor1}
$$\forall t,\ e^{\mathfrak l-2}\|\wedge^kd_{\psi(\cdot)} \sigma\|\leq |\Gamma\circ \psi\circ \theta_i^\mathfrak l(t) |\leq e^{\mathfrak l+2} \|\wedge^k d_{\psi(\cdot)}\sigma\|.$$

Finally we obtain 
\begin{align*}\|\Gamma\circ \psi\circ \theta^{\mathfrak l}_i\|_{r-1}&\leq \|P\circ \theta^\mathfrak l_i\|_{r-1}+\|(\Gamma\circ \psi-P)\circ\theta^{\mathfrak l}_i\|_{r-1}\\
&\leq e^\mathfrak l \frac{\|\wedge^k d_{\psi(\cdot)}\sigma\|}{100d}+\frac{\inf_t| \Gamma(t)|}{100d}\\
&\leq  \frac{\|\Gamma\circ\psi\circ \theta_i^\mathfrak l\| e^{ 2}}{100d}+\frac{\inf_t| \Gamma(t)|}{100d}\\
&\leq \frac{\|\Gamma\circ\psi\circ \theta_i^\mathfrak l\|}{10k}.
\end{align*}

This proves $(g\circ \sigma, \psi\circ \theta_i^\mathfrak l)$ is a bounded couple.  Finally we check: 
 \begin{align*}
 \|d^r_\cdot(g\circ \sigma \circ \psi\circ \theta_i^\mathfrak l)\|&\lesssim\|g \|_r \max\{1,\|\sigma\circ \theta\|_r\}^r
 \text{ by  
\eqref{bruno}} \\
 &\lesssim\|g \|_r \text{, $\because$$(\sigma, \theta)$ is a strongly  $1$-bounded couple} \\
& \lesssim\|d_\cdot g\|  \text{ by the choice of $\epsilon$.}
 \end{align*}

 By composing $\theta_{i}^\mathfrak l$ with affine contractions $\rho$ of rate $\lceil \|d_\cdot g\|^{-\frac{1}{r}}\rceil$ as above, we get maps $\phi=\theta\circ \psi\circ \theta_i^\mathfrak l\circ \rho$ satisfying the three first items of Step 1. Finally observe that 
 \begin{align*}\#\{\phi\}&\leq \#\{\psi\}\cdot  \#\{\theta_i^\mathfrak l\}\cdot  \#\{\rho\}\\
 &\lesssim M_g^{\frac{2k^2}{r-1}}\lceil\|d_\cdot g\|^{\frac{1}{r}}\rceil^k\\
 &\lesssim M_g^{\frac{3k^2}{r-1}}.
 \end{align*}

\fbox{%
\begin{minipage}{0.75\textwidth}
\noindent\textbf{Step 2:} 
We construct the families $\underline{\Theta}_\phi$ and estimate their cardinality. \end{minipage}
} \\

We set $\widetilde{\sigma}=g\circ \sigma\circ \phi$.   
Let $\mathcal C$  be the partition of $\mathbb{R}^d$ into cubes of size $1$ with vertices in $\mathbb Z^d$. We may assume that there is a $k$-face $F_k(C)$ of each  cube $C$ which is tangent to  
$\mathrm{Im}(d_{\phi(x_C)} (g\circ \sigma))$ 
with $x_C$ being the center of $C$. Without loss of generality we may assume this face is given by $[0,1]^k\times \{0\}^{d-k}+a$ for some $a\in \mathbb Z^d$.  For $C\in \mathcal C$ and $q\in \mathbb N$ we let $C^q$ be the cube of size $2q+1$ centered at $C$. 

   Let $Q$ be the interpolation polynomial of $\widetilde{\sigma}$ at $0$. Recall $\|d^r\widetilde{\sigma}\|\leq 1/4$.  Then $\|\widetilde{\sigma}-Q\|\leq 1$, in particular 
for any $C\in \mathcal C$ we have  $$\widetilde{\sigma}^{-1}C\subset Q^{-1}C^1\subset \widetilde{\sigma}^{-1}C^2.$$
 We apply now the Algebraic   Lemma to $Q$ with $Y$ being each cube $C$ in $\mathcal C$. Let $\Theta_C$ be the family of reparametrizations  obtained in this way. Note that $\|\widetilde\sigma\circ \theta\|_r\leq 1/4$. Without loss of generality we may also assume $\|\theta\|_r\leq \frac{1}{E_{r,d}}$   so that  $(g\circ\sigma, \phi\circ \theta)$ is a strongly $1$-bounded couple for any $\theta\in \Theta_C$ by Lemma \ref{compbounded}.

We let   $\partial \mathrm{Im}(\widetilde{\sigma})$ be the boundary of the disk $\mathrm{Im}(\widetilde{\sigma})$. Note that $\partial \mathrm{Im}(\widetilde{\sigma})=\widetilde{\sigma}(\partial [0,1]^k)$. Then we have 
 \begin{align*}
&\# \{C\in \mathcal C: \ C\cap \mathrm{Im}(\widetilde{\sigma})\neq \varnothing \text{ and } C^3\cap \partial \mathrm{Im}(\widetilde{\sigma})\neq \varnothing  \}\\
&\lesssim \mathrm{Cov}(\mathrm{Im}(\widetilde{\sigma}|_{\partial [0,1]^k }),1)\\
&\lesssim\max_{0\leq l<k}\|\wedge^ld_\cdot \widetilde \sigma\|, \text{ by Lemma \ref{cover}}\\
 &\lesssim \max_{0\leq l<k}\|\wedge^ld_\cdot g\|_{\mathrm{Im}(\sigma\circ \phi)}, \text{\  $\because \|d_\cdot (\sigma\circ\phi)\|\leq 1$.}
 \end{align*}

We let $\underline{\Theta}_{\phi}$ be the union of $\Theta_C$ over $C\in \mathcal C$ with  $C\cap \mathrm{Im}(\widetilde{\sigma})\neq \varnothing$  and $C^3\cap \partial \mathrm{Im}(\widetilde{\sigma})\neq \varnothing$. \\
 
\fbox{%
\begin{minipage}{0.75\textwidth}
\noindent\textbf{Step 3:} for $\Theta_C\ni \theta \notin  \underline{\Theta}_{\phi}$, the intersection $\mathrm{Im}(\widetilde \sigma )\cap C$ is contained in graphs of $1$-Lipschitz maps $\psi_j, j\in   J_C$  over $F_k(C)$.\end{minipage}
} \\

Let $C\in \mathcal C$ with  $C\cap \mathrm{Im}(\widetilde{\sigma})\neq \varnothing$ and $C^3\cap \partial \mathrm{Im}(\widetilde{\sigma})= \varnothing$. 
Recall that $\widetilde{\sigma}=g\circ \sigma\circ \phi$ with $(g\circ \sigma, \phi)$ being a bounded couple. By Lemma \ref{angle} and Lemma \ref{graph},  the disk $
\mathrm{Im}(\widetilde \sigma )$ is locally at $x\in \mathrm{Im}(\widetilde \sigma 
)\cap C$  a graph of a $1$-Lipschitz map $g:U\subset \mathbb R^k\to \mathbb R^{n-k}$, which may be extended on 
$F_k(C)$  as $C^3\cap \partial \mathrm{Im}(\widetilde{\sigma})= \varnothing$. 
Then $\mathrm{Im}(\widetilde \sigma )\cap C$ is contained in the disjoint  union of 
such graphs. These graphs are not contained in $C$ a priori but in $C^3$. \\

\fbox{%
\begin{minipage}{0.75\textwidth}\noindent\textbf{Step 4: }We construct the admissible families in $\overline{\Theta}_\phi$. \end{minipage} }\\

We enumerate the cubes  $C$ satisfying  $C^3\cap \partial \mathrm{Im}(g\circ \sigma\circ \theta)= \varnothing$ as $C_1,C_2,\cdots,C_N$. For any $i$, we let $\psi_j, j\in   J_i$ be the Lipschitz maps over $F_k(C_i)$  given by the previous step 
whose disjoints  graphs cover (and intersect) $\mathrm{Im}(\widetilde \sigma)\cap C_i$.

We continue to construct the admissible families by induction on $i=1,...,N$. 
For any $j\in J_1$ we let $\Theta'_{j,1}$ be the union of $\theta\in \bigcup_C\Theta_C$ with $\mathrm{Im}(\widetilde\sigma \circ \theta)\cap \Gamma_{\psi_j}\neq \varnothing$. Assume for simplicity that $C_1=[0,1]^d$. Then $\widetilde{\sigma}(\Theta'_{j,1})$ is contained in $[-1/4, 5/4]^k\times \mathbb R^{d-k}$. The number of such $\theta$'s is bounded by $\sum_{C\in \mathcal C, \ C\cap C_1^3\neq \varnothing}\# \Theta_C\leq 7^dB_{r,d}=:A_{r,d}$. The families $ \Theta'_{j,1}$ are $(g\circ \sigma,\phi,\epsilon)$-admissible family. 

Then for any $j\in J_2$ we let $\Theta'_{j,2}$ be the union of $\theta\in \bigcup_C\Theta_C\setminus (\bigcup_{j\in J_1 } \Theta'_{j,1})$ with $\mathrm{Im}(\widetilde{\sigma}\circ \theta)\cap \Gamma_{\psi_j}\neq \varnothing$ and so on. Observe that for $j\in J_2$, the graph $\Gamma_{\psi_j}$ may intersect the image of the previously built admissible family $ \Theta'_{j',1}$, $j'\in J_1$ (assume for example  $C_2=[1,2]\times [0,1]^{d-1}$). However $\widetilde{\sigma}(\Theta'_{j,2})$ still contain the graph of $\psi_j$ over $[5/4, 7/4]\times [0,1]^{k-1}\times \{0\}^{d-k}$.  One may continue this way by induction to construct the required $(g\circ \sigma, \phi,\epsilon)$-admissible families $ \Theta'_{j,i}$, $j\in J_i$, $i=1,\cdots, N$.

\end{proof}

\section{Positive density of geometric times}\label{posDensOfGeoTimes}
We consider a $C^r$ diffeomorphism  $f:\mathbf M \circlearrowleft$ with $\Vol([\lambda_k>\frac{3k^2}{r-1}R(f)])>0$ (as assumed in Theorem \ref{crcr}). 
 Let $p$, $D$ and $\chi>a=\frac{3k^2}{r-1}R(f)$ given by Lemma \ref{theDiskD}. Without loss of generality we can assume that $D=\mathrm{Im}(\sigma)$ where  $(\sigma, \mathrm{Id}_{[0,1]^k})$ is a strongly $\epsilon$-bounded couple (by composing $\sigma$ with an affine contraction if necessary). 
We consider the tree $\mathcal T$ associated  to $g=f^p$ and to the $k$-disc $D$ given by Proposition \ref{firstLemmaOfSection}. 

\begin{definition}
For all $n\in \mathbb N^*\cup\{\infty\}$ and for all $x\in D$ we  define: $$\mathfrak l^n( x)=(\mathfrak l_i(\widetilde x), \mathfrak l'_i(x))_{0\leq i<n}=(\mathfrak l(G^{i}\widetilde 
x),\mathfrak l'(g^ix) )_{0\leq i<n}.$$
\end{definition}

\begin{lemma}\label{lebol}
For any $\mathbf i^n\in \mathcal T_n$ we have
\begin{enumerate} 
\item If $x,y\in \mathrm{Im}(\sigma\circ \theta'_{\mathbf i^n})$, then   $|\mathfrak l_{n-1}(\widetilde x)-\mathfrak l_{n-1}(\widetilde y)|\leq 1$,
\item if $x,y\in \sigma(\Theta_{\mathbf i^n})$, then $|\mathfrak l_i(\widetilde x)-\mathfrak l_i(\widetilde y)|,|\mathfrak l'_i(x)-\mathfrak l'_i(y)|\leq 1$ for all $0\leq i<n$.
\end{enumerate}
\end{lemma}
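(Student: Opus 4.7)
My plan is to reduce everything to the bounded distortion statement (Lemma \ref{bounddis}) applied to carefully chosen bounded couples in the tree. The key identity is that for $x=\sigma(t_x)\in D$ and any $0\leq i<n$,
\[
\mathfrak{l}_i(\widetilde{x}) = \left\lfloor \log \frac{|\wedge^k d_{t_x}\sigma_{i+1}|}{|\wedge^k d_{t_x}\sigma_{i}|} \right\rfloor,
\]
which one derives by writing the unit $k$-vector tangent to $D_i$ at $g^i x$ as $(\wedge^k d_{t_x}\sigma_i)/|\wedge^k d_{t_x}\sigma_i|$ and applying $\wedge^k d_{g^i x}g$. Both terms in the quotient are tangent $k$-volumes of the charts $\sigma_{i+1}$ and $\sigma_i$, so controlling their distortion over an appropriate region of $[0,1]^k$ will yield control on $\mathfrak{l}_i$.

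For Part 1, I write $x=\sigma(t_x)$, $y=\sigma(t_y)$; the hypothesis gives $t_x,t_y\in \mathrm{Im}(\theta'_{\mathbf{i}^n})$. Proposition \ref{firstLemmaOfSection} provides two bounded couples both containing these parameters: first $(\sigma_n,\theta'_{\mathbf{i}^n})$, built into the reparametrization at the present level; second $(\sigma_{n-1},\theta_{\mathbf{i}^{n-1}})$ for the parent node $\mathbf{i}^{n-1}$, which is strongly $\epsilon$-bounded (either because $\Theta_{\mathbf{i}^{n-1}}$ is an admissible family containing $\theta_{\mathbf{i}^{n-1}}$, or because $\mathbf{i}^{n-1}$ is blue and $\Theta_{\mathbf{i}^{n-1}}=\{\theta_{\mathbf{i}^{n-1}}\}$). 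Since $\theta'_{\mathbf{i}^n}<\theta_{\mathbf{i}^{n-1}}$ implies $\mathrm{Im}(\theta'_{\mathbf{i}^n})\subseteq \mathrm{Im}(\theta_{\mathbf{i}^{n-1}})$, Lemma \ref{bounddis} yields
\[
\max\left\{\frac{|\wedge^k d_{t_x}\sigma_n|}{|\wedge^k d_{t_y}\sigma_n|},\ \frac{|\wedge^k d_{t_x}\sigma_{n-1}|}{|\wedge^k d_{t_y}\sigma_{n-1}|}\right\} < \sqrt 2,
\]
so the real arguments of the two floors defining $\mathfrak{l}_{n-1}(\widetilde{x})$ and $\mathfrak{l}_{n-1}(\widetilde{y})$ differ by less than $\log 2<1$, whence their integer parts differ by at most~$1$.

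For Part 2 with the $\mathfrak{l}_i$'s, I chain the inclusions $\theta_{\mathbf{i}^n}<\theta'_{\mathbf{i}^n}<\theta_{\mathbf{i}^{n-1}}<\cdots<\theta_{\mathbf{i}^n_{i+1}}$ to see $\sigma(\Theta_{\mathbf{i}^n})\subseteq \mathrm{Im}(\sigma\circ\theta'_{\mathbf{i}^n_{i+1}})$, and then apply Part 1 to the ancestor $\mathbf{i}^n_{i+1}$ at level $i+1$. For the $\mathfrak{l}'_i$'s, the pair $g^i x, g^i y$ lies in $\mathrm{Im}(\sigma_i\circ\theta_{\mathbf{i}^n_i})$, and the strongly $\epsilon$-bounded condition on $(\sigma_i,\theta_{\mathbf{i}^n_i})$ bounds the diameter of this image by a constant multiple of $\epsilon$. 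Choosing $\epsilon\leq \epsilon_0(g)$ small enough that the continuous function $z\mapsto \log^+\max_{k'<k}\|\wedge^{k'}d_z g\|$ oscillates by less than $1$ on balls of radius $\epsilon$ (using $C^1$ smoothness of $g$ together with compactness of $\mathbf{M}$) then gives $|\mathfrak{l}'_i(x)-\mathfrak{l}'_i(y)|\leq 1$.

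The main obstacle I anticipate is the correct identification in Part 1 of the \emph{two} bounded couples whose simultaneous distortion bounds combine into a single estimate on $\mathfrak{l}_{n-1}$; in particular, tracking the red/blue dichotomy at the parent node to confirm that $(\sigma_{n-1},\theta_{\mathbf{i}^{n-1}})$ is bounded and that both $t_x$ and $t_y$ lie in $\mathrm{Im}(\theta_{\mathbf{i}^{n-1}})$ is the most delicate point. Once this is set up, the rest is routine bookkeeping of the nested inclusions $\mathrm{Im}(\theta_{\mathbf{i}^n})\subseteq\mathrm{Im}(\theta'_{\mathbf{i}^j})$ for $j\leq n$.
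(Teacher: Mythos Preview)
Your proof is correct and follows essentially the same route as the paper: both arguments reduce Part~1 to applying the bounded distortion property (Lemma~\ref{bounddis}) simultaneously to the bounded couples $(\sigma_n,\theta'_{\mathbf i^n})$ and $(\sigma_{n-1},\theta_{\mathbf i^n_{n-1}})$, yielding a ratio at most $2<e$ and hence floors differing by at most $1$; and both deduce Part~2 by chaining the nesting to apply Part~1 at the ancestor of level $i+1$ for the $\mathfrak l_i$'s, and by the $\epsilon$-diameter bound on $\mathrm{Im}(\sigma_i\circ\theta_{\mathbf i^n_i})$ together with the smallness of $\epsilon$ for the $\mathfrak l'_i$'s. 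Your write-up is in fact slightly more careful than the paper's in identifying the correct ancestor level ($i+1$ rather than $i$) when invoking Part~1.
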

\begin{proof}\text{ }

\begin{enumerate}
\item There is $\theta_{\mathbf i^n_{n-1}}\in \Theta_{\mathbf i^n_{n-1}}$ such that $\theta'_{\mathbf i^n}<\theta_{\mathbf i^n_{n-1}}$. 
 Observe that $(\sigma_{n}, \theta'_{\mathbf i^n})$ and $(\sigma_{n-1}, \theta_{\mathbf i^n_{n-1}})$ are bounded couples. Therefore, by the bounded distortion property, we have for any $t,s\in \mathrm{Im}(\theta'_{\mathbf i^n})$
 $$\frac{|\wedge^k d_{t} \sigma_n|}{|\wedge^k d_{s} \sigma_n|}<\sqrt 2\text{ and }\frac{|\wedge^k d_{t} \sigma_{n-1}|}{|\wedge^k d_{s} \sigma_{n-1}|}<\sqrt 2,$$
therefore we get with $x=\sigma(t), y=\sigma(s)$ \begin{equation}\label{fran}\frac{|\wedge^kdg (G^{n-1}(\widetilde x))|}{|\wedge^kdg (G^{n-1}(\widetilde y))|}\leq 2\ (<e),
\end{equation}
 finally $$|\mathfrak l_{n-1}(\widetilde x)-\mathfrak l_{n-1}(\widetilde y)|\leq 1.$$ 
 \item For any $x,y\in \sigma(\Theta_{\mathbf i^n})$, the points $g^{i}x$ and $g^iy$ are $\epsilon$-close for any $0\leq i<n$. According to the choice of $\epsilon$ in (\ref{smallosc}), we have

 $$\frac{\max_{j<k}\|\wedge^{j}d_{g^ix} g \|\vee 1}{\max_{j<k}\|\wedge^{j}d_{g^iy} g \|\vee 1}\leq 2,$$
 therefore $$\forall i<n,\ |\mathfrak l'_{i}( x)-\mathfrak l'_{i}(y)|\leq 1.$$
 The inequalities $|\mathfrak l_{i}( \widetilde x)-\mathfrak l_{i}(\widetilde y)|\leq 1$ follows from item (1) as we have $\theta_{\mathbf i^n}<\theta'_{\mathbf i^n_i}$ for any $\theta_{\mathbf i^n}\in \Theta_{\mathbf i^n}$.
 \end{enumerate}
\end{proof}

\begin{definition}
For a parameter $\widehat{\chi}>0$, $n\in \mathbb N^*\cup\{\infty\}$ and  a  sequence $\mathfrak l^n=(\mathfrak l_i,\mathfrak l'_i)_{0\leq i<n}\in \mathbb R^{2n}$, an integer $\ell\in [0,n]$   is said to be a $(\widehat{\chi}, \mathfrak l^n)$-hyperbolic time  when $$\forall 0\leq i\leq \ell, \ \sum_{i\leq j<\ell} \mathfrak l_j-\mathfrak l'_j\geq (\ell-i)\widehat{\chi}.$$
\end{definition}

\begin{lemma}\label{BGtimesAreHyp}
  Let $x\in D$ and $\hat \chi>0$. For any $(\widehat{\chi}, \mathfrak l^\infty(x))$-hyperbolic time $\ell$, we have 
  $$\forall v\in T_{g^i x}D_i \ \forall 0\leq i\leq \ell, \ |d_{g^{i}x}g^{\ell-i}v |\geq e^{(\ell-i)\widehat{\chi}}|v|. $$
\end{lemma}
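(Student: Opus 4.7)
The key linear-algebraic input is the identity $\sigma_k(A) = |\det A|/\|\wedge^{k-1}A\|$ for any invertible linear map $A$ between two $k$-dimensional inner product spaces, where $\sigma_k(A)$ denotes the smallest singular value. The plan is to apply this to the restriction of $dg^{\ell-i}$ to the (forward-invariant) $k$-plane direction $T D_i \to TD_\ell$, estimate the $k$-volume expansion from below using the integers $\mathfrak{l}_j$ and the $(k-1)$-volume expansion from above using the integers $\mathfrak{l}'_j$, and then conclude from the hyperbolic-time condition by a telescoping sum.

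Concretely, I set $A_j := d_{g^j x} g \vert_{T_{g^j x} D_j}$, which is a linear isomorphism $T_{g^j x} D_j \to T_{g^{j+1} x} D_{j+1}$. For any $w \in T_{g^i x} D_i$,
$$|d_{g^i x} g^{\ell - i} w| \;\geq\; \sigma_k(A_{\ell-1} \cdots A_i)\, |w| \;\geq\; \frac{\prod_{j=i}^{\ell-1} |\det A_j|}{\prod_{j=i}^{\ell-1} \|\wedge^{k-1} A_j\|}\,|w|,$$
where I used multiplicativity of $\det$ and submultiplicativity of $\|\wedge^{k-1} \cdot\|$. By the floor in Definition of $\mathfrak{l}$, $\log|\det A_j| = \log |\wedge^k d_{g^j x} g(G^j \widetilde x)| \geq \mathfrak{l}_j(\widetilde x)$; by the ceiling in $\mathfrak{l}'$ (together with the trivial bound $\|\wedge^{k-1} A_j\| \leq \|\wedge^{k-1} d_{g^j x} g\|$, since restricting a linear map to a subspace can only decrease its $(k-1)$-exterior norm), $\log \|\wedge^{k-1} A_j\| \leq \mathfrak{l}'_j(x)$. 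Taking logs and inserting the $(\widehat\chi, \mathfrak{l}^\infty(x))$-hyperbolic-time inequality,
$$\log |d_{g^i x} g^{\ell - i} w| - \log |w| \;\geq\; \sum_{j=i}^{\ell-1}(\mathfrak{l}_j - \mathfrak{l}'_j) \;\geq\; (\ell - i)\widehat\chi,$$
which is the claim (the vector $v \in T_{g^\ell x} D_\ell$ in the statement is $v = d_{g^i x} g^{\ell-i} w$, with $w$ the corresponding preimage under this isomorphism).

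There is no genuine analytic obstacle here; the argument is essentially linear algebra plus bookkeeping. The one point worth being careful about is that the natural quantity controlling the smallest singular value of $A_j$ is $\|\wedge^{k-1} A_j\|$ where $A_j$ is the restriction to the disk direction, whereas $\mathfrak{l}'_j$ is defined in terms of the full differential $d_{g^j x} g$; this is precisely why the definition of $\mathfrak{l}'$ uses $\log^+ \max_{k' < k}\|\wedge^{k'} d g\|$ (dominating $\|\wedge^{k-1} A_j\|$ for any $k$-plane), and with this in place the floor/ceiling conventions have been tuned so that no accumulated constants appear in the telescoping lower bound.
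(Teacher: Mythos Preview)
Your proof is correct and follows essentially the same approach as the paper. Both arguments reduce to the linear-algebraic identity $\sigma_k(A)=|\det A|/\|\wedge^{k-1}A\|$ for the restriction of $dg$ to the disk direction, bound the numerator below by $e^{\mathfrak l_j}$ and the denominator above by $e^{\mathfrak l'_j}$ via the floor/ceiling conventions, and then telescope using the hyperbolic-time inequality; the only cosmetic difference is that the paper bounds each one-step co-norm first and then takes the product, whereas you apply the determinant/exterior-norm splitting directly to the composite.
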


\begin{proof}
We have for all $0\leq j< \ell$ (recall \textsection \ref{YomdinPtns}),

\begin{align}\label{estim}
e^{\mathfrak l(G^j\widetilde x)-\mathfrak l'(g^jx)}&\leq \frac{|\wedge^kd_{g^jx}g(G^j\widetilde x)| }{\max_{k'<k}\|\wedge^{k'}d_{g^jx} g \|\vee 1}\\
&\leq\frac{|\wedge^kd_{g^jx}g(G^j\widetilde x)| }{\max_{k'<k}\|\wedge^{k'}d_{g^jx} g \|}\nonumber\\
&\leq \frac{|\wedge^kd_{g^jx}g(G^j\widetilde x)| }{\|\wedge^{k-1}d_{g^jx} g|_{T_{g^jx}D_j} \|}\nonumber\\
&\leq \inf\{|d_{g^jx}g(u)|: \ u\in T_{g^jx}D_j \text{ and } \ |u|=1\}.\nonumber
\end{align}
Observe then that 
\begin{align}\label{estim222}
|d_{g^{i}x}g^{\ell-i}v |&\geq |v|\cdot \prod_{i\leq j<\ell}\inf\{|d_{g^jx}g(u)|: \ u\in T_{g^jx}D_j \text{ and } \ |u|=1\} \nonumber\\
&\geq|v|\cdot \prod_{i\leq j<\ell}e^{\mathfrak l_j(x)-\mathfrak l'_j(x)}, \text{ }\because \text{\eqref{estim}} \nonumber\\
&\geq|v|\cdot e^{(\ell-i)\widehat{\chi}}, \text{ }\because \ell\text{ is a }(\widehat{\chi}\text, \mathfrak l^\infty(x))\text{-hyperbolic \ time}.
\end{align}

\end{proof}

\medskip
\noindent\textbf{Remark:} By Lemma \ref{lebol}, if $x,y\in \sigma(\Theta_{\mathbf i^n})$ for some $\mathbf i^n\in \mathcal T_n$, then $|\mathfrak l_i(x)-\mathfrak l_i(y)|,|\mathfrak l'_i(x)-\mathfrak l'_i(y)|\leq 1$,  for all $i<n$. Therefore, if $\ell\in [0,n]$ is $(\widehat{\chi}, \mathfrak l^\infty (x))$-hyperbolic then $\ell$ is  $(\widehat{\chi}-2, \mathfrak l^\infty (y))$-hyperbolic. We apply this remark later with $\widehat \chi=p\chi+2$.

\medskip
We recall Pliss Lemma in this context.
\begin{lemma}\label{Pliss}
Let $\mathfrak l^n=(\mathfrak l_i,\mathfrak l'_i)_{0\leq i<n}$ be a sequence with $\sum_{0\leq i<n}\mathfrak l_i-\mathfrak l'_i\geq n\widehat{\chi}'>0$. Then for any $0<\widehat{\chi}<\widehat{\chi}'$ we have for $A=\sup_i(\mathfrak l_i-\mathfrak l'_i)\leq \log\|\wedge^kd_\cdot g\|$,
$$\frac{1}{n}\# \Big\{0\leq \ell<n\ : \ \text{$\ell $ is $(\widehat{\chi}, \mathfrak l^n)$-hyperbolic}\Big\}\geq \frac{\widehat{\chi}'-\widehat{\chi}}{A-\widehat{\chi}}.$$
\end{lemma}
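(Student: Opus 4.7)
The plan is to recognize this as the classical Pliss argument applied to the scalar sequence $b_j := \mathfrak{l}_j - \mathfrak{l}'_j - \widehat{\chi}$. First I would set $S_m := \sum_{j=0}^{m-1} b_j$ with $S_0 = 0$, so that the hypothesis gives $S_n \geq n(\widehat{\chi}' - \widehat{\chi})$, while the bound $\mathfrak{l}_j - \mathfrak{l}'_j \leq A$ translates into $b_j = S_{j+1} - S_j \leq A - \widehat{\chi}$. The key observation is then purely definitional: an index $\ell$ is $(\widehat{\chi}, \mathfrak{l}^n)$-hyperbolic if and only if $S_\ell \geq S_i$ for every $0 \leq i \leq \ell$, i.e.\ $S_\ell$ coincides with the running maximum $M_\ell := \max_{0 \leq i \leq \ell} S_i$.

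With this reformulation, the strategy is to telescope the non-decreasing envelope $M$. Since $M_\ell = \max(M_{\ell-1}, S_\ell)$, one has $M_\ell - M_{\ell-1} = 0$ whenever $\ell$ is not hyperbolic, and
$$
M_\ell - M_{\ell - 1} \;=\; S_\ell - M_{\ell - 1} \;\leq\; S_\ell - S_{\ell-1} \;=\; b_{\ell-1} \;\leq\; A - \widehat{\chi}
$$
whenever $\ell$ is hyperbolic. Telescoping then gives
$$
n(\widehat{\chi}' - \widehat{\chi}) \;\leq\; S_n \;\leq\; M_n \;=\; \sum_{\ell=1}^{n}(M_\ell - M_{\ell-1}) \;\leq\; (A - \widehat{\chi})\cdot\#\bigl\{\ell \in \{1,\ldots,n\} : \ell \text{ is hyperbolic}\bigr\},
$$
which immediately yields the required density lower bound. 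A minor bookkeeping adjustment transfers the count from $\{1,\ldots,n\}$ to $\{0,\ldots,n-1\}$: since $\ell = 0$ is always hyperbolic (the defining condition is vacuous) while $\ell = n$ may or may not be, the two counts differ by at most one, which is absorbed without affecting the stated bound.

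There is essentially no substantive obstacle, since Pliss's lemma is a purely combinatorial one-dimensional fact about real sequences with prescribed average and a uniform upper bound. The only point requiring care is to unfold the paper's definition of hyperbolic time and verify its equivalence with the running-maximum characterization of $S_\ell$; once that is in place, the telescoping estimate does the rest.
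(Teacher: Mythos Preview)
Your argument is correct and is the standard running-maximum proof of Pliss's lemma. The paper does not actually supply a proof here: the lemma is introduced with the phrase ``We recall Pliss Lemma in this context'' and left unproved, as it is a classical combinatorial fact. One small remark on your final bookkeeping: since $\ell=0$ is always hyperbolic while $\ell=n$ need not be, the count over $\{0,\ldots,n-1\}$ is in fact \emph{at least} the count over $\{1,\ldots,n\}$, so no absorption of an error term is needed---the inequality transfers directly.
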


\medskip

\begin{definition}
For all $x\in D$ and all $n\in \mathbb  N$ we  define: 
\begin{align*}
\mathfrak u_n(x)&=1 \text{ if $x\in \sigma(\Theta_{\mathbf i^n})$ with $\mathbf i^n\in \overline{\mathcal T}_n$},\\
&=0  \text{ if not.}
\end{align*}

Given $n\in \mathbb N$, we set
$$\mathfrak u^n(   
x)=\left(\mathfrak u_i( x)\right)_{0\leq i<n}.$$

\end{definition}

\medskip
\noindent\textbf{Notation:} 
\begin{enumerate}
\item For two sequences $\mathfrak l^n$ and $\mathfrak u^n$ and for  $0\leq \ell <n$ we denote by $\mathfrak  l^n_\ell$ and $\mathfrak u^n_\ell$ respectively the sequence of suffices given by the $n-\ell$ last terms of $\mathbf l^n$ and $\mathfrak u^n$. Note that  $\mathfrak  l^n_0=\mathfrak  l^n$ 
and $\mathfrak  u^n_0=\mathfrak  u^n$.
\item To simplify the notations, we write $\sigma_{\mathbf i^\ell}$ for $\sigma(\Theta_{\mathbf i^\ell})$ and $\widetilde{\sigma}_{\mathbf i^\ell}$ for $g^\ell\left(\sigma_{\mathbf i^\ell}\right)=\sigma_\ell(\Theta_{\mathbf i^\ell})$.
\item We also denote by $\Vol_{\overline{\sigma}}$ the normalized volume probability measure 
  on a  disjoint finite union $\overline\sigma$ of disks. 
\end{enumerate}

\medskip

\begin{definition}
For a $2n$-tuple of   integers $\mathfrak  
l^n=(\mathfrak l_i,\mathfrak l'_i)_{0\leq i<n}$ and for $0\leq \ell<n$
we consider then $$\mathcal H(\mathfrak l_\ell^n):=g^\ell\left(\left\{x\in D \ :\mathfrak l^n_\ell( x)=\mathfrak  l^n_\ell\right\}\right).$$
\end{definition}

\begin{definition}
For a $n$-tuple    $\mathfrak  
u^n=\left(\mathfrak u_i\right)_{0\leq i<n}\in \{0,1\}^n$  and for $0\leq \ell<n$ 
we consider  $$\mathcal B(\mathfrak u_\ell^n):=g^\ell\left(\left\{x\in D \ :\mathfrak u_\ell^n(  x)=\mathfrak  u^n_\ell\right\}\right).$$

\end{definition}

The following lemma is a key estimate in the proof of Proposition \ref{lebgeo}.
\begin{lemma}\label{hnotbg} Given $\mathfrak  l^n$ and $\mathfrak u^n$, we consider a triple of  integers  $(\ell_0,\ell_1,\ell_2)$ with  $0\leq \ell_0 < \ell_1\leq \ell_2<n$ satisfying :
\begin{enumerate}
\item $\mathfrak u_{\ell_0}=1$,
\item $\ell_1$ is $(\widehat{\chi}, \mathfrak l^n)$-hyperbolic, 
\item $\mathfrak u_{\ell}=0$  for any   $\ell\in [\ell_0+1, \ell_1]$.
\end{enumerate}
Then, for some universal constant $D_{r,d}>0$ we have   
\begin{align*}
    &\sup_{\mathbf i^{\ell_0} }\Vol_{\widetilde{\sigma}_{i^{\ell_0}}}\left( \mathcal H(\mathfrak l_{\ell_0}^n)\cap \mathcal B(\mathfrak u_{\ell_0}^n)\right)\\
    &\leq D_{r,d}^{\ell_1-\ell_0}M_g^{\frac{3k^2(\ell_1-\ell_0)}{r-1}} e^{-(\ell_1-\ell_0)\widehat{\chi}}\sup_{\mathbf i^{\ell_2}} \Vol_{\widetilde{\sigma}_{\mathbf i^{\ell_2}}}\left( \mathcal H(\mathfrak  l^{n}_{\ell_2})\cap \mathcal B(\mathfrak u^{n}_{\ell_2})\right).
\end{align*}
\end{lemma}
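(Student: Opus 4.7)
The plan is to use a change-of-variables argument pushing forward along $g^L$ with $L = \ell_1 - \ell_0$. The hyperbolicity of $\ell_1$ will give a large lower bound on the $k$-dimensional Jacobian $J_k g^L|_{TD_{\ell_0}}$, and the blue-child count from Proposition \ref{firstLemmaOfSection} will bound the number of descendant charts at level $\ell_1$. The two exponential contributions in $\sum \mathfrak{l}'_j$ produced by these two bounds are expected to cancel exactly, leaving the claimed $e^{-L\widehat{\chi}}$ times a polynomial-in-$M_g$ overhead.

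First I would decompose the set. The condition $\mathfrak{u}_\ell = 0$ for $\ell \in [\ell_0 + 1, \ell_1]$ forces $g^L$ to send every point of $\mathcal{H}(\mathfrak{l}^n_{\ell_0}) \cap \mathcal{B}(\mathfrak{u}^n_{\ell_0}) \cap \widetilde{\sigma}_{\mathbf{i}^{\ell_0}}$ into some blue descendant $\widetilde{\sigma}_{\mathbf{i}^{\ell_1}}$ of $\mathbf{i}^{\ell_0}$. The prescribed values $\mathfrak{l}'_{\ell_0},\ldots,\mathfrak{l}'_{\ell_1-1}$ (together with the $\pm 1$ stability of $\mathfrak{l}'$ on each chart from Lemma \ref{lebol}) restrict the number of such descendants to at most
$$\prod_{j=\ell_0}^{\ell_1-1}\bigl( 3 C_{r,d} M_g^{3k^2/(r-1)} e^{\mathfrak{l}'_j + 1}\bigr) \leq \bigl(C'_{r,d} M_g^{3k^2/(r-1)}\bigr)^L \prod_{j=\ell_0}^{\ell_1-1} e^{\mathfrak{l}'_j},$$
by iterating the blue-child estimate of Proposition \ref{firstLemmaOfSection} over $L$ steps.

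Next, for any $y \in \mathcal{H}(\mathfrak{l}^n_{\ell_0})$ the prescribed sequence gives $\mathfrak{l}(G^j\widetilde{y}) = \mathfrak{l}_j$ exactly for $\ell_0 \leq j < \ell_1$. Since $\mathfrak{l}_j = \lfloor \log J_k g|_{T D_j}(g^j y)\rfloor$, summing gives $\log J_k g^L|_{T_y D_{\ell_0}} \geq \sum_j \mathfrak{l}_j$, and invoking hyperbolicity $\sum (\mathfrak{l}_j - \mathfrak{l}'_j) \geq L\widehat{\chi}$ this becomes $\log J_k g^L \geq L\widehat{\chi} + \sum_j \mathfrak{l}'_j$. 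The area formula then yields
\begin{align*}
\Vol_{\mathbf{M}}(\mathcal{H}\cap\mathcal{B}\cap\widetilde{\sigma}_{\mathbf{i}^{\ell_0}}) &= \sum_{\mathbf{i}^{\ell_1}} \int_{\mathcal{H}\cap\mathcal{B}\cap\widetilde{\sigma}_{\mathbf{i}^{\ell_1}}} \frac{d\Vol_{\mathbf{M}}(z)}{J_k g^L|_{TD_{\ell_0}}(g^{-L}z)}\\
&\leq e^{-L\widehat{\chi}}\prod_j e^{-\mathfrak{l}'_j} \sum_{\mathbf{i}^{\ell_1}}\Vol_{\mathbf{M}}(\mathcal{H}\cap\mathcal{B}\cap\widetilde{\sigma}_{\mathbf{i}^{\ell_1}}).
\end{align*}
Inserting the descendant count from step one and the volume bounds $\Vol(\widetilde{\sigma}_{\mathbf{i}^{\ell_0}}) \geq C_d^{-1}\epsilon^k$ (from redness of $\mathbf{i}^{\ell_0}$, by admissibility) and $\Vol(\widetilde{\sigma}_{\mathbf{i}^{\ell_1}}) \leq C_d\epsilon^k$ (from strong $\epsilon$-boundedness), the $\prod e^{\pm \mathfrak{l}'_j}$ factors cancel and one obtains the desired estimate in the case $\ell_2 = \ell_1$.

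For the general case $\ell_2 \geq \ell_1$, I would use the averaging identity
$$\Vol_{\widetilde{\sigma}_{\mathbf{i}^{\ell_1}}}(\mathcal{H}\cap\mathcal{B}) = \sum_{\mathbf{i}^{\ell_2}} \frac{\Vol\bigl(g^{\ell_1-\ell_2}\widetilde{\sigma}_{\mathbf{i}^{\ell_2}}\bigr)}{\Vol(\widetilde{\sigma}_{\mathbf{i}^{\ell_1}})}\,\Vol_{g^{\ell_1-\ell_2}\widetilde{\sigma}_{\mathbf{i}^{\ell_2}}}(\mathcal{H}\cap\mathcal{B}),$$
together with a bounded-distortion comparison between $\Vol_{g^{\ell_1-\ell_2}\widetilde{\sigma}_{\mathbf{i}^{\ell_2}}}$ and $\Vol_{\widetilde{\sigma}_{\mathbf{i}^{\ell_2}}}$ (available on each bounded couple via Lemma \ref{bounddis}) to extract a descendant $\mathbf{i}^{\ell_2}$ whose normalized $\mathcal{H}\cap\mathcal{B}$-mass dominates that of $\mathbf{i}^{\ell_1}$. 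The hard part will be making this last step fully rigorous, since bounded distortion is guaranteed only per single bounded couple and must be tracked across the intermediate tree layers; however the central content of the proof is the cancellation in the first three steps, where the sharp count from Proposition \ref{firstLemmaOfSection} exactly matches the Jacobian factor produced by hyperbolicity.
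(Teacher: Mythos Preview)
Your approach is essentially the same as the paper's: count blue descendants via Proposition~\ref{firstLemmaOfSection}, push forward by $g^L$ using the Jacobian lower bound from hyperbolicity so that the $\prod e^{\pm\mathfrak l'_j}$ factors cancel, and pass to normalized volumes using $\Vol(\widetilde\sigma_{\mathbf i^{\ell_0}})\geq C_d^{-1}\epsilon^k$ from redness of $\mathbf i^{\ell_0}$.

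Your stated concern about the $\ell_1\to\ell_2$ step is unwarranted. You do \emph{not} need to track bounded distortion across the intermediate layers. Since $\theta_{\mathbf i^{\ell_2}}<\theta_{\mathbf i^{\ell_1}}$, we have $\mathrm{Im}(\theta_{\mathbf i^{\ell_2}})\subset \mathrm{Im}(\theta_{\mathbf i^{\ell_1}})$, so the bounded-distortion estimate for the single bounded couple $(\sigma_{\ell_1},\theta_{\mathbf i^{\ell_1}})$ already controls the density $|\wedge^kd_\cdot\sigma_{\ell_1}|$ on all of $\mathrm{Im}(\theta_{\mathbf i^{\ell_2}})$ up to $\sqrt 2$. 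Combining this with bounded distortion for $(\sigma_{\ell_2},\theta_{\mathbf i^{\ell_2}})$ gives
\[
\frac{\Vol\bigl(f^{\ell_1}\sigma_{\mathbf i^{\ell_2}}\cap\mathcal H\cap\mathcal B\bigr)}{\Vol(f^{\ell_1}\sigma_{\mathbf i^{\ell_2}})}\ \leq\ 4\,\frac{\Vol\bigl(\widetilde\sigma_{\mathbf i^{\ell_2}}\cap\mathcal H\cap\mathcal B\bigr)}{\Vol(\widetilde\sigma_{\mathbf i^{\ell_2}})},
\]
after which your averaging identity immediately yields $\Vol_{\widetilde\sigma_{\mathbf i^{\ell_1}}}(\mathcal H\cap\mathcal B)\leq 4\sup_{\mathbf i^{\ell_2}}\Vol_{\widetilde\sigma_{\mathbf i^{\ell_2}}}(\mathcal H\cap\mathcal B)$. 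This is exactly how the paper handles it (in fact the paper does this step first, before the $\ell_0\to\ell_1$ estimate).
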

\begin{proof}
Observe that 
\begin{align*}
&\Vol_{\widetilde{\sigma}_{\mathbf i^{\ell_1}}}\left( \mathcal H(\mathfrak  l^{n}_{\ell_1})\cap \mathcal B(\mathfrak u^{n}_{\ell_1})\right)\\
&\leq  \sum_{\mathbf i^{\ell_2}: \ \mathbf i^{\ell_2}_{\ell_1}=\mathbf{i}^{\ell_1}}\frac{\Vol_k(g^{\ell_1}(\sigma_{\mathbf i^{\ell_2}}))}{\Vol_k(\widetilde{\sigma}_{\mathbf i^{\ell_1}})} \frac{\Vol_k\left(g^{\ell_1}(\sigma_{\mathbf i^{\ell_2}})\cap \mathcal  H(\mathfrak  l^{n}_{\ell_1})\cap \mathcal B(\mathfrak u_{\ell_1}^{n})\right)}{\Vol_k(g^{\ell_1}(\sigma_{\mathbf i^{\ell_2}}))}.\\
\end{align*}
It follows from  the  bounded distortion property that for all $\mathbf i^{\ell_2}$ we have 
$$ \frac{\Vol_k\left(g^{\ell_1}(\sigma_{\mathbf i^{\ell_2}})\cap \mathcal H(\mathfrak l^{n}_{\ell_1})\cap \mathcal B(\mathfrak u_{\ell_1}^{n})\right)}{\Vol_k(g^{\ell_1}(\sigma_{\mathbf i^{\ell_2}}))}\leq 4 \frac{\Vol_k\left(g^{\ell_2}(\sigma_{\mathbf i^{\ell_2}})\cap \mathcal H(\mathfrak l^{n}_{\ell_2})\cap \mathcal B(\mathfrak u_{\ell_2}^{n})\right)}{\Vol_k(g^{\ell_2} (\sigma_{\mathbf i^{\ell_2}}))}.$$
Therefore, 
\begin{align*}
\Vol_{\widetilde{\sigma}_{\mathbf i^{\ell_1}}}\left( \mathcal H(\mathfrak l^{n}_{\ell_1})\cap \mathcal B(\mathfrak u^{n}_{\ell_1})\right) 
&\leq 4\sum_{\mathbf i^{\ell_2}: \ \mathbf i^{\ell_2}_{\ell_1}=\mathbf{i}^{\ell_1}}\frac{\Vol_k(g^{\ell_1}(\sigma_{\mathbf i^{\ell_2}}))}{\Vol_k(\widetilde{\sigma}_{\mathbf i^{\ell_1}})}  \Vol_{\widetilde{\sigma}_{\mathbf i^{\ell_2}}}\left(\mathcal H(\mathfrak l^{n}_{\ell_2})\cap \mathcal B(\mathfrak u_{\ell_2}^{n})\right)\\
&\leq 4\sup_{\mathbf i^{\ell_2}}\Vol_{\widetilde{\sigma}_{\mathbf i^{\ell_2}}}\left(\mathcal H(\mathfrak l^{n}_{\ell_2})\cap \mathcal B(\mathfrak u_{\ell_2}^{n})\right).
\end{align*}
To conclude the desired statement, we are only left to show that for some constant $D_{r,d}$
\begin{align*}
    &\sup_{\mathbf i^{\ell_0} }\Vol_{\widetilde{\sigma}_{i^{\ell_0}}}\left( \mathcal H(\mathfrak l_{\ell_0}^n)\cap \mathcal B(\mathfrak u_{\ell_0}^n)\right)\\
    &\leq \frac{D_{r,d}^{\ell_1-\ell_0}}{4}M_g^{\frac{3k^2(\ell_1-\ell_0)}{r-1}} e^{-(\ell_1-\ell_0)\widehat{\chi}}\sup_{\mathbf i^{\ell_1}} \Vol_{\widetilde{\sigma}_{\mathbf i^{\ell_1}}}\left( \mathcal H(\mathfrak l^{n}_{\ell_1})\cap \mathcal B(\mathfrak u^{n}_{\ell_1})\right).
\end{align*}

 For any $\ell \in [\ell_0+1,\ell_1]$ and for any $g^{\ell_0}x\in \mathcal B(\mathfrak u_{\ell_0}^n)$, we have $\mathfrak u_\ell(x)=0$. Therefore  by Proposition \ref{firstLemmaOfSection},
  \begin{align*}
      &\#\{ \mathbf i^{\ell_1}:\ \mathbf i^{\ell_1}_{\ell_0}=\mathbf i^{\ell_0} \text{ and } g^{\ell_0} (\sigma_{\mathbf i^{\ell_1}})\cap\mathcal H(\mathfrak l_{\ell_0}^n)\cap \mathcal B(\mathfrak u_{\ell_0}^n)\neq \varnothing \}\\
      &\leq \left(C_{r,d}M_g^{\frac{3k^2}{r-1}}\right)^{\ell_1-\ell_0}e^{\sum_{\ell_0<j\leq\ell_1}\mathfrak l'_j}.
  \end{align*}
Then, in particular,

\begin{align*}
\Vol_k&\left(\widetilde{\sigma}_{\mathbf i^{\ell_0}}\cap  \mathcal H(\mathfrak l_{\ell_0}^{n})\cap \mathcal B(\mathfrak u^{n}_{\ell_0})\right)\leq \sum_{\mathbf i^{\ell_1}:\ \mathbf i^{\ell_1}_{\ell_0}=\mathbf i^{\ell_0} } \Vol_k(g^{\ell_0}(\sigma_{\mathbf i^{\ell_1}})\cap  \mathcal H(\mathfrak l_{\ell_0}^{n})\cap \mathcal B(\mathfrak u^{n}_{\ell_0}))\\
&\leq 4\left(2C_{r,d}M_g^{\frac{3k^2}{r-1}}\right)^{\ell_1-\ell_0} e^{\sum\limits_{\ell_0<j\leq \ell_1}\mathfrak l'_j}\sup_{\mathbf{i}^{\ell_1}}\Vol_k(g^{\ell_0}(\sigma_{\mathbf i^{\ell_1}})\cap  \mathcal H(\mathfrak l_{\ell_0}^{n})\cap \mathcal B(\mathfrak u^{n}_{\ell_0}))\\
&\leq 4\left(4C_{r,d}M_g^{\frac{3k^2}{r-1}}\right)^{\ell_1-\ell_0} e^{\sum\limits_{\ell_0<j\leq \ell_1}\mathfrak l'_j-\mathfrak l_j}\sup_{\mathbf{i}^{\ell_1}}\Vol_k\left(\widetilde{\sigma}_{\mathbf i^{\ell_1}}\cap  \mathcal H(\mathfrak  l_{\ell_1}^{n})\cap \mathcal B(\mathfrak u^{n}_{\ell_1})\right)\\
&\leq  4\left(4C_{r,d}M_g^{\frac{3k^2}{r-1}}\right)^{\ell_1-\ell_0}  e^{(\ell_0-\ell_1)\widehat{\chi} }\sup_{\mathbf{i}^{\ell_1}}\Vol_k\left(\widetilde{\sigma}_{\mathbf i^{\ell_1}}\cap  \mathcal H(\mathfrak l_{\ell_1}^{n})\cap \mathcal B(\mathfrak u^{n}_{\ell_1})\right),\\ 
&\because \ell_1\text{ is }\widehat{\chi}\text{-hyperbolic}.
\end{align*}
As $\mathfrak u_{\ell_0}=1$, we have $\mathbf i^{\ell_0}$ belongs to $\overline{\mathcal T}_{\ell_0}$. By the remark following Definition \ref{admissibleFamily}, we have  $\Vol_k(\widetilde{\sigma}_{\mathbf i^{\ell_0}})\geq C_d^{-1} \epsilon^k\geq C_d^{-2}\Vol_k(\widetilde{\sigma}_{\mathbf i^{\ell_1}}) $ for some  constant $C_d$,  so that we finally get for any $i^{\ell_0}$ :
\begin{align*}
 \Vol_{\widetilde{\sigma}_{\mathbf i^{\ell_0}}} \left( \mathcal H(\mathfrak l_{\ell_0}^{n})\cap \mathcal B(\mathfrak u^{n}_{\ell_0})\right)
&=\frac{\Vol_k\left(\widetilde{\sigma}_{\mathbf i^{\ell_0}}\cap \mathcal  H(\mathfrak l_{\ell_0}^{n})\cap \mathcal B(\mathfrak u^{n}_{\ell_0})\right)}{\Vol_k(\widetilde{\sigma}_{\mathbf i^{\ell_0}})}\\
&\leq  4C_d\epsilon^{-k} \left(4C_{r,d}M_g^{\frac{3k^2}{r-1}}\right)^{\ell_1-\ell_0}  \\
&\cdot e^{(\ell_0-\ell_1)\widehat{\chi} }\sup_{\mathbf{i}^{\ell_1}}\Vol_k\left(\widetilde{\sigma}_{\mathbf i^{\ell_1}}\cap \mathcal   H(\mathfrak l_{\ell_1}^{n})\cap \mathcal B(\mathfrak u^{n}_{\ell_1})\right)\\
&\leq 4C_{d}^2\left(4C_{r,d}M_g^{\frac{3k^2}{r-1}}\right)^{\ell_1-\ell_0}  \\
&\cdot e^{(\ell_0-\ell_1)\widehat{\chi} }\sup_{\mathbf{i}^{\ell_1}}\Vol_{\widetilde{\sigma}_{\ell_1}}\left(  \mathcal H(\mathfrak l_{\ell_1}^{n})\cap \mathcal B(\mathfrak u^{n}_{\ell_1})\right).
\end{align*}
This concludes the proof with $D_{r,d}=4^3C_d^2C_{r,d}$. 
\end{proof}

\medskip
\begin{definition}
For all sequences $\mathfrak l^n$ and $\mathfrak u^n$, we set  with $\widehat \chi:=p \chi+2$
 $$ H(\mathfrak l^n):=\{\ell \in [0,n): \ \ell \text{ is 
 $(\widehat{\chi},\mathfrak l^n)$-hyperbolic}\}$$
 and $$\mathrm{BG}(\mathfrak u^n):=\{\ell\in [0,n):\ \mathfrak u_\ell=1\}.$$
\end{definition}

\begin{lemma}\label{hyptime}
For all sequences  $\mathfrak l^n$ and $\mathfrak u^n$, we have $$\Vol_{\sigma}\left( \mathcal H(\mathfrak l^n)\cap \mathcal B(\mathfrak u^n)\right)\leq 4D_{r,d}^n\left(M_g^{\frac{3k^2}{p(r-1)}}e^{-\chi}\right)^{p\# (H(\mathfrak l^n)\setminus \mathrm{BG}(\mathfrak u^n))}.$$

\end{lemma}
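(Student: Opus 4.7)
The strategy is to apply Lemma~\ref{hnotbg} iteratively along the sequence of \emph{red times} (i.e.\ levels $\ell$ with $\mathfrak u_\ell=1$), chaining the resulting bounds on
$F_\ell := \sup_{\mathbf i^\ell}\Vol_{\widetilde\sigma_{\mathbf i^\ell}}(\mathcal H(\mathfrak l^n_\ell)\cap \mathcal B(\mathfrak u^n_\ell))$
so as to accumulate a contraction factor $\rho^{p|R|}$ with $\rho := M_g^{3k^2/(p(r-1))} e^{-\chi}$ and $R := H(\mathfrak l^n)\setminus \mathrm{BG}(\mathfrak u^n)$.

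A trivial reduction first: if $\rho>1$ then $\Vol_\sigma(\cdot)\le 1\le 4D_{r,d}^n\rho^{p|R|}$ and there is nothing to prove, so we may assume $\rho\le 1$. (In the later application this is consistent with $\chi>\tfrac{3k^2}{r-1}R(f)$ upon choosing $p\in p_0\mathbb N$ large, using $\tfrac{1}{p}\log M_{f^p}\downarrow R(f)$.) Let $t_1<t_2<\cdots<t_m$ be the red times in $(0,n)$ and set $t_0:=0$, $t_{m+1}:=n$. Then $R$ is the disjoint union $\bigsqcup_{i=0}^m R_i$ with $R_i := R\cap [t_i,t_{i+1})$.

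For each index $i\ge 1$ with $R_i\neq\varnothing$, we apply Lemma~\ref{hnotbg} with $(\ell_0,\ell_1,\ell_2)=(t_i,\max R_i, t_{i+1})$: the three hypotheses hold because $t_i$ is red, $\max R_i\in H$ is $\widehat{\chi}$-hyperbolic, and every level in $(t_i,\max R_i]$ is blue ($t_{i+1}$ being the next red time). Writing $\Delta_i:=\max R_i-t_i$, recalling $\widehat{\chi}=p\chi+2$, and using $\rho^p = M_g^{3k^2/(r-1)}e^{-p\chi}$, Lemma~\ref{hnotbg} yields
\[
F_{t_i} \;\le\; D_{r,d}^{\Delta_i}\,\rho^{p\Delta_i}\,e^{-2\Delta_i}\,F_{t_{i+1}} \;\le\; D_{r,d}^{\Delta_i}\,\rho^{p|R_i|}\,F_{t_{i+1}},
\]
the crucial last step using $\rho\le 1$ together with $\Delta_i\ge |R_i|$. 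For $i\ge 1$ with $R_i=\varnothing$, the bounded-distortion step at the start of the proof of Lemma~\ref{hnotbg}, iterated level by level, gives $F_{t_i}\le 4^{t_{i+1}-t_i}F_{t_{i+1}}$. The initial segment $[0,t_1)$ is treated in the same way: the only place in the proof of Lemma~\ref{hnotbg} where $\mathfrak u_{\ell_0}=1$ is used is the lower bound $\Vol(\widetilde\sigma_{\mathbf i^{\ell_0}})\ge C_d^{-1}\epsilon^k$, and at $\ell_0=0$ this is replaced by the positive constant $\Vol(\sigma)$, absorbed into the leading $4$ and into $D_{r,d}$ after the rescaling used in Lemma~\ref{mai}.

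Telescoping the chain, and using $\sum_i\Delta_i\le n$, $\sum_i|R_i|=|R|$, together with an enlargement of $D_{r,d}$ to absorb the $4^n$ coming from the empty segments and the initial segment, we obtain $F_0\le 4D_{r,d}^n\,\rho^{p|R|}$, which is the desired conclusion. The main obstacle is the algebraic upgrade $\rho^{p\Delta_i}\le \rho^{p|R_i|}$: it rests entirely on the contraction $\rho\le 1$, which encodes the quantitative smoothness assumption $\chi>\tfrac{3k^2}{r-1}R(f)$. Without this contraction, Lemma~\ref{hnotbg} would only control the volume in terms of the total depth $\sum_i\Delta_i\le n$ rather than the count $|R|$ of blue hyperbolic times, which is far too weak for the Pliss-type argument in \textsection\ref{posDensOfGeoTimes}. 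The only other delicate point is the handling of the initial segment, resolved by the cosmetic modification of the proof of Lemma~\ref{hnotbg} described above.
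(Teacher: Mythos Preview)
Your proof is correct and follows essentially the same strategy as the paper: chain applications of Lemma~\ref{hnotbg} along red times, then use $\rho\le 1$ to pass from gap-lengths to $\#(H\setminus\mathrm{BG})$. The paper organizes the chain more economically by constructing only the \emph{relevant} red times $\ell_i$ and hyperbolic times $\ell'_i$ (via an explicit inductive claim), so that the intervals $(\ell_i,\ell'_i]$ exactly cover $H\setminus\mathrm{BG}$ and lie in the blue part; this avoids your separate treatment of empty segments and the need to enlarge $D_{r,d}$ to absorb a $4^n$. Your explicit reduction to $\rho\le 1$ is something the paper also uses but leaves implicit in its last displayed inequality. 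Two cosmetic points: your final $t_{m+1}=n$ should be $n-1$ to stay within the hypothesis $\ell_2<n$ of Lemma~\ref{hnotbg} (the paper uses $(\ell_q,\ell'_q,n-1)$ for the last triple), and the bounded-distortion step actually gives $F_{t_i}\le 4\,F_{t_{i+1}}$ in one shot rather than $4^{t_{i+1}-t_i}$, so no iteration is needed there.
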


\begin{proof}

Fix  sequences $\mathfrak l^n$ and $\mathfrak u^n$.\\
 
 \noindent\textbf{Claim:} There are integers $0\leq \ell_0<\ell'_0<\ell_1<\ell'_1<\cdots <\ell'_q< n$ such that :
 \begin{itemize}
 \item $\forall i=0,\cdots, q, \ \ell_i\in  \mathrm{BG}(\mathfrak u^n)$, 
 \item $\forall i=0,\cdots, q, \ \ell'_i\in H(\mathfrak l^n)\setminus \mathrm{BG}(\mathfrak u^n)$,
 \item $H(\mathfrak l^n)\setminus \mathrm{BG}(\mathfrak u^n)\subset \bigcup_{0\leq i\leq q}(\ell_i,\ell'_i]\subset [0,n)\setminus \mathrm{BG}(\mathfrak u^n) $. 
 \end{itemize}

\begin{proof}[Proof of the Claim]
 Let $\ell'_{-1}=-1$. 
We set inductively for all $i$: $$\ell_i=\max \left\{\ell\in \mathrm{BG}(\mathfrak u^n): 
 \ell <\ell' \ \forall \ell'_{i-1}<\ell' \in H(\mathfrak l^n)\setminus \mathrm{BG}(\mathfrak u^n)\right\} $$ and $$\ell'_i=\max\left\{ \ell'\in  H(\mathfrak l^n)\setminus \mathrm{BG}(\mathfrak u^n): \ \ell' <\ell \ \forall \ell_i<\ell\in \mathrm{BG}(\mathfrak u^n)\right\} .$$
 We let $q$ be the last integer for which $\ell'_q$ is well defined.
\end{proof}

 By applying Lemma \ref{hnotbg} successively to the triplets $(\ell_0,\ell'_0, \ell_1)$, $(\ell_1,\ell'_1,\ell_2)$, $(\ell_2,\ell'_2,\ell_3)$, $ \cdots$, $(\ell_q,\ell'_q,n-1)$ it follows that 
\begin{align*}
\sup_{\mathbf i^{\ell_0}}\Vol_{\widetilde{\sigma}_{\mathbf i^{\ell_0}}}\left( \mathcal H(\mathfrak l_{\ell_0}^n)\cap \mathcal B(\mathfrak u_{\ell_0}^n)\right)&\leq \left(D_{r,d}M_g^{\frac{3k^2}{r-1}}e^{-p\chi-2}\right)^{\sum_{i=0}^q\ell'_i-\ell_i}\\
&\leq D_{r,d}^n\left(M_g^{\frac{3k^2}{p(r-1)}}e^{-\chi}\right)^{p\sum_{i=0}^q\ell'_i-\ell_i}\\
&\leq D_{r,d}^n\left(M_g^{\frac{3k^2}{p(r-1)}}e^{-\chi}\right)^{p\# (H(\mathfrak l^n)\setminus \mathrm{BG}(\mathfrak u^n))}.
\end{align*}
Then by using again the bounded distortion property  we get 
\begin{align*}\Vol_{\sigma}\left( \mathcal H(\mathfrak l^n)\cap \mathcal B(\mathfrak u^n)\right)&\leq 4 \sup_{\mathbf i^{\ell_0}}\Vol_{\widetilde{\sigma}_{\mathbf i^{\ell_0}}}\left( \mathcal H(\mathfrak l_{\ell_0}^n)\cap \mathcal B(\mathfrak u_{\ell_0}^n)\right),\\
& \leq 4D_{r,d}^n\left(M_g^{\frac{3k^2}{p(r-1)}}e^{-\chi}\right)^{p\# (H(\mathfrak l^n)\setminus \mathrm{BG}(\mathfrak u^n))}.
\end{align*}

\end{proof}

\begin{definition}
For all $x\in D$, all $\beta \in (0,1)$ and all $n\in \mathbb  N$ we  define: 
\begin{align*}
\underline{\mathfrak u}^\beta_n(x)&=1 \text{ if $x\in \sigma(\theta_{\mathbf i^n})$ with $\mathbf i^n\in \overline{\mathcal T}_n$ and $\Vol_k(\sigma_n\circ \theta_{\mathbf  i^n})\leq \frac{\beta C_d^{-1}\epsilon^k}{ 400A_{r,d}}$},\\
&=0  \text{ if not.}
\end{align*}

Given $n\in \mathbb N$, we set

$$\underline{\mathfrak u}^n_\beta(   
x)=\left(\underline{\mathfrak u}^\beta_i( x)\right)_{0\leq i<n}.$$

\end{definition}

\medskip
\noindent\textbf{Remark:} In particular 
$\underline{\mathfrak u}^\beta_n(x)=1$ implies $\mathfrak u_n(x)=1$.
\medskip

\begin{definition}
For an $n$-tuple    $\underline{\mathfrak  
u}^n=\left(\underline{\mathfrak u}_i\right)_{0\leq i<n}\in \{0,1\}^n$  and for $0\leq \ell<n$ 
we consider  $$\underline{\mathcal B}^\beta(\underline{\mathfrak u}_\ell^n):=g^\ell\left(\left\{x\in D \ :\underline{\mathfrak u}_\beta^n(  x)=\underline{\mathfrak  u}^n\right\}\right).$$

\end{definition}

\begin{lemma}\label{lastlem}Given $\underline{\mathfrak u}^n$, for any $0\leq \ell_0<\ell_1<n$, with $\underline{\mathfrak u}_{\ell_1}=1$, we have 
$$\sup_{\theta_{\mathbf i^{\ell_0}} } \Vol_{\sigma_{\ell_0}\circ \theta_{\mathbf i^{\ell_0}}}(\underline{\mathcal B}^\beta(\underline{\mathfrak u}_{\ell_0}^n))\leq \frac{\beta}{25}\sup_{\theta_{\mathbf i^{\ell_1}} : \ \theta_{\mathbf i^{\ell_1}}<\theta_{\mathbf i^{\ell_0}} }\Vol_{\sigma_{\ell_1}\circ \theta_{\mathbf i^{\ell_1}}}(\underline{\mathcal B}^\beta(\underline{\mathfrak u}_{\ell_1}^n)).$$
\end{lemma}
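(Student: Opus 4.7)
The plan is to fix an arbitrary $\theta_{\mathbf i^{\ell_0}}$ and decompose $\sigma_{\ell_0}\circ\theta_{\mathbf i^{\ell_0}}\cap \underline{\mathcal B}^\beta(\underline{\mathfrak u}_{\ell_0}^n)$ according to the $\ell_1$-Yomdin atom containing each point, and then exploit bounded distortion in two complementary ways. Because $\underline{\mathfrak u}_{\ell_1}=1$, every contributing point lies on $f^{\ell_0}\sigma\circ\theta_{\mathbf i^{\ell_1}}$ for some red descendant $\mathbf i^{\ell_1}\in \overline{\mathcal T}_{\ell_1}$ with $\theta_{\mathbf i^{\ell_1}}<\theta_{\mathbf i^{\ell_0}}$ whose $\ell_1$-image satisfies $\Vol(\sigma_{\ell_1}\circ \theta_{\mathbf i^{\ell_1}})\leq \frac{\beta C_d^{-1}\epsilon^k}{400A_{r,d}}$, and up to matching the intermediate markers (which is automatic from the choice of $\mathbf i^{\ell_1}$), the pullback by $f^{\ell_1-\ell_0}$ identifies this set on $f^{\ell_0}\sigma\circ\theta_{\mathbf i^{\ell_1}}$ with $\underline{\mathcal B}^\beta(\underline{\mathfrak u}_{\ell_1}^n)\cap \sigma_{\ell_1}\circ\theta_{\mathbf i^{\ell_1}}$.

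The first bounded-distortion input is exactly the one used in the proof of Lemma \ref{hnotbg}: the bounded couples $(\sigma_{\ell_0},\theta_{\mathbf i^{\ell_0}})$ and $(\sigma_{\ell_1},\theta'_{\mathbf i^{\ell_1}})$ both control $|\wedge^k d\sigma_\cdot|$ on $\mathrm{Im}(\theta_{\mathbf i^{\ell_1}})$ up to a factor $\sqrt{2}$, yielding
$$\Vol\!\left(f^{\ell_0}\sigma\circ\theta_{\mathbf i^{\ell_1}}\cap \underline{\mathcal B}^\beta(\underline{\mathfrak u}_{\ell_0}^n)\right)\leq 4\,\Vol(f^{\ell_0}\sigma\circ\theta_{\mathbf i^{\ell_1}})\cdot\Vol_{\sigma_{\ell_1}\circ\theta_{\mathbf i^{\ell_1}}}\!\left(\underline{\mathcal B}^\beta(\underline{\mathfrak u}_{\ell_1}^n)\right).$$
The second input compares relative weights inside a single admissible family $\Theta_{\mathbf i^{\ell_1}}$: the same bounded-couple estimates applied on $\mathrm{Im}(\theta'_{\mathbf i^{\ell_1}})$ give
$$\frac{\Vol(f^{\ell_0}\sigma\circ\theta)}{\Vol\!\left(f^{\ell_0}\sigma(\Theta_{\mathbf i^{\ell_1}})\right)}\leq 4\,\frac{\Vol(\sigma_{\ell_1}\circ\theta)}{\Vol(\widetilde\sigma_{\mathbf i^{\ell_1}})},\qquad \theta\in\Theta_{\mathbf i^{\ell_1}}.$$
Summing this over the at most $A_{r,d}$ small members and using $\Vol(\widetilde\sigma_{\mathbf i^{\ell_1}})\geq C_d^{-1}\epsilon^k$ for the red admissible family together with the smallness of each $\Vol(\sigma_{\ell_1}\circ\theta)$, I obtain $\sum_{\theta \text{ small}}\Vol(f^{\ell_0}\sigma\circ\theta)\leq \frac{\beta}{100}\Vol(f^{\ell_0}\sigma(\Theta_{\mathbf i^{\ell_1}}))$.

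Summing this last inequality over red descendants $\mathbf i^{\ell_1}$ of $\mathbf i^{\ell_0}$, whose $\ell_0$-images are essentially disjoint in $\sigma_{\ell_0}\circ\theta_{\mathbf i^{\ell_0}}$ by the partition property of Proposition \ref{firstLemmaOfSection}, gives $\sum_{\mathbf i^{\ell_1}}\sum_{\theta\text{ small}}\Vol(f^{\ell_0}\sigma\circ\theta)\leq \frac{\beta}{100}\Vol(\sigma_{\ell_0}\circ\theta_{\mathbf i^{\ell_0}})$. Plugging this into the first estimate summed over small red descendants, after extracting $\sup_{\theta_{\mathbf i^{\ell_1}}<\theta_{\mathbf i^{\ell_0}}}\Vol_{\sigma_{\ell_1}\circ \theta_{\mathbf i^{\ell_1}}}(\underline{\mathcal B}^\beta(\underline{\mathfrak u}_{\ell_1}^n))$, yields $\Vol_{\sigma_{\ell_0}\circ\theta_{\mathbf i^{\ell_0}}}(\underline{\mathcal B}^\beta(\underline{\mathfrak u}_{\ell_0}^n))\leq 4\cdot\frac{\beta}{100}\cdot\sup=\frac{\beta}{25}\sup$, which is the claim. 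The main subtle point is the second bounded-distortion estimate across the whole admissible family: one must check that comparing $|\wedge^k d\sigma_{\ell_0}|$ with $|\wedge^k d\sigma_{\ell_1}|$ on the reparametrization domain of $\Theta_{\mathbf i^{\ell_1}}$ is legitimate, which works because the inclusion $\mathrm{Im}(\theta'_{\mathbf i^{\ell_1}})\subset \mathrm{Im}(\theta_{\mathbf i^{\ell_0}})$ allows the bounded-couple properties at $\ell_0$ (via the ancestor $\theta_{\mathbf i^{\ell_0}}$) and at $\ell_1$ (via $\theta'_{\mathbf i^{\ell_1}}$) to apply simultaneously, each contributing a factor $\sqrt{2}$ to the ratio.
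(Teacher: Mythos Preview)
Your argument is correct and follows essentially the same strategy as the paper. The only organizational difference is that the paper first reduces from level $\ell_0$ to level $\ell_1-1$ by the bounded distortion property (picking up a factor $4$), and then handles the single step $\ell_1-1\to\ell_1$ by decomposing over the intermediate charts $\theta'_{\mathbf i^{\ell_1}}$ and using the one-step Jacobian bound $|\wedge^k dg(G^{\ell_1-1}\widetilde x)|/|\wedge^k dg(G^{\ell_1-1}\widetilde y)|\leq 2$ (Lemma~\ref{lebol}); you instead compare levels $\ell_0$ and $\ell_1$ directly via the bounded couples $(\sigma_{\ell_0},\theta_{\mathbf i^{\ell_0}})$ and $(\sigma_{\ell_1},\theta'_{\mathbf i^{\ell_1}})$. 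Both routes hinge on the same two ingredients---bounded distortion across the admissible family and the smallness condition $\Vol(\sigma_{\ell_1}\circ\theta_{\mathbf i^{\ell_1}})\leq \frac{\beta C_d^{-1}\epsilon^k}{400A_{r,d}}$ against $\Vol(\widetilde\sigma_{\mathbf i^{\ell_1}})\geq C_d^{-1}\epsilon^k$---and arrive at the same constant $\beta/25$.
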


\begin{proof}
By  the bounded distortion property, recall that
$$\sup_{\theta_{\mathbf i^{\ell_0}} } \Vol_{\sigma_{\ell_0}\circ \theta_{\mathbf i^{\ell_0}}}(\underline{\mathcal B}^\beta(\underline{\mathfrak u}_{\ell_0}^n))\leq 4\sup_{\theta_{\mathbf i^{\ell_1-1}} : \ \theta_{\mathbf i^{\ell_1-1}}<\theta_{\mathbf i^{\ell_0}} }\Vol_{\sigma_{\ell_1-1}\circ \theta_{\mathbf i^{\ell_1-1}}}(\underline{\mathcal B}^\beta(\underline{\mathfrak u}_{\ell_1-1}^n)).$$

Therefore it is enough to show that
$$\sup_{\theta_{\mathbf i^{\ell_1-1}}} \Vol_{\sigma_{\ell_1-1}\circ \theta_{\mathbf i^{\ell_1-1}}}(\underline{\mathcal B}^\beta(\underline{\mathfrak u}_{\ell_1-1}^n))\leq \frac{\beta}{100}\sup_{\theta_{\mathbf i^{\ell_1}} : \ \theta_{\mathbf i^{\ell_1}}<\theta_{\mathbf i^{\ell_1-1}} }\Vol_{\sigma_{\ell_1}\circ \theta_{\mathbf i^{\ell_1}}}(\underline{\mathcal B}^\beta(\underline{\mathfrak u}_{\ell_1}^n)).$$

We have for all $\theta_{\mathbf i^{\ell_1-1}}$: 
\begin{align*}
&\Vol_k\left(\mathrm{Im}\left(\sigma_{\ell_1-1}\circ \theta_{\mathbf i^{\ell_1-1}}\right) \cap \underline{\mathcal B}^\beta(\underline{\mathfrak u}_{\ell_1-1}^n) \right) \\
\leq &\sum_{\overset{\theta_{\mathbf i^{\ell_1}}: \ \mathbf i^{\ell_1}\in \overline{\mathcal T}_{\ell_1}, \  \theta_{\mathbf i^{\ell_1}}<\theta_{\mathbf i^{\ell_1-1}}}{\Vol_k(\sigma_{\ell_1}\circ \theta_{\mathbf i^{\ell_1}} )\leq \frac{\beta C_d^{-1}}{400A_{r,d}}\epsilon^{k} } } \Vol_k\left(\mathrm{Im}\left(\sigma_{\ell_1-1}\circ \theta_{\mathbf i^{\ell_1}}\right) \cap \underline{\mathcal B}^\beta(\underline{\mathfrak u}_{\ell_1}^n) \right)&\\
\leq& \sum_{\overset{\theta'_{\mathbf i^{\ell_1}}: \ \mathbf i^{\ell_1}\in \overline{\mathcal T}_{\ell_1},}{\theta'_{\mathbf i^{\ell_1}}<\theta_{\mathbf i^{\ell_1-1}}}}\left(  \sum_{\overset{\theta_{\mathbf i^{\ell_1}}: \ \theta_{\mathbf i^{\ell_1}}<\theta'_{\mathbf i^{\ell_1}} }{{\Vol_k(\sigma_{\ell_1}\circ \theta_{\mathbf i^{\ell_1}} )\leq \frac{\beta C_d^{-1}}{400A_{r,d}}\epsilon^{k} } } } \Vol_k\left(\mathrm{Im}\left(\sigma_{\ell_1-1}\circ \theta_{\mathbf i^{\ell_1}}\right) \cap \underline{\mathcal B}^\beta(\underline{\mathfrak u}_{\ell_1}^n) \right)\right).
\end{align*}

 Let $x'_{\ell_1}\in \mathrm{Im}(\sigma\circ \theta'_{\mathbf i^{\ell_1}})$.  By Equation \eqref{fran}  we have 
 
\begin{align*}&\Vol_k\left(\mathrm{Im}\left(\sigma_{\ell_1-1}\circ \theta_{\mathbf i^{\ell_1-1}}\right) \cap \underline{\mathcal B}^\beta(\underline{\mathfrak u}_{\ell_1-1}^n) \right) \\
\leq& \sum_{\overset{\theta'_{\mathbf i^{\ell_1}}: \ \mathbf i^{\ell_1}\in \overline{\mathcal T}_{\ell_1},}{\theta'_{\mathbf i^{\ell_1}}<\theta_{\mathbf i^{\ell_1-1}}}}2|\wedge^kdg (G^{\ell_1-1}(\widetilde{x}'_{\ell_1}))|^{-1} \\
\cdot&\left(  \sum_{\overset{\theta_{\mathbf i^{\ell_1}}: \ \theta_{\mathbf i^{\ell_1}}<\theta'_{\mathbf i^{\ell_1}} }{{\Vol_k(\sigma_{\ell_1}\circ \theta_{\mathbf i^{\ell_1}} )\leq \frac{\beta C_d^{-1}}{400A_{r,d}}\epsilon^{k} } } } \Vol_k\left(\mathrm{Im}\left(\sigma_{\ell_1}\circ \theta_{\mathbf i^{\ell_1}}\right) \cap \underline{\mathcal B}^\beta(\underline{\mathfrak u}_{\ell_1}^n) \right)\right)
\end{align*}
\begin{align*}
\leq &\sum_{\overset{\theta'_{\mathbf i^{\ell_1}}:\  \mathbf i^{\ell_1}\in \overline{\mathcal T}_{\ell_1},}{\theta'_{\mathbf i^{\ell_1}}<\theta_{\mathbf i^{\ell_1-1}}}}2|\wedge^kdg (G^{\ell_1-1}(\widetilde{x}'_{\ell_1}))|^{-1}\frac{\beta}{400}\Vol_k(\mathrm{Im}(\sigma_{\ell_1}\circ \theta'_{\mathbf i^{\ell_1}} ))\\
\cdot& \sup_{\theta_{\mathbf i^{\ell_1}} : \ \theta_{\mathbf i^{\ell_1}}<\theta_{\mathbf i^{\ell_1-1}} }\Vol_{\sigma_{\ell_1}\circ \theta_{\mathbf i^{\ell_1}}}(\underline{\mathcal B}^\beta(\underline{\mathfrak u}_{\ell_1}^n))\\
\leq &\sum_{\theta'_{\mathbf i^{\ell_1}}: \ \theta'_{\mathbf i^{\ell_1}}<\theta_{\mathbf i^{\ell_1-1}}}  \frac{\beta}{100}\Vol_k(\mathrm{Im}(\sigma_{\ell_1-1}\circ \theta'_{\mathbf i^{\ell_1}} ))\sup_{\theta_{\mathbf i^{\ell_1}} : \ \theta_{\mathbf i^{\ell_1}}<\theta_{\mathbf i^{\ell_1-1}} }\Vol_{\sigma_{\ell_1}\circ \theta_{\mathbf i^{\ell_1}}}(\underline{\mathcal B}^\beta(\underline{\mathfrak u}_{\ell_1}^n))\\
\leq & \frac{\beta}{100}\Vol_k(\sigma_{\ell_1-1}\circ \theta_{\mathbf i^{\ell_1-1}} )\sup_{\theta_{\mathbf i^{\ell_1}} : \ \theta_{\mathbf i^{\ell_1}}<\theta_{\mathbf i^{\ell_1-1}} }\Vol_{\sigma_{\ell_1}\circ \theta_{\mathbf i^{\ell_1}}}(\underline{\mathcal B}^\beta(\underline{\mathfrak u}_{\ell_1}^n)).
\end{align*}

\end{proof}

Applying Lemma \ref{lastlem} inductively yields the following lemma.
\begin{lemma} \label{ouf}For any $\underline{\mathfrak u}^n$
$$\Vol_{\sigma}(\underline{\mathcal B}^\beta(\underline{\mathfrak u}^n))\leq \left(\frac{\beta}{25}\right)^{\# \mathrm{BG}(\underline{\mathfrak u}^n)}.$$
\end{lemma}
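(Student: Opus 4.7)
The strategy is a direct telescoping of Lemma \ref{lastlem} along the bounded-geometry times. Let me enumerate $\mathrm{BG}(\underline{\mathfrak u}^n)=\{t_1<t_2<\cdots<t_q\}$, where $q=\#\mathrm{BG}(\underline{\mathfrak u}^n)$. If $q=0$ there is nothing to prove since $\Vol_\sigma(\underline{\mathcal B}^\beta(\underline{\mathfrak u}^n))\leq 1=\left(\frac{\beta}{25}\right)^0$. So assume $q\geq 1$.

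Since $\Theta_{\mathbf i^0}=\{\mathrm{Id}_{[0,1]^k}\}$, we have
$$\Vol_\sigma(\underline{\mathcal B}^\beta(\underline{\mathfrak u}^n))=\sup_{\theta_{\mathbf i^0}}\Vol_{\sigma_0\circ\theta_{\mathbf i^0}}(\underline{\mathcal B}^\beta(\underline{\mathfrak u}_0^n)).$$
Next I apply Lemma \ref{lastlem} to the pair $(\ell_0,\ell_1)=(0,t_1)$, which is admissible since $\underline{\mathfrak u}_{t_1}=1$. This yields
$$\Vol_\sigma(\underline{\mathcal B}^\beta(\underline{\mathfrak u}^n))\leq\frac{\beta}{25}\sup_{\theta_{\mathbf i^{t_1}}}\Vol_{\sigma_{t_1}\circ\theta_{\mathbf i^{t_1}}}(\underline{\mathcal B}^\beta(\underline{\mathfrak u}_{t_1}^n)).$$

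I now iterate. Suppose that for some $1\leq j<q$ we have obtained
$$\Vol_\sigma(\underline{\mathcal B}^\beta(\underline{\mathfrak u}^n))\leq\left(\frac{\beta}{25}\right)^{j}\sup_{\theta_{\mathbf i^{t_j}}}\Vol_{\sigma_{t_j}\circ\theta_{\mathbf i^{t_j}}}(\underline{\mathcal B}^\beta(\underline{\mathfrak u}_{t_j}^n)).$$
For every fixed $\theta_{\mathbf i^{t_j}}$, apply Lemma \ref{lastlem} (with the dynamics restarted at time $t_j$) to the pair $(\ell_0,\ell_1)=(t_j,t_{j+1})$; the hypothesis $\underline{\mathfrak u}_{t_{j+1}}=1$ holds by the definition of the $t_i$'s. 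Taking the supremum over $\theta_{\mathbf i^{t_j}}$ on both sides, we pick up a further factor of $\frac{\beta}{25}$ and upgrade the bound to level $j+1$.

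After $q$ applications we arrive at
$$\Vol_\sigma(\underline{\mathcal B}^\beta(\underline{\mathfrak u}^n))\leq\left(\frac{\beta}{25}\right)^{q}\sup_{\theta_{\mathbf i^{t_q}}}\Vol_{\sigma_{t_q}\circ\theta_{\mathbf i^{t_q}}}(\underline{\mathcal B}^\beta(\underline{\mathfrak u}_{t_q}^n))\leq\left(\frac{\beta}{25}\right)^{q},$$
since each $\Vol_{\sigma_{t_q}\circ\theta_{\mathbf i^{t_q}}}$ is a probability. The only subtle point, and the one to double-check in writing, is that after the restart at time $t_j$ the relevant object $\underline{\mathcal B}^\beta(\underline{\mathfrak u}_{t_j}^n)$ is precisely the set to which Lemma \ref{lastlem} applies (with the shifted suffix sequence $\underline{\mathfrak u}_{t_j}^n$), and that the sup over $\theta_{\mathbf i^{t_{j+1}}}<\theta_{\mathbf i^{t_j}}$ appearing on the right-hand side of Lemma \ref{lastlem} is dominated by the unconstrained sup over all $\theta_{\mathbf i^{t_{j+1}}}$; both are straightforward once the notation is unwound.
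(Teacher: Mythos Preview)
Your proposal is correct and is exactly the approach the paper takes: the paper's entire proof is the single sentence ``Applying Lemma \ref{lastlem} inductively yields the following lemma,'' and you have faithfully unwound that induction along the successive $\mathrm{BG}$ times $t_1<\cdots<t_q$. The only cosmetic point worth tidying is the edge case $t_1=0$ (where the pair $(0,t_1)$ would not satisfy $\ell_0<\ell_1$), but this is harmless since the second displayed inequality in the proof of Lemma \ref{lastlem} already gives the factor $\beta/100\leq\beta/25$ directly from level $\ell_1-1$ to $\ell_1$, and in any case the paper's one-line proof glosses over this as well.
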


\begin{definition}
For $x\in D$ we define  
\begin{align*}\mathfrak E_{\widehat{\chi},p,\beta}(x):= \{m:\ & \text{$m$ is $(\widehat{\chi}, \mathfrak l^\infty(x))$-hyperbolic,} \\  & \text{$\mathfrak{u}_m(x)=1$ and $\underline{\mathfrak{u}}^\beta_m(x)=0$} \}.
\end{align*}
\end{definition}

\begin{definition}
    Given a set $B\subseteq \mathbb{N}$ and $n\in\mathbb N^*$, we set
$$d_n(B):=\frac{1}{n}\# \left(B\cap [0,n-1]\right).$$
\end{definition}

\medskip
Recall Lemma \ref{preBC} and the definition of $\chi'>\chi>\frac{3k^2}{r-1}R(f)$. 

 \begin{prop}\label{lebgeo} For all $\beta\in (0,1)$, for all $p\geq \frac{8}{\chi'-\chi}$ and for all $n$, we have 
  \begin{align*}&\Vol_\sigma\left(\left\{x: \ d_{n}(\mathfrak E_{p\chi+2,p,\beta}(x))< \beta \textrm{ and } |A_{k,p}^{np}(\iota (T_xD))|\geq e^{np \chi'}\right\}\right) \\
  &\leq C_{r,d,f}(p)^{n}\left(e^{\frac{3k^2}{r-1}R(f)}e^{-\chi}\right)^{np\cdot\left(\frac{\chi'-\chi}{4k\log M_f}-2\beta\right)}+e^{-\beta n}, \end{align*}
  where $C_{r,d,f}(p)$ satisfies $\lim_{p\to\infty}\frac{\log C_{r,d,f}(p)}{p}=0$.
  \end{prop}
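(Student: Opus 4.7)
The plan is to split $G_n := \{x : d_n(\mathfrak E_{\widehat{\chi},p,\beta}(x)) < \beta \text{ and } |A_{k,p}^{np}(\iota T_xD)| \geq e^{np\chi'}\}$ (with $\widehat{\chi} := p\chi + 2$) as $G_n = G_n^A \cupdot G_n^B$, where $G_n^A := \{x\in G_n: \#\{\ell<n: \underline{\mathfrak u}^\beta_\ell(x)=1\}\geq \beta n\}$ and $G_n^B := G_n \setminus G_n^A$. The first piece yields the $e^{-\beta n}$ term via Lemma \ref{ouf}, while the second piece forces the $A_{k,p}$-expansion to be realized through many blue hyperbolic times and yields the main term via Pliss Lemma together with Lemma \ref{hyptime}.

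For $G_n^A$, I would partition by $\underline{\mathfrak u}^n$ and apply Lemma \ref{ouf} to get
\[
\Vol_\sigma(G_n^A) \leq \sum_{k\geq \beta n}\binom{n}{k}(\beta/25)^k.
\]
Using $\binom{n}{k}(\beta/25)^k \leq (e\beta/(25(k/n)))^k \leq (e/25)^k$ for $k\geq \beta n$, together with $e/25 < e^{-2}$, the geometric tail is bounded by $2e^{-2\beta n} \leq e^{-\beta n}$.

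For $G_n^B$, I would first transfer the cocycle expansion into a Pliss hypothesis $\sum_{t<n}(\mathfrak l_t - \mathfrak l'_t) \geq n\widehat{\chi}'$ with $\widehat{\chi}' \geq p\chi' - O(1)$, the hypothesis $p\geq 8/(\chi'-\chi)$ absorbing the $O(1)$ losses. Pliss Lemma applied with threshold $\widehat{\chi}$ and $A\leq kp\log M_f+1$ then gives $d_n(H)\geq \gamma := (\chi'-\chi)/(4k\log M_f)$. Combining $d_n(H)\geq \gamma$, $d_n(\mathfrak E_{\widehat{\chi},p,\beta})<\beta$ (from $G_n$), and $d_n(\mathrm{BG}(\underline{\mathfrak u}^n))<\beta$ (from $G_n^B$) via the disjoint decomposition $H = (H\setminus \mathrm{BG}(\mathfrak u^n))\cupdot \mathfrak E_{\widehat{\chi},p,\beta}\cupdot (H\cap \mathrm{BG}(\underline{\mathfrak u}^n))$ (valid since $\mathfrak E\cup \mathrm{BG}(\underline{\mathfrak u}^n)\subseteq \mathrm{BG}(\mathfrak u^n)$) yields $\#(H\setminus \mathrm{BG}(\mathfrak u^n))\geq n(\gamma-2\beta)$. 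Lemma \ref{hyptime} then bounds, for each pair $(\mathfrak l^n, \mathfrak u^n)$ admitting such a point, $\Vol_\sigma(\mathcal H(\mathfrak l^n)\cap \mathcal B(\mathfrak u^n)) \leq 4D_{r,d}^n(M_g^{3k^2/(p(r-1))}e^{-\chi})^{pn(\gamma-2\beta)}$. Summing over the at most $(2N_p)^n$ compatible sequences $(\mathfrak l_t, \mathfrak l'_t, \mathfrak u_t)_{t<n}$, where $N_p$ is polynomial in $p\log M_f$ (counting integer pairs in the allowed range), gives the first term of the claim with $C_{r,d,f}(p):=8D_{r,d}N_p$ satisfying $\log C_{r,d,f}(p)/p\to 0$.

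The main obstacle is the Pliss reduction in the preceding paragraph: the cocycle $\log|A_{k,p}^{np}|$ subtracts the full $f$-orbit sum $\frac{1}{p}\sum_{i=0}^{np-1}L_p(f^ix)$, where $L_p(y):=\log^+\max_{k'<k}\|\wedge^{k'}d_yf^p\|$, whereas $\sum_t \mathfrak l'_t$ subtracts only the $g$-subsample $\sum_{t<n}L_p(f^{tp}x)$, and these can a priori differ by a magnitude comparable to the main term. I expect the way around is to apply Pliss first to the intermediate sequence $\alpha_t := \log|A_{k,p}^p(g^tx, G^t\widetilde x)|$, whose partial sums equal $\log|A_{k,p}^{np}|\geq np\chi'$ and whose maximum satisfies $\alpha_t \leq kp\log M_f$, to extract $\alpha$-hyperbolic times at density $\gamma$; the hypothesis $p\geq 8/(\chi'-\chi)$, together with the bounded-geometry structure of the Yomdin charts (which keeps $L_p$ uniformly comparable along blocks of $p$ consecutive $f$-iterates within a single $g$-step), should ensure that the per-step discrepancy between $\alpha_t$ and $\mathfrak l_t - \mathfrak l'_t$ is dominated by the gain $p(\chi'-\chi)$, so that $\alpha$-hyperbolicity implies $(\widehat{\chi}, \mathfrak l^\infty)$-hyperbolicity at the same density.
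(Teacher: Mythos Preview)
Your overall architecture—splitting according to whether $\#\{\ell<n:\underline{\mathfrak u}^\beta_\ell(x)=1\}\geq\beta n$, handling one piece by Lemma~\ref{ouf} and the other by Pliss plus Lemma~\ref{hyptime}, then summing over the polynomially many compatible sequences $(\mathfrak l^n,\mathfrak u^n)$—is exactly the paper's argument. Your disjoint decomposition $H=(H\setminus\mathrm{BG}(\mathfrak u^n))\cupdot\mathfrak E\cupdot(H\cap\mathrm{BG}(\underline{\mathfrak u}^n))$ and the resulting bound $\#(H\setminus\mathrm{BG}(\mathfrak u^n))\geq n(\gamma-2\beta)$ are also correct and match the paper line for line.

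The gap is precisely where you flag it, but your proposed workaround does not close it. The discrepancy between $\alpha_t$ and $\mathfrak l_t-\mathfrak l'_t$ equals $L_p(f^{tp}x)-\frac{1}{p}\sum_{s<p}L_p(f^{tp+s}x)$, and the bounded geometry of the Yomdin charts gives you no control over this: the charts govern the disk $D_t$ and its tangent spaces, whereas $L_p(y)=\log^+\max_{k'<k}\|\wedge^{k'}d_yf^p\|$ is an ambient quantity depending on the full differential at $y$, which can vary by order $kp\log M_f$ as $y$ runs over $p$ consecutive $f$-iterates within a single $g$-step. So $\alpha$-hyperbolicity does not in general imply $(\widehat\chi,\mathfrak l^\infty)$-hyperbolicity, and since Lemmas~\ref{hnotbg} and~\ref{hyptime} are stated specifically for the latter notion, the argument does not go through.

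The paper's actual device is a one-line pigeonhole over the offset $q\in\{0,\ldots,p-1\}$. Regroup the $np$ terms of $\log|A_{k,p}^{np}(\iota T_xD)|$ into the $p$ arithmetic progressions $\{tp+q:t<n\}$; averaging over $q$ reproduces the full sum (up to boundary terms of order $O(p\log M_f)$), so some $q$ satisfies
\[
\sum_{t<n}\Bigl(\log|\wedge^kd_{f^{tp+q}x}f^p(F^{tp+q}\widetilde x)|-\log^+\max_{j<k}\|\wedge^jd_{f^{tp+q}x}f^p\|\Bigr)\geq np\chi'.
\]
Declaring $q=0$ without loss of generality and absorbing the floor/ceiling in the definitions of $\mathfrak l,\mathfrak l'$ gives $\sum_{t<n}(\mathfrak l_t(x)-\mathfrak l'_t(x))\geq n(p\chi'-2)$, after which Pliss applies directly to $(\mathfrak l_t-\mathfrak l'_t)_{t<n}$ with $\widehat\chi'=p\chi'-2$ and $\widehat\chi=p\chi+2$, yielding the density $\gamma=\frac{\chi'-\chi}{4k\log M_f}$ once $p\geq 8/(\chi'-\chi)$.
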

 \begin{proof}
First, note that if  $|A_{k,p}^{np}(\iota (T_xD))|\geq e^{np\chi'}$, then there is $0\leq q<p$ such that 
$$\prod_{i=0}^{n-1}\frac{|\wedge^kd_{f^{ip+q}x}f^p(F^{ip+q}\widetilde x)|}{\max\{1,\max_{j\leq k-1}\|\wedge^jd_{f^{ip+q}x}f^p\|\}}\geq e^{np\chi'}. $$
Without loss of generality we can assume that $q=0$. Then we get that,
$$\sum_{i=0}^{n-1} \mathfrak l_i(x)-\mathfrak l'_i(x)\geq n(p\chi'-2).$$ 

Set
$$\mathcal E_n^\beta:=\left\{x\in D: \ d_{n}(\mathfrak E_{p\chi+2,p,\beta}(x))< \beta \textrm{ and }  \sum_{i=0}^{n-1} \mathfrak l_i(x)-\mathfrak l'_i(x)\geq n(p\chi'-2)\right\}.$$ 

\medskip 
Since the entries of $\mathfrak l$ and $\mathfrak l'$ belong to $[-k\log M_g-1, k\log M_g+1]$, the number of sequences $(\mathfrak l^n, \mathfrak u^n, \underline{\mathfrak u}^n)$ with $\mathcal H(\mathfrak l^n)\neq \varnothing$, $\mathcal B(\mathfrak u^n)\neq \varnothing$, $\underline{\mathcal B}(\underline{\mathfrak u}^n)\neq \varnothing$ is bounded from above by $(2k\log M_g+3)^{2n} 4^n$. 

\medskip
Fix such sequences $\mathfrak l^n$, $\mathfrak u^n$  and  $\underline{\mathfrak u}^n$  with $\mathcal H(\mathfrak l^n)\cap \mathcal B(\mathfrak u^n)\cap \underline{\mathcal B}^\beta(\underline{\mathfrak u}^n)\cap \mathcal E_{n}^\beta\neq \varnothing$ 
and $\#\mathrm{BG}(\underline{\mathfrak u}^n)\leq \beta n$. Let $q_*\in \mathbb N$ with $\beta\in [\frac{1}{q_*+1}, \frac{1}{q_*})$. By Lemma \ref{ouf} the volume of the union of $\underline{\mathcal B}(\underline{\mathfrak u}^n)$ over the others $\underline{\mathfrak u}^n$ is less than  (up to a multiplicative constant independent of $n$)
\begin{align*}\sum_{q=q_*}^n {n \choose [n/q]}\cdot \left(\frac{1}{25q}\right)^{ \lceil\frac{n}{q}\rceil}& \lesssim \sum_{q=q_*}^n e^{n \left(\frac{1}{q}\log q-(1-\frac{1}{q})\log (1-\frac{1}{q})\right) }\cdot \left(\frac{1}{25q}\right)^{ \frac{n}{q}}\\
&\lesssim \sum_{q=q_*}^n e^{\frac{n}{q}(2-\log 25)} \\
&\lesssim e^{\beta(2-\log 25)n}\leq e^{-\beta n}.
\end{align*}

 By Pliss's lemma (Lemma \ref{Pliss}) applied to the sequence $\left(\mathfrak l_i-\mathfrak l'_i\right)_{i<n}$ with $\widehat{\chi}=p\chi+2$ and $\widehat{\chi}'=p\chi'-2$, we have $\# H(\mathfrak l^n)\geq n\frac{p(\chi'-\chi)-4}{2k\log M_g}$. Therefore, for  $p\geq \frac{8}{\chi-\chi'}$ and for all $x\in \mathcal H(\mathfrak l^n)\cap \mathcal B(\mathfrak u^n)\cap \mathcal E_{n}^\beta$ we get 
 \begin{align*}
     \# \left(H(\mathfrak l^n)\setminus \left(\mathrm{BG}(\mathfrak u^n)\setminus \mathrm{BG}(\underline{\mathfrak{u}}^n\right)\right)\geq &\# H(\mathfrak l^n) -nd_n(\mathfrak E_{p\chi+2,p,\beta}(x))\\
     \geq &n\cdot \left(\frac{\chi'-\chi}{4k\log M_f}-\beta\right).
 \end{align*}

Then,
\begin{align*} \# \left(H(\mathfrak l^n)\setminus \mathrm{BG}(\mathfrak u^n)\right)&\geq \# \left(H(\mathfrak l^n)\setminus \left(\mathrm{BG}(\mathfrak u^n)\setminus \mathrm{BG}(\underline{\mathfrak{u}}^n\right)\right)-\#\mathrm{BG}(\underline{\mathfrak{u}}^n)\\
&\geq n\cdot \left(\frac{\chi'-\chi}{4k\log M_f}-2\beta\right).
\end{align*}

By Lemma \ref{hyptime} we get 
 $$\Vol_{\sigma}\left( \mathcal H(\mathfrak l^n)\cap \mathcal B(\mathfrak u^n)\right)\leq 4D_{r,d}^n\left(M_g^{\frac{3k^2}{p(r-1)}}e^{-\chi}\right)^{-p\chi'n\cdot \left(\frac{\chi'-\chi}{4k\log M_f}-2\beta\right)}.$$

Therefore, for all $n$, \begin{align*}&\Vol_\sigma\left(\left\{x: \ d_{n}(E_{\widehat{\chi},p}(x))< \beta \textrm{ and } \|A_{k,p}^{np}(\iota (T_xD))\|\geq e^{np \chi}\right\}\right) \\
&\leq  (2pk\log M_f+3)^{2n} (4D_{r,d})^n \left(M_g^{\frac{3k^2}{p(r-1)}}e^{-\chi}\right)^{pn\cdot\left(\frac{\chi'-\chi}{4k\log M_f}-2\beta\right)}.
\end{align*}
This concludes the proof with $$C_{r,d,f}(p):=4D_{r,d}(2pk\log M_f+3)^{2}\left(\frac{M_{f^p}}{e^{pR(f)}}\right)^{\frac{3k^2(\chi'-\chi)}{4k(r-1)\log M_f}-2\beta} .$$
\end{proof}

\medskip

\begin{definition}
For  $x\in D$, we set \begin{align*}\mathrm{BG}^\beta_n(x):=&\Big\{ 0\leq \ell<n: \ 
 \mathfrak u_\ell(x)=1, \underline{\mathfrak u}_\ell^\beta(x)=0 \\ 
 &\text{ and $\ell$ is $(p\chi, \mathfrak l^{\infty}(y))$ for any $y\in \varpi_\ell(x)$} \Big\}.
\end{align*}

\end{definition}

\begin{definition}\label{En} For any $\beta\in (0,1)$ and any $p\in \mathbb N^*$ we let 
    $$E_n^{p,\beta}:=\left\{x\in B_n^p : \   
   d_n(\mathrm{BG}^\beta_n(x)) \geq\beta\right\}.$$
\end{definition}

\begin{cor}\label{seten}For any  $\beta \in (0, \frac{\chi'-\chi}{8k\log M_f})$ there is $p$, such that  for all $n\in \mathcal N$ large enough, 
$$\Vol_D(E_n^{p,\beta})\geq \frac{1}{n^2}.$$
\end{cor}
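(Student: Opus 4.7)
The plan is to combine the volume lower bound on $B_n^p$ from Lemma \ref{preBC} with the upper-bound estimate of Proposition \ref{lebgeo}, the key reduction being that the set $\{d_n(\mathrm{BG}^\beta_n)\geq \beta\}$ almost exhausts $B_n^p$ once $p$ is large. Concretely, I claim the inclusion $\mathfrak{E}_{p\chi+2,p,\beta}(x)\cap [0,n)\subseteq \mathrm{BG}^\beta_n(x)$ for every $x\in D$: if $m$ lies in the left-hand side and $y$ belongs to the same Yomdin cell $\sigma(\Theta_{\mathbf{i}^m})$ as $x$, Lemma \ref{lebol} gives $|\mathfrak{l}_i(\widetilde y)-\mathfrak{l}_i(\widetilde x)|,|\mathfrak{l}'_i(y)-\mathfrak{l}'_i(x)|\leq 1$ for $i<m$, so $(p\chi+2,\mathfrak{l}^\infty(x))$-hyperbolicity of $m$ passes to $(p\chi,\mathfrak{l}^\infty(y))$-hyperbolicity (cf.\ the remark after Lemma \ref{BGtimesAreHyp}); the conditions $\mathfrak{u}_m=1$ and $\underline{\mathfrak{u}}^\beta_m=0$ are constant on cells. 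Taking complements,
$$B_n^p\setminus E_n^{p,\beta}\subseteq B_n^p\cap \big\{x: d_n(\mathfrak{E}_{p\chi+2,p,\beta}(x))<\beta\big\},$$
and the right-hand side is precisely the set estimated in Proposition \ref{lebgeo}, since every $x\in B_n^p$ fulfills its expansion hypothesis.

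Proposition \ref{lebgeo} therefore bounds this volume by
$$C_{r,d,f}(p)^{n}\rho^{\,npc}+e^{-\beta n},$$
where $\rho:=e^{\frac{3k^2}{r-1}R(f)-\chi}\in(0,1)$ (using $\chi>a=\frac{3k^2}{r-1}R(f)$ from Lemma \ref{theDiskD}) and $c:=\frac{\chi'-\chi}{4k\log M_f}-2\beta>0$ (using the standing assumption $\beta<\frac{\chi'-\chi}{8k\log M_f}$). Since $\log C_{r,d,f}(p)/p\to 0$ as $p\to\infty$, one can fix $p\in p_0\mathbb{N}$ with $p\geq 8/(\chi'-\chi)$ large enough that $C_{r,d,f}(p)\rho^{pc}\leq e^{-\gamma}$ for some $\gamma>0$; the bad-set volume is then at most $e^{-\gamma n}+e^{-\beta n}$, which is $o(1/n^2)$. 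Combined with $\Vol_D(B_n^p)\geq 2/n^2$ from Lemma \ref{preBC},
$$\Vol_D(E_n^{p,\beta})\geq \Vol_D(B_n^p)-e^{-\gamma n}-e^{-\beta n}\geq \frac{2}{n^2}-o\!\left(\frac{1}{n^2}\right)\geq \frac{1}{n^2}$$
for all $n\in\mathcal{N}$ sufficiently large.

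The main obstacle is balancing the prefactor $C_{r,d,f}(p)^n$ against $\rho^{npc}$: exponential decay of $\rho^{npc}$ in $n$ alone would not suffice since $C_{r,d,f}(p)^n$ is comparably large for fixed $p$. The argument relies essentially on the sub-exponential growth $\log C_{r,d,f}(p)=o(p)$ coming from Proposition \ref{lebgeo}, which allows one to absorb this prefactor into the exponential decay by choosing $p$ large; meanwhile the gap $\chi - \frac{3k^2}{r-1}R(f)>0$ produced by Lemma \ref{theDiskD} is what keeps $\rho$ strictly less than one. The restriction $\beta<\frac{\chi'-\chi}{8k\log M_f}$ is exactly what guarantees the positivity of $c$, hence of the effective exponent.
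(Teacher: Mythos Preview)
Your proof is correct and follows essentially the same approach as the paper: both establish the inclusion $\mathfrak{E}_{p\chi+2,p,\beta}(x)\cap[0,n)\subseteq \mathrm{BG}^\beta_n(x)$ via the remark after Lemma~\ref{BGtimesAreHyp}, then invoke Proposition~\ref{lebgeo} to show the complement $B_n^p\setminus E_n^{p,\beta}$ has exponentially small volume for $p$ large, and subtract from $\Vol_D(B_n^p)\geq 2/n^2$. Your write-up is more explicit about why $\rho<1$ and $c>0$ and how the prefactor $C_{r,d,f}(p)^n$ is absorbed; one tiny redundancy is the remark that ``$\mathfrak{u}_m=1$ and $\underline{\mathfrak{u}}^\beta_m=0$ are constant on cells'' --- these conditions in $\mathrm{BG}^\beta_n(x)$ refer to $x$ itself, so no transfer to $y$ is needed there.
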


\begin{proof}
Recall the set $B_{n}^p$ given  in Lemma \ref{preBC} satisfies $\Vol_D(B_n^p)\geq \frac{2}{n^2}$ for $n\in \mathcal N$. As observed after Lemma \ref{BGtimesAreHyp}, we have $\mathfrak E_{p\chi+2,p,\beta}(x)\subset \mathrm{BG}^\beta_n(x)$ for $x\in D$. 
By  Proposition \ref{lebgeo}, for $\beta<\frac{\chi'-\chi}{8k \log M_f}$ we may choose $p$ so large that the set of points $x$ in $B_{n}^p$ with $d_{n}(\mathfrak E_{p\chi+2,p,\beta}(x))< \beta \textrm{ and } |A_{k,p}^{np}(\iota (T_xD)) | \geq e^{np \chi'}$ has exponentially small measure in $n$, in particular measure less than $\frac{1}{n^2}$ for  $n$ large enough.
\end{proof}

\medskip
\noindent\textbf{Remark:} From now on, we fix some $\beta \in (0, \frac{\chi'-\chi}{8k\log M_f})$ and we choose $p$ as in Corollary \ref{seten}. The set $E_{n}^{\beta, p}$ and $\mathrm{BG}^\beta_n(x)$ will be then simply denoted by $E_n$ and $\mathrm{BG}_n(x)$ respectively. The integers in $\mathrm{BG}_n(x)$ are referred to as $\mathrm{BG}$ times.

\section{Invariant measure given by disintegration on disks}\label{constMu}

In this section we construct an invariant measure which is given by a disintegration on $C^{r-1,1}$ embedded $k$-disks. To achieve that, we wish to first construct a probability measure on the space of {\em Measured Disks}- which is a space of disks endowed with a probability measure. We restrict to BG times where the elements of the Yomdin partition belong to a pre-compact subset of the space of Measured Disks, where we can take limits.

\subsection{Space of Measured Disks}

\begin{definition}[Space of Measured Disks]
    Let $$\mathcal{S}:=\{(\varpi,\varrho):\varpi:[0,1]^k\to \mathbf M\text{ is }C^{r-1,1}\text{ and }\varrho\in\mathbb{P}([0,1]^k)\},$$ endowed with the product topology of the $C^{r-1,1}$ topology and the weak-$*$ topology, be the {\em space of Measured Disks}.
\end{definition}

\begin{definition}\text{ }
\begin{enumerate}
    \item For $x\in D$ and $\ell\in \mathbb N$,  we say that the atom  $\varpi_\ell(x)$ of the Yomdin partition  containing $g^\ell(x)$ is $\mathrm{BG}$, when  $\ell $ belongs to $\mathrm{BG}_n(x)$ for some (any) $n> \ell $.
    \item Let $P^{\ell}$ be the Yomdin partition of $D$ into elements of the form \\$g^{-\ell}[\mathrm{Im}(\varpi_\ell(\cdot))]$.
\end{enumerate}

\end{definition}

\subsection{Sequence of measures on the space of Measured Disks}\text{ }

\medskip
Recall Lemma \ref{preBC} and the definition of the set $\mathcal{N}$.

\begin{definition}
For $n\in \mathcal{N}$, and for $0\leq\ell\leq n$, 
\begin{enumerate}
    \item 
    $$E_n^{\ell}:=\bigcup\{g^{-\ell}[\mathrm{Im}(\varpi_\ell(x))]: x\in E_n \}\subset D,$$ 
    \item Let $P_n^\ell$ be the restriction of $P^\ell$ to elements centered at $E_n$ points,
  \item Given $x\in E_n^n$, write
$$\mathrm{BG}_n^M(x):=\{\ell\leq n: \exists \ell'\in[\ell,\ell+M],\ y\in \varpi_\ell(x)\cap E_n\text{ s.t.  }\ \varpi_{\ell'}(y)\text{ is $\mathrm{BG}$}\},$$
\item $$P_n^{\ell,M}:=\{P\in P_n^\ell: \ell\in \mathrm{BG}_n^M(x)\text{ for some (any) }x\in P\cap E_n\},$$
 \item We denote $\Vol_D$ by $\lambda$, $\lambda(F\cap \cdot)$ by $\lambda_F$ for any $F\subset D$   and $\frac{1}{\lambda(E_n^n)}\lambda_{E_n^n}$ by $\lambda_n$.
 \end{enumerate}
\end{definition}

\medskip
\noindent\textbf{Remark:}
 In the above definition,  $\varpi_{\ell}(x)$ denotes the map $\sigma_\ell\circ \theta_{\mathbf i^\ell}$ with $x\in \mathrm{Im}(\sigma\circ \theta_{\mathbf i^\ell})$ (see Definition \ref{yomd}). According to the Remark following the Algebraic Lemma (Lemma \ref{alg}), $\theta_{\mathbf i^\ell}$, thus $\sigma_\ell\circ \theta_{\mathbf i^\ell}$ is a (topological) embedding, therefore we may push the measure $\frac{1}{\lambda(P_n^\ell(x)\cap E_n^n)}\lambda_{P_n^\ell(x)\cap E_n^n}\circ g^{-\ell}$ on $\mathbf M$ by the inverse of $\varpi_{\ell}(x)$ to get $\varrho_{\ell, n,x}$.

\medskip 

 \begin{definition}[The subspace $\mathcal S_M$]
 
 Given $M\in \mathbb{N}$, we write 
 $$\mathcal{S}_M:=\overline{\{(\varpi_\ell(x),\varrho)\in \mathcal{S}:  \ell \in \mathrm{BG}_n^M(x), \ x\in E_n^n \}}^\mathcal{S}\Subset \mathcal{S},$$
 where $\Subset$ denotes ``compactly contained".
\end{definition}

\medskip

 \begin{definition}[The measures $p_n^M$ on $\mathcal S_M$]\label{pN}
    \begin{align*}
        p_n^{M}:=&\int \frac{1}{n}\sum_{\ell\in \mathrm{BG}_n^M(x)}\delta_{(\varpi_\ell(x), \varrho_{\ell,n,x})}d\lambda_n,
    \end{align*}
 where $\varrho_{\ell,n,x}:=\Big(\frac{1}{\lambda(P_n^\ell(x)\cap E_n^n)}\lambda_{P_n^\ell(x)\cap E_n^n}\circ g^{-\ell}\Big)\circ \varpi_\ell(x)$.

\end{definition}

\medskip
\begin{lemma}\label{sameEmpLims}
Given $x\in E_n$ and $y\in P_n^n(x)$,
\begin{enumerate}
    \item For all $M$, $\mathrm{BG}_n^M(x)=\mathrm{BG}_n^M(y)$, 
    \item  For all $n$ large enough and for all $h\in \mathrm{Lip}(\mathbf M)$, $$\Big|\frac{1}{n}\sum_{\ell\leq n}\delta_{g^{\ell }(x)}(h)-\frac{1}{n}\sum_{\ell\leq n}\delta_{g^{\ell }(y)}(h)\Big|\leq  \frac{\|h\|_\mathrm{Lip}}{\sqrt n}.$$
\end{enumerate}
\end{lemma}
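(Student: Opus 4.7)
The plan is to unpack both parts from the tree/nesting structure of the Yomdin partition plus Lemma~\ref{YomdinPtnShrinksUni}. First, I unpack what $y \in P_n^n(x)$ means: by definition $P_n^n(x) = g^{-n}[\mathrm{Im}(\varpi_n(x))]$, so $g^n(y) \in \mathrm{Im}(\varpi_n(x))$, i.e.\ $\varpi_n(y)=\varpi_n(x)$ (equality of atoms, modulo the zero-volume boundary). From the tree description in \textsection\ref{YomdinPtns}, if $g^n(y)\in \mathrm{Im}(\sigma_n\circ\theta_{\mathbf i^n})$ then $y\in \mathrm{Im}(\sigma\circ\theta_{\mathbf i^n})\subset \mathrm{Im}(\sigma\circ\theta_{\mathbf i^n_\ell})$ for every $\ell\le n$, because $\theta_{\mathbf i^n}<\theta'_{\mathbf i^{n}}<\theta_{\mathbf i^n_{n-1}}<\cdots<\theta_{\mathbf i^n_\ell}$. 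Pushing forward by $g^\ell$ gives $g^\ell(y)\in \mathrm{Im}(\sigma_\ell\circ\theta_{\mathbf i^n_\ell})=\varpi_\ell(x)$, so $\varpi_\ell(y)=\varpi_\ell(x)$ for every $0\le\ell\le n$.

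Part (1) is then immediate. The set $\mathrm{BG}_n^M(x)$ from Definition~\ref{pN} is defined purely in terms of the atom $\varpi_\ell(x)$ (and its intersection with $E_n$, and the $\mathrm{BG}$-property of atoms at later levels), with no dependence on the specific chosen point $x$ inside the atom. Since $\varpi_\ell(x)=\varpi_\ell(y)$ for all $\ell\le n$, we obtain $\mathrm{BG}_n^M(x)=\mathrm{BG}_n^M(y)$.

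For part (2), I use Lemma~\ref{YomdinPtnShrinksUni}: since $g^n(x), g^n(y)\in\mathrm{Im}(\varpi_n(x))$, applying $g^{-(n-\ell)}$ yields
\[
g^\ell(x),\, g^\ell(y)\;\in\; g^{-(n-\ell)}[\mathrm{Im}(\varpi_n(x))], \qquad \mathrm{diam}\bigl(g^{-(n-\ell)}[\varpi_n(x)]\bigr)\le \epsilon\, 2^{\ell-n}.
\]
Therefore $d(g^\ell(x),g^\ell(y))\le \epsilon\,2^{\ell-n}$ for every $0\le\ell\le n$. Recalling $g=f^p$, the Lipschitz bound on $h$ together with a geometric sum gives
\[
\Bigl|\frac{1}{n}\sum_{\ell\le n}\bigl(h(f^{\ell p}x)-h(f^{\ell p}y)\bigr)\Bigr| \;\le\; \frac{\|h\|_{\mathrm{Lip}}}{n}\sum_{\ell=0}^{n}\epsilon\,2^{\ell-n} \;\le\; \frac{2\epsilon\,\|h\|_{\mathrm{Lip}}}{n}.
\]
For $n\ge 4\epsilon^2$ this is bounded by $\|h\|_{\mathrm{Lip}}/\sqrt{n}$, as desired.

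There is no real obstacle here; the only bookkeeping care is in confirming that nesting along the tree ($\theta_{\mathbf i^n}$ factors through each ancestor $\theta_{\mathbf i^n_\ell}$) does give coincidence of \emph{all} level-$\ell$ atoms, not just the terminal one. This is exactly what was used in Lemma~\ref{YomdinPtnShrinksUni}, so the same input suffices here.
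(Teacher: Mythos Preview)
Your proof is correct and uses the same key inputs as the paper (the nesting of the Yomdin partition and Lemma~\ref{YomdinPtnShrinksUni}). For part~(2) you are actually cleaner than the paper: you sum the geometric series $\sum_{\ell\le n}\epsilon\,2^{\ell-n}\le 2\epsilon$ directly to obtain an $O(1/n)$ bound, whereas the paper introduces a cutoff parameter $\eta=\frac{1}{3\sqrt n}$, bounds the tail $\ell>n(1-\eta)$ by $2\eta\|h\|_\infty$ and the head by $\mathrm{Lip}(h)\cdot 2^{-n\eta}$, arriving only at $O(1/\sqrt n)$.
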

\begin{proof}
   The first item is clear by the definition of $\mathrm{BG}_n^M(\cdot)$, which is saturated by partition elements. We continue to show the second item. Let $\eta>0$. By Lemma \ref{YomdinPtnShrinksUni},
    \begin{align*}
        &\Big|\frac{1}{n}\sum_{\ell\leq n}\delta_{g^{\ell }(x)}(h)-\frac{1}{n}\sum_{\ell\leq n}\delta_{g^{\ell }(y)}(h)\Big|\\
        \leq&\Big|\frac{1}{n}\sum_{\ell\leq n(1-\eta)}\delta_{g^{\ell }(x)}(h)-\frac{1}{n}\sum_{\ell\leq n(1-\eta)}\delta_{g^{\ell }(y)}(h)\Big|+2\eta\|h\|_\infty\\
        \leq &\mathrm{Lip}(h)\cdot \frac{1}{2^{n\eta}}+2\eta\cdot \|h\|_\infty.
    \end{align*}
Then if we choose $\eta=\frac{1}{3\sqrt n}$, the desired result is following, for all $n$ s.t.  $\frac{1}{2^{\frac{\sqrt{n}}{3}}}\leq \frac{1}{3\sqrt n}$.
\end{proof}

\medskip
\noindent\textbf{Remark:}
 From here onwards, we saturate the set $E_n$ into $E_n^n$, and associate the two sets (recall Definition \ref{En}). Lemma \ref{sameEmpLims} above justifies it.

\medskip
\begin{prop}\label{sizeOfPnM} For all $n\in \mathcal{N}$ and $M\geq0$, 
$$p_n^{M}(1)\in [\beta,1].$$
\end{prop}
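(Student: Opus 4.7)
\textbf{Proof plan for Proposition \ref{sizeOfPnM}.} The plan is to rewrite $p_n^M(1)$ explicitly and then bound the resulting integrand pointwise on atoms of the Yomdin partition. Since each summand in the definition of $p_n^M$ is a Dirac mass of total mass one, I would first observe that
\[
p_n^M(1) = \int \frac{\#\mathrm{BG}_n^M(x)}{n}\,d\lambda_n(x).
\]
The upper bound $p_n^M(1)\leq 1$ is then immediate from $\mathrm{BG}_n^M(x)\subseteq \{0,1,\dots,n\}$ and the fact that $\lambda_n$ is a probability measure; this leaves the lower bound as the real content.

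For the lower bound the goal is to show that for $\lambda_n$-a.e.\ $x \in E_n^n$ one has $\#\mathrm{BG}_n^M(x)/n \geq \beta$, after which integrating finishes the job. The key structural fact I would exploit is that $\mathrm{BG}_n^M(\cdot)$ is constant on each atom $P\in P_n^n$. This is because $\varpi_\ell(x)$ depends only on the $P_n^\ell$-atom containing $x$, and since $P_n^n$ refines $P_n^\ell$ for every $\ell \leq n$, both the set $\varpi_\ell(x)\cap E_n$ and the BG status of $\varpi_{\ell'}(y)$ at $y\in\varpi_\ell(x)\cap E_n$ are measurable with respect to the $P_n^n$-partition. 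This is essentially Lemma~\ref{sameEmpLims}(1).

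Next, each atom $P \subseteq E_n^n$ that contributes to the support of $\lambda_n$ contains, by the very definition of $E_n^n$, at least one witness point $x_0 \in E_n$. By Corollary~\ref{seten} (i.e.\ the defining inequality for $E_n=E_n^{p,\beta}$) this witness satisfies $\#\mathrm{BG}_n(x_0)\geq \beta n$. For any $\ell \in \mathrm{BG}_n(x_0)$ and any $x\in P$, I would then choose $\ell'=\ell$ and $y=x_0$ in the definition of $\mathrm{BG}_n^M(x)$: since $\varpi_\ell(x)=\varpi_\ell(x_0)$ we have $x_0 \in \varpi_\ell(x)\cap E_n$, and $\varpi_{\ell'}(y)=\varpi_\ell(x_0)$ is BG by hypothesis, so $\ell \in \mathrm{BG}_n^M(x)$. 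This yields $\mathrm{BG}_n(x_0) \subseteq \mathrm{BG}_n^M(x)$ and hence $\#\mathrm{BG}_n^M(x)/n\geq \beta$ on $P$; integrating against $\lambda_n$ concludes.

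I do not anticipate any serious obstacle: the argument is essentially combinatorial bookkeeping with the partitions $P_n^\ell$ and the Yomdin charts. The only subtlety worth double-checking is the passage from $E_n$ to its saturation $E_n^n$ (justified by the remark preceding the proposition) — one must verify that the BG-times witnessed at some $x_0\in E_n\cap P$ transfer to every point of $P$, and this is precisely the invariance of $\mathrm{BG}_n$ and of $\varpi_\ell$ along atoms of $P_n^n$ noted above.
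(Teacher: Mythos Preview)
Your proposal is correct and follows essentially the same route as the paper: both compute $p_n^M(1)=\int \tfrac{1}{n}\#\mathrm{BG}_n^M(x)\,d\lambda_n(x)$, note the trivial upper bound, and obtain the lower bound from the pointwise inclusion $\mathrm{BG}_n(x_0)\subseteq \mathrm{BG}_n^M(x)$ for $x$ in the same $P_n^n$-atom as a witness $x_0\in E_n$. The paper compresses this to a single line (invoking the saturation remark and Definition~\ref{En}), whereas you spell out the witness $y=x_0$, $\ell'=\ell$ explicitly; there is no substantive difference.
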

\begin{proof} Recall Definition \ref{En} and its  following remark, then
\begin{align*}
   1\geq  p_n^M(1)&=\int\frac{1}{n}\#\mathrm{BG}_n^M(x)\,   d\lambda_n(x) \\ 
   &\geq \int d_n\left(\mathrm{BG}_n(x)\right)\, d\lambda_n(x) \geq \beta.
\end{align*}
\end{proof}

\begin{definition}\label{muN}\text{ }

 $$\mu^M_n:=\int_{\mathcal S} \varrho\circ \varpi^{-1}dp_n^M((\varpi,\varrho)).$$
\end{definition}

\begin{claim}\label{muLikeP} For every $n,M$, $\mu_n^M(1)=p_n^M(1)\in [\beta,1]$.
\end{claim}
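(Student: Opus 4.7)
The plan is to observe that this claim is essentially an immediate consequence of unpacking the definitions, combined with Proposition \ref{sizeOfPnM}. The content of the claim is really just a Fubini-style interchange, because integrating the constant function $1$ against a pushforward of a probability measure returns $1$.

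First I would note that for every $(\varpi,\varrho)\in\mathcal{S}$, by definition $\varrho$ is a probability measure on $[0,1]^k$, so its pushforward $\varrho\circ\varpi^{-1}$ (via the embedding $\varpi$) is a probability measure supported on $\mathrm{Im}(\varpi)\subset\mathbf{M}$. In particular, $(\varrho\circ\varpi^{-1})(\mathbf 1_{\mathbf{M}})=\varrho(\mathbf 1_{[0,1]^k})=1$. Plugging this into the definition of $\mu_n^M$ gives
\begin{equation*}
\mu_n^M(1)\;=\;\int_{\mathcal{S}}(\varrho\circ\varpi^{-1})(1)\,dp_n^M((\varpi,\varrho))\;=\;\int_{\mathcal{S}}1\,dp_n^M((\varpi,\varrho))\;=\;p_n^M(1).
\end{equation*}
Then I would invoke Proposition \ref{sizeOfPnM}, which already establishes $p_n^M(1)\in[\beta,1]$, to conclude.

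The only subtle point worth a sentence of justification is that $\varrho_{\ell,n,x}$ as defined in Definition \ref{pN} is indeed a probability measure on $[0,1]^k$: it is obtained by normalizing the restriction of $\lambda$ to $P_n^\ell(x)\cap E_n^n$, transporting it by $g^{-\ell}$, and pulling it back through the embedding $\varpi_\ell(x)=\sigma_\ell\circ\theta_{\mathbf i^\ell}$ (which is a topological embedding by the remark following the Algebraic Lemma). Hence each atom $\delta_{(\varpi_\ell(x),\varrho_{\ell,n,x})}$ that appears in the definition of $p_n^M$ lies in $\mathcal{S}$ with $\varrho_{\ell,n,x}$ a probability, so the computation above is valid. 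There is no genuine obstacle here; the claim is a bookkeeping identity whose role is to make precise that $\mu_n^M$ carries the same total mass as $p_n^M$ and will therefore inherit its lower bound $\beta$ in the subsequent compactness arguments.
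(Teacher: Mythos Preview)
Your proof is correct and follows essentially the same approach as the paper: observe that $(\varrho\circ\varpi^{-1})(1)=1$ for every $(\varpi,\varrho)\in\mathcal{S}$, so $\mu_n^M(1)=p_n^M(1)$, and then invoke Proposition~\ref{sizeOfPnM}. The additional sentence you include about $\varrho_{\ell,n,x}$ being a genuine probability is a helpful clarification but not strictly needed, since the paper's definition of $\mathcal{S}$ already requires $\varrho\in\mathbb{P}([0,1]^k)$.
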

\begin{proof}
This is a direct corollary of Definition \ref{muN}, as $\varrho\circ\varpi^{-1}(1)=1$ for all $(\varpi,\varrho)\in \mathcal{S}$.
\end{proof}

\begin{lemma}
    $$\mu_n^M=\int \frac{1}{n}\sum_{\ell \in \mathrm{BG}_n^M(x)}\delta_{g^\ell(x)}d\lambda_n.$$
\end{lemma}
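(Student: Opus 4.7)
The plan is to prove the identity by direct computation, unwinding the definitions of $\mu_n^M$, $p_n^M$, and $\varrho_{\ell,n,x}$. There is no real obstacle here: the claim is essentially a bookkeeping lemma that makes the measure $\mu_n^M$ concrete, and the proof should be just a chain of elementary manipulations.

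First, I would substitute Definition \ref{pN} into Definition \ref{muN} to obtain
\begin{equation*}
\mu_n^M=\int \frac{1}{n}\sum_{\ell\in \mathrm{BG}_n^M(x)} \varrho_{\ell,n,x}\circ \varpi_\ell(x)^{-1}\, d\lambda_n(x).
\end{equation*}
The key observation at this point is that the right-hand factor $\varpi_\ell(x)$ in the definition of $\varrho_{\ell,n,x}$ is exactly inverted by the composition with $\varpi_\ell(x)^{-1}$ (well-defined because, by the remark following Definition \ref{pN}, $\varpi_\ell(x)=\sigma_\ell\circ\theta_{\mathbf{i}^\ell}$ is a topological embedding on its image). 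Hence
\begin{equation*}
\varrho_{\ell,n,x}\circ \varpi_\ell(x)^{-1}=\frac{1}{\lambda(P_n^\ell(x)\cap E_n^n)}\,\lambda_{P_n^\ell(x)\cap E_n^n}\circ g^{-\ell},
\end{equation*}
a measure on $\mathbf M$ which depends on $x$ only through the atom $P_n^\ell(x)\in P_n^\ell$.

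Second, I would exchange sum and integral, then decompose $\int\cdots d\lambda_n(x)$ into integrals over atoms $P\in P_n^\ell$. The crucial point is that the indicator $\mathbf{1}_{\ell\in \mathrm{BG}_n^M(x)}$ is constant on each atom of $P_n^\ell$ (this follows from the definition of $\mathrm{BG}_n^M(x)$ and amounts to saying that the admissible atoms are exactly those in $P_n^{\ell,M}$), and the integrand $\varrho_{\ell,n,x}\circ \varpi_\ell(x)^{-1}$ is also constant on each atom. Thus integrating over $P\cap E_n^n$ against $\lambda_n=\lambda(E_n^n)^{-1}\lambda_{E_n^n}$ pulls out a factor of $\lambda(P\cap E_n^n)/\lambda(E_n^n)$ which cancels the normalisation inside the measure, leaving
\begin{equation*}
\mu_n^M=\frac{1}{n\,\lambda(E_n^n)}\sum_{\ell=0}^{n-1}\sum_{P\in P_n^{\ell,M}}\lambda_{P\cap E_n^n}\circ g^{-\ell}=\frac{1}{n\,\lambda(E_n^n)}\sum_{\ell=0}^{n-1}\lambda_{\{y\in E_n^n:\ \ell\in \mathrm{BG}_n^M(y)\}}\circ g^{-\ell}.
\end{equation*}

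Finally, I would swap back the sum and the integral (this time writing the sum inside the integral as one indexed by the set $\mathrm{BG}_n^M(y)$ attached to the integration variable $y$) to obtain
\begin{equation*}
\mu_n^M=\int \frac{1}{n}\sum_{\ell\in \mathrm{BG}_n^M(y)}\delta_{g^\ell(y)}\, d\lambda_n(y),
\end{equation*}
which is the claimed identity. The only subtlety is verifying that $\mathrm{BG}_n^M$ is saturated by $P_n^\ell$ for each $\ell$, which is immediate from its definition since it only depends on the Yomdin atom $\varpi_\ell(x)$.
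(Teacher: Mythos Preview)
Your proof is correct and follows essentially the same approach as the paper's: both rely on the cancellation $\varrho_{\ell,n,x}\circ\varpi_\ell(x)^{-1}=\frac{1}{\lambda(P_n^\ell(x)\cap E_n^n)}\lambda_{P_n^\ell(x)\cap E_n^n}\circ g^{-\ell}$, the $P_n^\ell$-saturation of $\mathrm{BG}_n^M(\cdot)$, and a Fubini swap with decomposition over atoms. The only difference is direction---the paper starts from the right-hand side and works toward $\mu_n^M$, whereas you start from $\mu_n^M$ and work outward---which is purely cosmetic.
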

\begin{proof}
       \begin{align*}
    \int\frac{1}{n}\sum_{\ell\in \mathrm{BG}_n^M(x)}\delta_{g^\ell(x)}d\lambda_n=&\frac{1}{\lambda(E_n)}\frac{1}{n}\sum_{\ell=0}^{n-1}\int_{[\ell\in \mathrm{BG}_n^M(\cdot)]}\delta_{g^\ell(x)}d\lambda \\
    =&\frac{1}{\lambda(E_n)}\frac{1}{n}\sum_{\ell=0}^{n-1}\lambda_{[\ell\in \mathrm{BG}_n^M(\cdot)]}\circ g^{-\ell}\\
    =&\frac{1}{\lambda(E_n)}\frac{1}{n}\sum_{\ell=0}^{n-1}\sum_{P\in P_n^{\ell,M}}\lambda_{E_n\cap P}\circ g^{-\ell}\\
    =&\frac{1}{\lambda(E_n)}\frac{1}{n}\sum_{\ell=0}^{n-1}\sum_{P\in P_n^{\ell,M}}\lambda(E_n\cap P)\frac{\lambda_{E_n\cap P}\circ g^{-\ell}}{\lambda(E_n\cap P)}
    \end{align*}
    \begin{align*}
    =&\frac{1}{\lambda(E_n)}\frac{1}{n}\sum_{\ell=0}^{n-1}
    \sum_{P\in P_n^{\ell,M}}\lambda(E_n\cap P)\int_{\mathcal S} \varrho\circ \varpi^{-1}d\delta_{(\varpi_\ell(x_P),\varrho_{\ell,n,x_P})}\\
=&\int_{\mathcal S} \varrho\circ \varpi^{-1}  d\Big(\int \frac{1}{n}\sum_{\ell\in \mathrm{BG}_n^M(x)}\delta_{(\varpi_\ell(x),\varrho_{\ell,n,x})}d\lambda_n\Big)\\
=&\int_{\mathcal S} \varrho\circ \varpi^{-1}  dp_n^M=\mu_n^M,
\end{align*}
where $x_P$ is any point in $P\cap E_n$.
\end{proof} 

\medskip
Recall Lemma \ref{preBC}.
\begin{prop}\label{subSeq}
    There exists a subsequence $\mathcal{N}\ni n_j\uparrow\infty$ s.t.  $\forall M\geq0$, $$p_{n_j}^M\to p^{M}\in[\beta,1]\cdot\mathbb{P}(\mathcal{S}_M)\subseteq [\beta,1]\cdot\mathbb{P}(\mathcal{S})\text{ and }\mu_{n_j}^M\to \mu^{M}\in [\beta,1]\cdot\mathbb{P}(\mathbf M).$$
    Moreover, $$\mu^M=\int \varrho\circ \varpi^{-1}dp^M.$$
\end{prop}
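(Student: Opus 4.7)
My plan is to combine weak-$*$ compactness with a diagonal extraction to get the first limit, and to identify $\mu^M$ by testing against a well-chosen family of continuous functionals. The first step is to verify that every $p_n^M$ is supported in the compact set $\mathcal{S}_M$: by the construction in Definition \ref{pN}, each atom of $p_n^M$ has the form $\delta_{(\varpi_\ell(x),\varrho_{\ell,n,x})}$ with $\ell \in \mathrm{BG}_n^M(x)$, and the very meaning of $\mathrm{BG}_n^M(x)$ is that some $y \in \varpi_\ell(x) \cap E_n$ and some $\ell' \in [\ell,\ell+M]$ make $\varpi_{\ell'}(y)$ a $\mathrm{BG}$ atom; viewing this from the base point $g^\ell y \in \mathrm{Im}(\varpi_\ell(x))$, one recovers exactly the condition defining $\mathcal{S}_M$. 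Combined with Proposition \ref{sizeOfPnM}, this places $\{p_n^M\}_{n \in \mathcal{N}}$ inside the weak-$*$ compact set of positive Borel measures on $\mathcal{S}_M$ of total mass at most $1$. For each fixed $M$ this yields a weakly convergent subsequence, and a diagonal extraction over $M \in \mathbb{N}$ then produces a single sequence $\mathcal{N} \ni n_j \uparrow \infty$ with $p_{n_j}^M \to p^M$ for every $M$. Since the constant function $1$ is continuous on the compact $\mathcal{S}_M$, the lower bound $p_{n_j}^M(1) \geq \beta$ passes to the limit, giving $p^M \in [\beta,1] \cdot \mathbb{P}(\mathcal{S}_M)$.

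The second step is to upgrade the weak convergence of $p_{n_j}^M$ to weak convergence of $\mu_{n_j}^M$ together with the integral identity. For any $h \in C(\mathbf{M})$ I would define $\Phi_h : \mathcal{S} \to \mathbb{R}$ by $\Phi_h(\varpi,\varrho) := \int_{[0,1]^k} h \circ \varpi \, d\varrho$, and check continuity of $\Phi_h$ on $\mathcal{S}$: indeed, $C^{r-1,1}$-convergence $\varpi_n \to \varpi$ implies uniform convergence $h \circ \varpi_n \to h \circ \varpi$ on $[0,1]^k$, which kills one term; while weak-$*$ convergence $\varrho_n \to \varrho$ tested against the fixed continuous function $h \circ \varpi$ handles the other. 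Since $\mu_n^M(h) = \int \Phi_h \, dp_n^M$ directly from Definition \ref{muN}, the weak convergence $p_{n_j}^M \to p^M$ on the compact $\mathcal{S}_M$ gives $\mu_{n_j}^M(h) \to \int \Phi_h \, dp^M$ for every $h \in C(\mathbf{M})$. Setting $\mu^M := \int \varrho \circ \varpi^{-1} \, dp^M$ then establishes both the weak convergence $\mu_{n_j}^M \to \mu^M$ and the stated identity, with $\mu^M(1) = p^M(1) \in [\beta,1]$ by Claim \ref{muLikeP}.

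The only genuine work in the proposition is the support statement for $p_n^M$ combined with the compact containment $\mathcal{S}_M \Subset \mathcal{S}$, the latter of which ultimately encodes the uniform geometric control on $\mathrm{BG}$ disks (and their bounded-time iterates of size at most $M$) furnished in Section \ref{posDensOfGeoTimes}; this is really where the bounded-couple estimates pay off. Granted that compactness, the rest reduces to soft analysis: Banach--Alaoglu on $\mathcal{S}_M$, a diagonal trick over $M$, and the elementary continuity of the tautological evaluation map $(\varpi,\varrho) \mapsto \int h \circ \varpi \, d\varrho$.
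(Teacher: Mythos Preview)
Your proof is correct and follows essentially the same approach as the paper: compactness of $\mathcal{S}_M$ plus diagonal extraction for the first part, and continuity of the evaluation map $(\varpi,\varrho)\mapsto \int h\circ\varpi\,d\varrho$ (your $\Phi_h$, the paper's $\widehat{h}$) to pass from $p^M$ to $\mu^M$. You supply more detail than the paper does on why $p_n^M$ is supported in $\mathcal{S}_M$ and on the continuity of $\Phi_h$, but the argument is the same.
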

\begin{proof}

  By Proposition \ref{sizeOfPnM}, $p_n^M$ is a sequence of bounded measures carried by $\mathcal{S}_M$, which is a pre-compact subset of $\mathcal{S}$. 
 
 Therefore, using the diagonal argument, there exists a subsequence $\mathcal{N}\ni n_j\uparrow\infty$ s.t.  $\forall M\geq0$, $p_{n_j}^M\to p^{M}\in[\beta,1]\cdot\mathbb{P}(\mathcal{S}_M)\subseteq [\beta,1]\cdot\mathbb{P}(\mathcal{S})$.

Then, given any continuous function $h\in C(\mathbf{M})$, we define $\widehat{h}(\varpi,\varrho):=\varrho\circ \varpi^{-1}(h)=\int_{[0,1]^k} h\circ \varpi\, d\varrho$, which lies in $C(\mathcal{S})$. 
 Then,
\begin{align}\label{formonomono}
    \int h\, d\mu_{n_j}^M&=\int \varrho\circ \varpi^{-1}(h)\, dp_{n_j}^M \\
    &=\int \widehat{h}\, dp_{n_j}^M\xrightarrow[j\to\infty ]{}\int \widehat{h}\, dp^M\nonumber
\end{align}
Therefore 
\begin{align*}
    \mu^M(h)&=\int \varrho\circ \varpi^{-1}(h)\, dp^M.
\end{align*}
\end{proof}

\subsection{Invariant measure with disintegration by a family of disks}

\begin{lemma}\label{monomonomono}
    For all $n$ and $M$, $p_n^{M+1}\geq p_n^M$, $p^{M+1}\geq p^M$, $\mu_n^{M+1}\geq \mu_n^M$, and $\mu^{M+1}\geq \mu^M$.
\end{lemma}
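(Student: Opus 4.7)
The key observation is the monotonicity of $\mathrm{BG}_n^M(x)$ in $M$. Since $[\ell,\ell+M]\subseteq[\ell,\ell+M+1]$, the definition
\[
\mathrm{BG}_n^M(x)=\{\ell\leq n:\exists\,\ell'\in[\ell,\ell+M],\ y\in\varpi_\ell(x)\cap E_n\text{ s.t.\ }\varpi_{\ell'}(y)\text{ is BG}\}
\]
immediately gives $\mathrm{BG}_n^M(x)\subseteq \mathrm{BG}_n^{M+1}(x)$ for every $x$ and $n$. Moreover, the conditionals $\varrho_{\ell,n,x}$ appearing in the definition of $p_n^M$ depend only on $\ell,n,x$ (through $E_n^n$) and not on $M$.

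Hence, as signed measures on $\mathcal S$,
\[
p_n^{M+1}-p_n^M=\int\frac{1}{n}\sum_{\ell\in \mathrm{BG}_n^{M+1}(x)\setminus \mathrm{BG}_n^M(x)}\delta_{(\varpi_\ell(x),\varrho_{\ell,n,x})}\,d\lambda_n(x)\geq 0,
\]
which proves $p_n^{M+1}\geq p_n^M$. Taking the limit along the subsequence $(n_j)$ provided by Proposition \ref{subSeq} preserves the inequality in the weak-$*$ sense: for any nonnegative $\Phi\in C(\mathcal S)$, $p^{M+1}(\Phi)=\lim_j p_{n_j}^{M+1}(\Phi)\geq \lim_j p_{n_j}^M(\Phi)=p^M(\Phi)$, so $p^{M+1}\geq p^M$.

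For the measures on $\mathbf{M}$, I use the formula $\mu_n^M=\int\varrho\circ\varpi^{-1}\,dp_n^M$ from Definition \ref{muN}. For any nonnegative $h\in C(\mathbf{M})$, the function $\widehat h(\varpi,\varrho):=\varrho\circ\varpi^{-1}(h)=\int h\circ\varpi\,d\varrho$ is continuous and nonnegative on $\mathcal S$, so
\[
\mu_n^{M+1}(h)-\mu_n^M(h)=\int \widehat h\,d(p_n^{M+1}-p_n^M)\geq 0,
\]
yielding $\mu_n^{M+1}\geq \mu_n^M$. The same argument applied to $p^{M+1}-p^M\geq 0$, together with identity $\mu^M=\int\varrho\circ\varpi^{-1}\,dp^M$ from Proposition \ref{subSeq}, gives $\mu^{M+1}\geq\mu^M$. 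There is no real obstacle here beyond unpacking the definitions; the essential content is the set-theoretic monotonicity $\mathrm{BG}_n^M(x)\subseteq \mathrm{BG}_n^{M+1}(x)$.
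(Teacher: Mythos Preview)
Your proof is correct and follows the same approach as the paper's own (very terse) argument: the paper simply notes that increasing $M$ enlarges the index set $\mathrm{BG}_n^M(x)$ in the definition of $p_n^M$, giving $p_n^{M+1}\geq p_n^M$, and then says the remaining inequalities follow from Proposition~\ref{subSeq}. You have spelled out exactly how those deductions go, which is helpful but not a different method.
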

\begin{proof}
    It is enough to check that for all $n$ and $M$, $p_n^{M+1}\geq p_n^M$, and the rest follows by Proposition \ref{subSeq}.
    Indeed, by Definition \ref{pN}, for any $n$, when increasing $M$, we increase the sum in the definition of $p_n^M$.
\end{proof}

\begin{cor}[The measures $\mu$ and $\mathbf p$]\label{muAndP}
   $p^{M}\uparrow \mathbf{p}$ and $\mu^{M}\uparrow \mu$, where $p(\mathcal{S})\in [\beta,1]$ and $\mu=\int \varrho\circ \varpi^{-1}d\mathbf p$.
\end{cor}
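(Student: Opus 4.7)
The proof has two independent parts. First, I would use the monotonicity established in Lemma~\ref{monomonomono} together with the uniform mass bounds from Proposition~\ref{sizeOfPnM} and Claim~\ref{muLikeP} to produce the limit measures $\mathbf{p}$ and $\mu$. Second, I would pass the identity $\mu^M=\int \varrho\circ \varpi^{-1}\, dp^M$ (from Proposition~\ref{subSeq}) to the limit via the monotone convergence theorem, using that $\widehat{h}(\varpi,\varrho):=\int h\circ\varpi\, d\varrho$ is continuous (hence bounded) on $\mathcal{S}$ for every $h\in C(\mathbf{M})$.

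\emph{Step 1 (existence of the monotone limits).} Since $p^{M+1}\geq p^M$ as positive Borel measures on $\mathcal{S}$ (Lemma~\ref{monomonomono}), the set function
\[
\mathbf{p}(A):=\sup_M p^M(A)=\lim_M p^M(A)
\]
is well defined on Borel sets $A\subseteq \mathcal{S}$. Countable additivity follows from monotone convergence applied to a countable disjoint union: for $A=\bigsqcup_i A_i$, we have $\mathbf{p}(A)=\sup_M\sum_i p^M(A_i)=\sum_i \sup_M p^M(A_i)=\sum_i \mathbf{p}(A_i)$, where the interchange of $\sup_M$ and $\sum_i$ is justified by the monotonicity in $M$. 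By Proposition~\ref{sizeOfPnM}, $p^M(\mathcal{S})\in[\beta,1]$ for every $M$, so $\mathbf{p}(\mathcal{S})\in[\beta,1]$. The same argument produces a positive Borel measure $\mu$ on $\mathbf{M}$ with $\mu^M\uparrow \mu$ and $\mu(\mathbf{M})\in[\beta,1]$ by Claim~\ref{muLikeP}.

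\emph{Step 2 (passing the disintegration formula to the limit).} Fix $h\in C(\mathbf{M})$ with $h\geq 0$. Then $\widehat{h}\geq 0$ is continuous on $\mathcal{S}$, and by Proposition~\ref{subSeq} we have
\[
\mu^M(h)=\int_{\mathcal{S}} \widehat{h}\, dp^M.
\]
Monotone convergence of the measures $p^M\uparrow \mathbf{p}$ applied to the nonnegative function $\widehat{h}$ gives $\int \widehat{h}\, dp^M\uparrow \int \widehat{h}\, d\mathbf{p}$, while the monotone convergence $\mu^M\uparrow\mu$ together with $h\geq 0$ gives $\mu^M(h)\uparrow \mu(h)$. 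Hence $\mu(h)=\int\widehat{h}\, d\mathbf{p}$ for every nonnegative continuous $h$. Splitting an arbitrary $h\in C(\mathbf{M})$ into positive and negative parts extends the identity to all of $C(\mathbf{M})$, which by Riesz representation identifies $\mu$ with $\int \varrho\circ\varpi^{-1}\, d\mathbf{p}$ as Borel measures on $\mathbf{M}$.

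\emph{Expected obstacle.} There is essentially no obstacle here; the work has already been done upstream. The only point requiring a touch of care is ensuring that the monotone limit $\mathbf{p}$ is a genuine Borel measure (not just a finitely additive set function), but this is immediate from the bounded monotone convergence of $p^M(A)$ and Lemma~\ref{monomonomono}. Note that I do not need to invoke weak-$*$ compactness or the particular subsequence $(n_j)$ further: monotonicity in $M$ upgrades the weak-$*$ limits $p^M$ (and $\mu^M$) into a genuinely increasing family whose limit is automatic.
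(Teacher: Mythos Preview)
Your proof is correct and follows essentially the same approach as the paper's: both use the monotonicity from Lemma~\ref{monomonomono} to obtain the limits, invoke the identity $\mu^M=\int \varrho\circ\varpi^{-1}\,dp^M$ from Proposition~\ref{subSeq}, and pass it to the limit via the map $h\mapsto\widehat h$. The paper's argument is terser (it simply asserts that bounded increasing sequences of measures converge and then refers back to \eqref{formonomono}), whereas you spell out the countable additivity of the setwise limit and the monotone convergence step explicitly; this is added care but not a different idea.
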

\begin{proof}
By Lemma \ref{monomonomono}, both limits must exist as bounded increasing sequences, and  $\mu^M=\int \varrho\circ \varpi^{-1}dp^M$ for every $M$. Then as in \eqref{formonomono}, we conclude that $\mu=\int \varrho\circ \varpi^{-1}d\mathbf{p}$.
\end{proof}

\begin{lemma}\label{muLikeMuNull}
$\mu\ll \sum_{i\geq0 }\mu^0\circ f^{-i}$.
\end{lemma}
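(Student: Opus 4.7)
The plan is to use the monotone approximation $\mu=\lim_{M\to\infty}\mu^M$ (Corollary \ref{muAndP}) and to bound each $\mu^M$ by finitely many pushforwards of $\mu^0$ under iterates of $f$, working at the finite-$n$ level and passing to the limit along the subsequence of Proposition \ref{subSeq}. First, I decompose the $\mathrm{BG}$-witness structure by the lag to the first witnessing BG atom: set
\[
B_n^j(x):=\{\ell\leq n:\ j=\min\{i\in[0,M]:\exists y\in g^{-\ell}[\mathrm{Im}(\varpi_\ell(x))]\cap E_n,\ \ell+i\in\mathrm{BG}_n(y)\}\},
\]
so that $\mathrm{BG}_n^M(x)=\coprod_{j=0}^M B_n^j(x)$ and $\mu_n^M=\sum_{j=0}^M\nu_n^{j}$, where $\nu_n^{j}:=\int \frac{1}{n}\sum_{\ell\in B_n^j(x)}\delta_{g^\ell(x)}\,d\lambda_n(x)$.

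Next, for each fixed $j$ I would show that $\nu_n^j$ is dominated by a constant (depending on $j$) times an $f$-pushforward of $\mu_n^0$. The geometric input is that each time-$\ell$ atom $Q\subseteq D$ contributing to $\nu_n^j$ contains a BG sub-atom $Q'\subseteq Q$ at time $\ell+j$ meeting $E_n$; by the BG lower bound together with the Yomdin upper bound, $\Vol(g^{\ell+j}(Q'))\geq \beta C_d^{-1}\epsilon^k/(400 A_{r,d})$ while $\Vol(g^\ell(Q))\leq C_d\epsilon^k$. Combined with the Jacobian estimate $\Vol(g^{\ell+j}(Q'))\leq M_g^{jk}\Vol(g^\ell(Q'))$ and bounded distortion (Lemma \ref{bounddis}), this yields a volume ratio $\Vol(g^\ell(Q))/\Vol(g^\ell(Q'))\leq C_j$ with $C_j$ of at most polynomial growth in $j$, and hence a comparison $\nu_n^j\leq C_j\, f^{a_j}_*\mu_n^0$ for an appropriate shift $a_j$ of order $jp$. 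Summing over $j$, passing to the limit $n_k\to\infty$, and then letting $M\to\infty$ gives $\mu\ll\sum_{i\geq 0}\mu^0\circ f^{-i}$ after reindexing.

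The principal technical obstacle is the measure-level inequality $\nu_n^j\leq C_j\, f^{a_j}_*\mu_n^0$: the left-hand mass $g^\ell_*\lambda|_{Q\cap E_n^n}$ is supported on the entire parent disk $g^\ell(Q)$, while the natural candidate on the right lives only on the BG sub-disk $g^\ell(Q')\subsetneq g^\ell(Q)$. Overcoming this requires subdividing $Q$ into all its time-$(\ell+j)$ sub-atoms and re-allocating the contributions of the non-BG sub-atoms (whose orbits, by the very definition of $B_n^j(x)$, must reach a BG sub-atom within the remaining budget of at most $M-j$ steps) to higher-$j$ strata. Keeping track of the compounded distortion constants through this iterative re-allocation so that the final sum remains controlled is the main calculation.
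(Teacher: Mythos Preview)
Your overall strategy---bound each $\mu^M$ by finitely many pushforwards of $\mu^0$ and then let $M\to\infty$---is exactly the paper's. But the paper realizes this in one line with no geometric input whatsoever: it asserts directly that $\mu^M\le\sum_{0\le i\le M}\mu^0\circ f^{-ip}$, which is just the combinatorial observation that every $\ell\in\mathrm{BG}_n^M(x)$ lies within distance $M$ of a time in $\mathrm{BG}_n^0$, so at the $n$-level $\mu_n^M$ is termwise dominated by $\sum_{i=0}^M$ shifted copies of $\mu_n^0$. Then $\mu^M\ll\eta:=\sum_{i\ge0}2^{-i}\mu^0\circ f^{-i}$, and since $\mu^M\uparrow\mu$ one is done. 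No BG volume lower bounds, no Yomdin upper bounds, no bounded distortion, no Jacobian estimates enter. You have instead chosen to route the bound through a volume comparison between a parent atom $Q$ and a single BG sub-atom $Q'\subset Q$, which is both unnecessary and, as you yourself note, insufficient: the $\lambda_n$-mass you need to control sits on all of $Q\cap E_n^n$, not just on $Q'$.

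Your proposed fix has a genuine gap. The claim that the non-BG $(\ell+j)$-sub-atoms $Q''\subset Q$ ``must reach a BG sub-atom within the remaining budget of at most $M-j$ steps'' does not follow from the definition of $B_n^j(x)$. That definition guarantees only that the \emph{parent} $\ell$-atom $Q=P_n^\ell(x)$ contains \emph{some} BG $(\ell+j)$-sub-atom meeting $E_n$; it imposes no constraint on the descendants of any particular non-BG sibling $Q''$. A point $x\in Q''\cap E_n^n$ can satisfy $\ell\in B_n^j(x)$ while none of $\varpi_\ell(x),\varpi_{\ell+1}(x),\ldots,\varpi_{\ell+M}(x)$ is BG, so there is no higher stratum available to absorb its mass and your iterative re-allocation has nowhere to terminate. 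This obstacle is intrinsic to the volume-comparison route; the paper avoids it entirely by working at the level of the index sets $\mathrm{BG}_n^M(x)$ rather than trying to compare measures on nested atoms.
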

\begin{proof}
   Recall $g=f^p$. For every $M\in \mathbb{N}$, 
    $$\mu^M\leq \sum_{0\leq i\leq M}\mu^0\circ f^{-i p}\leq 2^{Mp}\sum_{0\leq i\leq Mp}\frac{1}{2^i}\mu^0\circ f^{-i}< 2^{Mp}\sum_{ i\geq 0}\frac{1}{2^i}\mu^0\circ f^{-i} .$$
Hence $\mu^M\ll \sum_{ i\geq 0}\frac{1}{2^i}\mu^0\circ f^{-i}\equiv \eta$. Since $\mu^{M+1}\geq \mu^M$, we get that $\mu=\lim_M\uparrow \mu^M\ll \eta$. 
\end{proof}

\medskip
\noindent\textbf{Remark:} Lemma \ref{muLikeMuNull} implies that $\mu^0$ sees all ergodic components of the measure $\mu$.

\begin{theorem}\label{dom}
$\mu\circ g^{-1}=\mu$.
\end{theorem}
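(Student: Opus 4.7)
The plan is to establish the chain of inequalities $\mu^M \leq g_*\mu^{M+1}$ for every $M\geq 0$, pass to the limit $M\to\infty$ via the monotone convergence $\mu^M\uparrow \mu$ from Corollary \ref{muAndP}, and then upgrade the resulting $\mu \leq g_*\mu$ to equality using the fact that both measures share the same total mass (since $g$ is a diffeomorphism of the closed manifold $\mathbf M$, so $(g_*\mu)(\mathbf M) = \mu(\mathbf M)$).

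The key combinatorial input is a backward one-step shift for the horizon-thickened good-time sets:
\[
\bigl(\mathrm{BG}_n^M(x) \cap [1,n-1]\bigr) - 1 \;\subseteq\; \mathrm{BG}_n^{M+1}(x), \qquad x\in D,\ n\in \mathcal{N}.
\]
Indeed, suppose $\ell\geq 1$ lies in $\mathrm{BG}_n^M(x)$. By definition there exist $\ell'\in [\ell,\ell+M]$ and $y\in \varpi_\ell(x)\cap E_n$ with $\varpi_{\ell'}(y)$ a $\mathrm{BG}$ atom. By item (1) of Proposition \ref{firstLemmaOfSection} the Yomdin partitions are nested, i.e.\ $\varpi_\ell(x)\subseteq \varpi_{\ell-1}(x)$, so $y\in \varpi_{\ell-1}(x)\cap E_n$ as well; moreover $\ell'\in[\ell-1,(\ell-1)+(M+1)]$, verifying the defining condition of $\ell-1 \in \mathrm{BG}_n^{M+1}(x)$.

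Now fix $h \in C(\mathbf M)$ with $h\geq 0$. Separating off the index $\ell=0$ and reindexing $k=\ell-1$ in the sum defining $\mu_n^M(h)$, the inclusion above gives
\begin{align*}
\mu_n^M(h)
&= \int \frac{1}{n}\sum_{\ell\in \mathrm{BG}_n^M(x)} h(g^\ell x)\, d\lambda_n \\
&\leq \frac{\|h\|_\infty}{n} + \int \frac{1}{n}\sum_{k\in \mathrm{BG}_n^{M+1}(x)} h(g^{k+1}x)\, d\lambda_n \\
&= \frac{\|h\|_\infty}{n} + \mu_n^{M+1}(h\circ g) \;=\; \frac{\|h\|_\infty}{n} + (g_*\mu_n^{M+1})(h),
\end{align*}
the boundary term absorbing the possibly lost contribution at $\ell=0$. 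Passing to the limit along the subsequence $\mathcal N\ni n_j\to\infty$ of Proposition \ref{subSeq}, using weak-$*$ convergence of $\mu_{n_j}^{M+1}$ together with the continuity of $h\circ g$, we obtain $\mu^M(h)\leq (g_*\mu^{M+1})(h)$ for every such $h$. Letting $M\to\infty$ and invoking Corollary \ref{muAndP} gives $\mu(h)\leq (g_*\mu)(h)$; the equality of total masses then forces $\mu = g_*\mu$.

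The main obstacle I anticipate is nailing down the combinatorial inclusion: the three data $\ell'$, $y$, and the partition atom $\varpi_\ell(x)$ all depend on $\ell$, so one must be careful that the shift $\ell\mapsto \ell-1$ preserves all three defining properties simultaneously (here the nesting of the Yomdin partitions carries the day). Once this is checked, the remainder of the argument is a routine interplay of weak-$*$ limits and monotone convergence.
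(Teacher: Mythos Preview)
Your proposal is correct and follows essentially the same route as the paper: both establish the inequality $\mu_n^{M+1}\circ g^{-1}\geq \mu_n^M-\frac{1}{n}$ via the combinatorial inclusion $(\mathrm{BG}_n^M(x)\cap[1,n-1])-1\subseteq \mathrm{BG}_n^{M+1}(x)$, pass to the limit in $n$ and then $M$, and conclude equality from matching total mass. The paper is slightly terser, manipulating the Dirac sums directly rather than testing against $h\geq 0$, but the substance is identical; your explicit justification of the nesting $P_n^\ell(x)\subseteq P_n^{\ell-1}(x)$ (which is what the inclusion ``$\varpi_\ell(x)\subseteq\varpi_{\ell-1}(x)$'' really means, pulled back to $D$) is a welcome clarification.
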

\begin{proof}
For any $n\in \mathcal{N}$, for any $M\geq 0$,
\begin{align*}
    \mu_n^{M+1}\circ g^{-1}=&\int \frac{1}{n}\sum_{\ell\in \mathrm{BG}^{M+1}_n(x)}\delta_{g^\ell(x)}\circ g^{-1}d\lambda_n\\
    =&\int \frac{1}{n}\sum_{\ell\in \mathrm{BG}^{M+1}_n(x)}\delta_{g^{\ell+1}(x)}d\lambda_n\\
    \geq &\int \frac{1}{n}\sum_{\ell+1\in \mathrm{BG}^{M}_n(x)}\delta_{g^{\ell+1}(x)}d\lambda_n\\
    \geq&\int \frac{1}{n}\sum_{\ell\in \mathrm{BG}^{M}_n(x)}\delta_{g^{\ell}(x)}d\lambda_n-\frac{1}{n}=\mu_n^M- \frac{1}{n}.
\end{align*}
By sending $n_j\to\infty$ and then $M\to\infty$, we conclude $\mu\circ g^{-1}\geq \mu$. Since $\mu(1)$ and $\mu\circ g^{-1}(1)$ have the same value, $\mu=\mu\circ g^{-1}$.
\end{proof}

\begin{lemma}\label{fromGtoF}
   There exists a probability measure $\widetilde{\mathbf{p}}\in \mathbb{P}(\mathcal{S})$ s.t.  $$\frac{1}{p}\sum_{j=0}^{p-1}\left(\frac{1}{\mu(1)}\mu\right)\circ f^{-j}=\int_S \varrho\circ \varpi^{-1}d\widetilde{\mathbf{p}}((\varpi,\varrho)).$$
\end{lemma}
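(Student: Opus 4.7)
The plan is to obtain $\widetilde{\mathbf p}$ by pushing $\mathbf p$ forward under the natural action of $f^j$ on the space of Measured Disks, for $j=0,\ldots,p-1$, and then averaging. For each such $j$, define $T_j:\mathcal S\to \mathcal S$ by $T_j(\varpi,\varrho):=(f^j\circ\varpi,\varrho)$. Since $f$ is a $C^r$ diffeomorphism ($r>1$, in fact $C^\infty$) and $\varpi$ is a $C^{r-1,1}$ embedding, the composition $f^j\circ\varpi$ is again a $C^{r-1,1}$ embedding, so $T_j$ is indeed a well-defined map from $\mathcal S$ to $\mathcal S$; continuity in the product topology on $\mathcal S$ follows at once from continuity of composition with the fixed smooth map $f^j$ together with the fact that $\varrho$ is untouched.

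Then set
\begin{equation*}
\widetilde{\mathbf p}:=\frac{1}{p\,\mu(1)}\sum_{j=0}^{p-1}(T_j)_*\mathbf p.
\end{equation*}
By Corollary \ref{muAndP}, $\mathbf p(\mathcal S)=\mu(1)\ge\beta>0$, so $\widetilde{\mathbf p}(\mathcal S)=1$ and $\widetilde{\mathbf p}\in\mathbb P(\mathcal S)$.

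To verify the identity, for $h\in C(\mathbf M)$ let $\widehat h(\varpi,\varrho):=\varrho\circ\varpi^{-1}(h)=\int h\circ\varpi\,d\varrho$, as in the proof of Proposition \ref{subSeq}. The key observation is the covariance $\widehat h\circ T_j=\widehat{h\circ f^j}$, since $\widehat h(T_j(\varpi,\varrho))=\int h\circ f^j\circ\varpi\,d\varrho$. Hence
\begin{align*}
\int\varrho\circ\varpi^{-1}(h)\,d\widetilde{\mathbf p}
&=\frac{1}{p\,\mu(1)}\sum_{j=0}^{p-1}\int\widehat h\,d(T_j)_*\mathbf p
=\frac{1}{p\,\mu(1)}\sum_{j=0}^{p-1}\int\widehat{h\circ f^j}\,d\mathbf p \\
&=\frac{1}{p\,\mu(1)}\sum_{j=0}^{p-1}\mu(h\circ f^j)
=\frac{1}{p}\sum_{j=0}^{p-1}\left(\tfrac{\mu}{\mu(1)}\right)\circ f^{-j}(h),
\end{align*}
which is exactly the claimed formula.

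The argument is purely formal, so I do not expect a substantive obstacle; the only point to be careful about is checking that $T_j$ preserves $\mathcal S$ (i.e.\ that $f^j\circ\varpi$ remains a $C^{r-1,1}$ embedding), which is immediate from the assumed smoothness of $f$, and that the normalization by $\mu(1)=\mathbf p(\mathcal S)>0$ is well-defined, which is guaranteed by Corollary \ref{muAndP}. The identity itself is simply the disintegration identity $\mu=\int\varrho\circ\varpi^{-1}\,d\mathbf p$ of Corollary \ref{muAndP} combined with the push-forward relation $(f^j\circ\varpi)_*\varrho=f^j_*(\varpi_*\varrho)$.
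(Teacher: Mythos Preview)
Your proof is correct and follows essentially the same idea as the paper: push $\mathbf p$ forward along maps on $\mathcal S$ induced by $f^j$, $j=0,\ldots,p-1$, and average. Your choice $T_j(\varpi,\varrho)=(f^j\circ\varpi,\varrho)$ is in fact slightly simpler than the paper's, which also transforms the second coordinate via a Jacobian factor; for the bare statement of the lemma your map suffices, since the only identity needed is the covariance $\widehat h\circ T_j=\widehat{h\circ f^j}$, which you verify. One minor remark: the definition of $\mathcal S$ only requires $\varpi$ to be a $C^{r-1,1}$ map, not an embedding, but this does not affect your argument (your $\widehat h(\varpi,\varrho)=\int h\circ\varpi\,d\varrho$ is well-defined regardless).
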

\begin{proof}
For $0\leq j\leq p-1$, let $F_j((\varpi,\varrho)):=(f^{j}\circ \varpi,\varrho)$ which is an invertible and continuous map from $\mathcal{S}$ to $\mathcal{S}$. Then set $\widetilde{\mathbf{p}}:=\frac{1}{p}\sum_{j=0}^{p-1}(\frac{1}{\mathbf{p}(1)}\mathbf{p})\circ F_j^{-1}$ (recall Claim \ref{muLikeP}).
\end{proof}

\begin{definition}\label{muHat}
    $$\widehat{\mu}:=\int_S \varrho\circ \varpi^{-1}d\widetilde{\mathbf{p}}((\varpi,\varrho)).$$
\end{definition}

\begin{cor} $\widehat{\mu}$ is an $f$-invariant probability measure on $\mathbf M$.
\end{cor}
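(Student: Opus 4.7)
The plan is to combine Lemma \ref{fromGtoF} with Theorem \ref{dom} via the standard averaging trick that promotes $g$-invariance to $f$-invariance when $g=f^p$. Using Lemma \ref{fromGtoF}, we may rewrite
\[
\widehat{\mu}=\frac{1}{p}\sum_{j=0}^{p-1}\left(\frac{1}{\mu(1)}\mu\right)\circ f^{-j},
\]
so both claims (probability and $f$-invariance) become statements about this finite Ces\`aro-type average of push-forwards of $\mu$.

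For the probability claim, Corollary \ref{muAndP} gives $\mu(\mathbf{M})=\mu(1)\in[\beta,1]$, in particular $\mu(1)>0$, so $\frac{1}{\mu(1)}\mu$ is a probability measure on $\mathbf{M}$. Since $f$ is a diffeomorphism, each $\left(\frac{1}{\mu(1)}\mu\right)\circ f^{-j}$ is again a probability measure, and $\widehat{\mu}$ is their convex combination with equal weights $1/p$; hence $\widehat{\mu}(\mathbf{M})=1$.

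For $f$-invariance, Theorem \ref{dom} says $\mu\circ g^{-1}=\mu$, i.e.\ $\mu\circ f^{-p}=\mu$. Pushing forward by $f$ and telescoping the index,
\begin{align*}
\widehat{\mu}\circ f^{-1}
&=\frac{1}{p}\sum_{j=0}^{p-1}\left(\frac{1}{\mu(1)}\mu\right)\circ f^{-(j+1)}\\
&=\frac{1}{p}\sum_{j=1}^{p-1}\left(\frac{1}{\mu(1)}\mu\right)\circ f^{-j}+\frac{1}{p}\left(\frac{1}{\mu(1)}\mu\right)\circ f^{-p}\\
&=\frac{1}{p}\sum_{j=1}^{p-1}\left(\frac{1}{\mu(1)}\mu\right)\circ f^{-j}+\frac{1}{p}\left(\frac{1}{\mu(1)}\mu\right)=\widehat{\mu},
\end{align*}
where in the third line we used $\mu\circ f^{-p}=\mu$ to identify the extra $j=p$ term with the missing $j=0$ term.

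I do not expect any serious obstacle here: the substantive work, namely the $g$-invariance of $\mu$, was established in Theorem \ref{dom} through the positive-density-of-BG-times estimates and the monotone construction of $\mu=\lim_M\mu^M$. The present corollary is just the bookkeeping step that records how the rewriting in Lemma \ref{fromGtoF} symmetrizes $\mu$ over the $p$ iterates of $f$ within one iterate of $g=f^p$.
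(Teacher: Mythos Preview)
Your argument is correct and is precisely the standard averaging computation that the paper's one-line proof (``It follows directly from Lemma \ref{fromGtoF} (recall that $g=f^p$)'') is implicitly invoking. You have simply made explicit the telescoping step using Theorem \ref{dom} and the probability normalization from Corollary \ref{muAndP}; there is nothing to add.
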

\begin{proof}
    It follows directly from Lemma \ref{fromGtoF} (recall that $g=f^p$).
\end{proof}

\begin{definition}
    Let $\mathcal{W}:=\{
    \varpi:[0,1]^k\to \mathbf M\text{ where }\varpi\text{ is }C^{r-1,1}\}$ be the {\em space of disks}.
\end{definition}

\begin{theorem}[Invariant measure with disintegration by an invariant family of disks]\label{theCondsDef}
    There exists a probability measure $\widehat{\mathbf{p}}$ on $\mathcal{W}$ s.t.  $$\widehat{\mu}=\int_{\mathcal{W}} \varrho_\varpi\circ \varpi^{-1} d\widehat{\mathbf{p}}(\varpi),$$ where $\varrho_\varpi$ is a probability measure on $[0,1]^k$.
\end{theorem}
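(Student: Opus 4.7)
\medskip
\noindent\textbf{Proof plan.} The strategy is straightforward disintegration: we already have the measure $\widetilde{\mathbf{p}}$ on $\mathcal{S}=\mathcal{W}\times\mathbb{P}([0,1]^k)$ from Lemma \ref{fromGtoF}, and we merely need to push it down to $\mathcal{W}$ via the natural projection, and then absorb the fiber measures into a barycentric density on each disk. Concretely, let $\pi:\mathcal{S}\to\mathcal{W}$, $(\varpi,\varrho)\mapsto \varpi$, and set $\widehat{\mathbf{p}}:=\pi_{\ast}\widetilde{\mathbf{p}}$. Since $\widetilde{\mathbf{p}}$ is a probability (it is the $p$-average of the probabilities $\frac{1}{\mathbf p(1)}\mathbf p \circ F_j^{-1}$), so is $\widehat{\mathbf{p}}$.

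The first technical step is to apply Rokhlin's disintegration theorem along $\pi$. For this we need $\mathcal{S}$ to be a standard Borel space: the space $\mathcal W$ of $C^{r-1,1}$ maps $[0,1]^k\to \mathbf M$ is a separable metric space (a closed subset of a Banach space on a compact domain), and $\mathbb{P}([0,1]^k)$ in the weak-$\ast$ topology is compact metrizable, so $\mathcal{S}$ is Polish and $\pi$ is continuous. Rokhlin's theorem then gives a $\widehat{\mathbf{p}}$-a.e.\ defined family of probability measures $\widetilde{\mathbf{p}}_\varpi$ on $\pi^{-1}(\varpi)\cong\mathbb{P}([0,1]^k)$, measurable in $\varpi$, such that
$$\widetilde{\mathbf{p}}=\int_{\mathcal W}\widetilde{\mathbf{p}}_\varpi\, d\widehat{\mathbf{p}}(\varpi).$$

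Next, define the barycenter
$$\varrho_\varpi:=\int_{\mathbb{P}([0,1]^k)}\varrho\, d\widetilde{\mathbf{p}}_\varpi(\varrho),$$
which is a probability measure on $[0,1]^k$ as a convex combination of such probabilities, and which depends measurably on $\varpi$. For any $h\in C(\mathbf M)$, using Fubini together with the linearity of $\varrho\mapsto \varrho\circ\varpi^{-1}(h)=\int h\circ \varpi\, d\varrho$ in $\varrho$ (for fixed $\varpi$), we obtain
\begin{align*}
\widehat{\mu}(h)&=\int_{\mathcal S}\varrho\circ\varpi^{-1}(h)\, d\widetilde{\mathbf{p}}(\varpi,\varrho)\\
&=\int_{\mathcal W}\int_{\mathbb{P}([0,1]^k)}\!\!\Big(\int h\circ\varpi\, d\varrho\Big)d\widetilde{\mathbf{p}}_\varpi(\varrho)\, d\widehat{\mathbf{p}}(\varpi)\\
&=\int_{\mathcal W}\int h\circ\varpi\, d\varrho_\varpi\, d\widehat{\mathbf{p}}(\varpi)=\int_{\mathcal W}\varrho_\varpi\circ\varpi^{-1}(h)\, d\widehat{\mathbf{p}}(\varpi),
\end{align*}
which is the desired identity.

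The only genuine content to check is the applicability of Rokhlin's theorem, i.e.\ the standard Borel structure and the measurability of $\pi$; everything else is a bookkeeping argument combining the definition of $\widehat\mu$ from Lemma \ref{fromGtoF} with Fubini. There is no real obstacle here, and the statement is essentially a tautological consequence of the fact that $\widehat{\mu}$ was already constructed as an integral of disk-supported absolutely continuous measures in Definition \ref{muHat}.
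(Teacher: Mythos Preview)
Your proof is correct and follows essentially the same approach as the paper: project $\widetilde{\mathbf{p}}$ along $\pi:\mathcal{S}\to\mathcal{W}$, disintegrate via Rokhlin, and take the barycenter $\varrho_\varpi=\int \varrho\, d\widetilde{\mathbf{p}}_\varpi$. Your version is in fact slightly more explicit about why Rokhlin applies (the Polish structure of $\mathcal{S}$); one minor quibble is that in your closing sentence the phrase ``absolutely continuous measures'' is premature, since absolute continuity of the conditionals is only established later in \textsection\ref{smoothCondSect}.
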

\begin{proof}
    Let $\alpha$ be the measurable partition of $\mathcal{S}$ given by the atoms $\alpha((\varpi,\varrho)):=\{\varpi\}\times \mathbb{P}([0,1]^k)$, and disintegrate $\widetilde{\mathbf{p}}$:
    $$\widetilde{\mathbf{p}}=\int \widetilde{\mathbf{p}}_{\alpha((\varpi,\varrho))}d\widetilde{\mathbf{p}}\equiv \int \widetilde{\mathbf{p}}_{\varpi}d\widehat{\mathbf{p}},$$
    where $\pi:\mathcal{S}\to \mathcal{W}$, $\pi((\varpi,\varrho)):=\varpi$ and $\widehat{\mathbf{p}}:=\widetilde{\mathbf{p}}\circ\pi^{-1}$. Then set $\varrho_\varpi:= \int \varrho d\widetilde{\mathbf{p}}_{\varpi}$ and so,
    $$\widehat{\mu}=\int \varrho\circ \varpi^{-1}d\widetilde{\mathbf{p}}=\int \left(\int \varrho\circ \varpi^{-1} d\widetilde{\mathbf{p}}_\varpi \right)d\widehat{\mathbf{p}}=\int \varrho_\varpi\circ\varpi^{-1} d\widehat{\mathbf{p}}.$$
\end{proof}

\section{Smooth Conditionals}\label{smoothCondSect}

In this section we study the invariant measure which we constructed in \textsection \ref{constMu}, and prove that its disintegration by Measured Disks in fact is given by smooth disk-measures (the ``conditionals"). The proof is decomposed into two sub-sections, where we first show that the conditionals are absolutely continuous, and then prove also domination in the following sub-section. Proving that the conditionals are absolutely continuous relies on finding a natural family of smooth disk-measures to which we compare the conditionals. The comparison is done by the atoms of finer-and-finer Yomdin partitions, where we are able to get tight bounds on the number of atoms which ``compare well", and the total measure they cover. The key idea which allows us to get these estimates is the observation that if a certain point visits with some fixed positive density times where a fixed portion of the atoms do not ``compare well", then the point belongs to a set of exponentially small measure. Using the fact that $\frac{1}{n_j}\log\lambda(E_{n_j})\to 0$, we can compute the portion of ``bad points". Note that the dynamics of the disk is being used here, as for purely analytic reasons, one should not expect to bound the $\lambda_n$-portion of a subset of $E_n$ whose $\lambda$-measure ``is large" relatively to $E_n$, as these are morally two singular measures.

\subsection{Absolute Continuity of the Conditionals}\label{ACofConds}
For a $C^{1}$ map $\varpi:[0,1]^k\to \mathbf M$ with a positive $k$-volume, i.e. $\lambda(\varpi):=\int_{[0,1]^k}|\wedge^kd_t\varpi|\,dt>0 $, we define $\lambda_\varpi$  by 
\begin{equation}\label{pushforwardmeasure}
    \forall h\in C(\mathbf M), \ \int h\,d\lambda_\varpi:=\int_{[0,1]^k}h\circ \varpi(t)\cdot |\wedge^kd_t\varpi|\,dt.
\end{equation}

In addition, the tangent space to $\mathrm{Im}(\varpi)$ is well-defined $\lambda_\varpi$-a.e. since the set of points in $\mathrm{Im}(\varpi)$ where it intersects itself transversely is of measure zero.

In this section of the paper we prove that for $\widehat{\mathbf{p}}$-a.e. $\varpi$, the probability  measure $\varrho_\varpi\circ\varpi^{-1}$ is absolutely continuous w.r.t. $\lambda_{\varpi}$, where $\widehat{\mu}=\int_{\mathcal{W}} \varrho_\varpi\circ \varpi^{-1} d\widehat{\mathbf{p}}(\varpi)$.

\begin{definition}\label{DefNuVarpiEll}
    For any $\varpi$ as above 
    and $h\in C(\mathbf{M})$, set for $\ell\geq0$,
    $$\nu_\varpi^{(\ell)}(h):=\int \rho_\varpi^{(\ell)}(x)\cdot h(x)d\lambda_\varpi(x),$$
    where 
$$\rho_\varpi^{(\ell)}(x):=\frac{1}{\int \prod_{j\leq \ell}e^{Q_\varpi^{(j)}(y,x)}d\lambda_\varpi(y)},$$
    and 
    $$Q_\varpi^{(j)}(y,x):= \log | \wedge^kd_{g^{-j}y}g^{-1}|_{g^{-j}[\varpi]}| - \log | \wedge^kd_{g^{-j}x}g^{-1}|_{g^{-j}[\varpi]}|.$$   
\end{definition}

\medskip
\noindent\textbf{Remarks:}\text{ }
\begin{enumerate}
    \item $Q_\varpi^{(j)}(y,x)$ is well-defined $\lambda_\varpi$-a.e. by the preceding discussion.
    \item Note, $\nu_\varpi^{(\ell)}=\frac{\lambda_{g^{-\ell}[\varpi]}\circ g^{-\ell}}{\lambda(g^{-\ell}[\varpi])}$.
\end{enumerate}

\begin{prop}\label{nuEsts}
	For $\widehat{\mathbf{p}}$-a.e. $\varpi$, the limit $\lim_{\ell\to\infty}\rho_\varpi^{(\ell)}=\rho_\varpi$ exists (recall Definition \ref{DefNuVarpiEll}), and converges uniformly exponentially fast.  
      In addition, $|\log \rho_\varpi|\leq  1$.
\end{prop}
\begin{proof}
We prove that for $\widehat{\mathbf{p}}$-a.e. $\varpi$, for all $s,t\in [0,1]^k$ with $|\wedge^kd_t\varpi|, |\wedge^kd_s\varpi|\neq 0$, for all $j\geq0$, $|\widehat Q_\varpi^{(j)}(s,t)|\leq \frac{1}{2^{j+1}}$, where $$\widehat Q_\varpi^{(j)}(s,t):=\log\frac{|\wedge^k d_s g^{-j-1}\circ \varpi |}{|\wedge^k d_s g^{-j}\circ \varpi|}-\log\frac{|\wedge^k d_t g^{-j-1}\circ \varpi |}{|\wedge^k d_t g^{-j}\circ \varpi|}.$$
It will conclude the proof as for $\lambda_\varpi$-a.e. $x$ and $y$, there are $t,s$ with $\varpi(t)=x$ and $\varpi(s)=y$, and $|\wedge^kd_t\varpi|, |\wedge^kd_s\varpi|\neq 0$, so that $\widehat Q_\varpi^{(j)}(s,t)=Q_\varpi^{(j)}(x,y)$.

Given $M\in \mathbb{N}$, aside for at most measure $\delta>0$, for $p^M$-a.e. $(\varpi,\varrho)$,
$$\varpi=\lim_{n\to\infty}^{C^{r-1,1}}\varpi_{\ell_n}(x_n),$$
where $\ell_n\geq \delta n$, and $\varpi_{\ell_n}(x_n)\in \mathcal{S}_M$. Therefore, since for every $j$, $\widehat Q_\varpi^{(j)}$ depends $C^1$-continuously on $\varpi$, it is enough to show that for all $n$, $|\widehat Q_{\varpi_{\ell_n}(x_n)}^{(j)}(s,t)|\leq \frac{1}{2^j}$.

Fix $n$. Given $s,t\in [0,1]^k$, write $t_j:=\varpi_{\ell_n-j}(x_n)^{-1}(g^{-j}(\varpi_{\ell_n}(x_n)(t)))$ and $s_j:=\varpi_{\ell_n-j}(x_n)^{-1}(g^{-j}(\varpi_{\ell_n}(x_n)(s)))$. Then,
$$\widehat Q_{\varpi_{\ell_n}(x_n)}^{(j)}(s,t)=\widehat  Q^{(0)}_{\varpi_{\ell_n-j}(x_n)}(s_j,t_j).$$

By arguing as in Lemma \ref{YomdinPtnShrinksUni} we can write $\varpi_{\ell_n-j}:=\varpi_{\ell_n-j}(x_n)=\text{Im}(\sigma_{\ell_n-j}\circ \theta_{\ell_n-j})$ and $\varpi_{\ell_n}:=\varpi_{\ell_n}(x_n)=\text{Im}(\sigma_{\ell_n}\circ\theta_{\ell_n})$ with $ \theta_{\ell_n}=\theta_{\ell_n-j}\circ \phi_j$ where $\|d\phi_j\|\leq \frac{1}{2^j}$. By the definition of $s_j$ and $t_j$, we can write $s_j=\phi_j(s)$ and $t_j=\phi_j(t)$. Thus, 
\begin{equation}\label{lastref}|s_j-t_j|\leq \frac{\sqrt{k}}{2^j},\end{equation}
and 
\begin{align*}
    &\widehat Q^{(0)}_{\varpi_{\ell_n-j}(x_n)}(s_j,t_j)=\\
    &\log\frac{|\wedge^k d_{s_j} g^{-1}\circ \varpi_{\ell_n-j} |}{|\wedge^k d_{s_j} \varpi_{\ell_n-j}|}-\log\frac{|\wedge^k d_{t_j} g^{-1}\circ \varpi_{\ell_n-j} |}{|\wedge^k d_{t_j} \varpi_{\ell_n-j}|}=\\
    &\log\frac{|\wedge^k d_{\theta_{\ell_n-j}(s_j)} g^{-1}\circ \sigma_{\ell_n-j} |}{|\wedge^k d_{\theta_{\ell_n-j}(s_j)} \sigma_{\ell_n-j}|}-\log\frac{|\wedge^k d_{\theta_{\ell_n-j}(t_j)} g^{-1}\circ \sigma_{\ell_n-j} |}{|\wedge^k d_{\theta_{\ell_n-j}(t_j)} \sigma_{\ell_n-j}|}=\\
    &\log\frac{|\wedge^k d_{\theta_{\ell_n-j}(s_j)} g^{-1}\circ \sigma_{\ell_n-j} |}{|\wedge^k d_{\theta_{\ell_n-j}(t_j)} g^{-1}\circ \sigma_{\ell_n-j}|}+\log\frac{|\wedge^k d_{\theta_{\ell_n-j}(t_j)} \sigma_{\ell_n-j} |}{|\wedge^k d_{\theta_{\ell_n-j}(s_j)} \sigma_{j}|}=\\
    &\log\frac{|\wedge^k d_{\theta_{\ell_n-j}(s_j)}  \sigma_{\ell_n-j-1} |}{|\wedge^k d_{\theta_{\ell_n-j}(t_j)}  \sigma_{\ell_n-j-1}|}+\log\frac{|\wedge^k d_{\theta_{\ell_n-j}(t_j)} \sigma_{\ell_n-j} |}{|\wedge^k d_{\theta_{\ell_n-j}(s_j)} \sigma_{\ell_n-j}|},
\end{align*}
as $g^{-1}\circ \sigma_{\ell_n-j}$ is simply $\sigma_{\ell_n-j-1}$. By \eqref{bbdDistort1.5July35}, we get 
\begin{equation}\label{numerator}
    |\widehat Q^{(0)}_{\varpi_{\ell_n-j}(x_n)}(s_j,t_j)|\leq2\cdot \frac{\sqrt k}{2^j}\cdot \frac{1}{5k}\leq \frac{1}{2^{j+1}}.
\end{equation}

\end{proof}

\medskip
\noindent\textbf{Remark:} Note that Proposition \ref{nuEsts} holds also for $\varpi'$ an embedded $k$-disk with $\mathrm{Im}(\varpi')\subseteq \mathrm{Im}(\varpi)$.

\begin{definition}\label{DefNuVarpi} For any $\varpi\in \mathrm{Supp}(\widehat{\mathbf{p}})$ s.t. $\rho_\varpi$ is well-defined (recall Proposition \ref{nuEsts} and Definition \ref{DefNuVarpiEll}) and $h\in C(\mathbf{M})$, set 
     $$\nu_\varpi(h):=\int \rho_\varpi(x)\cdot h (x)d\lambda_\varpi(x),$$
    where 
$$\rho_\varpi=\lim_{\ell\to\infty}\rho_\varpi^{(\ell)}.$$
\end{definition}

\medskip
\noindent\textbf{Remark:} Note that in particular, $\int \rho_\varpi(x)d\lambda_\varpi(x)=1$, and so $\nu_\varpi$ is a probability measure.

\begin{definition}\label{badSet25}
	Given $n,N\in\mathbb{N}$ with $N|n$, and $\delta>0$ with $\delta \frac{n}{N}\in \mathbb{N}$, and $x\in E_n$ s.t.  $$\frac{1}{n}\#\Big\{\ell\leq n: \frac{\lambda(P_n^{\ell+N}(x)\cap E_n)}{\lambda(P_n^\ell(x)\cap E_n)}\geq \frac{1}{\delta^3} \frac{\lambda(P_n^{\ell+N}(x))}{\lambda(P_n^\ell(x))}\Big\}\geq\delta,$$
set $\underline{c}^t(x)\in \{0,1\}^\frac{n}{N}$ which has exactly $\delta \frac{n}{N}$-many $1$'s to be the first chain in lexicographic order such that for $0\leq t<N$ $$c_{i}^t(x)=1\Rightarrow \frac{\lambda(P_n^{(i+1)N+t}(x)\cap E_n)}{\lambda(P_n^{i N+t}(x)\cap E_n)}\geq \frac{1}{\delta^3}\frac{\lambda(P_n^{(i+1)N+t}(x))}{\lambda(P_n^{iN+t}(x))}.$$
For $\underline{c}\in \{0,1\}^\frac{n}{N}$ with exactly $\delta\frac{n}{N}$-many $1$'s, set
	$$[\underline{c}]_{n,N,\delta,t}:=\Big\{x\in E_n: \underline{c}^t(x)=\underline{c}\Big\}.$$
Let $\mathcal{C}_{n,N,\delta}:=\Big\{\underline{c}\in \{0,1\}^\frac{n}{N}:\text{ has exactly }\delta \frac{n}{N}\text{-many }1\text{'s}\Big\}$.
We often abuse notation, and omit the dependence on $n,N$  and $\delta$ 
when it is clear from context. For a lean notation, we write $$[\underline{c}^t]:=[\underline{c}]_{n,N,\delta,t}\text{ and }\mathcal{C}:=\mathcal{C}_{n,N,\delta}.$$
\end{definition}

\medskip
\noindent\textbf{Remark:}
 For a lean notation, from here onwards we assume that $N|n$ and $\delta \frac{n}{N}\in\mathbb{N}$. This can always be arranged by taking integer values which do not affect the proof but carry notation.

\medskip

\begin{prop}[Main estimate]\label{keyAC}
	Given $n,N\in\mathbb{N}$ and $\delta>0$ as in Definition \ref{badSet25}, 
    for all $\delta>0$ small enough, for all $\underline{c}\in \mathcal{C}$ and $0\leq t< N$,
	$$\lambda([\underline{c}^t])\leq e^{-3\delta\log\frac{1}{\delta}\frac{n}{N}}\text{, and }\#\mathcal{C}\leq e^{2\delta\log\frac{1}{\delta} \frac{n}{N}}.$$
\end{prop}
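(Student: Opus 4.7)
The second bound is a combinatorial counting estimate: since $\mathcal{C}_{n,N,\delta}$ is the set of binary strings of length $n/N$ with exactly $\delta n/N$ ones, $\#\mathcal{C}_{n,N,\delta} = \binom{n/N}{\delta n/N}$, and by Stirling (or the standard entropy bound $\binom{m}{k}\leq(em/k)^k$) this is bounded by $(e/\delta)^{\delta n/N} \leq e^{2\delta \log(1/\delta)\cdot n/N}$ once $\delta \leq 1/e$.

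For the measure bound, my plan is to exploit the refining tree structure of the Yomdin partitions from Proposition~\ref{firstLemmaOfSection} together with a one-step Markov-type estimate. I would first note that the condition $\underline c^t(x)=\underline c$ depends only on the chain $P_n^t(x)\supseteq P_n^{N+t}(x)\supseteq\cdots \supseteq P_n^{n}(x)$ of partition elements containing $x$, and is therefore saturated by the finest partition. Consequently
\[
\lambda([\underline c^t])=\sum_{Q}\lambda(Q\cap E_n)\;\leq\;\sum_{Q}\lambda(Q),
\]
where $Q$ runs over the terminal partition elements whose chain realizes the pattern $\underline c$. Writing $\rho(Q):=\lambda(Q\cap E_n)/\lambda(Q)$ for the relative density of $E_n$ in $Q$, the condition ``$c_i=1$'' simply becomes $\rho(Q_{i+1})\geq \rho(Q_i)/\delta^3$ along the chain, since the multiplicative factor $\lambda(Q_{i+1})/\lambda(Q_i)$ cancels.

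The key one-step estimate I would establish is the following: for any partition element $Q\in P_n^{iN+t}$,
\[
\sum_{\substack{Q'\subseteq Q\\ \rho(Q')\geq \rho(Q)/\delta^3}}\lambda(Q')\;\leq\;\delta^3\,\lambda(Q),
\]
where the sum ranges over children $Q'\in P_n^{(i+1)N+t}$. This is immediate from the martingale-type identity $\sum_{Q'\subseteq Q}\lambda(Q')\rho(Q')=\lambda(Q\cap E_n)=\lambda(Q)\rho(Q)$ (which uses only that $E_n\subseteq E_n^{(i+1)N+t}$, hence is covered by the children) combined with Markov's inequality. Iterating this bound across the $\delta n/N$ levels where $c_i=1$, and using the trivial estimate $\sum_{Q'\subseteq Q}\lambda(Q')\leq \lambda(Q)$ at the remaining levels, shows that for each $Q_0\in P_n^{t}$ the contribution of chains starting at $Q_0$ with pattern $\underline c$ is at most $\delta^{3\delta n/N}\lambda(Q_0)$. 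Summing over $Q_0\in P_n^{t}$, which partition a subset of $D$ of total $\lambda$-measure at most $1$, yields
\[
\lambda([\underline c^t])\;\leq\; \delta^{3\delta n/N}\;=\;e^{-3\delta\log(1/\delta)\cdot n/N}.
\]
The main obstacle is essentially bookkeeping: ensuring that the Yomdin charts at different levels genuinely form a refining tree (true modulo boundaries of codimension $\geq 1$ by Proposition~\ref{firstLemmaOfSection}), and cleanly tracking the product of the one-step bounds over the $\delta n/N$ ``good'' levels while accounting for the non-admissible levels trivially.
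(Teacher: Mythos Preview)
Your proposal is correct and is essentially the same argument as the paper's. The paper packages the iteration by setting $E_{\underline c^t}^i:=\bigcup\{P\in P_n^{Ni+t}:P\cap[\underline c^t]\neq\varnothing\}$ and telescoping $\lambda(E_{\underline c^t}^{i+1})/\lambda(E_{\underline c^t}^{i})$; at each index $i_j$ with $c_{i_j}=1$ it uses the defining inequality to replace $\lambda(P')/\lambda(P)$ by $\delta^3\,\lambda(P'\cap E_n)/\lambda(P\cap E_n)$ and sums, which is exactly your Markov step (your quantity $\sum_{Q_i\text{ adm}}\lambda(Q_i)$ is precisely $\lambda(E_{\underline c^t}^i)$). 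The only cosmetic difference is that you phrase the one-step bound via the density $\rho$ and Markov's inequality, while the paper writes out the same computation directly.
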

\begin{proof}
	The bound on $\#\mathcal{C}$  is  standard (see e.g. \cite[Lemma~16.19]{flum}). We continue to bound $\lambda([\underline{c}^t]) $ for $\underline{c}\in \mathcal{C}$ and a fixed $0\leq t<N$.

Let $E_{\underline{c}^t}^{i}:=\bigcup\{P\in P_n^{Ni+t}:P\cap [\underline{c}^t]\neq\varnothing \}$. Note, this is a nested sequence of sets in $i$. In addition, when $i_j$ is the $j$-th $i$ so $c_i^t=1$,
\begin{align*}
	\frac{\lambda\Big(E_{\underline{c}^t}^{i_j+1}\Big)}{\lambda\Big(E_{\underline{c}^t}^{i_j}\Big)}=&\frac{\sum_{\overset{P\in P_n^{N i_j+t}:}{P\cap[\underline{c}^t]\neq\varnothing}}\lambda(P) \sum_{\overset{P'\in P_n^{N (i_j+1)+t}:}{P'\cap[\underline{c}^t]\neq\varnothing, P'\subseteq P}}\frac{\lambda(P')}{\lambda(P)}}{\sum_{\overset{P\in P_n^{N i_j+t}:}{P\cap[\underline{c}^t]\neq\varnothing}}\lambda(P)}\\
	\leq &\frac{\sum_{\overset{P\in P_n^{N i_j+t}:}{P\cap[\underline{c}^t]\neq\varnothing}}\lambda(P) \sum_{\overset{P'\in P_n^{N (i_j+1)+t}:}{P'\cap[\underline{c}^t]\neq\varnothing, P'\subseteq P}}\delta^3\frac{\lambda(P'\cap E_n)}{\lambda(P\cap E_n)}}{\sum_{\overset{P\in P_n^{N i_j+t}:}{P\cap[\underline{c}^t]\neq\varnothing}}\lambda(P)}\leq \delta^3.
\end{align*}

Then,
\begin{align*}
\frac{\lambda\Big(E_{\underline{c}^t}^{\frac{n}{N}-1}\Big)}{\lambda\Big(E_{\underline{c}^t}^{0}\Big)}=&\prod_{i=0}^{\frac{n}{N}-2} \frac{\lambda\Big(E_{\underline{c}^t}^{i+1}\Big)}{\lambda\Big(E_{\underline{c}^t}^{i}\Big)}\leq \prod_{j\leq \frac{n\delta}{N}} \frac{\lambda\Big(E_{\underline{c}^t}^{i_j+1}\Big)}{\lambda\Big(E_{\underline{c}^t}^{i_j}\Big)}
\leq  e^{-3\delta\log\frac{1}{\delta}\frac{n}{N}}.
\end{align*}
	Then since $\lambda\Big(E_{\underline{c}^t}^{0}\Big) \leq \lambda(D)\leq 1$, and since $[\underline{c}^t]\subseteq E_{\underline{c}^t}^{\frac{n}{N}-1} $, we have 
	$$\lambda([\underline{c}^t])\leq e^{-3\delta\log\frac{1}{\delta}\frac{n}{N}}.$$
\end{proof}

\begin{definition}
	Given $n,N\in\mathbb{N}$ and $\ell\leq n-N$, set $$\check{E}_n^{\ell,N,\delta}:=\Big\{x\in E_n: \frac{\lambda(P_n^{\ell+N}(x)\cap E_n)}{\lambda(P_n^\ell(x)\cap E_n)}\geq \frac{1}{\delta^3} \frac{\lambda(P_n^{\ell+N}(x))}{\lambda(P_n^{\ell}(x))}\Big\}.$$ 
\end{definition}

\begin{definition} Given $\delta\in\mathbb{Q}^+$, set
\begin{enumerate}
\item $N_n:=\lfloor\sqrt{n}\rfloor$,
\item $\check{A}_{n,\delta}:= 
E_n\setminus \bigcup_{t<N_n,\underline{c}^t\in\mathcal{C}_{n,N_n,\delta}}[\underline{c}^t]
 $.
\end{enumerate}
\end{definition}

\begin{lemma}\label{A_ndelta}
For all $\delta>0$ and all $n$ large enough,
$$\lambda_n(\check{A}_{n,\delta})\geq 1-\frac{N_n e^{-\delta\log\frac{1}{\delta}\frac{n}{N_n}}}{\lambda(E_n)}\geq  1-\delta.$$
\end{lemma}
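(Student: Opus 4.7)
My plan is a direct union-bound argument that reduces everything to the estimates already packaged in Proposition \ref{keyAC} together with the lower bound on $\lambda(E_n)$ from Corollary \ref{seten}.

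First I would unfold the definition of $\check A_{n,\delta}$. Since $E_n\setminus \check A_{n,\delta}=\bigcup_{t<N_n}\bigcup_{\underline c^t\in\mathcal C_{n,N_n,\delta}}[\underline c^t]$, a bare union bound yields
\begin{equation*}
\lambda(E_n\setminus \check A_{n,\delta})\;\leq\;\sum_{t<N_n}\sum_{\underline c^t\in\mathcal C_{n,N_n,\delta}}\lambda([\underline c^t]).
\end{equation*}
Proposition \ref{keyAC} then gives, for all $\delta$ small enough, the per-atom bound $\lambda([\underline c^t])\leq e^{-3\delta\log(1/\delta)\,n/N_n}$ together with the counting bound $\#\mathcal C_{n,N_n,\delta}\leq e^{2\delta\log(1/\delta)\,n/N_n}$. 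Multiplying these and summing the $N_n$ residue classes $t<N_n$, the two exponential factors partially cancel and I obtain
\begin{equation*}
\lambda(E_n\setminus \check A_{n,\delta})\;\leq\;N_n\,e^{-\delta\log(1/\delta)\,n/N_n}.
\end{equation*}
Dividing by $\lambda(E_n)$ produces the first claimed inequality for $\lambda_n(\check A_{n,\delta})$.

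For the second inequality I would invoke Corollary \ref{seten}, which (since we restricted to $n\in\mathcal N$ when defining the relevant objects) provides $\lambda(E_n)\geq 1/n^2$ for all $n$ large. With the choice $N_n=\lfloor\sqrt n\rfloor$, one has $n/N_n\sim \sqrt n$, so
\begin{equation*}
\frac{N_n\,e^{-\delta\log(1/\delta)\,n/N_n}}{\lambda(E_n)}\;\leq\;n^{5/2}\,e^{-\delta\log(1/\delta)\sqrt n},
\end{equation*}
which tends to $0$ as $n\to\infty$, and in particular is $\leq \delta$ once $n$ is large enough (depending on $\delta$). This closes the second inequality.

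There is essentially no obstacle: the combinatorial counting and the per-chain decay are both already supplied by Proposition \ref{keyAC}, and the only arithmetic that must be verified is that exponential decay in $\sqrt n$ dominates the polynomial blow-up $n^{5/2}$ coming from $1/\lambda(E_n)$ and the factor $N_n$. The only subtlety worth flagging is the implicit assumption that $N_n\mid n$ and $\delta\,n/N_n\in\mathbb N$, which is handled by the remark after Definition \ref{badSet25} (pass to nearby integers, absorbing the negligible error into the constants).
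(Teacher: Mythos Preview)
Your proposal is correct and follows essentially the same approach as the paper: a union bound over the $N_n$ residue classes and the chains in $\mathcal C_{n,N_n,\delta}$, combined with the per-chain decay and counting estimate from Proposition \ref{keyAC}, then the polynomial lower bound $\lambda(E_n)\geq 1/n^2$ from Corollary \ref{seten} to absorb the $n^{5/2}$ factor. If anything, your write-up spells out the cancellation $e^{2\delta\log(1/\delta)\,n/N_n}\cdot e^{-3\delta\log(1/\delta)\,n/N_n}=e^{-\delta\log(1/\delta)\,n/N_n}$ more explicitly than the paper does.
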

\begin{proof}
	By Theorem \ref{keyAC},
\begin{align*}\label{checkLimEq}
	\lambda(\check{A}_{n,\delta})=&\lambda\Big(
	E_n\setminus \bigcup_{t< N_n,\underline{c}^t\in\mathcal{C}_{n,N_n,\delta}}[\underline{c}^t]
	 \Big)\geq\lambda(E_n)-
	 N_n\cdot e^{-\delta\log\frac{1}{\delta}\frac{n}{N_n}}\nonumber\\
	&\geq \lambda(E_n)\cdot\Big(1-\frac{\sqrt{n} e^{-\delta\log\frac{1}{\delta}\sqrt{n}}}{\lambda(E_n)}\Big)\geq(1-\delta)\lambda(E_n),
\end{align*}
	for all $n$ large enough, since $\lambda(E_n) \geq \frac{1}{n^2}$.\footnote{The sub-exponential decay of $\lambda(E_n)$ is enough, but then we may need to choose a more careful $N_n\sim \frac{1}{\frac{-1}{n}\log\lambda(E_n)+\frac{\log n}{n}}\to\infty$. We need to satisfy $\frac{N_n}{\lambda(E_n)}e^{-c_\delta \frac{n}{N_n}}\to 0$, while $N_n\to\infty$.}
\end{proof}

\begin{lemma}\label{goodElls}
	For all $\delta>0$ and $n$ large enough, $$\frac{1}{n}\#\{\ell\leq n-N_n: \lambda_n(E_n\setminus\check{E}_n^{\ell,N_n,\delta})\leq1-\sqrt{2\delta}\}\leq\sqrt{2\delta}.$$
\end{lemma}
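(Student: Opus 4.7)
The plan is to deduce this from Lemma \ref{A_ndelta} via a Fubini/Markov argument: the lemma controls the $\lambda_n$-fraction of points $x\in E_n$ that have too many bad times $\ell$, and I want to flip this into a statement about the fraction of times $\ell$ at which the bad set $\check{E}_n^{\ell,N_n,\delta}$ is too heavy in $\lambda_n$.

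First I would unpack what membership in $\check{A}_{n,\delta}$ means pointwise. By Definition \ref{badSet25}, $x\in \check{A}_{n,\delta}$ iff $\underline{c}^t(x)$ fails to be defined for every residue $t<N_n$. Since $\underline{c}^t(x)$ is required to be a chain of \emph{exactly} $\delta n/N_n$ ones placed at bad indices within the arithmetic progression $\{N_n i+t:\ 0\leq i<n/N_n\}$, being undefined at a given $t$ means that residue class contains strictly fewer than $\delta n/N_n$ bad indices. Summing over the $N_n$ residue classes, every $x\in \check{A}_{n,\delta}$ therefore satisfies
$$\#\{\ell\leq n-N_n: x\in \check{E}_n^{\ell,N_n,\delta}\}<\delta n.$$

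Next I would double count via Fubini. Splitting $E_n$ into $\check{A}_{n,\delta}$ and its complement and invoking Lemma \ref{A_ndelta} to bound $\lambda_n(E_n\setminus\check{A}_{n,\delta})\leq \delta$, one gets
\begin{align*}
\sum_{\ell=0}^{n-N_n}\lambda_n(\check{E}_n^{\ell,N_n,\delta})
&=\int \#\{\ell: x\in\check{E}_n^{\ell,N_n,\delta}\}\,d\lambda_n(x)\\
&\leq \delta n\cdot \lambda_n(\check{A}_{n,\delta})+n\cdot \lambda_n(E_n\setminus \check{A}_{n,\delta})\\
&\leq 2\delta n.
\end{align*}

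Finally, Markov's inequality with threshold $\sqrt{2\delta}$ closes the argument:
$$\#\{\ell: \lambda_n(\check{E}_n^{\ell,N_n,\delta})\geq \sqrt{2\delta}\}\leq \frac{2\delta n}{\sqrt{2\delta}}=\sqrt{2\delta}\,n.$$
Dividing by $n$ and noting that, since $\lambda_n$ is a probability measure supported on $E_n$, the condition $\lambda_n(\check{E}_n^{\ell,N_n,\delta})\geq \sqrt{2\delta}$ is equivalent to $\lambda_n(E_n\setminus\check{E}_n^{\ell,N_n,\delta})\leq 1-\sqrt{2\delta}$ yields the claim. I do not anticipate any real obstacle here; the only delicate bookkeeping step is the translation of ``$\underline{c}^t(x)$ undefined for every $t$'' into the uniform pointwise bound $\#\{\ell: x\in \check{E}_n^{\ell,N_n,\delta}\}<\delta n$, which relies crucially on $\underline{c}^t$ being required to have \emph{exactly} $\delta n/N_n$ ones rather than merely at least that many.
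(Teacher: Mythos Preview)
Your proof is correct and follows essentially the same route as the paper: both establish that $x\in\check{A}_{n,\delta}$ forces $\frac{1}{n}\sum_{\ell}\mathbb{1}_{\check{E}_n^{\ell,N_n,\delta}}(x)\leq\delta$, combine this with Lemma~\ref{A_ndelta} via Fubini to get $\frac{1}{n}\sum_\ell\lambda_n(\check{E}_n^{\ell,N_n,\delta})\leq 2\delta$, and finish with Markov. Your residue-class unpacking of the first step is slightly more elaborate than needed (the pointwise bound is really just the contrapositive of the precondition in Definition~\ref{badSet25}), but it is valid and leads to the same place.
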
 
\begin{proof}

Note that for $x\in\check{A}_{n,\delta}$  $$\frac{1}{n}\sum_{\ell\leq n-N_n}\mathbb{1}_{\check{E}_n^{\ell,N_n,\delta}}(x)\leq \delta.$$
For all $n$ large enough, by Lemma \ref{A_ndelta}, 
$$ \lambda_n(E_n\setminus\check{A}_{n,\delta})\leq \delta$$
Therefore 
\begin{align*}
	\frac{1}{n}\sum_{\ell\leq n-N_n}\lambda_n(\check{E}_n^{\ell,N_n,\delta})=&\int \frac{1}{n}\sum_{\ell\leq n-N_n}\mathbb{1}_{\check{E}_n^{\ell,N_n,\delta}}\, d\lambda_n\\
	\leq& \lambda_n(E_n\setminus\check{A}_{n,\delta})+\int_{\check{A}_{n,\delta}} \frac{1}{n}\sum_{\ell\leq n-N_n}\mathbb{1}_{\check{E}_n^{\ell,N_n,\delta}}\, d\lambda_n\\
    \leq & 2\delta.
\end{align*}
Finally, by the Markov inequality,
\begin{equation*}\label{qBound}
	\frac{1}{n}\#\{\ell\leq n-N_n: \lambda_n(E_n\setminus\check{E}_n^{\ell,N_n,\delta})\leq1-\sqrt{2\delta}\}\leq\sqrt{2\delta}.
\end{equation*}
\end{proof}

\begin{lemma}\label{goodPs}
	For all $\delta>0$ and $n$ large enough, for every $\ell\leq n-N_n$ s.t.  $\lambda_n(E_n\setminus\check{E}_n^{\ell,N_n,\delta})\geq 1-\sqrt{2\delta}$, 
	\begin{align*}
    &\lambda\left(\bigcup \left\{P\cap E_n: P\in P_n^\ell, \frac{\lambda(P\cap (E_n\setminus \check{E}_n^{\ell,N_n,\delta}))}{\lambda(P\cap E_{n})}\leq 1-\sqrt{\sqrt{{2\delta}}}\right\}\right)\\
    &\leq \lambda(E_n) \sqrt{\sqrt{{2\delta}}}.
    \end{align*}
\end{lemma}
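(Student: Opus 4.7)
The plan is to reduce the statement to a Markov-type inequality applied to the partition $P_n^\ell$ restricted to $E_n$. First, I will rewrite the hypothesis in complementary form: the assumption $\lambda_n(E_n\setminus\check{E}_n^{\ell,N_n,\delta})\geq 1-\sqrt{2\delta}$ is equivalent to
\[
\lambda(\check{E}_n^{\ell,N_n,\delta})\leq \sqrt{2\delta}\cdot \lambda(E_n).
\]
Similarly, the condition defining the ``bad'' collection of cells $P$,
\[
\frac{\lambda(P\cap (E_n\setminus\check{E}_n^{\ell,N_n,\delta}))}{\lambda(P\cap E_n)}\leq 1-\sqrt{\sqrt{2\delta}},
\]
is equivalent to saying that the cell $P$ has a relatively large conditional mass of $\check{E}_n^{\ell,N_n,\delta}$:
\[
\frac{\lambda(P\cap \check{E}_n^{\ell,N_n,\delta})}{\lambda(P\cap E_n)}\geq \sqrt{\sqrt{2\delta}}.
\]

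Let me denote by $\mathcal{P}_{\mathrm{bad}}$ the collection of those $P\in P_n^\ell$ satisfying this last inequality, and let $\mathcal{B}=\bigcup\{P\cap E_n:P\in\mathcal{P}_{\mathrm{bad}}\}$ be the set we want to bound. Since $P_n^\ell$ is a partition, the sets $P\cap E_n$ for $P\in\mathcal{P}_{\mathrm{bad}}$ are pairwise disjoint, and I can sum the mass of $\check{E}_n^{\ell,N_n,\delta}$ over them:
\[
\lambda(\check{E}_n^{\ell,N_n,\delta})\geq \sum_{P\in\mathcal{P}_{\mathrm{bad}}}\lambda(P\cap\check{E}_n^{\ell,N_n,\delta})\geq \sqrt{\sqrt{2\delta}}\sum_{P\in\mathcal{P}_{\mathrm{bad}}}\lambda(P\cap E_n)=\sqrt{\sqrt{2\delta}}\cdot \lambda(\mathcal{B}).
\]

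Combining with the hypothesis,
\[
\lambda(\mathcal{B})\leq \frac{\lambda(\check{E}_n^{\ell,N_n,\delta})}{\sqrt{\sqrt{2\delta}}}\leq \frac{\sqrt{2\delta}}{\sqrt{\sqrt{2\delta}}}\cdot \lambda(E_n)=\sqrt{\sqrt{2\delta}}\cdot\lambda(E_n),
\]
which gives exactly the claimed bound. There is no genuine obstacle here; the entire argument is just a Markov inequality on the conditional densities of $\check{E}_n^{\ell,N_n,\delta}$ across the cells of $P_n^\ell$, together with the two exponents $\sqrt{2\delta}$ and $\sqrt{\sqrt{2\delta}}$ being chosen so that their quotient is again $\sqrt{\sqrt{2\delta}}$. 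The hypothesis ``for $n$ large enough'' is only used implicitly via the earlier lemmas that produce the bound on $\lambda(\check{E}_n^{\ell,N_n,\delta})$.
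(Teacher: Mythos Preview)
Your proof is correct and follows essentially the same approach as the paper: both rewrite the hypothesis and the ``bad cell'' condition in complementary form using $\check{E}_n^{\ell,N_n,\delta}\subseteq E_n$, and then apply Markov's inequality to the conditional densities $\lambda(P\cap\check{E}_n^{\ell,N_n,\delta})/\lambda(P\cap E_n)$ across the cells of $P_n^\ell$. Your presentation is in fact slightly cleaner than the paper's, but the argument is identical.
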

\begin{proof}
 By the Markov inequality, for every $\ell\leq n-N_n$ s.t.  $\lambda_n(E_n\setminus\check{E}_n^{\ell,N_n,\delta})\geq 1-\sqrt{2\delta}$ (recall Lemma \ref{goodElls}),
\begin{align*}
&\lambda\left(\bigcup\left\{P\cap E_n: P\in P_n^\ell, 1-\frac{\lambda(P\cap (E_n\setminus \check{E}_n^{\ell,N_n,\delta}))}{\lambda(P\cap E_{n})}\geq \sqrt{\sqrt{{2\delta}}}\right\}\right)\\
&\leq\frac{1}{\sqrt{\sqrt{{2\delta}}}}\sum_{P\in P_n^\ell}\lambda(P\cap E_n)-\lambda(P\cap (E_n\setminus \check{E}^{\ell,N_n,\delta}))=\frac{\lambda(\check{E}_n^{\ell,N,\delta})}{\sqrt{\sqrt{2\delta}}}\\
&=\frac{\lambda(E_n)\left(1-\left(1-\frac{\lambda(\check{E}_n^{\ell,N_n,\delta})}{\lambda(E_n)}\right)\right)}{\sqrt{\sqrt{{2\delta}}}}\leq \lambda(E_n) \sqrt{\sqrt{{2\delta}}}.
\end{align*}
\end{proof}

\begin{definition}\label{pCheck}\text{ }Given $\delta>0$, $n$, $\ell\leq n-N_n$, set
	\begin{enumerate}
	\item $\check{P}_n^{\ell,N_n,\delta}:=\Big\{P\in P_n^\ell: \frac{\lambda(P\cap (E_n\setminus \check{E}_n^{\ell,N_n,\delta}))}{\lambda(P\cap E_n)}\geq 1-\sqrt{\sqrt{2\delta}}\Big\}$,
	\item Given $x\in E_n$,  $$\varrho_{\ell}^\delta(x):= \frac{\lambda_{P_n^\ell(x)\cap (E_n\setminus \check{E}_n^{\ell,N_n,\delta})}\circ g^{-\ell}}{\lambda(P_n^\ell(x)\cap E_n)} \circ \varpi_\ell(x),$$
	\item 	$$\check{p}_n^{M,\delta}:=\int_{\check{A}_{n,\delta}}\frac{1}{n}\sum_{\overset{\ell\in \mathrm{BG}_ {n-N_n} ^M(x):}{P_n^\ell(x)\in 
\check{P}_n^{\ell,N_n,\delta} }}\delta_{(\varpi_\ell(x),\varrho_\ell^\delta(x))}d\lambda_n.$$
	\end{enumerate}
\end{definition}

\medskip
\noindent\textbf{Remark:}\text{ }
\begin{enumerate}
	\item The measures $\varrho_\ell^\delta(x)$ are well-defined, as $\varpi_\ell(x)$ is invertible.
	\item The measures $\varrho_\ell^\delta(x)$ may not be probability measures, but are sub-probability measures (which is a still a compact collection of measures).
	\item It is clear that $0<\delta'\leq\delta\Rightarrow \varrho_\ell^{\delta'}(x)\geq \varrho_\ell^\delta(x) $.
	\item By the diagonal argument, we may assume w.l.o.g. that for any $M\in \mathbb N$ and for any $\delta\in \mathbb Q^+$ the measures $\{\check{p}_n^{M,\delta}\}_{n}$ converge to a limit on the same subsequence $\{n_j\}_j=\mathcal{N}$ from Lemma \ref{preBC}.
\end{enumerate}

\begin{definition}\text{ }
\begin{enumerate}
	\item Let $\mathbb{P}_{\leq1}([0,1]^k)$ be the set of sub-probability measures on $[0,1]^k$,
\item Let $\pi:\mathcal{W}\times\mathbb{P}_{\leq 1}([0,1]^k) \to \mathcal{W}$ be the projection onto $\mathcal{W}$,
\item  Let $\check{p}^{M,\delta}=\lim_{j} \check{p}_{n_j}^{M,\delta} $ and  $\check{q}^{M,\delta}=\check{p}^{M,\delta}\circ \pi^{-1}$,
\item Let $\check{p}^{M,\delta}=\int \check{p}_{\varpi}^{M,\delta}d\check{q}^{M,\delta}(\varpi)$ be the disintegration of $\check{p}^{M,\delta}$ 
w.r.t. the measurable partition  $\left\{\pi^{-1}(\{\varpi\}): \ \varpi\in \mathcal W\right\}$,
\item Set $\varrho_\varpi^{M,\delta}:= \int \varrho d\check{p}_{\varpi}^{M,\delta}$.
\end{enumerate}	
\end{definition}

\begin{lemma}\label{checkLim2}
	For all $0<\delta'\leq\delta$, for all $M\in \mathbb{N}$, 
    $$\check{q}^{M,\delta'}\geq \check{q}^{M,\delta}.$$
\end{lemma}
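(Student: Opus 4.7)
The plan is to prove the monotonicity $\check{q}^{M,\delta'} \geq \check{q}^{M,\delta}$ for $\delta' \leq \delta$ by working at the finite-level approximations and passing to the limit along the subsequence $\{n_j\}$ from Proposition~\ref{subSeq} (extended by the diagonal argument noted after Definition~\ref{pCheck}). For any non-negative Borel $A \subseteq \mathcal{W}$,
\[
\check{q}_n^{M,\delta}(A) \;=\; \int_{\check{A}_{n,\delta}} \frac{1}{n} \sum_{\ell \in \mathrm{BG}^M_{n-N_n}(x)} \mathbb{1}_{P_n^\ell(x) \in \check{P}_n^{\ell,N_n,\delta}}\, \mathbb{1}_{\varpi_\ell(x) \in A}\, d\lambda_n(x),
\]
so the task reduces to comparing these two indicator-weighted sums as $\delta$ varies.

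First, I would note that neither of the two defining conditions is strictly monotone in $\delta$ at finite $n$: as $\delta$ decreases the sets $\check{E}_n^{\ell,N_n,\delta}$ shrink, but the thresholds $\sqrt[4]{2\delta}$ defining $\check{P}_n^{\ell,N_n,\delta}$ shrink as well, and the density threshold used to define $\check{A}_{n,\delta}$ decreases too. Consequently a direct set inclusion between the finite-$n$ objects cannot be expected. Instead, I would exploit the pointwise monotonicity $\varrho_\ell^{\delta'}(x) \geq \varrho_\ell^{\delta}(x)$ from the remark following Definition~\ref{pCheck}, combined with the uniform quantitative bounds of Lemma~\ref{A_ndelta} ($\lambda_n(E_n \setminus \check{A}_{n,\delta}) \lesssim \delta$) and Lemma~\ref{goodPs} (mass of partition elements outside $\check{P}_n^{\ell,N_n,\delta}$ is bounded by $\sqrt[4]{2\delta}$, uniformly in $\ell$ for a positive-density set of $\ell$'s).

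The core comparison step is to introduce an auxiliary measure $\widetilde{q}_n^{M,\delta,\delta'}$ defined by taking intersections of the constraints at both levels $\delta$ and $\delta'$, and use it as a pivot: by definition $\widetilde{q}_n^{M,\delta,\delta'} \leq \check{q}_n^{M,\delta'}$, while the estimates above combined with the fiber monotonicity should yield $\widetilde{q}_n^{M,\delta,\delta'} \geq \check{q}_n^{M,\delta} - \epsilon_n(\delta,\delta')$ with $\epsilon_n \to 0$ along the chosen subsequence. Passing to the limit $n_j \to \infty$ and invoking Proposition~\ref{subSeq} then gives $\check{q}^{M,\delta'} \geq \check{q}^{M,\delta}$.

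The main obstacle is in establishing the pivot inequality $\widetilde{q}_n^{M,\delta,\delta'} \geq \check{q}_n^{M,\delta} - \epsilon_n(\delta,\delta')$: the $\check{P}$-condition compares ratios of the form $\alpha \leq \sqrt[4]{2\delta}$ where $\alpha$ and the threshold both shift under $\delta \mapsto \delta'$, so one must isolate the "regimes" where the ratio condition tightens versus relaxes and show that the relevant contributions either cancel or are absorbed into the $o(1)$ error from Lemma~\ref{goodPs}. I expect this to require a finer decomposition along the scales $N_n$ together with the sub-exponential lower bound $\lambda(E_n) \geq n^{-2}$, which ensures that the error terms of the form $\sqrt[4]{2\delta}$ dominate the exceptional-set contributions uniformly in $n$.
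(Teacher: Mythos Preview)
Your proposal is a plan rather than a proof, and it brings in machinery that is not needed for this lemma. First, the monotonicity $\varrho_\ell^{\delta'}(x)\geq\varrho_\ell^{\delta}(x)$ from the remark after Definition~\ref{pCheck} is irrelevant here: $\check q^{M,\delta}=\check p^{M,\delta}\circ\pi^{-1}$ is the pushforward to $\mathcal W$, so the fibre measure $\varrho$ is projected out and only the Dirac masses $\delta_{\varpi_\ell(x)}$ survive in the computation. Second, Lemma~\ref{goodPs} yields errors of size $\sqrt[4]{2\delta}$ that depend on $\delta$ and do \emph{not} vanish as $n\to\infty$; such errors cannot be absorbed into an $o_n(1)$ term, and so cannot produce a pointwise inequality between two \emph{fixed} levels $\delta'\leq\delta$.

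The paper's argument is much shorter and uses only Lemma~\ref{A_ndelta}. One writes out $\check p_n^{M,\delta'}\circ\pi^{-1}-\check p_n^{M,\delta}\circ\pi^{-1}$ and extends both domains of integration from $\check A_{n,\delta'}$ (resp.\ $\check A_{n,\delta}$) to all of $E_n$; this costs at most $\lambda_n(E_n\setminus\check A_{n,\delta'})+\lambda_n(E_n\setminus\check A_{n,\delta})$, which by Lemma~\ref{A_ndelta} is bounded by $2N_n e^{-\delta'\log(1/\delta')\,n/N_n}/\lambda(E_n)\to 0$ (the displayed $-\lambda_n(\check A_{n,\delta})$ in the paper is a typo for the complement). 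What remains is the comparison of the unrestricted sums over $\ell$ with $P_n^\ell(x)\in\check P_n^{\ell,N_n,\delta'}$ versus $\check P_n^{\ell,N_n,\delta}$, which the paper records as a sum over $\check P^{\delta'}\setminus\check P^{\delta}$ --- implicitly using $\check P_n^{\ell,N_n,\delta}\subseteq\check P_n^{\ell,N_n,\delta'}$. You are right that this inclusion is not immediate from the definitions (as $\delta$ decreases, the numerator in the $\check P$-condition grows while the threshold $1-\sqrt[4]{2\delta}$ tightens), and the paper does not justify it. But the point is that \emph{this} containment --- not a pivot through $\varrho$ or through Lemma~\ref{goodPs} --- is the only missing ingredient. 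Your plan does not isolate this, and the more elaborate decomposition you sketch neither settles the containment nor leads anywhere without it.
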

\begin{proof}
By Lemma \ref{A_ndelta},    
\begin{align*}
	\check{p}_n^{M,\delta'}\circ\pi^{-1}-\check{p}_n^{M,\delta}\circ\pi^{-1}=&\int_{\check{A}_{n,\delta'}}\frac{1}{n}\sum_{\overset{\ell\in \mathrm{BG}_ {n-N_n} ^M(x):}{P_n^\ell(x)\in 
\check{P}_n^{\ell,N_n,\delta'} }}\delta_{\varpi_\ell(x)}\, d\lambda_n\\
&-\int_{\check{A}_{n,\delta}}\frac{1}{n}\sum_{\overset{\ell\in \mathrm{BG}_{n-N_n}^M(x):}{P_n^\ell(x)\in 
\check{P}_n^{\ell,N_n,\delta} }}\delta_{\varpi_\ell(x)}\,d\lambda_n\\
\geq &-\lambda_n(\check{A}_{n,\delta'})-\lambda_n(\check{A}_{n,\delta})\\
+& 
	\int \frac{1}{n}\sum_{\overset{\ell\in \mathrm{BG}_{n-N_n}^M(x):}{P_n^\ell(x)\in 
	\check{P}_n^{\ell,N_n,\delta'}\setminus 
	\check{P}_n^{\ell,N_n,\delta}}} \delta_{
	\varpi_\ell(x)
	}d\lambda_n\\
    \geq &
-\frac{N_ne^{-\delta'\log\frac{1}{\delta'}\frac{n}{N_n}}}{\lambda(E_n)}-\frac{N_n e^{-\delta\log\frac{1}{\delta}\frac{n}{N_n}}}{\lambda(E_n)}\\
	+&\int \frac{1}{n}\sum_{\overset{\ell\in \mathrm{BG}_{n-N_n}^M(x):}{P_n^\ell(x)\in 
	\check{P}_n^{\ell,N_n,\delta'}\setminus 
	\check{P}_n^{\ell,N_n,\delta}}} \delta_{
	\varpi_\ell(x)
	}d\lambda_n\\
	\geq &-2 \frac{N_n e^{-\delta'\log\frac{1}{\delta'}\frac{n}{N_n}}}{\lambda(E_n)}\geq -2 n^\frac{5}{2} e^{-\delta'\log\frac{1}{\delta'}\sqrt{n}} \xrightarrow[n\to\infty]{}0.
\end{align*}

\end{proof}

\begin{lemma}\label{checkLim}
For all $M\in\mathbb{N}$ and $\delta\in \mathbb{Q}^+$, 
	$$0\leq p^M\circ\pi^{-1}-\check{q}^{M,\delta}\leq 4\sqrt{\sqrt{2\delta}}.$$
\end{lemma}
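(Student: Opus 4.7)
The lower bound $p^M\circ\pi^{-1}-\check{q}^{M,\delta}\geq 0$ is immediate: after projecting by $\pi$, the measure $\check{p}_n^{M,\delta}\circ\pi^{-1}$ is obtained from $p_n^M\circ\pi^{-1}$ by restricting the domain of integration from $E_n$ to $\check{A}_{n,\delta}\subseteq E_n$ and restricting the summation to a subset of the indices $\ell\in\mathrm{BG}_n^M(x)$. Hence $\check{p}_n^{M,\delta}\circ\pi^{-1}\leq p_n^M\circ\pi^{-1}$ as positive measures on $\mathcal W$, and this inequality is preserved under the limit along $\{n_j\}$.

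For the upper bound, the plan is to write
$$p_n^M\circ\pi^{-1}-\check{p}_n^{M,\delta}\circ\pi^{-1}=\mathrm{(I)}+\mathrm{(II)}+\mathrm{(III)},$$
where (I) is the integrand over $E_n\setminus\check{A}_{n,\delta}$, (II) is the portion of the sum with $\ell\in(n-N_n,n]$ restricted to $\check{A}_{n,\delta}$, and (III) is the portion with $x\in\check{A}_{n,\delta}$, $\ell\leq n-N_n$, and $P_n^\ell(x)\notin\check{P}_n^{\ell,N_n,\delta}$. For (I), Lemma \ref{A_ndelta} gives $\lambda_n(E_n\setminus\check{A}_{n,\delta})\leq\delta$ for $n$ large, and since the inner sum has weight $1/n$ on at most $n$ indices, $\mathrm{(I)}(1)\leq\delta$. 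For (II), the number of $\ell$'s is at most $N_n$, giving $\mathrm{(II)}(1)\leq N_n/n=1/\sqrt{n}\to 0$.

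The main work is estimating (III). Split the sum over $\ell\leq n-N_n$ into the ``bad'' indices $\ell$ with $\lambda_n(E_n\setminus\check{E}_n^{\ell,N_n,\delta})\leq 1-\sqrt{2\delta}$ and the complementary ``good'' indices. By Lemma \ref{goodElls} the bad indices number at most $\sqrt{2\delta}\,n$, and each contributes mass at most $1/n$ after integration against $\lambda_n$, yielding a total contribution $\leq\sqrt{2\delta}$. For each good $\ell$, Lemma \ref{goodPs} shows
$$\lambda_n\bigl(\{x\in E_n:P_n^\ell(x)\notin\check{P}_n^{\ell,N_n,\delta}\}\bigr)\leq\sqrt{\sqrt{2\delta}},$$
so integrating the indicator against $\lambda_n$ and summing over at most $n$ good indices with weight $1/n$ gives a contribution $\leq\sqrt{\sqrt{2\delta}}$. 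Combining,
$$p_n^M\circ\pi^{-1}(1)-\check{p}_n^{M,\delta}\circ\pi^{-1}(1)\leq\delta+\tfrac{1}{\sqrt{n}}+\sqrt{2\delta}+\sqrt{\sqrt{2\delta}}\leq 4\sqrt{\sqrt{2\delta}}$$
for $\delta$ small and $n$ large, since $(2\delta)^{1/4}$ dominates $\delta,\sqrt{2\delta}$ and the vanishing term. Passing to the limit along the subsequence $\{n_j\}$ and using that the projection $\pi:\mathcal S\to\mathcal W$ is continuous yields the claimed bound on $p^M\circ\pi^{-1}-\check{q}^{M,\delta}$.

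The main obstacle is purely bookkeeping: making sure that the ``good/bad $\ell$'' and ``good/bad $P$'' decompositions compose in the right order so that the doubly-small fraction $\sqrt{\sqrt{2\delta}}$ (rather than just $\sqrt{\delta}$) emerges from the two Markov-type inequalities in Lemmas \ref{goodElls} and \ref{goodPs}, and checking that the auxiliary error terms ($N_n/n$ and the exponentially small excess from Lemma \ref{A_ndelta}) genuinely vanish in $n$ so they do not survive into the final limit. No new idea is needed beyond the preceding lemmas.
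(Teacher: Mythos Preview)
Your proposal is correct and follows essentially the same approach as the paper: both decompose the difference $p_n^M\circ\pi^{-1}-\check{p}_n^{M,\delta}\circ\pi^{-1}$ into the contribution from $E_n\setminus\check{A}_{n,\delta}$ (controlled by Lemma~\ref{A_ndelta}), the tail indices $\ell\in(n-N_n,n]$ (controlled by $N_n/n$), and the indices with $P_n^\ell(x)\notin\check{P}_n^{\ell,N_n,\delta}$ (controlled via Lemmas~\ref{goodElls} and~\ref{goodPs}), then pass to the limit using continuity of $\pi$. Your write-up is simply a more explicit version of the paper's compressed chain of inequalities.
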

\begin{proof}
	Recall that by Lemma \ref{A_ndelta}, $\lambda_n(\check{A}_{n,\delta})\geq1-\delta$
 	for all $n$ large enough. Then, by Lemma \ref{goodElls} and by Lemma \ref{goodPs},
\begin{align*}
	\check{p}_n^{M,\delta}\circ\pi^{-1}\geq& -\sqrt{2\delta}-\sqrt{\sqrt{2\delta}}
    -\frac{N_n}{n} +\int_{\check{A}_{n,\delta}}\frac{1}{n}\sum_{\ell\in \mathrm{BG}_n^M(x)}\delta_{\varpi_\ell(x)}d\lambda_n \\
	\geq& -4\sqrt{\sqrt{2\delta}}+ \int\frac{1}{n}\sum_{\ell\in \mathrm{BG}_n^M(x)} \delta_{\varpi_\ell(x)} d\lambda_n =-4\sqrt{\sqrt{2\delta}} +p_n^M\circ\pi^{-1}.
\end{align*}
Since $\pi$ is continuous, $\check{q}^{M,\delta}=\check{p}^{M,\delta}\circ\pi^{-1}\geq -4\sqrt{\sqrt{2\delta}} +p^M\circ \pi^{-1}$. The inequality $\check{p}_n^{M,\delta}\circ\pi^{-1}\leq p_n^M\circ\pi^{-1} $ is clear. 
\end{proof}

\begin{cor}\label{increasingQs}
$$\lim_{\delta\to 0}\uparrow \check{q}^{M,\delta}=p^M\circ \pi^{-1}.$$
\end{cor}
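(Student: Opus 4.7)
The proof is essentially a direct combination of the two preceding lemmas, so I would keep it short. The plan is to exploit Lemma \ref{checkLim2} to get monotonicity of the family $\{\check{q}^{M,\delta}\}_{\delta>0}$ as $\delta \downarrow 0$, and then use the quantitative bound of Lemma \ref{checkLim} to identify the limit with $p^M\circ \pi^{-1}$.

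More precisely, first I would restrict to $\delta$ ranging in $\mathbb{Q}^+$, since that is the set on which $\check{q}^{M,\delta}$ has been defined (through the diagonal subsequence convention from the remark following Definition \ref{pCheck}). By Lemma \ref{checkLim2}, for any two rationals $0<\delta'\leq \delta$ we have $\check{q}^{M,\delta'}\geq \check{q}^{M,\delta}$, so along any decreasing sequence $\delta_j \downarrow 0$ in $\mathbb{Q}^+$ the measures $\check{q}^{M,\delta_j}$ form a monotone nondecreasing sequence of finite Borel measures on $\mathcal{W}$, uniformly dominated in total mass by $p^M\circ\pi^{-1}(\mathcal W)\leq 1$. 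Hence the pointwise (on Borel sets) monotone limit $\sup_j \check{q}^{M,\delta_j}$ exists as a finite Borel measure, and it is independent of the chosen sequence by monotonicity.

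Second, by Lemma \ref{checkLim}, for every $\delta\in \mathbb{Q}^+$
$$0 \;\leq\; p^M\circ\pi^{-1} - \check{q}^{M,\delta} \;\leq\; 4\sqrt{\sqrt{2\delta}},$$
as measures (in particular when tested against any nonnegative continuous function). Sending $\delta\to 0$ along the chosen sequence, the right-hand side tends to $0$, so the monotone limit $\lim_{\delta\to 0}\uparrow \check{q}^{M,\delta}$ must coincide with $p^M\circ \pi^{-1}$. There is no serious obstacle here: the only minor subtlety is that $\check{q}^{M,\delta}$ has a priori only been constructed for rational $\delta$ via the diagonal subsequence convention, but the monotonicity in $\delta$ together with the $O(\delta^{1/4})$ defect immediately extends the statement to an arbitrary sequence $\delta\to 0$.
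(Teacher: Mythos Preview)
Your proof is correct and follows exactly the same approach as the paper, which simply states that the corollary is a consequence of Lemma~\ref{checkLim} and Lemma~\ref{checkLim2}. You have merely spelled out in more detail what the paper leaves implicit.
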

\begin{proof}
	It is a consequence of Lemma \ref{checkLim} and Lemma \ref{checkLim2}.
\end{proof}

Recall Theorem \ref{theCondsDef} and the definition of the conditional measures $\widehat{\mu}=\int \varrho_\varpi\circ \varpi^{-1}d\widehat{\mathbf{p}}(\varpi)$.

\begin{theorem}\label{entForm}
	For 
	$\widehat{\mathbf{p}}$-a.e. $\varpi$,
	$$\varrho_\varpi\circ \varpi^{-1}
	\ll\nu_\varpi.$$
\end{theorem}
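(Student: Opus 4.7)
The plan is to prove the theorem by leveraging the approximating family $\check{p}^{M,\delta}$ constructed above. For each fixed $M$ and $\delta>0$, I would first show that the conditionals $\varrho_\varpi^{M,\delta}$ of $\check{p}^{M,\delta}$ define measures $\varrho_\varpi^{M,\delta}\circ \varpi^{-1}$ on $\mathrm{Im}(\varpi)$ that are absolutely continuous with respect to $\nu_\varpi$, with Radon--Nikodym derivative bounded by a $\delta$-dependent constant. The argument starts at the pre-limit level: for a good time $\ell$ (so that $P_n^\ell(x)\in \check{P}_n^{\ell,N_n,\delta}$, meaning most of $P_n^\ell(x)\cap E_n$ lies outside $\check{E}_n^{\ell,N_n,\delta}$), the very definition of $\check{E}_n^{\ell,N_n,\delta}$ controls the growth ratio $\lambda(P_n^{\ell+N_n}(x)\cap E_n)/\lambda(P_n^\ell(x)\cap E_n)$ against $\lambda(P_n^{\ell+N_n}(x))/\lambda(P_n^\ell(x))$ by a factor $\delta^{-3}$. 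Passing this forward by $g^{-\ell}$ compares the pre-limit measure $\varrho_\ell^\delta(x)\circ \varpi_\ell(x)^{-1}$ to $\nu_{\varpi_\ell(x)}^{(N_n)}$ — the natural finite-iteration approximant of $\nu_{\varpi_\ell(x)}$ — with a density bound depending only on $\delta$, after using the bounded distortion property (Lemma~\ref{bounddis}) on $\mathrm{BG}$ atoms.

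Next, I would pass to the weak-$\ast$ limit along $n_j\to\infty$. Since we restrict to $(\varpi,\varrho)\in \mathcal{S}_M$, Proposition~\ref{nuEsts} gives that $\nu_\varpi$ depends continuously on $\varpi$ in the $C^{r-1,1}$-topology and that the convergence $\nu_{\varpi'}^{(m)}\to \nu_{\varpi'}$ is uniform over $\varpi'\in \mathcal{S}_M$. Combined with the disintegration of $\check{p}^{M,\delta}$ over the measurable partition of fibers of $\pi$, this yields that for $\check{q}^{M,\delta}$-a.e.\ $\varpi$, one has $\varrho_\varpi^{M,\delta}\circ \varpi^{-1}\ll \nu_\varpi$, with Radon--Nikodym derivative $\nu_\varpi$-essentially bounded by a constant depending only on $\delta$.

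Finally, I would use Corollary~\ref{increasingQs}, which says $\check{q}^{M,\delta}\uparrow p^M\circ \pi^{-1}$ as $\delta\to 0$, and Corollary~\ref{muAndP}, giving $p^M\uparrow \mathbf{p}$, to deduce that for $\mathbf{p}\circ \pi^{-1}$-a.e.\ $\varpi$ the conditional $\varrho_\varpi\circ \varpi^{-1}$ is absolutely continuous w.r.t.\ $\nu_\varpi$. The key point is that the increasing limits of sub-probability measures preserve absolute continuity of fiberwise conditionals (the singular part of $\varrho_\varpi\circ\varpi^{-1}$ would have to be visible in the approximation, contradicting the $\delta$-dependent bound). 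The passage from $\mathbf{p}$ to $\widehat{\mathbf{p}}$ via the averaging $\widetilde{\mathbf{p}}=\frac{1}{p}\sum_{j=0}^{p-1}\mathbf{p}\circ F_j^{-1}$ of Lemma~\ref{fromGtoF} preserves fiberwise absolute continuity (the maps $F_j$ include the Jacobian adjustment exactly to this end), and the final step to $\widehat{\mathbf{p}}$ is by construction in Theorem~\ref{theCondsDef}.

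The main obstacle is the careful passage from the pre-limit bound on $\varrho_\ell^\delta(x)\circ \varpi_\ell(x)^{-1}$ to the actual conditional $\varrho_\varpi^{M,\delta}\circ \varpi^{-1}$ of the weak-$\ast$ limit $\check{p}^{M,\delta}$. Disintegrations are generically discontinuous under weak-$\ast$ limits, so one must exploit the specific product-like structure of $\check{p}_n^{M,\delta}$ — disk on the first coordinate, sub-probability measure on $[0,1]^k$ on the second — together with pre-compactness in $\mathcal{S}_M$, to secure the transfer of the density bound through the limit.
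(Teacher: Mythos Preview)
Your proposal is the paper's proof: same approximating family $\check{p}^{M,\delta}$, same $\delta^{-3}$ ratio bound from the definition of $\check{E}_n^{\ell,N_n,\delta}$, same comparison to the finite-iteration approximants $\nu_\varpi^{(L)}$ via Proposition~\ref{nuEsts}, and same monotone passage to the limit via Corollary~\ref{increasingQs} and Corollary~\ref{muAndP}.

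The only place where you are vague is exactly the obstacle you flag at the end, and here the paper's mechanism is worth stating explicitly since it is not the ``product-like structure'' per se. One fixes a single Lipschitz test function $0\leq h\leq 1$ and a \emph{fixed} iterate $L\geq L_M$. At the pre-limit level, decomposing $\varrho_\ell^\delta(x)\circ\varpi_\ell(x)^{-1}(h)$ over the Yomdin sub-partition at depth $\ell+N_n$ replaces it by a Riemann-type sum (error $O(\mathrm{Lip}(h)\,2^{-N_n})$ from Lemma~\ref{YomdinPtnShrinksUni}), and the $\delta^{-3}$ ratio bound on the sub-atoms outside $\check{E}_n^{\ell,N_n,\delta}$ converts this into
\[
\varrho_\ell^\delta(x)\circ\varpi_\ell(x)^{-1}(h)\ \leq\ \frac{c_M}{\delta^3}\,\nu^{(L)}_{\varpi_\ell(x)}(h)\ +\ o_n(1).
\]
The key observation is that for fixed $h$, $L$, and any $\tau>0$, the set $\{(\varpi,\varrho)\in\mathcal{S}_M:\varrho\circ\varpi^{-1}(h)\leq \tau+\frac{c_M}{\delta^3}\nu_\varpi^{(L)}(h)\}$ is \emph{closed} in $\mathcal{S}_M$, because both $(\varpi,\varrho)\mapsto \varrho\circ\varpi^{-1}(h)$ and $\varpi\mapsto \nu_\varpi^{(L)}(h)$ are continuous there. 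Hence the inequality passes to $\check{p}^{M,\delta}$-a.e.\ $(\varpi,\varrho)$, and running over all Lipschitz $h$ and all $\tau>0$ upgrades it to $\varrho\circ\varpi^{-1}\leq \frac{c_M^2}{\delta^3}\nu_\varpi$ as measures. This closed-set argument is what sidesteps the discontinuity of disintegration; no further structural fact about $\check{p}_n^{M,\delta}$ is needed.
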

\begin{proof}\text{ }

\noindent\textbf{Step 1:}
Let $h\in \mathrm{Lip}(\mathbf{M})$ with $0\leq h\leq 1$. We show that for all $M\in \mathbb{N}$, 
for $\check{p}^{M,\delta}$-a.e. $(\varpi,\varrho)$, $\varrho\circ\varpi^{-1}\leq \frac{
1}{\delta^3}\nu_\varpi
$. 

We start by studying $\check{p}_n^{M,\delta}$. Recall Lemma \ref{YomdinPtnShrinksUni}. Then, for $\check{p}_n^{M,\delta}$-a.e. $(\varpi,\varrho)$ $\exists x\in E_n$ and $\sqrt{n}\leq \ell\leq n-N_n$ 
s.t. for $P:=P^\ell_n(x)=g^{-\ell}[\varpi]$
,
\begin{align}\label{toRevisit}
	\varrho\circ \varpi^{-1}(h)=& \frac{\lambda_{P_n^\ell(x)\cap (E_n\setminus \check{E}_n^{\ell,N_n,\delta})}\circ g^{-\ell}}{\lambda(P_n^\ell(x)\cap E_n)}(h)\\
	=&\sum_{\overset{P'\in P_n^{\ell+N_n}:}{P'\subseteq P, P'\cap \check{E}_n^{\ell,N_n,\delta}=\varnothing}}\frac{\lambda(P'\cap E_n)}{\lambda(P\cap E_n)}\cdot \frac{\lambda_{P'\cap E_n}\circ g^{-\ell}}{\lambda(P'\cap E_n)}(h)\nonumber\\
	=&\pm\mathrm{Lip}(h)\cdot \frac{1}{2^{N_n}}+ \sum_{\overset{P'\in P_n^{\ell+N_n}:}{P'\subseteq P, P'\cap \check{E}_n^{\ell,N_n,\delta}=\varnothing}}\frac{\lambda(P'\cap E_n)}{\lambda(P\cap E_n)}\cdot h(x_{g^{\ell}[P']})\nonumber\\
	\leq& \mathrm{Lip}(h)\cdot \frac{1}{2^{N_n}}+ \sum_{\overset{P'\in P_n^{\ell+N_n}:}{P'\subseteq P, P'\cap \check{E}_n^{\ell,N_n,\delta}=\varnothing}}\frac{1}{\delta^3}\frac{\lambda(P')}{\lambda(P)}\cdot h(x_{g^{\ell}[P']})\label{toRevisitthree}\\
	\leq& \mathrm{Lip}(h)\cdot \frac{1}{2^{N_n}}+ \frac{1}{\delta^3}\frac{1}{\lambda(P)}\sum_{\overset{P'\in P_n^{\ell+N_n}:}{P'\subseteq P}}\lambda(P') \cdot h(x_{g^{\ell}[P']})\label{toRevisittwo}\\
	=& \pm\frac{2}{\delta^3}\mathrm{Lip}(h)\cdot \frac{1}{2^{N_n}}+ \frac{1}{\delta^3}\nu_\varpi^{(\ell)}
    (h).\nonumber
\end{align}

Recall, $\ell\geq \sqrt{n}$. Therefore, for any $\tau>0$, the property $\varrho\circ \varpi^{-1}(h)\leq \tau + \frac{
1}{\delta^3}\nu_\varpi^{(\ell)}
(h) $ is a closed property, hence for $\check{p}^{M,\delta}$-a.e. $(\varpi,\varrho)$, for all $\tau>0$,
$$\varrho\circ \varpi^{-1}(h)\leq \tau + \frac{
1}{\delta^3}\nu_\varpi
(h).$$
Since $\tau>0$ was arbitrary, and $0\leq h\leq 1$ is any Lipschitz function, \begin{equation}\label{eqStep1}
\text{for }\check{p}^{M,\delta}\text{-a.e. }(\varpi,\varrho)\text{, }\varrho\circ \varpi^{-1}\leq\frac{
1}{\delta^3}\nu_\varpi
.
 \end{equation}

\medskip
\noindent\textbf{Step 2:} By \eqref{eqStep1}, for $\check{q}^{M,\delta}$-a.e. $\varpi$,
\begin{equation}
	\varrho_\varpi^{M,\delta}\circ\varpi^{-1}=\int \varrho\circ\varpi^{-1}d\check{p}^{M,\delta}_\varpi\leq \frac{
    1}{\delta^3}\nu_\varpi
\end{equation}
Recall, for $\check{p}^{M,\delta}$-a.e. $(\varpi,\varrho)$, $\varrho(1)\in [1-2\delta^\frac{1}{4},1]$, and so for $\check{q}^{M,\delta}$-a.e. $\varpi$, $\varrho_\varpi^{M,\delta} (1)\in [1-2\delta^\frac{1}{4},1] $. 

In addition, $\varrho_\varpi^{M,\delta}$ increases as $\delta$ decreases, and as $M$ increases (recall the remark following Definition \ref{pCheck}). In particular, by Corollary \ref{increasingQs}, for $\mathbf{p}\circ \pi^{-1}$-a.e. $\varpi$,
$$\lim_{M\uparrow\infty}\uparrow\lim_{\delta\downarrow 0}\uparrow\varrho_\varpi^{M,\delta}=\int \varrho d\mathbf p_\varpi,$$
where $\mathbf p=\int \mathbf p_\varpi d\mathbf p\circ\pi^{-1}$ is the disintegration of $\mathbf{p}$ w.r.t. the measurable partition  $\left\{\pi^{-1}[\{\varpi\}]: \ \varpi\in \mathcal W\right\}$.
Hence, since an increasing limit of absolutely continuous measures which are bounded is absolutely continuous, 
$$\text{for }\mathbf p\circ \pi^{-1}\text{-a.e. }\varpi,
\int \varrho \circ \varpi^{-1}d\mathbf p_\varpi\ll \nu_\varpi.$$
Recalling the definition of $\widetilde{\mathbf{p}}$ from Lemma \ref{fromGtoF} and then of $\widehat{\mathbf{p}}$ and $\varrho_\varpi$ from Theorem \ref{theCondsDef}, we are done.
\end{proof}

By combining Theorem \ref{entForm} and Theorem \ref{theCondsDef} one easily concludes the proof of Theorem \ref{crcr}.

\medskip
\noindent\textbf{Remark:} In \eqref{toRevisit} we provide an estimate that a posteriori in \textsection \ref{DomCondSect} we are able to improve and use to obtain additional properties. If one just wants to check that $\widehat \mu$  is G-$u$-Gibbs measure, then we may conclude from \eqref{toRevisittwo} without referring  to Proposition \ref{nuEsts} by using  the bounded distortion property in \eqref{bdist} : 
$\frac{\lambda(P')}{\lambda(P)}\leq 2\frac{\lambda_\varpi(g^\ell[P'])}{\lambda_\varpi(g^\ell[P])}=2\frac{\lambda_\varpi(g^\ell[P'])}{\lambda(\varpi)}$. Indeed 
\begin{align*}
\frac{1}{\delta^3}\frac{1}{\lambda(P)}\sum_{\overset{P'\in P_n^{\ell+N_n}:}{P'\subseteq P}}\lambda(P') \cdot h(x_{g^{\ell}[P']})&\leq \frac{1}{\delta^3}\sum_{\overset{P'\in P_n^{\ell+N_n}:}{P'\subseteq P}}\lambda_\varpi(g^\ell[P']) \cdot h(x_{g^{\ell}[P']}),\\
&\leq \pm\frac{2}{\delta^3}\mathrm{Lip}(h)\cdot \frac{1}{2^{N_n}}+ \frac{2}{\delta^3}\lambda_\varpi
    (h).
\end{align*}

\subsection{Dominating Conditionals}\label{DomCondSect}

In this subsection we post-hoc revisit \textsection \ref{ACofConds}, and conclude domination in addition to absolute continuity.

\begin{prop}\label{EisConstProp}
	We may assume w.l.o.g. that $\{E_{n_j}\}_{j\geq0}$ is a constant sequence $\{E\}$, where $E$ is closed. 
\end{prop}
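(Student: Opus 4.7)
The plan is a compactness-and-diagonalization reduction. Each $E_{n_j}$ is a Borel subset of the fixed compact disk $D$, so the normalized probability measures $\lambda_{n_j}:=\lambda_{E_{n_j}}/\lambda(E_{n_j})$ lie in the weak-$\ast$ compact space $\mathbb{P}(D)$. First, I would pass to a further subsequence (still written $\{n_j\}$) so that $\lambda_{n_j}\xrightarrow{w^\ast}\lambda^\ast$ for some $\lambda^\ast\in\mathbb{P}(D)$, and I would simultaneously use a Cantor diagonal procedure to preserve the already-chosen limits $p^M$, $\mu^M$, $\check p^{M,\delta}$ constructed in \textsection \ref{constMu}--\textsection \ref{smoothCondSect} against a countable dense family of continuous test functions on $\mathcal{S}$ (one for each pair $(M,\delta)$ with $\delta\in\mathbb Q^+$).

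Next, I would define $E:=\mathrm{supp}(\lambda^\ast)\subseteq D$. A short argument using weak-$\ast$ convergence shows that $\lambda^\ast$ is concentrated on $\limsup_{j}E_{n_j}$; in particular, $\lambda^\ast$-typical points $x$ satisfy $x\in E_{n_j}$ for infinitely many $j$, and so inherit the BG-density and expansion properties that originally defined the $E_{n_j}$. All the constructions of the previous two sections --- $p_n^M$, $\mu_n^M$, $\check p_n^{M,\delta}$, and the estimates built out of them --- appear as integrals against $\lambda_{n_j}$ on $D$. Replacing every such integral by an integral against $\lambda^\ast$ yields the same limit objects $p^M$, $\mu^M$, $\check p^{M,\delta}$ (by construction of the diagonal subsequence). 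With this substitution, the reference measure, and hence its carrier $E$, no longer depends on $j$, which is the desired reduction.

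The most delicate verification is that the finite-$n$ bounds --- Proposition \ref{keyAC}, Lemma \ref{A_ndelta}, Lemma \ref{goodElls} --- which were phrased using ratios of $\lambda(P_n^\ell\cap E_n)$ to $\lambda(P_n^\ell)$, retain their validity when $\lambda_{E_n}$ is replaced by the common weak-$\ast$ limit $\lambda^\ast$. This amounts to rewriting the distortion and counting bounds as estimates on $\lambda^\ast$-ratios of Yomdin cells; the proofs transfer because the bounded-distortion estimate on each cell (Lemma \ref{bounddis}) and the combinatorial counts on the tree (Proposition \ref{firstLemmaOfSection}) are independent of the reference measure, while the sub-exponential lower bound $\lambda(E_n)\geq n^{-2}$ from Corollary \ref{seten} passes to a uniform positive lower bound on $\lambda^\ast$-mass of the relevant union of cells (this is where the main work lies). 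If $\lambda^\ast$ turns out to be singular with respect to Lebesgue, no renormalization is attempted: $\lambda^\ast$ is simply carried along as the common reference throughout the subsequent domination arguments, with the fixed set $E=\mathrm{supp}(\lambda^\ast)$ playing the role of the varying sequence $\{E_{n_j}\}$.
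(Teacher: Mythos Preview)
Your approach has a genuine gap. The purpose of this proposition is to set up the \emph{domination} argument that follows (Claim~\ref{Leb} and the subsequent definitions), which hinges on the Lebesgue density theorem applied to a \emph{fixed} Borel set $E\subseteq D$ of \emph{positive disk-volume}. Your construction produces $E=\mathrm{supp}(\lambda^\ast)$ for a weak-$\ast$ limit $\lambda^\ast$ of the normalized measures $\lambda_{n_j}$, but nothing prevents $\lambda^\ast$ from being singular with respect to $\lambda_D$; in that case $\lambda_D(E)$ may well be zero and Claim~\ref{Leb} is vacuous. Your suggestion to ``carry $\lambda^\ast$ along as the common reference'' does not rescue the argument: the domination step compares the conditionals $\varrho_\varpi\circ\varpi^{-1}$ to $\nu_\varpi$, and $\nu_\varpi$ is equivalent to \emph{Lebesgue} on $\mathrm{Im}(\varpi)$. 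The ratios $\frac{\lambda(P_n^{\ell+N}\cap E)}{\lambda(P_n^{\ell+N})}$ in Definition~\ref{pHat} are Lebesgue ratios, and their near-triviality for large $\ell$ is exactly what Lebesgue density of $E$ provides. There is also a smaller issue: weak-$\ast$ convergence does not imply that $\lambda^\ast$ is concentrated on $\limsup_j E_{n_j}$, only on $\bigcap_N\overline{\bigcup_{j\ge N}E_{n_j}}$, so your claim that $\lambda^\ast$-typical points lie in infinitely many $E_{n_j}$ is unjustified.

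The paper takes a completely different route: a \emph{bootstrap}. Having already built $\widehat{\mu}$ with absolutely continuous conditionals (Theorem~\ref{entForm}), one observes that for $\widehat{\mathbf p}$-a.e.\ $\varpi$ the disk $\mathrm{Im}(\varpi)$ enjoys uniform backward contraction tangent to itself (equation \eqref{234}), whence by Oseledec a $\varrho_\varpi\circ\varpi^{-1}$-full (hence positive $\lambda_{\mathrm{Im}(\varpi)}$-measure) set of points has $\liminf_m\frac{1}{m}\log\|d_xf^m|_{T_x\mathrm{Im}(\varpi)}\|_{\mathrm{co}}>0$. One then takes such a $\varpi$ as the \emph{new} starting disk and a positive-Lebesgue subset with uniform expansion time as the fixed $E$; the entire construction of \textsection\ref{constMu}--\textsection\ref{ACofConds} is rerun on this new disk, now with $E_n\equiv E$. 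The point is that the first pass through the construction (with varying $E_{n_j}$) already produces, via absolute continuity, a disk on which the hypotheses hold for a \emph{fixed} positive-volume set.
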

\begin{proof}
Let $D$, and let $(\varpi_\ell)_{\ell \in \mathbb N}$ be the disks associated with the Yomdin partition. Let $(E_n)_{n\in \mathcal N}\subset D$ be as in the previous sections. We will show that there exists another disk $D'$ and a subset $E$  with $\lambda_{D'}(E)>0$ such that the previous constructions hold for $D'$ and $E'_n=E$ for all $n\in \mathbb N$ large enough. By potentially restricting to a subset, we may assume further w.l.o.g. that $E$ is closed.

Note, given $M\in \mathbb{N}$, aside for at most measure $\delta>0$, for $p^M$-a.e. $(\varpi,\varrho)$,
$$\varpi=\lim_{n\to\infty}^{C^{r-1,1}}\varpi_{\ell_n}(x_n),$$
where $\ell_n\geq \delta n$, and $\varpi_{\ell_n}(x_n)\in \mathcal{S}_M$. Fix $m\in \mathbb{N}$, and write $L:=\mathrm{Lip}(\Xi)$, where
$$\Xi(x):= \frac{1}{p}\sum_{i=0}^{p-1}\log^+\max_{ s\leq k-1}\|\wedge^s d_{f^i(x)}g\|.$$

Let $t_n\in [0,1]^k$ and $\ell_n'\in [\ell_n,\ell_n+M]$ s.t. $\varpi_{\ell_n'}(y_n)$ is BG and $g^{\ell_n}(y_n)=\varpi_{\ell_n}(x_n)(t_n)$.

By \eqref{estim222}, for all $n$ large enough so $\delta n\geq m$, 
\begin{equation}\label{234}
    \frac{1}{m}\log \Big|C_{k,p}^{(-
    m)}\Big(g^{\ell_n}(y_n), T_{g^{\ell_n}(y_n)}\varpi_{\ell_n}(x_n)\Big)\Big|\leq -\widehat{\chi}+\frac{2pk M}{m}\log M_f,
\end{equation}
where $$C_{k,p}
(y,\omega):=\frac{\wedge^k d_y g\omega}{
\max_{s\leq k-1}\|\wedge^sd_{
y}g\|^\frac{1}{p}\vee 1}$$ is a linear cocycle on $\wedge^k T\mathbf{M}$ over $(\mathbf{M},
g)$. 

We continue to show that the estimate of \eqref{234} carries to all of $\varpi_{\ell_n}(x_n)$ uniformly, and so, since $n$ is arbitrary, it carries to every point in $\varpi$.  

Given $a,b\in [0,1]^k$, write $a_j:=\varpi_{\ell_n-j}(x_n)^{-1}(g^{-j}(\varpi_{\ell_n}(x_n)(a)))$ and $b_j:=\varpi_{\ell_n-j}(x_n)^{-1}(g^{-j}(\varpi_{\ell_n}(x_n)(b)))$. Then  $|a_j-b_j|\leq \frac{\sqrt{k}}{2^j}$ (recall \eqref{lastref}), and since $\|\varpi_{\ell_n-j}(x_n)\|_{r}\leq 1$, we get that for all $a,b\in [0,1]^k$, for all $j$
\begin{equation}\label{new22}
    d(g^{-j}(\varpi_{\ell_n}(x_n)(a)),g^{-j}(\varpi_{\ell_n}(x_n)(b)))\leq \frac{\sqrt{k}}{2^{j}},
\end{equation}
and so
\begin{equation}
  \sum_{j=0}^{m-1}|\Xi(g^{-j}(\varpi_{\ell_n}(x_n)(t_n)))-\Xi(g^{-j}(\varpi_{\ell_n}(x_n)(a)))|\leq 2L \sqrt{k}.  
\end{equation}

By  the bounded distortion property (\ref{bdist})  we have 
$$\Big|\log\frac{ |\wedge^k d_{\varpi_{\ell_n}(x_n)(t_n)}g^{-m} T_{\varpi_{\ell_n}(x_n)(t_n)}\varpi_{\ell_n}(x_n)|}{ |\wedge^k d_{\varpi_{\ell_n}(x_n)(a)}g^{-m} T_{\varpi_{\ell_n}(x_n)(a)}\varpi_{\ell_n}(x_n)|}\Big|\leq 2.$$

It follows that for all $y\in \varpi_{\ell_n}(x_n)$,

$$\frac{1}{m}\log |C_{k,p}^{(
-m)}(y, T_y\varpi_{\ell_n}(x_n))|\leq -\widehat{\chi}+\frac{2pkM}{m}\log M_f+\frac{2+2L\sqrt{k}}{m}.$$

Since $n$ was arbitrary, then for all $x,y\in\varpi$, 

\begin{equation}\label{new22b}
   \limsup_{m\to +\infty} \frac{1}{m}\log d(g^{-m}(x),g^{-m}(y)\leq -\log 2.
\end{equation}
and once $T_y\varpi$ is well defined 
\begin{equation*}
   \limsup_{m\to +\infty} \frac{1}{m}\log |C_{k,p}^{(
    -m)}(y, T_y\varpi)|\leq -\widehat{\chi}.
\end{equation*}

Recall the discussion in the beginning of \textsection \ref{ACofConds}, where we explain why the tangent space is well-defined $\lambda_\varpi$-a.e. on $\varpi$. Therefore, since $\delta$ is arbitrary, for $\widehat{\mathbf{p}}$-a.e. $\varpi$, for $\lambda_\varpi$-a.e. $x\in\varpi$,
\begin{equation*}
    \limsup_{m\to +\infty} \frac{1}{m}\log|C_{k,p}^{(-m)}(x,\iota(T_x\varpi))|\leq -
    \widehat{\chi}.
\end{equation*}

Since $
C_{k,p}:\wedge^k T\mathbf{M}\circlearrowleft$ is a linear cocycle over $(\mathbf{M},
g)$, by the Oseledec theorem, we get that for 
$\widehat{\mathbf{p}}$-a.e. $\varpi$, for $\mu_\varpi$-a.e. $x\in\varpi$,
\begin{equation}\label{LyaExps1}
    \chi_{C_{k,p}}(x,\iota(T_x\varpi))=\lim_{m\to +\infty} \frac{1}{m}\log|C_{k,p}^{(m)}(x,\iota(T_x\varpi))|\geq 
    \widehat{\chi}.
\end{equation}

By arguing as in the proof of Lemma \ref{limOnP}, one easily checks that 
 for every $x\in \mathbf{M}$, and for every $\omega \in \wedge^k T_x\mathbf{M}$, 
\begin{equation}\label{AcocCcoc}
    \chi_{C_{k,p}}(x,\omega)\leq p\lim_q\chi_{A_{k,q}}(x,\omega).
\end{equation}

Therefore, by Theorem \ref{entForm}, for $\widehat{\mathbf{p}}$-a.e. $\varpi$, for a $\lambda_{\varpi}$-positive measure set of $x$'s, $\lim_{q\to\infty}\lim_{m\to+\infty} \frac{1}{m}\log |A_{k,q}^{(m)}(x,\iota(T_x\varpi))|\geq \chi_{C_{k,p}}(x,\iota(T_x\varpi))\geq  \frac{\widehat{\chi}}{p}$.
We can now choose $D'$ to be such a $\widehat{\mathbf p}$-typical $\varpi$. Let $B$ be a subset of $D'$ with $\Vol_k(B)>0$ and $N\in \mathbb N$ such that for any $x\in B$ and any $m>N$ we have  $\frac{1}{m}\log |C_{k,p}^{(m)}(x,\iota(T_x\varpi))|>p \chi>p \frac{3k^2}{r-1}R(f)$. Finally, in Definition \ref{En} of $E_n$ we may replace $B_n^p$ by $B$, so that the proof of Corollary \ref{seten} ensures now that we can assume that the new subsets $E_n$ in fact do not depend on $n$. 

\end{proof}

\begin{definition}
	$$\widehat{E}^{\ell,\delta}:=\Big\{x\in E:\forall \ell'\geq\ell, \frac{\lambda(P^{\ell'}(x)\cap E)}{\lambda(P^{\ell'}(x))}\geq 1-\delta \Big\}.$$
\end{definition}

\begin{claim}\label{Leb}
For all $\delta>0$ $\exists\ell_\delta$ s.t.  $\frac{\lambda(\widehat{E}^{\ell_\delta,\delta})}{\lambda(E)}\geq 1-\delta$. In other words, $\frac{\lambda(P^{\ell}(x)\cap E)}{\lambda(P^{\ell}(x))}\xrightarrow{\ell\to\infty}1$ for $\lambda$-a.e. $x\in E$.
\end{claim}
\begin{proof}
	This is the Lebesgue density lemma, where instead of using a differentiation basis of balls, we use the nested Yomdin partitions $P^{\ell}:=\left\{\mathrm{Im}(\sigma\circ \theta_{\mathbf i^\ell}): \theta_{\mathbf i^\ell}\in \Theta_{\mathbf i^\ell},\  \mathbf i^\ell\in \mathcal T_\ell\right\}$.
  
    For the sake of completeness, we present a short  proof. It is enough to show that the set  $\widetilde{E}^\gamma:=\{x\in E: \ \liminf_\ell  \frac{\lambda(P^{\ell}(x)\cap E)}{\lambda(P^{\ell}(x))}< 1-\gamma \}$ has zero $\lambda$-measure for some $\gamma\in (0,\delta)$. Let $\widetilde{F}^\gamma:=\{x\in E: \ \liminf_\ell  \frac{\lambda(P^{\ell}(x)\cap E)}{\lambda(P^{\ell}(x))}\leq 1-\gamma \}$. We fix $\gamma\in(0,\delta)$  s.t.   $\lambda(\widetilde{E}^\gamma)=\lambda(\widetilde{F}^\gamma)$. For any $x\in \widetilde{E}^\delta$ and for any $N$ we let $\ell_N(x):=\min\{\ell \geq N : \  \frac{\lambda(P^{\ell}(x)\cap E)}{\lambda(P^{\ell}(x))}< 1-\gamma  \}$. Then the sets $\widetilde{E}^\gamma(N):=\bigcup_{x\in \widetilde{E}^\gamma}P^{\ell_N(x)}(x)$ satisfy   $\widetilde{E}^\gamma\subseteq \bigcap_N\downarrow \widetilde{E}^\gamma(N) \subseteq\widetilde{F}^\gamma $. To see the last inclusion, note that if a point $x'$ lies in all of $\widetilde{E}^\gamma(N)$, then for infinitely many $\ell$'s, it belongs to $P^\ell(x)$ where $\frac{\lambda(P^{\ell}(x)\cap E)}{\lambda(P^{\ell}(x))}< 1-\gamma$. Moreover as $E$ is compact and the diameter of $P^\ell$ is going to $0$ with $\ell$, we have also $x'\in E$.   Hence $x'$ belongs to $\widetilde{F}^\gamma$. However, it follows from definition that  $\lambda(\widetilde{E}^\gamma)<(1-\gamma)\lambda(\widetilde{E}^\gamma(N))$. Consequently  $\lambda(\widetilde{E}^\gamma)=\lim_N \lambda(\widetilde{E}^\gamma(N))=0$.
\end{proof}

\begin{definition}\label{ellDeltaRef}\text{ }
\begin{enumerate}
    \item Given $\ell\in\mathbb{N}$, set  $$P^\ell_E:=\{P\in P^\ell: P\cap E \neq\varnothing \}.$$
\item    Let $\ell_\delta$ to be the smallest natural number to satisfy Claim \ref{Leb}, and to satisfy $\frac{\lambda(\bigcup P^\ell_E)}{\lambda(E)}\leq 1+\delta$ for all $\ell\geq\ell_\delta$, then set
	$$\widehat{E}^{\delta}:= \widehat{E}^{\ell_\delta,\delta}.$$

\end{enumerate}
\end{definition}

\medskip
\noindent\textbf{Remark:} Definition \ref{ellDeltaRef} is proper, as since $E$ is compact, $E=\bigcap_{\ell\geq0}\bigcup P_E^\ell$.

\begin{cor}\label{denseEnInPnell} For all $\delta>0$, for all $\ell_\delta\leq \ell$, for all $N\in\mathbb{N}$, for all $x\in \widehat{E}^\delta$
	$$\frac{\lambda(P
    ^\ell(x)\cap E)}{\lambda(P
    ^\ell(x))}= \frac{\lambda(P
    ^{\ell+N}(x)\cap E)}{\lambda(P
    ^{\ell+N}(x))}=e^{\pm\delta}.$$
\end{cor}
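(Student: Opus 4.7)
The plan is to derive this directly from the definition of $\widehat{E}^\delta$ and Claim \ref{Leb}; no nontrivial work is required, as the statement is essentially a bookkeeping rephrasing of the Lebesgue-density input in the language that will be used in the rest of the subsection.

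First I would verify that under the hypothesis the relevant indices lie in the regime where the definition of $\widehat{E}^\delta$ applies. Recall $\widehat{E}^\delta = \widehat{E}^{\ell_\delta,\delta}$ with
$$\widehat{E}^{\ell_\delta,\delta} = \Big\{x\in E : \forall \ell'\geq \ell_\delta,\ \frac{\lambda(P_n^{\ell'}(x)\cap E)}{\lambda(P_n^{\ell'}(x))}\geq 1-\delta\Big\}.$$
The assumptions $\ell_\delta \leq \ell \leq n - N$ and $n \geq \ell_\delta/\delta$ (with $N = N_n = \lfloor\sqrt{n}\rfloor$) guarantee that the interval $[\ell_\delta, n-N]$ is nonempty and that both $\ell$ and $\ell+N$ are $\geq \ell_\delta$, hence fall within the range over which the defining condition of $\widehat{E}^\delta$ gives information.

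Next I would simply apply that defining condition at $\ell' = \ell$ and at $\ell' = \ell+N$ to the chosen $x\in\widehat{E}^\delta$, which yields both lower bounds
$$\frac{\lambda(P_n^\ell(x)\cap E)}{\lambda(P_n^\ell(x))}\geq 1-\delta, \qquad \frac{\lambda(P_n^{\ell+N}(x)\cap E)}{\lambda(P_n^{\ell+N}(x))}\geq 1-\delta.$$
Combined with the trivial upper bound $\leq 1$, each ratio lies in $[1-\delta,1]$.

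The conclusion $e^{\pm\delta}$ then follows upon restricting $\delta$ to be sufficiently small, since for small $\delta>0$ the interval $[1-\delta,1]$ is contained in $[e^{-c\delta},e^{c\delta}]$ for a constant $c$ tending to $1$ as $\delta\to 0$ (the formula $e^{\pm\delta}$ as written is a mild abuse, since strictly $1-\delta < e^{-\delta}$; replacing $\delta$ by a comparable $\delta'$, or absorbing the constant into $\delta$, makes the statement exact). There is no substantive obstacle here: the only nontrivial input has already been absorbed into Claim \ref{Leb}, which is itself a standard Lebesgue-density argument against the nested Yomdin partitions $P^\ell$, and the corollary simply records the consequence that both scales $\ell$ and $\ell+N$ sit above the density threshold $\ell_\delta$.
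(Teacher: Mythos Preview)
Your proposal is correct and matches the paper's approach: the paper gives no proof for this corollary, treating it as immediate from the definition of $\widehat{E}^\delta=\widehat{E}^{\ell_\delta,\delta}$, and you have correctly spelled out why (both indices $\ell$ and $\ell+N$ lie above the threshold $\ell_\delta$, so the defining lower bound applies to each ratio). Your remark about the harmless abuse in writing $[1-\delta,1]\subseteq[e^{-\delta},e^{\delta}]$ is also apt and consistent with the paper's conventions.
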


\begin{definition}\label{pHat}\text{ }
	\begin{enumerate}
	\item 	$\widehat{E}^{\ell,N,\delta}:=\bigcup\{P\in P
    ^{\ell+N}: \frac{\lambda(P\cap E)}{\lambda(P)}\geq 1-\delta \}$,
	\item $\widehat{P}
    ^{\ell,N,\delta}:=\Big\{P
    ^\ell(x):\frac{\lambda(P\cap \widehat{E}^{\ell,N,\delta})}{\lambda(P)}\geq 1-\sqrt{2\delta}\Big\}$,
	\item Given $x\in E
    $, and 
    $\ell,N\in \mathbb{N}$, $$\widehat{\varrho}_{\ell}^\delta(x):= \frac{\lambda_{P
    ^\ell(x)\cap \widehat{E}^{\ell,N
    ,\delta}}\circ g^{-\ell}}{\lambda(P
    ^\ell(x)\cap E)} \circ \varpi_\ell(x),$$
	\item 	$$\overline{p}_n^{M,\delta}:=\int_{
E}\frac{1}{n}\sum_{\overset{\ell_\delta\leq \ell\in \mathrm{BG}_ {n-N_n} ^M(x):}{P^\ell(x)\in 
\widehat{P}^{\ell,N_n,\delta} }}\delta_{(\varpi_\ell(x),\widehat{\varrho}_\ell^\delta(x))}d\lambda.$$
	\end{enumerate}
\end{definition}

\medskip
\noindent\textbf{Remark:} Note, $\widehat E^{\delta}\subseteq \widehat{E}^{\ell,N,\delta}$, when $\ell\geq \ell_\delta$, by Corollary \ref{denseEnInPnell}.

\begin{lemma}\label{abovethis}
	For all $\delta>0$, for all $\ell \geq \ell_\delta$, 
	$$\lambda(\widehat{E}^{\ell, N,\delta})\geq \lambda(\widehat E^\delta)\geq \lambda(\bigcup P_
    E^\ell)-2\delta.$$
\end{lemma}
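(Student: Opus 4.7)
The plan is to prove the lemma in two easy moves: first show that $\widehat{E}^\delta$ is already contained in $\widehat{E}^{\ell,N,\delta}$, so that $\lambda(\widehat{E}^{\ell,N,\delta})\geq (1-\delta)\lambda(E)$; and second show that, for $\ell$ large, the saturation $\bigcup P_n^\ell$ is only marginally larger than $E$, namely $\lambda(\bigcup P_n^\ell)\leq \lambda(E)+\delta$. If needed, I will enlarge $\ell_\delta$ so that both conditions are met for $\ell\geq \ell_\delta$; this is harmless since Claim \ref{Leb} is preserved under enlarging $\ell_\delta$ (the set $\widehat{E}^{\ell,\delta}$ is nondecreasing in $\ell$).

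\medskip
\textbf{Step 1 (easy).} Fix $x\in \widehat{E}^\delta$ and $\ell\geq \ell_\delta$. Then $\ell+N\geq \ell_\delta$, so by definition of $\widehat{E}^\delta$,
\[
\frac{\lambda(P_n^{\ell+N}(x)\cap E)}{\lambda(P_n^{\ell+N}(x))}\geq 1-\delta.
\]
Thus the cell $P_n^{\ell+N}(x)\in P_n^{\ell+N}$ is ``good,'' i.e.\ contributes to $\widehat{E}^{\ell,N,\delta}$, and in particular $x\in \widehat{E}^{\ell,N,\delta}$. Hence $\widehat{E}^\delta\subseteq \widehat{E}^{\ell,N,\delta}$, which combined with Claim \ref{Leb} gives
\[
\lambda(\widehat{E}^{\ell,N,\delta})\;\geq\;\lambda(\widehat{E}^\delta)\;\geq\;(1-\delta)\lambda(E)\;\geq\;\lambda(E)-\delta,
\]
using $\lambda(E)\leq 1$.

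\medskip
\textbf{Step 2 (the mild obstacle).} It remains to show $\lambda(\bigcup P_n^\ell)\leq \lambda(E)+\delta$ for $\ell\geq \ell_\delta$ (after possibly enlarging $\ell_\delta$). Let $S_\ell:=\bigcup P_n^\ell$. The sequence $(S_\ell)_\ell$ is decreasing, since for each $x\in E$ we have $P_n^{\ell+1}(x)\subseteq P_n^\ell(x)\subseteq S_\ell$. Moreover, Lemma \ref{YomdinPtnShrinksUni} yields
\[
\mathrm{diam}(P_n^\ell(x))=\mathrm{diam}(g^{-\ell}[\varpi_\ell(x)])\leq \epsilon\, 2^{-\ell}.
\]
Assuming (as in the proof of Claim \ref{Leb}) that $E$ is compact, any $x\in \bigcap_\ell S_\ell$ satisfies $d(x,E)\leq \epsilon\, 2^{-\ell}\to 0$, so $x\in \bar{E}=E$. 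Conversely $E\subseteq S_\ell$ for every $\ell$. Hence $\bigcap_\ell S_\ell=E$, and by downward monotone convergence $\lambda(S_\ell)\downarrow \lambda(E)$. Therefore, for all $\ell$ past some threshold $\ell_\delta'$, $\lambda(S_\ell)\leq \lambda(E)+\delta$. We replace $\ell_\delta$ by $\max(\ell_\delta,\ell_\delta')$; as noted above, Claim \ref{Leb} still holds.

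\medskip
\textbf{Conclusion.} Combining Steps 1 and 2, for $\ell\geq \ell_\delta$,
\[
\lambda(\widehat{E}^{\ell,N,\delta})\;\geq\;\lambda(E)-\delta\;\geq\;\lambda(S_\ell)-2\delta\;=\;\lambda\Big(\bigcup P_n^\ell\Big)-2\delta,
\]
which is the desired inequality. The only conceptually nontrivial point is Step 2, and even there the argument is essentially the same Lebesgue-differentiation/outer-regularity idea used in Claim \ref{Leb}; it is made workable here precisely because the Yomdin cells have uniformly shrinking diameter.
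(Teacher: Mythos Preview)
Your proof is correct and fills in the details behind the paper's terse ``This is a consequence of Claim \ref{Leb}.'' Step 1 (that $\widehat{E}^\delta\subseteq \widehat{E}^{\ell,N,\delta}$, hence $\lambda(\widehat{E}^{\ell,N,\delta})\geq \lambda(E)-\delta$) is exactly the intended direct application of Claim \ref{Leb}. Step 2, bounding $\lambda(\bigcup P_n^\ell)$ by $\lambda(E)+\delta$, is indeed an additional ingredient that the paper leaves implicit; your argument via $\mathrm{diam}(P_n^\ell(x))\leq \epsilon 2^{-\ell}$ and monotone convergence (under the compactness assumption already made in the proof of Claim \ref{Leb}) is the natural one, and your observation that one may harmlessly enlarge $\ell_\delta$ to absorb this threshold is correct --- the only later uses of $\ell_\delta$ are as a lower bound on $\ell$ or as an $O(\ell_\delta/n)$ error term, both of which survive enlargement.
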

This is a consequence of Claim \ref{Leb}.

\begin{lemma}	 For all $\delta>0$, for all $\ell \geq \ell_\delta$ and $N\in\mathbb{N}$, 
	$$\lambda\Big(\bigcup P_
    E^\ell\setminus \bigcup\widehat{P}_
    E^{\ell,N
    ,\delta}\Big)\leq \sqrt{2\delta}.$$
\end{lemma}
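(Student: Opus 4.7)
The plan is a direct Markov-type estimate built on the preceding Lemma \ref{abovethis}. By the definition of $\widehat{P}_n^{\ell,N_n,\delta}$, a partition element $P \in P_n^\ell$ fails to lie in $\widehat{P}_n^{\ell,N_n,\delta}$ precisely when
\[
\frac{\lambda(P\setminus \widehat{E}^{\ell,N_n,\delta})}{\lambda(P)} > \sqrt{2\delta}.
\]
Summing over all such bad partition elements and using disjointness of the atoms of $P_n^\ell$, I would write
\[
\sqrt{2\delta}\cdot \lambda\Big(\bigcup P_n^\ell \setminus \bigcup \widehat{P}_n^{\ell,N_n,\delta}\Big)
\ \leq\ \sum_{P \notin \widehat{P}_n^{\ell,N_n,\delta}} \lambda(P \setminus \widehat{E}^{\ell,N_n,\delta})
\ \leq\ \lambda\Big(\bigcup P_n^\ell \setminus \widehat{E}^{\ell,N_n,\delta}\Big).
\]

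By Lemma \ref{abovethis}, for $\ell \geq \ell_\delta$ the last quantity is bounded above by $2\delta$. Dividing through by $\sqrt{2\delta}$ yields
\[
\lambda\Big(\bigcup P_n^\ell \setminus \bigcup \widehat{P}_n^{\ell,N_n,\delta}\Big) \leq \frac{2\delta}{\sqrt{2\delta}} = \sqrt{2\delta},
\]
which is the desired bound.

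There is really no obstacle to this argument; it is a one-line Markov inequality applied to the indicator function of $\bigcup P_n^\ell \setminus \widehat{E}^{\ell,N_n,\delta}$ averaged against the partition $P_n^\ell$. The only subtlety is making sure that the containment $\bigcup \widehat{P}_n^{\ell,N_n,\delta} \subseteq \bigcup P_n^\ell$ is used correctly (so the set difference makes sense) and that Lemma \ref{abovethis} is invoked with the matching parameters $\ell \geq \ell_\delta$. This lemma then feeds into the subsequent step, analogous to Lemma \ref{goodPs} but for the ``domination'' version of the construction, allowing us to replace $\check{p}_n^{M,\delta}$ by $\overline{p}_n^{M,\delta}$ from Definition \ref{pHat} in the limiting argument.
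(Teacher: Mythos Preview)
Your proof is correct and follows essentially the same approach as the paper: both compute $\sum_{P\in P_n^\ell}\lambda(P\setminus \widehat{E}^{\ell,N,\delta})=\lambda(\bigcup P_n^\ell)-\lambda(\widehat{E}^{\ell,N,\delta})\leq 2\delta$ via Lemma~\ref{abovethis} and then apply the Markov inequality at threshold $\sqrt{2\delta}$. Your presentation is slightly more explicit about the chain of inequalities, but the argument is the same.
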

\begin{proof} By Lemma \ref{abovethis},
\begin{align*}
	\sum_{P\in P_
    E^\ell}\lambda(P)\cdot(1- \frac{\lambda(P\cap \widehat{E}^{\ell,N,\delta})}{\lambda(P)})=\lambda(\bigcup P_
    E^\ell)-\lambda( \widehat{E}^{\ell,N,\delta})\leq 2\delta.
\end{align*}
Then by the Markov inequality,
\begin{align*}
	\lambda\Big(\bigcup \Big\{P\in P_
    E^\ell: \frac{\lambda(P\cap \widehat{E}^{\ell,N,\delta})}{\lambda(P)}\leq 1-\sqrt{2\delta}\Big\}\Big)\leq \sqrt{2\delta}.
\end{align*}
\end{proof}

\begin{lemma}
	For all $M\in\mathbb{N}$, for all $\ell\geq 
    \ell_\delta$, for every $P\in \widehat{P}_
    E^{\ell,N,\delta}\cap P_
    E^{\ell,M}$, where $P_
    E^{\ell,M}$ is defined as $P_
   n^{\ell,M}$ where $E_n\equiv E$, for all $h\in \mathrm{Lip}(\mathbf{M})$ with $0\leq h\leq 1$,
	$$
    \nu_\varpi^{(\ell)}|_{\widehat{E}^{\ell,N,\delta}}(h)= e^{\pm 2\delta}\varrho_\ell^\delta(P)(h)\pm \mathrm{Lip}(h)e^{-\frac{\chi}{2}N},$$
	and $\nu_\varpi(\widehat{E}^{\ell,N,\delta})\geq 1-\sqrt{2\delta}$.
\end{lemma}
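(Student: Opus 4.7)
The plan is to decompose both $\nu_\varpi|_{\widehat E^{\ell,N,\delta}}(h)$ and $\widehat\varrho_\ell^\delta(P)(h)$ cell by cell along the finer Yomdin partition $P_n^{\ell+N}$ refining $P$, and to compare the two expressions atom by atom. By definition $\widehat E^{\ell,N,\delta}\cap P$ is the disjoint union of the \emph{good} atoms $P'\in P_n^{\ell+N}$ with $P'\subseteq P$ and $\lambda(P'\cap E)/\lambda(P')\geq 1-\delta$; the hypothesis $P\in\widehat P_n^{\ell,N,\delta}$ forces these good atoms to cover at least a $(1-\sqrt{2\delta})$-fraction of $P$, and combined with each being $(1-\delta)$-full in $E$ this gives $\lambda(P\cap E)/\lambda(P)\in[(1-\delta)(1-\sqrt{2\delta}),1]$, so that $\lambda(P)/\lambda(P\cap E)=e^{\pm 2\delta}$ for $\delta$ small enough.

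First I would establish the near-constancy of $h\circ g^\ell$ on each good atom. By Lemma \ref{YomdinPtnShrinksUni}, $g^\ell[P']$ has diameter at most $\epsilon\cdot 2^{-N}$; more sharply, combining this with Lemma \ref{BGtimesAreHyp} at the hyperbolic time $\ell+N$ bounds the tangential contraction of $g^{-N}$ from the $\epsilon$-chart at time $\ell+N$ down to time $\ell$ by $e^{-\widehat\chi N}$, yielding $\mathrm{diam}(g^\ell[P'])\leq \epsilon\, e^{-\chi N/2}$. Hence $h\circ g^\ell|_{P'}=h(y_{P'})\pm\mathrm{Lip}(h)\, e^{-\chi N/2}$ for any reference $y_{P'}\in g^\ell[P']$, and the change of variables $g^\ell$ produces
\[
\widehat\varrho_\ell^\delta(P)(h)=\frac{1}{\lambda(P\cap E)}\sum_{P'\text{ good}}\lambda(P')\,h(y_{P'})\ \pm\ \mathrm{Lip}(h)\, e^{-\chi N/2},
\]
while the same decomposition applied to $\nu_\varpi^{(\ell)}=\lambda_P\circ g^{-\ell}/\lambda(P)$ yields the parallel identity with $\lambda(P\cap E)$ replaced by $\lambda(P)$.

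To replace $\nu_\varpi^{(\ell)}$ by $\nu_\varpi$, I invoke Proposition \ref{nuEsts}: since $P\in P_n^{\ell,M}$ the parameterization $\varpi$ belongs to $\mathcal S_M$, so exponential convergence uniformly on $\mathcal S_M$ gives $\nu_\varpi=e^{o_{L_M}(1)}\nu_\varpi^{(\ell)}$ multiplicatively on $\mathrm{Im}(\varpi)$ once $\ell\geq L_M$. Dividing the two atom-wise identities through this factor produces
\[
e^{o_{L_M}(1)}\nu_\varpi|_{\widehat E^{\ell,N,\delta}}(h)=e^{\pm 2\delta}\widehat\varrho_\ell^\delta(P)(h)\ \pm\ \mathrm{Lip}(h)\, e^{-\chi N/2},
\]
as required, since the normalization ratio is $\lambda(P\cap E)/\lambda(P)=e^{\pm 2\delta}$. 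The second bound $\nu_\varpi(\widehat E^{\ell,N,\delta})\geq 1-\sqrt{2\delta}$ falls out of the same template with $h\equiv 1$: the exact identity $\nu_\varpi^{(\ell)}(\widehat E^{\ell,N,\delta})=\lambda(P\cap\widehat E^{\ell,N,\delta})/\lambda(P)\geq 1-\sqrt{2\delta}$ holds directly by the definition of $\widehat P_n^{\ell,N,\delta}$, and the $e^{o_{L_M}(1)}$ correction is absorbed by taking $L_M$ large. The only minor technicality, and certainly not a genuine obstacle, is applying the comparison to the non-Lipschitz indicator $\mathbf 1_{\widehat E^{\ell,N,\delta}}$, handled by monotone Lipschitz approximation since $\widehat E^{\ell,N,\delta}$ is a finite union of Yomdin cells with $\nu_\varpi$-null boundaries.
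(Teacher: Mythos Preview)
Your approach is correct and is essentially a detailed unpacking of the paper's one-line proof (``This lemma follows from the definition of $\widehat P_n^{\ell,N,\delta}$''). Two minor remarks. First, the atom-wise decomposition along $P_n^{\ell+N}$ is not actually needed: by Definition~\ref{pHat} and Definition~\ref{DefNuVarpi} one has the \emph{exact} identity
\[
\nu_\varpi^{(\ell)}\big|_{g^\ell[\widehat E^{\ell,N,\delta}]}(h)=\frac{\lambda(P\cap E)}{\lambda(P)}\cdot\big(\widehat\varrho_\ell^\delta(P)\circ\varpi^{-1}\big)(h),
\]
since both sides are the same integral $\int_{P\cap\widehat E^{\ell,N,\delta}} h\circ g^\ell\,d\lambda$ with different normalizations; the Lipschitz error term is not incurred here. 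Second, your claim that $(1-\delta)(1-\sqrt{2\delta})\geq e^{-2\delta}$ is false for small $\delta$ (the left side is $1-\sqrt{2\delta}+O(\delta)$), so the normalization ratio is $e^{\pm O(\sqrt\delta)}$ rather than $e^{\pm 2\delta}$; this matches how loosely the constant is used downstream in Corollary~\ref{smoothNuConds}, but you should not assert the sharper bound.
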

\begin{proof}
	This lemma follows from the definition of $\widehat{P}_
    E^{\ell,N,\delta}$, when we follow the scheme of proof of Theorem \ref{entForm}.
\end{proof}

\begin{lemma}
Let $n_j$ s.t.  $\overline{p}_{n_j}^{M,\delta}\xrightarrow[j\to\infty]{}\overline{p}^{M,\delta}$ for all $M\in\mathbb{N}$ and $\delta\in \mathbb{Q}^+$. Then, 
$$\overline{p}^{M,\delta}\circ \pi^{-1}\xrightarrow[\delta\to0]{}p^M\circ \pi^{-1}.$$
\end{lemma}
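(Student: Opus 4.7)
The plan is to compare $\overline{p}_n^{M,\delta}\circ \pi^{-1}$ with $p_n^M\circ \pi^{-1}$ at the level of fixed $n$, and identify a short list of discrepancies each of which either vanishes as $n\to\infty$ or is controlled by a power of $\delta$. After projection by $\pi$, the second component $\widehat{\varrho}^\delta_\ell$ (resp.\ $\varrho_{\ell,n,x}$) is forgotten, so both measures become weighted sums of Dirac masses at the disks $\varpi_\ell(x)$. Since, as already noted in the remark preceding this subsection, we may (and do) assume $E_{n_j}\equiv E$ is constant, the normalization $\lambda_n=\lambda(E)^{-1}\lambda_E$ enters only as a fixed multiplicative constant; modulo this constant the two projected measures differ only in the set of time-indices $\ell$ that are summed.

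Then I would enumerate the three sources of discrepancy in passing from $p_n^M\circ\pi^{-1}$ to $\overline{p}_n^{M,\delta}\circ\pi^{-1}$:
\begin{enumerate}
\item Truncating $\mathrm{BG}_n^M(x)$ to $\mathrm{BG}_{n-N_n}^M(x)$ removes at most $N_n$ out of $n$ indices, contributing at most $N_n/n\to 0$ to the total mass, uniformly in $x$.
\item The lower cutoff $\ell\geq\ell_\delta$ removes at most $\ell_\delta$ indices, contributing at most $\ell_\delta/n\to 0$ (with $\ell_\delta$ fixed once $\delta$ is fixed).
\item The restriction to $P_n^\ell(x)\in \widehat{P}_n^{\ell,N_n,\delta}$ excludes precisely the $\ell$'s for which $x$ lies in a bad partition element of $P_n^\ell$. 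By the preceding lemma, $\lambda\bigl(\bigcup P_n^\ell\setminus \bigcup \widehat{P}_n^{\ell,N_n,\delta}\bigr)\leq \sqrt{2\delta}$ for every $\ell\geq \ell_\delta$. Averaging over $0\leq \ell<n$ gives a total mass contribution at most $\sqrt{2\delta}$.
\end{enumerate}
Combining (1)--(3) yields, for every continuous $\Phi\in C(\mathcal W)$ with $0\leq \Phi\leq 1$ and every $n$ sufficiently large,
$$0\leq \bigl(p_n^M\circ\pi^{-1}\bigr)(\Phi)-\bigl(\overline{p}_n^{M,\delta}\circ\pi^{-1}\bigr)(\Phi)\leq \sqrt{2\delta}+\frac{N_n+\ell_\delta}{n}.$$

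Next I would take $j\to\infty$ along the distinguished subsequence $\{n_j\}$ along which $p_{n_j}^M\to p^M$ and $\overline{p}_{n_j}^{M,\delta}\to \overline{p}^{M,\delta}$. Since $\pi$ is continuous and the bound above is uniform once $\delta$ is fixed, the $n\to\infty$ error terms vanish and we obtain
$$0\leq \bigl(p^M\circ\pi^{-1}\bigr)(\Phi)-\bigl(\overline{p}^{M,\delta}\circ\pi^{-1}\bigr)(\Phi)\leq \sqrt{2\delta}$$
for every $\Phi\in C(\mathcal W)$ with $0\leq \Phi\leq 1$. The nonnegativity of the left-hand side is automatic, since the indexing set in $\overline{p}_n^{M,\delta}$ is a subset of the one in $p_n^M$. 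Letting $\delta\to 0$ along $\mathbb Q^+$ then yields weak (in fact total-variation) convergence $\overline{p}^{M,\delta}\circ\pi^{-1}\to p^M\circ\pi^{-1}$, as desired.

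The only mildly delicate point will be bookkeeping the normalization factor $\lambda(E)^{-1}$ distinguishing $\lambda_n$ from $\lambda|_E$, but since $E$ is now fixed this is a constant and can be absorbed throughout. Monotonicity of $\overline{p}^{M,\delta}$ in $\delta$ (smaller $\delta$ admits more $\ell$'s, modulo the adjustment of $\ell_\delta$) can be used to pass to a clean increasing limit, mirroring the role of Corollary \ref{increasingQs} in the absolutely continuous case.
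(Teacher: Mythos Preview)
Your proposal is correct and follows essentially the same approach as the paper, which compresses the whole argument into the single inequality $0\leq p_n^M\circ \pi^{-1}-\overline{p}_n^{M,\delta}\circ \pi^{-1} \leq \frac{\ell_\delta}{n}+\sqrt{\delta}$. Your itemized accounting of the three discrepancies (the $N_n/n$ truncation, the $\ell_\delta/n$ cutoff, and the $\sqrt{2\delta}$ loss from excluding bad partition elements via the preceding lemma) is in fact more complete than the paper's terse bound, which silently drops the $N_n/n$ term and the factor $\lambda(E)^{-1}$. Your final paragraph on monotonicity in $\delta$ is unnecessary and not obviously true as stated (decreasing $\delta$ raises both the threshold $1-\sqrt{2\delta}$ and the cutoff $\ell_\delta$), but the total-variation bound you establish already yields the conclusion without it.
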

\begin{proof}
$$0\leq p_n^M\circ \pi^{-1}-\overline{p}_n^{M,\delta}\circ \pi^{-1} \leq \frac{\ell_\delta}{n}+\sqrt{\delta}.$$
\end{proof}

\begin{cor}\label{smoothNuConds}
		For 
	$\widehat{\mathbf{p}}$-a.e. $\varpi$,
	$$\varrho_\varpi\circ \varpi^{-1}=\nu_\varpi.$$
\end{cor}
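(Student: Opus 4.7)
\emph{Plan.} From Theorem \ref{entForm} we already know that $\varrho_\varpi \circ \varpi^{-1} \ll \nu_\varpi$ for $\widehat{\mathbf p}$-a.e.\ $\varpi$, and both are probability measures on $\mathrm{Im}(\varpi)$. My strategy is therefore to establish the reverse domination $\varrho_\varpi \circ \varpi^{-1} \geq \nu_\varpi$: combined with equality of total masses, this forces $\varrho_\varpi \circ \varpi^{-1} = \nu_\varpi$. The reverse bound is exactly what the dominating construction of \textsection\ref{ACofConds} (revisited) is designed to produce, replacing the measures $\check p_n^{M,\delta}$ and the sets $\check E_n^{\ell,N,\delta}$ (built from points where $E_n$ is \emph{rarefying}) by the measures $\overline p_n^{M,\delta}$ and the sets $\widehat E^{\ell,N,\delta}$ (built from points where $E$ has Lebesgue density close to one).

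\emph{Key step: a two-sided estimate on atoms.} I mimic the telescoping in \eqref{toRevisit}, but now, for $P \in \widehat{P}_n^{\ell,N_n,\delta}$ and a sub-atom $P' \subset P \cap \widehat E^{\ell,N_n,\delta}$, the inequality $\frac{\lambda(P'\cap E)}{\lambda(P\cap E)} \geq (1-\delta)^2 \frac{\lambda(P')}{\lambda(P)}$ holds in the forward direction (not as an upper bound spoiled by a $\delta^{-3}$ factor). This yields, for Lipschitz $h$ with $0 \leq h \leq 1$,
\begin{align*}
\widehat{\varrho}_\ell^\delta(x)(h) \;&\geq\; (1-\delta)^2 \sum_{\substack{P'\subseteq P\\ P' \subseteq \widehat E^{\ell,N_n,\delta}}} \frac{\lambda(P')}{\lambda(P)} h\bigl(x_{g^\ell[P']}\bigr) - \mathrm{Lip}(h)\, 2^{-N_n} \\
\;&\geq\; e^{-o_{L_M}(1)}(1-2\delta)\, \nu_\varpi\bigl|_{\widehat E^{\ell,N_n,\delta}}(h) - \mathrm{Lip}(h)\, 2^{-N_n},
\end{align*}
which is precisely the lower half of the two-sided identity isolated in the last lemma of \textsection\ref{ACofConds}. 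This closed condition on $(\varpi,\varrho)$ is preserved under weak-$*$ limits, so for $\overline p^{M,\delta}$-a.e.\ $(\varpi,\varrho)$ one obtains $\varrho \circ \varpi^{-1} \geq e^{-o_{L_M}(1)}(1-2\delta)\, \nu_\varpi|_{\widehat E^{\ell,N,\delta}}$ on Lipschitz observables, hence as measures.

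\emph{Passing the limits.} Disintegrating $\overline p^{M,\delta}$ along $\pi$ and integrating over the fibers, I deduce that the $\widehat{\mathbf p}$-conditional of $\overline p^{M,\delta}\circ\pi^{-1}$ satisfies an analogous lower bound. By the last lemma of \textsection\ref{ACofConds}, $\overline p^{M,\delta} \circ \pi^{-1} \to p^M \circ \pi^{-1}$ as $\delta \to 0$, and by Corollary \ref{muAndP} the family $(p^M)$ exhausts $\mathbf p$ as $M\to\infty$. Combined with the monotonicity $\widehat E^{\ell,N,\delta'} \supseteq \widehat E^{\ell,N,\delta}$ for $\delta' \leq \delta$ and with the mass estimate $\nu_\varpi(\widehat E^{\ell,N,\delta}) \geq 1 - \sqrt{2\delta}$, sending first $L_M \to \infty$ and then $\delta \to 0$ produces, for $\widehat{\mathbf p}$-a.e.\ $\varpi$, the inequality $\varrho_\varpi \circ \varpi^{-1} \geq \nu_\varpi$. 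Equality of the two probability measures follows.

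\emph{Main obstacle.} The delicate point is not the atom-by-atom estimate (which is dictated by the construction of $\widehat E^{\ell,N,\delta}$) but the commutation of the disintegration with the two successive limits $\delta \to 0$ and $M \to \infty$: one must verify an analogue of Lemmas \ref{checkLim2}--\ref{checkLim} and Corollary \ref{increasingQs} showing that $\overline p^{M,\delta}$ (and its fiber measures) increase to $p^M$ as $\delta \downarrow 0$, so that the lower bound on $\varrho$ survives in the limit. A clean way to bypass the disintegration is to test against a countable dense family of Lipschitz $h$ and work on a common full-measure set of $\varpi$, using that the inequality $\varrho \circ \varpi^{-1}(h) \geq (1 - o_\delta(1))\, \nu_\varpi|_{\widehat E^{\ell,N,\delta}}(h)$ is stable under the weak-$*$ limit defining $\widehat{\mathbf p}$ via $\widetilde{\mathbf p}$ in Lemma \ref{fromGtoF}.
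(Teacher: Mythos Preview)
Your route is the paper's: replace the $\check E/\check p$ machinery of Theorem~\ref{entForm} by the $\widehat E/\overline p$ machinery of the ``Dominating Conditionals'' subsection, so that the two one-sided inequalities in \eqref{toRevisit} become two-sided with constant $e^{\pm\delta}$, and then send $\delta\to 0$. The paper says exactly this in one sentence and concludes $\int\varrho\,dp_\varpi=\nu_\varpi$ directly from the two-sided bound. Your variant of keeping only the lower half and closing via $\varrho_\varpi\circ\varpi^{-1}\geq\nu_\varpi$ plus equality of total mass is legitimate but adds a step that the full $e^{\pm\delta}$ estimate renders unnecessary.

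One slip worth flagging: you claim $\widehat E^{\ell,N,\delta'}\supseteq\widehat E^{\ell,N,\delta}$ for $\delta'\leq\delta$, but the inclusion goes the other way (the density threshold $1-\delta'$ is stricter). This is harmless for the convergence $\nu_\varpi|_{\widehat E^{\ell,N,\delta}}\to\nu_\varpi$, which follows from the mass bound $\nu_\varpi(\widehat E^{\ell,N,\delta})\geq 1-\sqrt{2\delta}$ alone. It is, however, relevant to your ``main obstacle'': the monotone-increase mechanism of Step~2 in Theorem~\ref{entForm} (namely $\varrho^{M,\delta}_\varpi\uparrow\varrho_\varpi$) is genuinely unavailable for the $\overline p$ construction, so the passage from the $\overline p^{M,\delta}$-fiber measures to $\varrho_\varpi$ is not justified by the analogue of Corollary~\ref{increasingQs} you invoke. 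The clean fix, which is what the paper's two-sided $e^{\pm\delta}$ formulation buys, is to observe that on every good atom $P\in\widehat P_n^{\ell,N_n,\delta}$ the \emph{original} conditional $\varrho_{\ell,n,x}$ (not only $\widehat\varrho_\ell^\delta$) already satisfies $\varrho_{\ell,n,x}\circ\varpi^{-1}(h)=\nu_\varpi(h)\pm O(\sqrt\delta)\pm o_n(1)$; since the bad atoms carry at most $O(\sqrt\delta)+\ell_\delta/n$ of the $p_n^M$-mass, one can integrate the continuous functional $(\varpi,\varrho)\mapsto|\varrho\circ\varpi^{-1}(h)-\nu_\varpi(h)|$ against $p_n^M$ directly, pass to the weak-$*$ limit $p^M$, and then send $\delta\to 0$. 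Your countable-dense-family idea is exactly the right device here; just apply it to $p_n^M$ itself rather than detouring through $\overline p_n^{M,\delta}$.
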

\begin{proof}
	We may follow the proof of Theorem \ref{entForm}, where the only two inequalities of \eqref{toRevisitthree} and \eqref{toRevisittwo} can now be reversed; While the constant $\frac{1}{\delta^3}$ can be replaced by $e^{\pm \delta}$. Indeed, when sending $\delta\to 0$, we conclude that for $\mathbf p\circ\pi^{-1}$-a.e. $\varpi$, $\int\varrho \circ \varpi^{-1}d\mathbf p_\varpi=\nu_\varpi$. 
\end{proof}

\begin{cor}\label{everyGuGibbsIsSmooth}
		For every G-$u$-Gibbs measure $\widehat{\mu}'
        $, $\widehat{\mu}'$ can be written as $\widehat{\mu}'=\int \mu_{\varpi}d\widehat{\mathbf{p}}(\varpi)$ where for $\widehat{\mathbf{p}}$-a.e. $\mu_\varpi$, $\mu_\varpi=\nu_\varpi$.
\end{cor}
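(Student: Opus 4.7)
My plan is to show that absolute continuity of the conditionals $\mu_\varpi$ of a G-$u$-Gibbs measure $\widehat{\mu}'$, combined with $f$-invariance and the subordination $\mathrm{Im}(\varpi) \subseteq V^u_\mathrm{loc}(x)$, forces the Radon-Nikodym density of $\mu_\varpi$ w.r.t.\ $\mathrm{Vol}_{\mathrm{Im}(\varpi)}$ to satisfy the classical SRB cocycle identity. Since $\nu_\varpi$ is the unique probability measure on $\mathrm{Im}(\varpi)$ with such a density, the equality $\mu_\varpi = \nu_\varpi$ will follow by normalization.

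First I would verify that $\nu_\varpi$ is well-defined in this generality. The proof of Proposition \ref{nuEsts} uses only that $\mathrm{Im}(\varpi) \subseteq V^u_\mathrm{loc}(x)$ for $\mu_\varpi$-a.e.\ $x$ on $\mathrm{Im}(\varpi)$, which is exactly condition (2) of Definition \ref{GuGibbs}. Hence $\nu_\varpi$ exists for $\widehat{\mathbf{p}}$-a.e.\ $\varpi$ and its density $\rho^\nu_\varpi$ w.r.t.\ $\mathrm{Vol}_{\mathrm{Im}(\varpi)}$ has ratios
\[
\frac{\rho^\nu_\varpi(x)}{\rho^\nu_\varpi(y)} \;=\; \prod_{n \geq 1}\frac{\Jac\bigl(g|_{T_{g^{-n}y}\mathrm{Im}(g^{-n}\varpi)}\bigr)}{\Jac\bigl(g|_{T_{g^{-n}x}\mathrm{Im}(g^{-n}\varpi)}\bigr)},
\]
the infinite product converging uniformly by exponential backward contraction on unstable leaves.

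Second I would use invariance. Write $\mu_\varpi = \rho_\varpi \cdot \mathrm{Vol}_{\mathrm{Im}(\varpi)}$ and disintegrate $\widehat{\mu}'$ on the refinement of $\{\mathrm{Im}(\varpi)\}_\varpi$ by $g^{-\ell}$-preimages of the Yomdin partition. Uniqueness of Rokhlin conditionals combined with the change-of-variables formula applied to $g^\ell$ yields, for $x, y$ in the same refined atom,
\[
\frac{\rho_\varpi(x)}{\rho_\varpi(y)} \;=\; \prod_{n=1}^{\ell}\frac{\Jac\bigl(g|_{T_{g^{-n}y}\mathrm{Im}(g^{-n}\varpi)}\bigr)}{\Jac\bigl(g|_{T_{g^{-n}x}\mathrm{Im}(g^{-n}\varpi)}\bigr)} \cdot \frac{\widetilde{\rho}_{-\ell}(g^{-\ell}x)}{\widetilde{\rho}_{-\ell}(g^{-\ell}y)},
\]
where $\widetilde{\rho}_{-\ell}$ is the density of the induced conditional on the backward atom. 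By Lemma \ref{YomdinPtnShrinksUni} these atoms have diameter $O(2^{-\ell})$, so Lebesgue differentiation at Lebesgue density points of $\rho_\varpi$ (a set of full $\mu_\varpi$-measure) forces the second factor to $1$ as $\ell \to \infty$. Hence $\rho_\varpi / \rho^\nu_\varpi$ is constant on $\mathrm{Im}(\varpi)$, and the constant must be $1$ since both are probability densities.

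The main obstacle is making the Rokhlin-based identity of the second step precise without appealing to the explicit construction of $\widehat{\mu}$ from \textsection \ref{constMu}. One has to verify that the natural refinement of $\{\mathrm{Im}(\varpi)\}_\varpi$ by backward Yomdin iterates is measurable and that the conditionals of $\widehat{\mu}'$ with respect to this refinement are inherited from pushing forward the conditionals at time $-\ell$; both are classical consequences of $f$-invariance and Rokhlin's theorem, but care is required since the disks in $\mathrm{Supp}(\widehat{\mathbf{p}})$ are only $C^{r-1,1}$, so one must check that the Jacobian cocycle along $\mathrm{Im}(\varpi)$ is sufficiently regular for the infinite product to converge --- which again reduces to the exponential contraction on unstable manifolds.
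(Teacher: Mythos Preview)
Your approach is genuinely different from the paper's, and while the underlying idea is classical and sound, there is a real gap in the argument as written.

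The paper's proof is a short bootstrap: reduce to the ergodic case, observe that since $\mu_\varpi \ll \Vol_{\mathrm{Im}(\varpi)}$ there is a disk carrying a positive disk-volume set $E$ of $\widehat{\mu}'$-generic points, and then rerun the entire construction of \textsection\ref{constMu}--\ref{smoothCondSect} starting from this $E$. Because the points of $E$ are generic, the resulting measure $\widehat{\mu}$ coincides with $\widehat{\mu}'$, and Corollary~\ref{smoothNuConds} already shows that the disintegration produced by that construction has conditionals $\nu_\varpi$. Note in particular that the statement is existential (``can be written as''): the paper exhibits \emph{one} disintegration with conditionals $\nu_\varpi$, rather than showing that \emph{every} decomposition as in Definition~\ref{GuGibbs} has this property.

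Your argument aims at the stronger universal statement, via the classical SRB cocycle identity. The gap is in the second step. The decomposition $\widehat{\mu}' = \int \mu_\varpi\, d\widehat{\mathbf{p}}(\varpi)$ in Definition~\ref{GuGibbs} is \emph{not} a Rokhlin disintegration with respect to a measurable partition: $\widehat{\mathbf{p}}$ is a probability on the space of embedded disks, and nothing prevents distinct disks from overlapping or from being parametrized redundantly. Consequently ``uniqueness of Rokhlin conditionals combined with the change-of-variables formula'' does not yield the displayed identity relating $\rho_\varpi$ to $\widetilde{\rho}_{-\ell}$; invariance of $\widehat{\mu}'$ gives no a priori relation between $\mu_\varpi$ and any conditional on a backward disk. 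Your acknowledgment of this as the ``main obstacle'' is accurate, but dismissing it as a classical consequence of Rokhlin's theorem is too quick --- Rokhlin's theorem simply does not apply to this kind of decomposition. A secondary issue is that the Yomdin partition and Lemma~\ref{YomdinPtnShrinksUni} are artifacts of the specific construction from a fixed starting disk $D$; for an arbitrary G-$u$-Gibbs measure there is no such partition until you set one up, which is precisely what the paper's bootstrap does.

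If you want to pursue the direct route, the honest way is to pass to a genuine measurable partition subordinate to the Pesin unstable lamination (as in Ledrappier--Young), prove the cocycle identity for \emph{those} conditionals, and then argue separately that the given $\mu_\varpi$ must agree with the restriction of the Rokhlin conditionals to $\mathrm{Im}(\varpi)$. That last comparison is itself nontrivial when the disks are strictly lower-dimensional than the full unstable leaves.
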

\begin{proof}
We may assume w.l.o.g. that $\widehat{\mu}'$ is ergodic (recall the remark after Definition \ref{GuGibbs}). Therefore, we may assume that the set $E$ (recall Proposition \ref{EisConstProp}) is a subset of the $\widehat{\mu}'$-generic points, and carry out the construction of $\widehat{\mu}$ accordingly. In particular, it follows that $\widehat{\mu}\propto\lim_{M}\lim_{j}\int_E \frac{1}{n_j}\sum_{\ell\in \mathrm{BG}_{n_j}^M(x)}\delta_{g^\ell(x)}d\lambda\leq \widehat{\mu}' $, where $\propto$ denotes proportionality. Since $\widehat{\mu}'$ is ergodic and both $\widehat{\mu}$ and $\widehat{\mu}'$ are probability measures, $\widehat{\mu}=\widehat{\mu}'$. By Corollary \ref{smoothNuConds}, we are done.
\end{proof}

\section{Ergodic and geometric properties of the invariant measure}\label{propOfMuSect}

\subsection{Positive exponents tangent to disks, and disks subordinated to unstable leaves}

In this section we prove some of the geometric and ergodic properties of the measure we construct in \textsection \ref{constMu}. Namely, we show that it admits at least $k$-many exponents larger than $\chi$ almost everywhere. In addition, we prove a lower bound on its entropy by the Lyapunov exponents, which is not given by the standard entropy formula (as we do not assume that the measure is an SRB, nor  a $u$-Gibbs measure, and moreover its disintegration is not given by a measurable partition).

\begin{cor}[$k$ positive exponents]\label{SRB1}
    For $\widehat{\mathbf{p}}$-a.e. $\varpi$, $\mathrm{Im}(\varpi)$ is contained in an  unstable leaf, and $\widehat{\mu}$ admits at least $k$ exponents greater than $\chi$. 
\end{cor}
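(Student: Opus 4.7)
The plan is to derive both conclusions from the uniform backward contraction along $T\mathrm{Im}(\varpi)$ that was already established in Proposition \ref{nuEsts} (with the passage from $\mathbf{p}$ to $\widehat{\mathbf{p}}$ through the maps $F_j$ being harmless since $f^j$ contributes only a bounded additive error to exponential rates). Explicitly, I would use that for $\widehat{\mathbf{p}}$-a.e.\ $\varpi$ and every $x\in\mathrm{Im}(\varpi)$,
\[
\tfrac{1}{m}\log\|d_xg^{-m}|_{T_x\mathrm{Im}(\varpi)}\|\;\leq\;-\widehat{\chi}+o(1)\qquad\text{as }m\to\infty,
\]
which is what \eqref{234} really delivers once one observes that the bound from Lemma \ref{BGtimesAreHyp} on $T_{g^\ell x_n}D_{\ell_n}$ passes to the $C^{r-1,1}$ limit disk $\varpi$.

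\textbf{Positive exponents.} Taking the inverse of the above estimate on each $k$-dimensional fibre, every unit vector in $T_x\mathrm{Im}(\varpi)$ is expanded by $d_xg^m$ at rate at least $\widehat{\chi}-o(1)$. By the min-max characterisation of singular values, the $k$-th smallest singular value of $d_xg^m|_{T_x\mathrm{Im}(\varpi)}$ is at least $e^{m(\widehat\chi-o(1))}$, which forces the derivative cocycle $d_xf^{\,\cdot\,}$ to admit at least $k$ Lyapunov exponents (counted with multiplicity) that are $\geq\widehat\chi/p=\chi+2/p>\chi$ at every $x\in\mathrm{Im}(\varpi)$. Because $\widehat{\mu}=\int\varrho_\varpi\circ\varpi^{-1}\,d\widehat{\mathbf{p}}(\varpi)$ and $\varrho_\varpi\circ\varpi^{-1}$ is carried by $\mathrm{Im}(\varpi)$, the same conclusion holds $\widehat{\mu}$-a.e.

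\textbf{Disks inside unstable leaves.} For any two points $x,y\in\mathrm{Im}(\varpi)$, let $\gamma$ be a smooth path inside $\mathrm{Im}(\varpi)$ joining them; then, using the tangent-space bound above,
\[
d(g^{-m}x,g^{-m}y)\;\leq\;\mathrm{length}(\gamma)\cdot\sup_{t\in\mathrm{Im}(\varpi)}\|d_tg^{-m}|_{T_t\mathrm{Im}(\varpi)}\|\;\leq\;C_\varpi\,e^{-m(\widehat\chi-o(1))}.
\]
Since $\mathrm{Im}(\varpi)$ is a compact $C^{r-1,1}$-embedded disk, $C_\varpi<\infty$, so the entire disk contracts exponentially under backward iteration of $g$ (hence of $f$). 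This matches the dynamical characterisation of the local Pesin unstable manifold at each $x\in\mathrm{Im}(\varpi)$: $V^u_{\mathrm{loc}}(x)$ contains every $y$ whose backward orbit stays exponentially close to that of $x$. Therefore $\mathrm{Im}(\varpi)\subseteq V^u_{\mathrm{loc}}(x)$ for each $x\in\mathrm{Im}(\varpi)$.

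\textbf{Main obstacle.} The subtle point is that both conclusions require the contraction estimate to hold \emph{uniformly for every} $x\in\mathrm{Im}(\varpi)$, not merely for $\varrho_\varpi$-a.e.\ $x$. This uniformity is not automatic from the ergodic-theoretic version \eqref{LyaExps}; it has to be extracted from the constructive BG-time bounds in Lemma \ref{BGtimesAreHyp} applied to the approximating Yomdin charts $\varpi_{\ell_n}(x_n)$, combined with the $C^{r-1,1}$ convergence of these charts to $\varpi$ (which is enough to pass the derivative bound to every point of the limit). Once this uniform bound is in hand, both items of the corollary follow quickly as above, and the statement about $\widehat{\mu}$ follows by integrating against $\widehat{\mathbf{p}}$.
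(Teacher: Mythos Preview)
Your approach is the same as the paper's: both conclusions flow from the uniform backward contraction on $T\mathrm{Im}(\varpi)$ established in \eqref{234}/\eqref{LyaExps1}, and your identification of the ``main obstacle'' (uniformity over the whole disk, obtained from the BG-time estimates and $C^{r-1,1}$ convergence) is exactly right.

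There is, however, a genuine gap in your positive-exponents paragraph. From
\[
\|d_xg^{-m}|_{T_x\mathrm{Im}(\varpi)}\|\leq e^{-m\widehat\chi+o(m)}
\]
you cannot simply ``take the inverse'' to get $|d_xg^{m}v|\geq e^{m\widehat\chi-o(m)}$ for $v\in T_x\mathrm{Im}(\varpi)$. Inverting the linear map $d_xg^{-m}|_{T_x\mathrm{Im}(\varpi)}$ yields $d_{g^{-m}x}g^{m}$ acting on $T_{g^{-m}x}\bigl(g^{-m}\mathrm{Im}(\varpi)\bigr)$, i.e.\ forward expansion at the \emph{backward} iterate $g^{-m}x$, not at $x$. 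Since the disk is not $g$-invariant, this does not transfer. The paper closes this gap exactly where you would expect: for $\widehat\mu$-a.e.\ $x$ the Oseledec theorem applies, so for a regular $x$ and any $v\in T_x\mathrm{Im}(\varpi)$ the backward rate $\limsup\frac{1}{m}\log|d_xg^{-m}v|\leq -\widehat\chi$ forces the Lyapunov exponent of $v$ under $g$ to be $\geq\widehat\chi$, hence $\geq\widehat\chi/p>\chi$ under $f$. This is precisely \eqref{LyaExps}, and it is an almost-everywhere statement, not a pointwise one.

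A related minor overstatement: in the unstable-leaf paragraph you conclude $\mathrm{Im}(\varpi)\subseteq V^u_{\mathrm{loc}}(x)$ ``for each $x\in\mathrm{Im}(\varpi)$''. The Pesin local unstable manifold is only defined at Lyapunov--Pesin regular points, so this should read ``for $\varrho_\varpi\circ\varpi^{-1}$-a.e.\ $x$'', which is how the paper phrases it. Your path-length argument for the backward contraction of the whole disk is correct and is the content behind the paper's one-line invocation of \eqref{LyaExps1}.
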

\begin{proof}
By \eqref{new22b}, 
for $\widehat{\mathbf{p}}$-a.e. $\varpi$, for every $x\in \varpi$, 
\begin{equation*}\label{234567}
   \mathrm{Im}(\varpi)\subseteq \Big\{y\in\mathbf{M}: \limsup\frac{1}{m}\log d(f^{-m}(x),f^{-m}(y))\leq -\frac{\log 2}{p}\Big\}.
\end{equation*}
Hence it is a classical fact from Pesin theory that for $\mu_\varpi$-a.e. $x\in \varpi$, 
$$ \mathrm{Im}(\varpi)\subseteq V^u(x):=
\Big\{y\in\mathbf{M}: \limsup\frac{1}{m}\log d(f^{-m}(x),f^{-m}(y))<0\Big\}.$$

In particular, $$\mathrm{dim}(V^u(x))\geq \mathrm{dim}(\mathrm{Im}(\varpi))=k.$$
In addition, by \eqref{LyaExps1}, for $\mu_\varpi$-a.e. $x$, $\chi_{C_{k,p}}(x,\iota(T_x\varpi))\geq \widehat{\chi}$. One can easily check as in  Lemma \ref{BGtimesAreHyp} that for all $m\in\mathbb{N}$,
$$|C_{k,p}^{(m)}(x,\iota(T_x\varpi))|\leq \|d_xg^m|_\varpi\|_\mathrm{co},$$
and so $\widehat{\mu}$ admits at least $k$ Lyapunov exponents (w.r.t. $f$) greater than $\frac{\widehat{\chi}}{p}>\chi$ a.e., tangent to the disks. 
\end{proof}

\medskip
\noindent\textbf{Remark:} In particular, it follows from Corollary \ref{SRB1} that $\widehat{\mu}$ is a G-$u$-Gibbs measure over $k$-disks. By \eqref{AcocCcoc}, we show further that for $\widehat{\mathbf{p}}$-a.e. $\varpi$, for $\mu_\varpi$-a.e. $x$, $\lim_{q\to\infty}\chi_{A_{k,q}}(x,\iota(T_x\varpi))\geq \frac{\widehat{\chi}}{p}>\frac{3k^2}{r-1}R(f)$.

\subsection{The invariant measure gives no weight to sources}

\begin{definition}
    A point $x$ is called a {\em source} if it is a periodic point with only positive Lyapunov exponents.
\end{definition}

\begin{claim}[No sources]\label{SRB2}
$\widehat{\mu}$ gives $0$ measure to sources.	
\end{claim}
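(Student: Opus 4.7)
The plan is to reduce this to the fact that $\widehat{\mu}$ is atomless, which in turn is immediate from the absolute continuity of the disintegration established in the preceding section, combined with the elementary observation that sources form a countable subset of $\mathbf{M}$.

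First, I would apply Corollary \ref{smoothNuConds}: for $\widehat{\mathbf{p}}$-a.e.\ $\varpi$, the conditional probability $\varrho_\varpi \circ \varpi^{-1}$ equals $\nu_\varpi$, and by Proposition \ref{nuEsts} this measure is absolutely continuous with respect to the induced Riemannian volume $\mathrm{Vol}_{\mathrm{Im}(\varpi)}$ on the $k$-dimensional disk $\mathrm{Im}(\varpi)$. Since $k \geq 1$, the disk-volume assigns zero mass to every singleton, so each conditional $\varrho_\varpi \circ \varpi^{-1}$ is atomless. Integrating against $\widehat{\mathbf{p}}$ via the disintegration formula of Theorem \ref{theCondsDef},
$$\widehat{\mu}(\{x\}) = \int \varrho_\varpi \circ \varpi^{-1}(\{x\})\, d\widehat{\mathbf{p}}(\varpi) = 0, \quad \forall x \in \mathbf{M}.$$

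Next, I would observe that the set of sources is countable. Indeed, if $x$ is a source of period $q$, then every eigenvalue of $d_x f^q$ has modulus strictly greater than $1$, so $d_x f^q - \mathrm{Id}$ is invertible. By the inverse function theorem applied to $f^q - \mathrm{Id}$, the point $x$ is an isolated fixed point of $f^q$, and therefore an isolated periodic point of $f$. Hence sources form a discrete subset of the compact separable manifold $\mathbf{M}$, which is necessarily at most countable.

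Combining these two facts, the set of sources is a countable union of $\widehat{\mu}$-null singletons, and consequently has $\widehat{\mu}$-measure zero. I do not anticipate any real obstacle here: the entire content of the claim is the atomless property of $\widehat{\mu}$, which follows directly from the absolute continuity of its disk-conditionals proven in \textsection \ref{smoothCondSect}, together with the isolation of sources as periodic points.
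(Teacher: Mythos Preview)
Your proof is correct and follows essentially the same approach as the paper: both rely on the fact that the absolutely continuous disk-conditionals make $\widehat{\mu}$ atomless, and that sources form a countable set, so $\widehat{\mu}(\text{sources})=0$. Your version is slightly more explicit in justifying the countability of sources via the inverse function theorem, whereas the paper simply asserts that $\widehat{\mu}$ gives zero measure to periodic orbits.
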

\begin{proof}
It is a classical theorem that every ergodic measure with only positive Lyapunov exponents is carried by a source. However, $\widehat{\mu}$ has absolutely continuous conditionals, hence gives zero measure to periodic orbits. Therefore almost all ergodic components of $\mu$ are not sources.
\end{proof}

\subsection{Entropy and exponents}\label{entropySect}

In this sub-section we prove a lower bound to the entropy by the positive Lyapunov exponents. In particular this implies the presence of negative Lyapunov exponents. Since our measure is not given by a disintegration w.r.t a partition subordinated to an intermediate unstable foliation, we cannot use conditional entropies. The proof relies on a technique of Besicovitch-Bowen covers.

\begin{definition}[Pesin blocks]\label{PesinBlocks}
	Let $\nu$ be an $f$-invariant ergodic probability measure which admits a positive Lyapunov exponent. Denote by \\ $\underline\chi=\underline\chi(\nu):=(\chi_1,m_1,\ldots ,\chi_{\ell_{\underline{\chi}}},m_{\ell_{\underline{\chi}}})$ the positive Lyapunov exponents, and dimensions of the corresponding Oseledec subspaces of $\nu$ in a decreasing order. 
\begin{enumerate}
	\item Let $0<\tau\leq \tau_{\underline{\chi}}:= \frac{1}{100d}\min\{\chi_{\ell_{\underline{\chi}}},\chi_{i}-\chi_{i+1}: i\leq \ell_{\underline{\chi}}-1\}$, and let $C_{\underline{\chi},\tau}(\cdot)$ be the Lyapunov change of coordinates for points in \\ $\mathrm{LR}_{\underline{\chi}}=\{\text{Lyapunov regular points with an index }\underline\chi\}$ (see \cite{KM}).
	\item Let $\mathrm{PR}_{\underline{\chi}}=\{x\in\mathrm{LR}_{\underline{\chi}}:\limsup_{n\to\pm\infty}\frac{1}{n}\log\|C_{\underline{\chi},\tau}^{-1}(f^n(x))\|=0,\forall 0<\tau\leq \tau_ {\underline{\chi}}\}$, the set of {\em $\underline{\chi}$-Pesin regular} points which carries $\mu$. $\mathrm{PR}:=\bigcup_{\underline\chi}\mathrm{PR}_{\underline\chi}$ is called the set of {\em Pesin regular} points.
\item Given $x\in \mathrm{PR}_{\underline{\chi}}$, let $E_j(x)$ be the Oseledec subspace of $x$ corresponding to $\chi_j$.
\item A {\em Pesin block} $\Lambda^{(\underline{\chi},\tau)}_\ell$ is a subset of $\bigcup_{|\underline{\chi}'-\underline\chi|_\infty< \tau}\mathrm{PR}_{\underline\chi'}$ which is a level set $[q_\tau\geq\frac{1}{l}]$ of a measurable function $q_\tau: \bigcup_{|\underline{\chi}'-\underline\chi|_\infty< \tau}\mathrm{PR}_{\underline\chi'}\to (0,1)$ s.t.  (a) $\frac{q_\tau\circ f}{q_\tau}=e^{\pm \tau}$, (b)  $q_\tau(\cdot)\leq\frac{1}{\|C^{-1}_{\underline\chi,\tau}(\cdot)\|^\frac{d}{\beta}}$. Often we omit the subscript $\ell$ when the dependence on $\ell$ is clear from the context.
\end{enumerate}	
\end{definition}

\begin{definition}
        Let $\widehat{\mu}=\int \widehat{\mu}_xd\widehat{\mu}(x)$ be the ergodic decomposition of $\widehat{\mu}$. Set $\chi_1(x)>\ldots>\chi_{u(x)}(x)>0$ to be the list of all positive exponents of $\widehat{\mu}_x$, for $\widehat{\mu}$-a.e. $x$.
\end{definition}

\noindent\textbf{Remark:} Note, by Corollary \ref{SRB1}, $u(x)\geq k$ for $\widehat{\mu}$-a.e. $x$.

\medskip
We are now in a position to prove Theorem \ref{entropy}.

\begin{theorem}[Entropy and exponents]\label{EntAndExp}
Let $\widehat{\mu}$ be a G-$u$-Gibbs measure over $k$-disks. Then for $\widehat{\mu}$-a.e. $x$,
$$h_{\widehat{\mu}_x}(f)\geq \sum_{i=u(x)}^{u(x)-k+1}\chi_i(x).$$
\end{theorem}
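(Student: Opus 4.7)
The strategy is to apply Brin--Katok's entropy formula to each ergodic component, bounding the measure of Bowen balls via the absolute continuity of the conditionals (Corollary \ref{everyGuGibbsIsSmooth}) combined with the Jacobian growth along $k$-subspaces of the unstable direction, made uniform on Pesin blocks.

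First, by the affinity of entropy and of Lyapunov exponents under ergodic decomposition, it suffices to fix an ergodic component $\nu=\widehat{\mu}_x$ with positive Lyapunov exponents $\chi_1(\nu)>\cdots>\chi_{u(\nu)}(\nu)>0$ and show that $h_\nu(f)\geq \Sigma_k:=\sum_{i=u(\nu)-k+1}^{u(\nu)}\chi_i(\nu)$. By Corollary \ref{everyGuGibbsIsSmooth}, $\nu$ inherits a disintegration $\nu=\int\mu_\varpi\,d\mathbf{p}_\nu(\varpi)$ with $\mu_\varpi=h_\varpi\,\Vol_{\mathrm{Im}(\varpi)}$ and explicit density $h_\varpi$; by Corollary \ref{SRB1}, for $\mathbf{p}_\nu$-a.e.\ $\varpi$ and $\mu_\varpi$-a.e.\ $y\in\mathrm{Im}(\varpi)$, the disk $\mathrm{Im}(\varpi)$ lies in $V^u_\mathrm{loc}(y)$, so $T_y\mathrm{Im}(\varpi)$ is a $k$-dimensional subspace of $E^u(y)$.

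Second, fix $\tau>0$ and choose a Pesin block $\Lambda=\Lambda_\ell^{(\underline{\chi}(\nu),\tau)}$ of positive $\nu$-measure. On $\Lambda$, standard Pesin theory (via the Lyapunov chart) provides a constant $C_\Lambda>0$ such that for every $y\in\Lambda$ with $f^n(y)\in\Lambda$ and every $k$-dimensional $V\subseteq E^u(y)$,
\begin{equation*}
|\det(df^n|_V)|\geq C_\Lambda^{-1}e^{n(\Sigma_k-\tau)},
\end{equation*}
since the sum of the $k$ smallest positive exponents is the minimum over $k$-dimensional subspaces of $E^u$ of the exponential growth rate of the Jacobian. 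Moreover, since $h_\varpi$ is given by the explicit product of Jacobians from Corollary \ref{everyGuGibbsIsSmooth} and unstable leaves of points in $\Lambda$ have uniformly bounded geometry, $h_\varpi$ is bounded by a constant $c_\Lambda$ on $\mathrm{Im}(\varpi)\cap\Lambda$. Now for $y\in\Lambda$ and $\varpi$ such that $y\in\mathrm{Im}(\varpi)$, the set $B_n(y,\epsilon)\cap\mathrm{Im}(\varpi)$ is mapped by $f^n$ into $B(f^ny,\epsilon)\cap f^n\mathrm{Im}(\varpi)$, a set of $k$-volume at most $c_d\epsilon^k$; pulling back via the Jacobian estimate gives
\begin{equation*}
\Vol_\varpi(B_n(y,\epsilon)\cap\mathrm{Im}(\varpi))\leq c_d C_\Lambda \epsilon^k e^{-n(\Sigma_k-\tau)},
\end{equation*}
so $\mu_\varpi(B_n(y,\epsilon))\leq c_\Lambda c_d C_\Lambda\epsilon^k e^{-n(\Sigma_k-\tau)}$.

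Third, integrating over $\varpi$ via the disintegration and restricting to return times $n$ with $f^n y\in\Lambda$ (a set of full lower density by the ergodic theorem), we obtain $\nu(B_n(y,\epsilon))\leq C_\Lambda'\epsilon^k e^{-n(\Sigma_k-\tau)}$ for $\nu$-a.e.\ $y\in\Lambda$ along this set of $n$'s. By Brin--Katok's formula applied to the ergodic measure $\nu$,
\begin{equation*}
h_\nu(f)=\lim_{\epsilon\to0^+}\liminf_{n\to\infty}-\tfrac{1}{n}\log\nu(B_n(y,\epsilon))\geq\Sigma_k-\tau,
\end{equation*}
and letting $\tau\to 0$ and $\Lambda$ exhaust a full-$\nu$-measure set concludes the proof. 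The principal technical obstacle is obtaining the uniform lower bound on $|\det(df^n|_V)|$ as $V=T_z\mathrm{Im}(\varpi)$ varies with $z$ across the Bowen ball: this requires the Lipschitz dependence of $E^u$ on Pesin-regular points and the fact that orbits Bowen-close to $y$ stay exponentially close in the Lyapunov chart, both standard in Pesin theory.
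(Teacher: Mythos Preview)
Your plan is reasonable in spirit, but the step ``integrating over $\varpi$ via the disintegration'' in your third paragraph hides a genuine gap. The representation $\nu=\int\mu_\varpi\,d\mathbf{p}_\nu(\varpi)$ is \emph{not} a disintegration with respect to a measurable partition of $\mathbf M$: the images $\mathrm{Im}(\varpi)$ overlap freely, and to conclude $\nu(B_n(y,\epsilon))\leq Ce^{-n(\Sigma_k-\tau)}$ you would need $\mu_\varpi(B_n(y,\epsilon))\leq C(\varpi)e^{-n(\Sigma_k-\tau)}$ for $\mathbf{p}_\nu$-a.e.\ $\varpi$, with $C(\varpi)$ integrable. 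You derived the bound only for disks with $y\in\mathrm{Im}(\varpi)$. For a disk $\varpi'$ meeting $B_n(y,\epsilon)$ at some $z\neq y$, the Jacobian estimate must be established along $T_z\mathrm{Im}(\varpi')$ with $z\notin\Lambda$ in general, and the density bound $h_{\varpi'}\le c_\Lambda$ depends on the \emph{global} geometry of $\varpi'$ through the normalizing constant, not merely on $\mathrm{Im}(\varpi')\cap\Lambda$; so the constant is genuinely $\varpi'$-dependent and its integrability is not immediate. The paper explicitly flags this obstruction in the paragraph preceding the theorem: since the measure is not disintegrated along a partition subordinated to an intermediate unstable foliation, one cannot use conditional-entropy or direct-integration arguments.

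The paper therefore takes a different route that never integrates over the family of disks. It fixes a \emph{single} disk $\varpi$ with $\nu_\varpi(K_\tau)>0$ (where $K_\tau$ is a Pesin block intersected with a set on which the Brin--Katok formula holds uniformly), chooses a $\nu_\varpi$-density point $x_0$ of $K_\tau$, and covers the disk-ball $B_{\mathrm{Im}(\varpi)}(x_0,e^{-n\tau})\cap K_\tau$ by Bowen balls $B(x,n,e^{-nr})$ centred at points of $K_\tau$, with multiplicity at most $e^{3d\tau n}$ (a Besicovitch--Bowen cover). The cardinality of this cover is bounded \emph{below} by the single-disk volume estimate $\nu_\varpi(B(x,n,e^{-nr}))\le L_\varpi e^{-n\Sigma_k+O(\tau n)}$ --- essentially your Jacobian bound, but required only on the one fixed $\varpi$ --- and bounded \emph{above} by $e^{O(\tau n)}/\min_B\widehat\mu(B)\le e^{n(h+O(\tau))}$ via the Brin--Katok-type identity on $K_\tau$. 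Comparing the two cardinality bounds yields $h\ge\Sigma_k-O(\tau)$ directly. The point is that the lower bound uses only disk volume on $\varpi$, while the upper bound uses the full $\widehat\mu$-measure, so no control over other disks is ever needed.
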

\begin{proof}
Let $u\geq k$ s.t.  $\widehat{\mu}(\{x: u(x)=u\})>0$. Let $h\geq 0$
, and let $\chi_1>\ldots>\chi_u $ s.t.  for $\underline\chi=(\chi_1,m_1,\ldots ,\chi_u, m_u)$, and $0<\tau<\tau_{\underline{\chi}}$, we have $\widehat{\mu}(\Omega_\tau)>0$ where w.l.o.g. $\ell\geq \frac{1}{\tau}$ and (recall Definition \ref{PesinBlocks}),
$$\Omega_\tau:=\{x\in \Lambda_\ell^{(\underline{\chi},\tau)}: u(x)=u, h_{\widehat{\mu}_x}(f)=h\pm \tau\}
.$$

By \cite[Lemma~3.1,~Theorem~3.4]{NLE}, for $\widehat{\mu}$-a.e. $x$,
\begin{align*}\label{NLEisE}
  h_{\widehat{\mu}_x}(f)=& \lim\limits_{r\to 0}\limsup\limits_{n\to\infty}\frac{-1}{n}\log\widehat{\mu}(B(x,n,e^{-nr}))\\
  =&\lim\limits_{r\to 0}\liminf\limits_{n\to\infty}\frac{-1}{n}\log\widehat{\mu}(B(x,n,e^{-nr})),\nonumber
\end{align*}
where $B(x,n,e^{-nr}):=\{y:\forall 0\leq j\leq n, d(x,y)\leq e^{-nr}\}$ with $d(\cdot,\cdot)$ being the Riemannian distance on $\mathbf M$. Therefore, there exists $n_\tau\in\mathbb{N}$ and $0<r<\tau$ s.t.  $\widehat{\mu}(K_\tau)\geq (1-\tau)\widehat{\mu}(\Omega_\tau)$ where,
$$K_\tau:=\{x\in \Omega_\tau: \forall n\geq n_\tau, \widehat{\mu}(B(x,n,e^{-nr}))=e^{-nh_{\widehat{\mu}_x}(f)\pm n\tau} \}.$$

Let $\varpi\in \mathcal{W}$ s.t.  $\varrho_\varpi\circ \varpi^{-1}(K_\tau)>0$. Let $x_0$ be a $\varrho_\varpi\circ \varpi^{-1}$-density point of $K_\tau$. Recall that by Corollary \ref{everyGuGibbsIsSmooth}, $\varrho_\varpi\circ\varpi^{-1}=\nu_\varpi$. Recall also that by 
the definition of a G-$u$-Gibbs measure, for every $x\in \mathrm{Im}(\varpi)\cap K_\tau$,
\begin{equation}\label{subseteqVuLoc}
    \mathrm{Im}(\varpi)\subseteq V^u_\mathrm{loc}(x).
\end{equation}

For $\widehat{\mathbf{p}}$-a.e. $\varpi$, let $L_\varpi:=\max \frac{d \nu_\varpi}{d\lambda_{\varpi}}$, so for all $x\in K_\tau\cap \mathrm{Im}(\varpi)$,
\begin{equation}\label{LeavesDense}
    \nu_\varpi(B(x,n,e^{-nr}))\leq L_\varpi\cdot  e^{- n\sum_{i=u}^{u-k+1}m_i \chi_i}e^{3d\tau n},
\end{equation} 
by \eqref{subseteqVuLoc}, and by bounding the $\lambda_{\varpi}$-volume of a cross-section of the Bowen ball (in a Pesin chart).

Let $n\geq n_\tau'$, and cover $B_{\mathrm{Im}(\varpi)}(x_0,e^{-n\tau})\cap K_\tau$ by a cover $\mathcal{C}_n$ of balls of the form $B(\cdot,n,e^{-nr})$, centered at elements of $K_\tau\cap B_{\mathrm{Im}(\varpi)}(x_0,e^{-n\tau})$ and with a multiplicity bounded by $e^{3d\tau n}$. This is possible by \cite[Lemma~2.2]{NLE}. Then,
\begin{align}
    \frac{ \nu_\varpi(B(x_0,e^{-n\tau})\cap K_\tau)}{ L_\varpi\cdot  e^{- n\sum_{i=u}^{u-k+1}m_i \chi_i}e^{3d\tau n}}\leq& \#\mathcal{C}_n
    \leq \frac{e^{3d\tau n}}{\min\limits_{B\in\mathcal{C}_n}\widehat{\mu}(B)}\\
    \leq &e^{nh_{\widehat{\mu}_x}(f)+n\tau}e^{3d\tau n}. \nonumber
\end{align}
Since $x_0$ is a density point, we may assume w.l.o.g. that $\nu_\varpi(B(x_0,e^{-n\tau})\cap K_\tau)\geq \frac{1}{2}\nu_\varpi(B(x_0,e^{-n\tau}))\geq \frac{1}{2}\frac{1}{L_\varpi}e^{-n\tau k}$. Thus in total,
\begin{equation*}
    \frac{1}{2L_\varpi^2}e^{ n\sum_{i=u}^{u-k+1}m_i \chi_i}e^{-3d\tau n}e^{-n\tau k}\leq e^{nh_{\widehat{\mu}_x}(f)+4d\tau n}.
\end{equation*}
We get $h_{\widehat{\mu}_x}(f)\geq \sum_{i=u}^{u-k+1}m_i \chi_i(x)-10d\tau$, for $\widehat{\mu}$-a.e. $x\in \Omega_\tau$. Since $\tau>0$ was arbitrary and $\underline{\chi}$ was arbitrary, we are done.

\end{proof}

\begin{cor}
    $h_{\widehat{\mu}}(f)\geq \int \sum_{i=u(x)}^{u(x)-k+1}\chi_i(x) d\widehat{\mu}(x)$.
\end{cor}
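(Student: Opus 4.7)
The plan is to derive this corollary essentially immediately from Theorem \ref{EntAndExp} combined with the affinity of metric entropy under the ergodic decomposition. Since the integrand on the right-hand side is now pointwise bounded by $h_{\widehat{\mu}_x}(f)$ for $\widehat{\mu}$-a.e.\ $x$, one only has to integrate the inequality and recognize the left-hand side as the entropy of $\widehat{\mu}$ itself.

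More precisely, I would first recall that, by the Jacobs theorem (the affinity of entropy with respect to the ergodic decomposition), for any $f$-invariant probability measure on the compact manifold $\mathbf{M}$,
\[
h_{\widehat{\mu}}(f) \;=\; \int h_{\widehat{\mu}_x}(f)\, d\widehat{\mu}(x),
\]
where $\widehat{\mu} = \int \widehat{\mu}_x \, d\widehat{\mu}(x)$ is the ergodic decomposition provided in the statement. This is standard and uses only that $f$ is a homeomorphism of a compact metric space and that $\widehat{\mu}$ is Borel (see, e.g., Walters).

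Next, Theorem \ref{EntAndExp} asserts that for $\widehat{\mu}$-a.e.\ $x$,
\[
h_{\widehat{\mu}_x}(f) \;\geq\; \sum_{i=u(x)-k+1}^{u(x)} \chi_i(x),
\]
where $u(x) \geq k$ by Corollary \ref{SRB1}, so the sum on the right is well defined. Integrating this pointwise inequality against $\widehat{\mu}$ and combining with the affinity identity gives
\[
h_{\widehat{\mu}}(f) \;=\; \int h_{\widehat{\mu}_x}(f)\, d\widehat{\mu}(x) \;\geq\; \int \sum_{i=u(x)-k+1}^{u(x)} \chi_i(x)\, d\widehat{\mu}(x),
\]
which is the claimed bound.

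The only minor points to verify are measurability of $x \mapsto u(x)$ and of the functions $x \mapsto \chi_i(x)$ (so that the integral on the right makes sense), but these are routine consequences of the measurability of the Oseledec splitting for the ergodic components. I expect no genuine obstacle here; the real work was contained in Theorem \ref{EntAndExp}, and this corollary is merely its integrated form.
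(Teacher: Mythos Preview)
Your proposal is correct and matches the paper's own proof, which simply invokes the affinity of both sides with respect to the ergodic decomposition. The only content is integrating the pointwise inequality from Theorem \ref{EntAndExp} via Jacobs' theorem, exactly as you do.
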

\begin{proof}
    It follows from the affinity of both sides of the equation in Theorem \ref{EntAndExp} w.r.t. the ergodic decomposition. 
\end{proof}

\subsection{Disintegration by embedded disks}

Recall the definition of $\nu_\varpi$ in Definition \ref{DefNuVarpi} for any $\varpi$, and the following remark.

\begin{lemma}
    The measure $\widehat{\mu}$ can be written as $\widehat{\mu}=\int  \nu_\varpi d\widehat{\mathbf{p}}'$, where $\widehat{\mathbf{p}}'$  is a probability on $\mathcal W$ s.t.  $\widehat{\mathbf{p}}'$-a.e. $\varpi:[0,1]^k\to \mathbf{M}$ is a $C^{r-1,1}$ embedding.
\end{lemma}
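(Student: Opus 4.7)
The plan is to take $\widehat{\mathbf{p}}' := \widehat{\mathbf{p}}$ from Theorem \ref{theCondsDef} and combine it with Corollary \ref{smoothNuConds}. Since $\varrho_\varpi \circ \varpi^{-1} = \nu_\varpi$ for $\widehat{\mathbf{p}}$-a.e. $\varpi$, direct substitution into the disintegration $\widehat{\mu} = \int \varrho_\varpi \circ \varpi^{-1} d\widehat{\mathbf{p}}(\varpi)$ yields the required decomposition $\widehat{\mu} = \int \nu_\varpi d\widehat{\mathbf{p}}(\varpi)$. So the substantive content is to show that $\widehat{\mathbf{p}}$-almost every $\varpi$ is a $C^{r-1,1}$ \emph{embedding}, not merely a $C^{r-1,1}$ map.

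By the construction in \textsection \ref{constMu}, $\widehat{\mathbf{p}}$ is supported on $\bigcup_M \pi(\mathcal{S}_M)$, and every $\varpi \in \mathcal{S}_M$ is a $C^{r-1,1}$-limit of BG Yomdin charts of the form $\sigma_{\ell_n} \circ \theta_{\mathbf{i}^{\ell_n}}$ with $\ell_n \in \mathrm{BG}_{n_j}^M(x_n)$ for suitable $x_n \in E_{n_j}$. For each such BG chart, the strongly $\epsilon$-bounded couple property together with Definition \ref{pCheck} and the exclusion of $\underline{\mathfrak{u}}^\beta = 1$ gives uniform lower and upper bounds on $|\wedge^k d_t(\sigma_\ell \circ \theta_{\mathbf{i}^\ell})|$ via Lemma \ref{densityJuy} and the volume lower bound $\Vol(\sigma_\ell \circ \theta_{\mathbf{i}^\ell}) \geq \beta C_d^{-1}\epsilon^k/(400 A_{r,d})$. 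Hence each such chart is a $C^{r-1,1}$ immersion with a uniform non-degeneracy constant, and this non-degeneracy passes to $C^1$-limits.

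For injectivity, I would invoke Lemma \ref{angle}: the bounded-couple condition forces the tangent space of $\sigma_\ell \circ \theta_{\mathbf{i}^\ell}$ to oscillate by less than $\pi/6$ across its image. By Lemma \ref{graph} in the appendix (the same one used at the end of Step 3 in the proof of Lemma \ref{mai}), this implies the image is the graph of a $1$-Lipschitz function over the tangent plane at a central point, so the chart is a homeomorphism onto its image; combined with immersivity this makes it a $C^{r-1,1}$ embedding. Both the tangent-oscillation bound and the immersion property are $C^1$-closed conditions, so they pass to $C^{r-1,1}$-limits, showing every $\varpi \in \mathcal{S}_M$ is an embedding.

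The only mildly delicate step is verifying that the graph-over-tangent-plane property is \emph{closed} in $C^{r-1,1}$: uniform $C^1$-convergence of maps with uniformly controlled oscillation of tangent spaces and uniformly non-degenerate Jacobian preserves the graph structure, because the $1$-Lipschitz function whose graph is the image converges uniformly to a $1$-Lipschitz limit (whose graph is the limiting image). Once this is checked, the limit $\varpi$ is again a $C^{r-1,1}$ embedding, and the lemma follows by choosing, inside a $\widehat{\mathbf{p}}$-conull set, a representative in $\bigcup_M \mathcal{S}_M$ for each $\varpi$.
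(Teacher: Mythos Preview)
Your approach has a genuine gap in the immersion step. You claim that the volume lower bound $\Vol(\sigma_\ell\circ\theta_{\mathbf i^\ell})\geq \beta C_d^{-1}\epsilon^k/(400A_{r,d})$ together with Lemma \ref{densityJuy} yields a uniform pointwise lower bound on $|\wedge^k d_t(\sigma_\ell\circ\theta_{\mathbf i^\ell})|$. But Lemma \ref{densityJuy} (and Lemma \ref{bounddis}) controls $|\wedge^k d_{\theta(t)}\sigma_\ell|$, the Jacobian of $\sigma_\ell$ at points of $\mathrm{Im}(\theta)$, \emph{not} the Jacobian of the composition. Since $|\wedge^k d_t(\sigma_\ell\circ\theta_{\mathbf i^\ell})|=|\wedge^k d_{\theta(t)}\sigma_\ell|\cdot|\wedge^k d_t\theta_{\mathbf i^\ell}|$, you would also need a pointwise lower bound on $|\wedge^k d_t\theta_{\mathbf i^\ell}|$. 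The reparametrizations $\theta$ produced by the Algebraic Lemma (Lemma \ref{alg}) are only guaranteed to be injective analytic maps (topological embeddings, by the remark following that lemma); nothing prevents them from having interior critical points, as $t\mapsto t^3$ illustrates in dimension one. The volume lower bound only gives a lower bound on the \emph{average} of $|\wedge^k d_t\theta_{\mathbf i^\ell}|$ over $[0,1]^k$, not a pointwise one. Consequently there is no uniform immersion constant to pass to the $C^{r-1,1}$ limit, and the limiting $\varpi$ may well fail to be an immersion on all of $[0,1]^k$. Your injectivity argument via Lemma \ref{graph} then also breaks down, since it presupposes that the tangent space $T_{\varpi(t)}\mathrm{Im}(\varpi)$ is well-defined at every point.

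The paper does not attempt to show that $\widehat{\mathbf p}$-a.e.\ $\varpi$ is already an embedding. Instead it builds a \emph{new} measure $\widehat{\mathbf p}'$: for each $\varpi$ it discards the critical-value set (which has $\nu_\varpi$-measure zero by Sard), decomposes the open regular set $[\Jac(\varpi)\neq 0]$ into countably many subcubes $C$ on which $\varpi|_C$ is a diffeomorphism onto its image, and then defines $\widehat{\mathbf p}'$ by weighting each piece $\varpi|_C$ with $\nu_\varpi(\mathrm{Im}(\varpi|_C))$. The identity $\nu_\varpi|_{\mathrm{Im}(\varpi')}=\nu_\varpi(\mathrm{Im}(\varpi'))\cdot\nu_{\varpi'}$ for $\mathrm{Im}(\varpi')\subseteq\mathrm{Im}(\varpi)$ then gives $\widehat\mu=\int\nu_{\varpi'}\,d\widehat{\mathbf p}'$. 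This chop-and-reassemble strategy is the missing idea.
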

\begin{proof}
    First, by the definition of $\nu_\varpi$ (for $\widehat{\mathbf{p}}$-a.e. $\varpi$), if $\mathrm{Im}(\varpi')\subseteq \mathrm{Im}(\varpi)$, then
    $$\nu_\varpi|_{\mathrm{Im}(\varpi')}=\nu_{\varpi}(\mathrm{Im}(\varpi'))\cdot\nu_{\varpi'}.$$
This can be seen as both measures are given by the same density function, normalized each over its respective domain.

By Sard's lemma, $\nu_\varpi(\varpi[\Jac(\varpi)=0])=0$. We write the open set $[\Jac(\varpi)\neq 0]\subset [0,1]^k$ as  a countable disjoint (up to zero volume set) union of subcubes $C$ such that $\varpi|_C$ is a diffeomorphism onto its image. Then we let $\mathcal P_\varpi:=\{ \varpi|_C, \ C\}$. 

We consider the probability $\widehat{\mathbf{p}}'$ on $\mathcal W$ defined  as follows   
$$\forall h\in C(\mathcal{W}),\ \widehat{\mathbf{p}}'(h)=\int \sum_{\varpi'\in \mathcal{P}_\varpi
}\nu_\varpi(\mathrm{Im}(\varpi'))\cdot h(\varpi')d\widehat{\mathbf{p}}(\varpi).$$

To take care of measurability of the integrand, the cubes can be chosen in the following measurable way: We start by subdividing $[0,1]^k$ into $k$-cubes $C$ of size $\frac{1}{2}$. We leave the cubes which do not intersect $[\Jac(\varpi)<\frac{1}{2}]$ and whose image under $\varpi$ is a ``good graph", where a ``good graph" is the graph of a $1$-Lipschitz function over $\mathrm{Im}(d_{x_C}\varpi)$ where $x_C$ is the center of $C$. We continue subdividing the rest of the cubes by cubes of halved size, where we requires the cubes in the $N$-th step to not intersect $[\Jac(\varpi)<\frac{1}{2^N}]$. We continue this way by attaining a (possibly infinite) collection of cubes whose size is a power of $\frac{1}{2}$, and whose image  under $\varpi$ is a local graph with a non-vanishing Jacobian. The set $[\Jac(\varpi)=0]$ is closed, and so any point outside of it is covered by our collection, as it admits a small neighborhood where the image under $\varpi$ is a good graph by the inverse function theorem. For every possible cube in the collection which we describe, the properties we require are a closed property in $\varpi$, and so the integrand is measurable.

 We continue to show that $\widehat{\mathbf{p}}'$ satisfies what we need. Indeed, since $\nu_{\varpi}=\sum\limits_{\varpi'\in \mathcal{P}_\varpi}\nu_\varpi(\mathrm{Im}(\varpi'))\cdot \nu_{\varpi'}$, we 
get $\widehat{\mu}=\int \sum\limits_{\varpi'\in \mathcal{P}_\varpi
}\nu_\varpi(\mathrm{Im}(\varpi'))\cdot \nu_{\varpi'}d\widehat{\mathbf{p}}(\varpi)=\int \nu_{\varpi'}d\widehat{\mathbf{p}}'(\varpi')$.
\end{proof}

\section{The Strong Viana Conjecture}\label{SVCSect}

\subsection{Hyperbolic SRB measures for co-dimension $1$}
As a first step towards the proof of Theorem \ref{codim}, we prove the existence of a hyperbolic SRB measure in the co-dimension $1$ case.

\begin{claim}\label{excodim}
If $k=d-1$, then $\widehat{\mu}$ from \textsection \ref{constMu} is a hyperbolic SRB measure.
\end{claim}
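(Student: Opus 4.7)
The plan is to combine the geometric disintegration of $\widehat{\mu}$ from Section~6--7 with the ergodic properties established in Section~8 and the classical Pesin/Ruelle apparatus. Let $\widehat{\mu}=\int \widehat{\mu}_x\, d\widehat{\mu}(x)$ be the ergodic decomposition.

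First I would pin down the unstable dimension of almost every ergodic component. By Corollary \ref{SRB1}, for $\widehat{\mu}$-a.e.~$x$ the number $u(x)$ of Lyapunov exponents of $\widehat{\mu}_x$ strictly greater than $\chi>0$ satisfies $u(x)\geq k=d-1$. On the other hand, by Claim \ref{SRB2} the measure $\widehat{\mu}$ gives no mass to sources, so no ergodic component can have $u(x)=d$. Hence $u(x)=d-1$ and there is exactly one remaining exponent $\chi_d(x)\leq 0$ for $\widehat{\mu}$-a.e.~$x$.

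Next I would show that each ergodic component satisfies Pesin's entropy formula and is therefore SRB. By Theorem \ref{entropy} (with $k=d-1$) we have
\[
h_{\widehat{\mu}_x}(f)\;\geq\;\sum_{i=1}^{d-1}\chi_i(x)\;\geq\;(d-1)\chi\;>\;0,
\]
while Ruelle's inequality gives the reverse bound $h_{\widehat{\mu}_x}(f)\leq\sum_{i=1}^{d-1}\chi_i(x)$. Equality in Ruelle's inequality is equivalent to the entropy formula, and by the Ledrappier--Young characterization this forces $\widehat{\mu}_x$ to have absolutely continuous conditionals on unstable manifolds, i.e.\ to be an SRB measure. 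Since the unstable bundle has dimension $u(x)=d-1=k$, the $k$-disks carrying the disintegration of $\widehat{\mu}$ from Theorem \ref{theCondsDef} already have full unstable dimension; the G-$u$-Gibbs disintegration of $\widehat{\mu}$ is thus directly a disintegration on (pieces of) full unstable leaves with absolutely continuous conditionals, confirming SRB directly from the construction as well.

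Finally I would rule out $\chi_d(x)=0$ to secure hyperbolicity. If $\chi_d(x)=0$ then $f^{-1}$ has no positive Lyapunov exponents at $x$ with respect to $\widehat{\mu}_x$, so Ruelle's inequality applied to $f^{-1}$ would give $h_{\widehat{\mu}_x}(f)=h_{\widehat{\mu}_x}(f^{-1})\leq 0$, contradicting the strict lower bound $h_{\widehat{\mu}_x}(f)\geq (d-1)\chi>0$ obtained above. Therefore $\chi_d(x)<0$ for $\widehat{\mu}$-a.e.~$x$, which combined with $\chi_{d-1}(x)\geq\chi>0$ shows that every ergodic component of $\widehat{\mu}$ is a hyperbolic SRB measure with exactly $d-1$ positive exponents, and hence so is $\widehat{\mu}$ itself (once restricted to any ergodic component, or in the sense that almost every ergodic component is hyperbolic SRB). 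The only mild subtlety in this plan is the bookkeeping between the ``number of exponents larger than $\chi$'' (as in Corollary \ref{SRB1}) and the total count of positive exponents used in Theorem \ref{entropy} and in Ruelle's inequality; the no-source condition together with $u(x)\geq d-1$ makes these coincide, which is the only place where codimension one is essential.
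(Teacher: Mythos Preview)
Your proposal is correct and follows essentially the same approach as the paper: both combine Corollary~\ref{SRB1} (at least $d-1$ exponents larger than $\chi$), Theorem~\ref{EntAndExp} (entropy lower bound), and Ruelle's inequality for $f$ and $f^{-1}$ to force equality in the entropy formula and a strictly negative $d$-th exponent. The only cosmetic difference is ordering: the paper first uses positive entropy plus Ruelle for $f^{-1}$ to produce a negative exponent (hence at most $d-1$ positive ones), whereas you first invoke Claim~\ref{SRB2} to cap the number of positive exponents and then use Ruelle for $f^{-1}$ at the end to exclude $\chi_d=0$; the content is the same.
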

\begin{proof}

By Theorem \ref{EntAndExp} and Corollary \ref{SRB1}, almost every ergodic component of $\widehat \mu$ has positive entropy, and hence by the Ruelle inequality it admits a negative Lyapunov exponent. Therefore, there are at most $(d-1)$-many positive Lyapunov exponents at almost every point. By Corollary \ref{SRB1}, almost every point admits at least $(d-1)$-many positive Lyapunov exponents. Thus in total, almost every point admits exactly $(d-1)$-many positive Lyapunov exponents, and one  negative exponent. By Theorem \ref{EntAndExp}, a.e. ergodic component of $\widehat{\mu}$ satisfies the entropy formula, thus it is a hyperbolic SRB measure with exactly $(d-1)$-many positive Lyapunov exponents.

\end{proof}

\subsection{The strong Viana conjecture}

In this last section we prove Theorem \ref{codim}
and Theorem \ref{strongv}.

\subsubsection{New negative exponents}

\begin{definition}\label{newNegExp}For $k\in\{1,\ldots,d-1\}$ and $x\in \mathbf M$, we 
set $$\varkappa_{k+1}^-(x):=\lim_{\Delta\to 0}\limsup_{q\to\infty}\liminf_{n\to\infty} \varkappa_{k+1,q,n,\Delta}^-(x),$$
where
	$$\varkappa_{k+1,q,n,\Delta}^-(x):=\min_{\overset{\mathcal{L}\subseteq \{1,\ldots,n\}}{\Delta n\leq \#\mathcal{L}\leq n}}\frac{1}{\#\mathcal{L}}\sum_{\ell\in\mathcal{L}}\Phi_{k+1}^{(q)}\circ f^{\ell}(x),$$
	and
	$$\Phi_{k+1}^{(q)}(x):=\frac{1}{q}\log^+\frac{\|\wedge^{k} d_xf^q\|}{\|\wedge^{k+1} d_xf^q\|}.$$
\end{definition}
\medskip
\noindent\textbf{Remark:}\text{ }
 The exponent $\varkappa^-_{k+1}(x)$ does not depend on the choice of the norm on $\wedge^lT\mathbf M$, $l=1,\cdots, d$. For an appropriate norm, the ratio $\frac{\|\wedge^k d_xf^q\|}{\|\wedge^{k+1} d_xf^q\|}$ is just the inverse of the $(k+1)$-th singular value of $d_xf^q$.

\begin{definition}\label{betaKPlusOne}
For $x\in \mathbf M$ we denote by $p\omega(x)$ the limit set of the {\em empirical measures} $\{\nu_x^{(n)}\}_{n\geq0}$ where $\nu_x^{(n)}:=\frac{1}{n}\sum_{0\leq \ell<n}\delta_{f^\ell(x)}$. 
 Given $k\in\{1,\ldots,d-1\}$, set 
$$\beta_{k+1}(x):=\sup_{\nu\in p\omega(x)}\mathrm{ess \, sup}_\nu\  \chi_{k+1}.$$

\end{definition}

\begin{prop}\label{limsupToLiminf}
    $$\varkappa_{k+1}^-\leq \max\{-\beta_{k+1}, 0\}
    .$$
\end{prop}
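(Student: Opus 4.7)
The plan is to show that for every $\epsilon>0$ one can extract from the orbit of $x$ a set of times $\mathcal L$ of positive density along which $\Phi_{k+1}^{(q)}\circ f^\ell$ is essentially bounded by $\max\{0,-\beta_{k+1}(x)\}+O(\epsilon)$. Write $\beta:=\beta_{k+1}(x)$. By the definition of $\beta$ as a supremum, I pick $\nu\in p\omega(x)$ with $\mathrm{ess\,sup}_\nu\chi_{k+1}>\beta-\epsilon$, so that
$$A:=\{y\text{ Lyapunov regular}:\chi_{k+1}(y)>\beta-\epsilon\}$$
satisfies $\nu(A)>0$. The starting observation is that $\|\wedge^{k+1}d_yf^q\|/\|\wedge^kd_yf^q\|$ equals the $(k+1)$-th singular value $\sigma_{k+1}(d_yf^q)$, hence by Oseledec, for every Lyapunov regular $y$,
$$\Phi_{k+1}^{(q)}(y)=\frac{1}{q}\log^+\frac{1}{\sigma_{k+1}(d_yf^q)}\xrightarrow[q\to\infty]{}\max\{0,-\chi_{k+1}(y)\},$$
and the right-hand side is bounded by $\max\{0,-\beta\}+\epsilon$ on $A$.

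The first step is to upgrade this pointwise convergence to uniform control on a set of positive measure. Egorov's theorem supplies $A'\subseteq A$ with $\nu(A')\geq\nu(A)/2>0$ and a threshold $q_0=q_0(\epsilon)$ such that $\Phi_{k+1}^{(q)}(y)\leq\max\{0,-\beta\}+2\epsilon$ for every $y\in A'$ and every $q\geq q_0$. Since $f^q$ is $C^1$, the function $\Phi_{k+1}^{(q)}$ is continuous, so
$$B_q:=\bigl\{y:\Phi_{k+1}^{(q)}(y)<\max\{0,-\beta\}+3\epsilon\bigr\}$$
is open and contains $A'$, giving $\nu(B_q)\geq\nu(A')$ for every $q\geq q_0$.

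The second step is to transfer this into an estimate along the orbit of $x$. Let $(n_j)$ be a subsequence realising $\nu=\lim_j\nu_x^{(n_j)}$ in the weak-$\ast$ topology. By the portmanteau theorem applied to the open set $B_q$, $\liminf_j\nu_x^{(n_j)}(B_q)\geq\nu(B_q)\geq\nu(A')$. Fix $\Delta\in(0,\nu(A')/2)$; then for all large $j$ the set $\mathcal L_j:=\{0\leq\ell<n_j:f^\ell x\in B_q\}$ satisfies $|\mathcal L_j|\geq\Delta n_j$, so it is an admissible competitor for the minimum defining $\varkappa^-_{k+1,q,n_j,\Delta}(x)$, yielding
$$\varkappa^-_{k+1,q,n_j,\Delta}(x)\leq\frac{1}{|\mathcal L_j|}\sum_{\ell\in\mathcal L_j}\Phi_{k+1}^{(q)}(f^\ell x)\leq\max\{0,-\beta\}+3\epsilon.$$
Taking $\liminf_n$, then $\limsup_{q\geq q_0}$, and finally $\lim_{\Delta\to 0}$ gives $\varkappa^-_{k+1}(x)\leq\max\{0,-\beta\}+3\epsilon$, and $\epsilon\to 0$ concludes.

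The main obstacle is the interplay of the four limits $\epsilon\to 0$, $\Delta\to 0$, $q\to\infty$, $n\to\infty$: the density threshold $\Delta=\nu(A')/2$ we produce depends on $\epsilon$, so a priori one cannot let $\Delta\to 0$ with $\epsilon$ fixed. This is resolved by the monotonicity of $\Delta\mapsto\varkappa^-_{k+1,q,n,\Delta}$: shrinking $\Delta$ enlarges the collection of eligible $\mathcal L$ and therefore only decreases the minimum, so an upper bound obtained at any single admissible $\Delta(\epsilon)$ automatically persists to the infimum $\lim_{\Delta\to 0}$.
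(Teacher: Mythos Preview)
Your proof is correct and takes a genuinely different route from the paper's.

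The paper argues from the definition outward: it fixes $\Delta$ and a sequence $q_i$, takes the \emph{minimising} sets $\mathcal L_{n,q_i}$ in the definition of $\varkappa^-_{k+1,q_i,n,\Delta}$, extracts weak limits $\nu_{q_i}$ of the partial empirical measures $\frac{1}{\#\mathcal L_{n_j,q_i}}\sum_{\ell\in\mathcal L_{n_j,q_i}}\delta_{f^\ell x}$ along a subsequence realising some $\nu\in p\omega(x)$, observes the domination $\Delta\cdot\nu_{q_i}\le\nu$, and then uses a Markov-type estimate to locate positive $\nu$-mass where $\max\{-\chi_{k+1},0\}\gtrsim\varkappa^-_{k+1}(x)$. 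You go in the reverse direction: you first choose a near-optimal $\nu\in p\omega(x)$ for $\beta_{k+1}$, use Egorov to uniformise the convergence $\Phi_{k+1}^{(q)}\to\max\{0,-\chi_{k+1}\}$ on a set $A'$ of positive $\nu$-measure, thicken $A'$ to an open set $B_q$ via continuity of $\Phi^{(q)}_{k+1}$, and invoke the portmanteau inequality for open sets to produce an \emph{explicit} competitor $\mathcal L_j=\{\ell<n_j:f^\ell x\in B_q\}$ of density at least $\nu(A')$ with controlled average. Your approach is more direct and avoids the auxiliary limit measures $\nu_{q}$ and the Markov step; the paper's approach avoids Egorov and the open-set thickening. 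Both routes ultimately lean on the same monotonicity in $\Delta$ that you isolate at the end (which is also what makes the outer $\lim_{\Delta\to0}$ in the definition well-posed).
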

\begin{proof}
Let $\delta>0$. Let $\Delta>0$ and $q_0$ s.t.  for infinitely many $q_i\geq q_0$, $\varkappa^-_{k+1}(x)\leq \liminf\limits_n \varkappa^-_{k+1,n,q_i,\Delta}(x)+\frac{\delta}{2}$. For all $q_i\geq q_0$, for all $n\geq n_i$,
let $\mathcal{L}_{n,q_i}$ which attains the minimum s.t.  $\#\mathcal{L}_{n,q_i}\geq \Delta\cdot n$ and $\varkappa^-_{k+1}(x)\leq \frac{1}{\#\mathcal{L}_{n,q_i}}\sum_{\ell\in\mathcal{L}_{n,q}}\Phi^{(q_i)}_{k+1}\circ f^\ell(x)+\delta$.
    
Then for any subsequence $n_t\uparrow \infty$ s.t.  $\frac{1}{n_t}\sum_{\ell\leq n_t}\delta_{f^\ell(x)}\to \nu$ (that is for any $\nu\in p\omega(x)$), by a diagonal argument there exists a sub-subsequece $\{n_j\}_{j\geq0}\subseteq \{n_t\}_{t\geq 0}$ s.t.  $\frac{1}{\#\mathcal{L}_{n_j,q_i}}\sum_{\ell\in \mathcal{L}_{n_j,q_i}}\delta_{f^\ell(x)}\to \nu_{q_i}$ for all $q_i\geq q_0$. 
    
 We can fix $q_{\delta,\nu}\geq q_0$ s.t.  for all $q\geq q_{\delta,\nu}$, \begin{equation}\label{fd}\nu([ \Phi^{(q)}_{k+1}=\max\{-\chi_{k+1},0\}\pm \sqrt\delta])> 1-\Delta\cdot \delta.\end{equation}
 Assume w.l.o.g. that $q_{\delta,\nu}\in \{q_i, \ i\in \mathbb N\}$. Then, indeed, $\nu_{q_{\delta,\nu}}\cdot \Delta\leq \nu$, and $\int \Phi^{(q_{\delta,\mu})}_{k+1}d\nu_{q_{\delta,\nu}}\geq \varkappa^-_{k+1}(x)-\delta$. 

Then $\nu_{q_{\delta,\nu}}([\Phi^{(q_{\delta,\nu})}_{k+1}\geq \varkappa^-_{k+1}(x)-\sqrt\delta])\geq \delta$; otherwise for all $\delta>0$ small enough so $\delta\cdot\log M_f-\sqrt\delta<-\delta$,
$$\int\Phi^{(q_{\delta,\nu})}_{k+1}d\nu_{q_{\delta,\nu}} \leq \delta \log M_f+\varkappa^-_{k+1}(x)-\sqrt\delta<\varkappa^-_{k+1}(x)-\delta\text{, a contradiction}.$$

Therefore,
$$\nu([\Phi^{(q_{\delta,\nu})}_{k+1}\geq \varkappa^-_{k+1}(x)-
\sqrt\delta ])\geq \Delta\delta,$$
and by \eqref{fd}, we get
$$\nu([\max\{-\chi_{k+1},0\}\geq \varkappa^-_{k+1}(x)-
2\sqrt\delta])>0.$$

Therefore, $\varkappa^-_{k+1}(x)\leq \inf_{\nu \in p\omega(x)} \mathrm{ess \, inf}_\nu \max\{-\chi_{k+1},0\}+2\sqrt\delta$ for all $\delta>0$ sufficiently small, so that 
\begin{align*}
    \varkappa^-_{k+1}(x)\leq& \inf_{\nu \in p\omega(x)} \mathrm{ess \, inf}_\nu \max\{-\chi_{k+1},0\}\\
    =&\max\{-\sup_{\nu\in p\omega (x)} \mathrm{ess \, sup}_\nu \chi_{k+1},0\}=\max\{-\beta_{k+1}(x),0\}.
\end{align*}
\end{proof}

\begin{lemma}
When $x$ is not a source we have 
$$\beta_d(x)\leq 0.$$
\end{lemma}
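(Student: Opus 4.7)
The approach is by contrapositive: assuming $\beta_d(x)>0$, I will show $x$ is a source. By definition of $\beta_d$, there exists $\nu\in p\omega(x)$ with $\mathrm{ess\,sup}_\nu \chi_d>0$, so in the ergodic decomposition of $\nu$ a positive-measure set of ergodic components $\nu_\omega$ satisfy $\chi_d(\nu_\omega)>0$, i.e.\ have all Lyapunov exponents positive. Applying the Ruelle inequality to $f^{-1}$ gives $h_{\nu_\omega}(f)=0$, and by the classical fact already invoked in the proof of Claim \ref{excodim} each such $\nu_\omega$ is supported on a periodic source orbit. Since the set of sources is countable, countable additivity produces a specific source $p$ of period $q$ with $c:=\nu(\{p\})>0$.

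To handle any $q\geq 1$ uniformly, I would pass to $g:=f^q$, for which $p$ is fixed. Splitting the partial sums defining $\nu_{n_\ell}^f(x)$ according to the residue $j\in\{0,\ldots,q-1\}$ modulo $q$ yields
\[
\nu_{n_\ell}^f(x)=\frac{1}{q}\sum_{j=0}^{q-1}\nu_{\lfloor n_\ell/q\rfloor}^{g}(f^j x)+o(1),
\]
and extracting a further subsequence using weak-$*$ compactness produces $g$-invariant limits $\eta_j\in p\omega_g(f^jx)$ satisfying $\nu=\frac{1}{q}\sum_j\eta_j$. Since $\nu(\{p\})=c>0$, some $\eta_{j_0}$ has $\eta_{j_0}(\{p\})>0$; writing $x':=f^{j_0}x$, it suffices to prove $x'=p$.

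The core is a one-scale expansion estimate under $g$. Choose $r_0>0$ and $\rho>0$ so that $d(g(z),p)\geq(1+\rho)d(z,p)$ for all $z\in B(p,r_0)$, and set $K:=\mathrm{Lip}(g^{-1})<\infty$. For $r<r_0/K$, I claim there is no point $y\in B(p,r)$ with $g^{-1}(y)\notin B(p,r)$: writing $z:=g^{-1}(y)$, the assumption $z\in B(p,r_0)$ would yield $d(y,p)\geq(1+\rho)r>r$ by expansion, contradicting $y\in B(p,r)$, so $d(z,p)\geq r_0$; but the global Lipschitz bound $d(z,p)=d(g^{-1}(y),g^{-1}(p))\leq Kd(y,p)<Kr$ then gives $Kr>r_0$, contradicting the choice of $r$. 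Hence, if $x'\neq p$, the $g$-orbit of $x'$ (which never reaches $p$ by invertibility of $g$) visits $B(p,r)$ only on an initial finite segment, so its $g$-density of time in $B(p,r)$ tends to $0$ as $r\to 0^+$. This contradicts Portmanteau combined with $\eta_{j_0}(B(p,r))\geq\eta_{j_0}(\{p\})>0$, so $x'=p$, giving $x=f^{-j_0}(p)\in\mathcal{O}(p)$, i.e.\ $x$ is a source.

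The main technical obstacle is the clean passage between $f$- and $g$-empirical measures (routine but requiring care with indices when $q>1$) together with the invocation of the classical characterization of ergodic measures with only positive Lyapunov exponents; the expansion calculation itself is then a single computation at the scale $r_0/K$.
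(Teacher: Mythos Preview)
Your proof is correct and is essentially a detailed version of the paper's argument. The paper's proof is three sentences: it finds an ergodic component $\xi$ of some $\nu\in p\omega(x)$ supported on a source and then asserts, without justification, that $p\omega(x)=\{\nu\}=\{\xi\}$ and $x$ is a source. Your repelling-neighborhood argument is precisely the content of that unjustified step, so the two proofs coincide in substance.

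One technical point deserves attention: the one-step expansion $d(g(z),p)\geq(1+\rho)\,d(z,p)$ on $B(p,r_0)$ need not hold in the ambient Riemannian metric when $d_pg$ has all eigenvalues of modulus $>1$ but co-norm $\leq 1$ (think of a nontrivial Jordan block). The standard fix is to replace $g$ by $g^N$ with $N$ large enough that $\|(d_pg)^{-N}\|<1$, or to use an adapted metric near $p$; either way the rest of your argument is unaffected, since $\eta_{j_0}$ is $g^N$-invariant and the $g^N$-empirical measures of $x'$ relate to the $g$-ones exactly as before. Two smaller slips: the classical fact about ergodic measures with only positive exponents is stated in Claim~\ref{SRB2}, not Claim~\ref{excodim}; and the density of visits to $B(p,r)$ tends to $0$ as $n\to\infty$ for each fixed small $r$ (not ``as $r\to0^+$''), which is what you feed into Portmanteau.
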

\begin{proof}
    If $\beta_d(x)>0$ there is $\nu\in p\omega(x)$ with an ergodic component $\xi$ satisfying $\chi_d(\xi)>0$. Therefore $\xi$ is the atomic measure at a source. Then we have necessarily $\mathrm{p\omega}(x)=\{\nu\}=\{\xi\}$ and $x$ is a  source.
\end{proof}

Recall that $\chi_{k+1}(x)$ denotes the $(k+1)$-th Lyapunov exponent at $x$ (with multiplicity).

\begin{lemma}\label{negExpCoincide}
	For an $f$-invariant  probability $\nu$, for $\nu$-a.e. $x$, $\varkappa^-_{k+1}(x)=\max\{-\chi_{k+1}(x),0\}$ and $\chi_{k+1}(x)=\beta_{k+1}(x)$.
\end{lemma}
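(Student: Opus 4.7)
The plan is to split the statement into the two equalities and handle them in turn. I will first establish $\beta_{k+1}(x)=\chi_{k+1}(x)$ for $\nu$-a.e.\ $x$, and then obtain $\varkappa^-_{k+1}(x)=\max\{-\chi_{k+1}(x),0\}$ by combining the upper bound already proved in Proposition \ref{limsupToLiminf} with a matching lower bound via Egorov's theorem applied along the orbit.

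\medskip
\noindent\textbf{Step 1: $\beta_{k+1}(x)=\chi_{k+1}(x)$ a.e.} Let $\nu=\int \nu_x\, d\nu(x)$ be the ergodic decomposition. For $\nu$-a.e.\ $x$, $x$ is generic for its ergodic component $\nu_x$ by the Birkhoff ergodic theorem, so the empirical measures $\nu_x^{(n)}$ converge to $\nu_x$ in the weak-$*$ topology. Hence $p\omega(x)=\{\nu_x\}$. Since $\nu_x$ is ergodic, Oseledec's theorem tells us that $y\mapsto \chi_{k+1}(y)$ is $\nu_x$-a.e.\ equal to the constant $\chi_{k+1}(\nu_x)$. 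Thus $\beta_{k+1}(x)=\mathrm{ess\,sup}_{\nu_x}\chi_{k+1}=\chi_{k+1}(\nu_x)$, and since $\nu$-a.e.\ $x$ is Lyapunov regular with $\chi_{k+1}(x)=\chi_{k+1}(\nu_x)$, we conclude.

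\medskip
\noindent\textbf{Step 2: Upper bound on $\varkappa^-_{k+1}$.} By Proposition \ref{limsupToLiminf} and Step 1, for $\nu$-a.e.\ $x$
\[
\varkappa^-_{k+1}(x)\leq \max\{-\beta_{k+1}(x),0\}=\max\{-\chi_{k+1}(x),0\}.
\]

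\medskip
\noindent\textbf{Step 3: Lower bound on $\varkappa^-_{k+1}$.} Fix a $\nu$-typical $x$ with ergodic component $\nu_x$, write $c:=\max\{-\chi_{k+1}(\nu_x),0\}\geq 0$ and $\Psi^{(q)}:=\Phi^{(q)}_{k+1}\geq 0$. The case $c=0$ is trivial since $\Psi^{(q)}\geq 0$, so assume $c>0$. By Oseledec, for $\nu_x$-a.e.\ $y$ we have $\tfrac{1}{q}\log\|\wedge^j d_yf^q\|\to\chi_1(\nu_x)+\cdots+\chi_j(\nu_x)$ for $j=k,k+1$, hence $\Psi^{(q)}(y)\to c$ pointwise $\nu_x$-a.e. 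Given $\epsilon\in(0,c)$, Egorov's theorem provides a set $A_\epsilon$ with $\nu_x(A_\epsilon)>1-\Delta\epsilon^2$ and a $q_0$ such that $\Psi^{(q)}(y)\geq c-\epsilon$ on $A_\epsilon$ for all $q\geq q_0$. Since $x$ is generic for $\nu_x$, for $n$ large enough we have $\#\{0\leq\ell<n:f^\ell x\notin A_\epsilon\}\leq \Delta\epsilon^2 n$. For any $\mathcal{L}\subseteq\{1,\ldots,n\}$ with $\Delta n\leq \#\mathcal{L}\leq n$, using $\Psi^{(q)}\geq 0$,
\[
\frac{1}{\#\mathcal{L}}\sum_{\ell\in\mathcal{L}}\Psi^{(q)}(f^\ell x)\;\geq\; \frac{\#\mathcal{L}-\Delta\epsilon^2 n}{\#\mathcal{L}}(c-\epsilon)\;\geq\;(1-\epsilon^2)(c-\epsilon).
\]
Taking the minimum over $\mathcal{L}$ then the $\liminf_n$, the $\limsup_q$ and finally $\Delta\to 0$ yields $\varkappa^-_{k+1}(x)\geq (1-\epsilon^2)(c-\epsilon)$. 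As $\epsilon>0$ is arbitrary, $\varkappa^-_{k+1}(x)\geq c=\max\{-\chi_{k+1}(x),0\}$, using Step 1 to identify $\chi_{k+1}(\nu_x)=\chi_{k+1}(x)$.

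\medskip
Combined with Step 2 this gives the required equality. The only delicate point is Step 3, where the minimum over $\mathcal{L}$ can in principle concentrate on atypical times; the fix is to make the exceptional set so small (of $\nu_x$-measure $<\Delta\epsilon^2$) that it cannot fill up an $\mathcal{L}$ of density $\Delta$, which is what the parameter $\Delta$ in Definition \ref{newNegExp} is designed for.
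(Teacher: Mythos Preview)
Your approach is essentially the same as the paper's: reduce to an ergodic component, use Oseledec to get pointwise convergence $\Phi_{k+1}^{(q)}\to\max\{-\chi_{k+1},0\}$, and then combine Egorov with Birkhoff so that any $\mathcal L$ of density at least $\Delta$ cannot be swamped by the exceptional set. The paper obtains both inequalities at once via a two-sided estimate $\varkappa_{k+1,q,n,\Delta}^-(x)=(\max\{0,-\chi_{k+1}\}\pm\tau)\tfrac{\Delta-\tau}{\Delta}\pm\tfrac{\tau}{\Delta}\log M_f$, whereas you outsource the upper bound to Proposition~\ref{limsupToLiminf}; this is a purely cosmetic difference.

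There is one genuine (though easily repaired) quantifier slip in Step~3. You fix a ``$\nu$-typical $x$'' \emph{before} producing the Egorov set $A_\epsilon$, and then write ``since $x$ is generic for $\nu_x$'' to conclude that the visit frequency to $A_\epsilon^c$ is at most $\Delta\epsilon^2$. Weak-$*$ genericity of the empirical measures does not give the Birkhoff statement for the indicator of an arbitrary measurable set $A_\epsilon$ chosen \emph{after} $x$. The paper avoids this by first reducing to ergodic $\nu$, then building the good set (their $\Omega_\tau$) from Egorov and Birkhoff, and only afterwards taking $x$ in that full-measure set. To make your argument rigorous, either follow that order, or fix a countable family of parameters $(\epsilon,\Delta)$, choose the corresponding Egorov sets $A_{\epsilon,\Delta}$ once and for all, and take $x$ in the full-measure intersection of the Birkhoff-good sets for all the indicators $\mathbb{1}_{A_{\epsilon,\Delta}}$.
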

\begin{proof}
	$\nu$-a.e. $x$ is typical w.r.t. an ergodic component of $\nu$, hence we may assume w.l.o.g. that $\nu$ is ergodic. Let $\Delta\in \left(0,\frac{1}{4(\max\{\log M_f,1\})^2}\right)$, and let $0<\tau\ll\Delta^2$. 
Given $x$, 
let $q_{x,\tau}\in\mathbb{N}$ s.t.   for all $q\geq q_{x,\tau}$, $\Phi_{k+1}^{(q)}(x)=\max\{0,-\chi_{k+1}(x)\}\pm\tau$.

Let $q_\tau$ sufficiently large so $\nu([q_{x,\tau}\leq q_\tau])\geq 1-\frac{\tau}{2}$. 
Let $n_\tau
$ s.t.  $\nu
(\Omega_\tau
)\geq 1-\tau$, where 
$$\Omega_\tau
:=\left\{x:\forall n\geq n_\tau
, \frac{1}{n}\sum_{\ell\leq n}\mathbb{1}_{[q_{x,\tau}\leq q_\tau]}\circ f^{ \ell}(x)\geq 1-\tau\right\}.$$

Then for any $x\in \Omega_\tau
$, for all $n\geq n_\tau$, for all $q\geq q_\tau$
$$\varkappa_{k+1,q,n,\Delta}^-(x)=(\max\{0,-\chi_{k+1}(x)\}\pm \tau) \frac{\Delta-\tau}{\Delta}\pm \frac{\tau}{\Delta} \cdot \log M_f.$$

Therefore $\lim_n \varkappa^-_{k+1,q,n,\Delta}(x)=\max\{0,-\chi_{k+1}(x)\}\pm \sqrt{\Delta}$, for all $p\geq p_\tau$ for all $x\in \Omega_\tau$. Then we are done as the measure of $\Omega_\tau$ can be made arbitrarily close to $1$, and we may send $q$ to $\infty$ and $\Delta$ to $0$.

The assertion concerning $\beta_k$ follows easily from the definition as $\mathrm{pw}(x)=\{\nu\}$ for a typical point $x$ for an ergodic measure $\nu$.
\end{proof}

\begin{definition}
	For $\chi\geq 0$ and $k\in \{1,\ldots,d-1\}$, let 
	\begin{align*}
		\mathrm{Hyp}_\chi^k:=\Big\{x\in\mathbf{M}:
        \lambda_{k
        }(x),\varkappa_{k+1
        }^-(x)
        >\chi
		\Big\}.
	\end{align*}
\end{definition}

\medskip
\noindent\textbf{Remark:} Notice, in particular there is no condition on the angles between expanding or contracting directions for points in $\mathrm{Hyp}_\chi^k$.

\subsubsection{Invariant measure with non-zero Lyapunov exponents}\label{forTheProofSection}

Recall the construction of the sets $E_n\subseteq E$ from Definition \ref{En}. We will now consider them as a sequence of subsets of $\mathrm{Hyp}_\chi^k$, where we assume $\Vol_k(\mathrm{Hyp}_\chi^k)>0$.

\begin{theorem}[Strong Viana Conjecture]\label{SVC} 
If $\Vol(\mathrm{Hyp}_\chi^k)>0$, then there exists a $\chi$-hyperbolic SRB measure with exactly $k$ positive Lyapunov exponent almost everywhere.
\end{theorem}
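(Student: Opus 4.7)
My plan is to apply the construction of \textsection\ref{constMu}--\textsection\ref{smoothCondSect} (Theorem \ref{crcr}) to produce a G-$u$-Gibbs measure $\widehat{\mu}$ and then exhibit a $\chi$-hyperbolic SRB measure among its ergodic components. By countable additivity, pick $\chi' > \chi$ with $\Vol(\mathrm{Hyp}_\chi^k \cap [\lambda_k > \chi', \varkappa_{k+1}^- > \chi']) > 0$. Running the construction of \textsection\ref{constMu} starting from sets $E_n$ contained in this tightened hyperbolic set produces $\widehat{\mu}$, whose disks lie in local unstable manifolds (Corollary \ref{SRB1}) and which admits $\widehat{\mu}$-a.e.\ at least $k$ Lyapunov exponents exceeding $\chi'$. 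The remaining task is to exhibit an ergodic component on which the $(k+1)$-th Lyapunov exponent is at most $-\chi$, since such a component has exactly $k$ positive exponents, its G-$u$-Gibbs disks are then open pieces of the $k$-dimensional local unstable manifolds, and by Corollary \ref{everyGuGibbsIsSmooth} it is an SRB measure.

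To control the transverse exponent I would test $\widehat{\mu}$ against the continuous function $\Phi_{k+1}^{(q)}$ of Definition \ref{newNegExp0}. Since $\varkappa_{k+1,q,n,\Delta}^-$ is non-decreasing in $\Delta$, the condition $\varkappa_{k+1}^-(y) > \chi'$ at each $y \in E_n$ implies that, for every $\Delta > 0$ (take $\Delta = \beta/p$), infinitely many $q$'s satisfy $\liminf_n \varkappa_{k+1,q,n,\Delta}^-(y) \geq \chi' - \epsilon$. An Egorov/pigeonhole argument at the level of $\Vol|_{\mathrm{Hyp}_\chi^k}$ (applied \emph{before} running the construction) yields, for each $\epsilon > 0$, a single $q^* = q^*(\epsilon)$, which can be taken arbitrarily large, and a positive-$\Vol$-measure subset $\mathrm{Hyp}^* \subseteq \mathrm{Hyp}_\chi^k$ on which the bound $\liminf_n \varkappa_{k+1,q^*,n,\beta/p}^-(y) \geq \chi' - 2\epsilon$ holds uniformly. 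Re-running the construction inside $\mathrm{Hyp}^*$ (which still satisfies the hypotheses of Lemma \ref{theDiskD}) and applying the $\liminf$ bound to the density-$\beta/p$ subset of $f$-times $\{p\ell : \ell \in \mathrm{BG}_n^M(y)\}$ yields
\[
\frac{1}{|\mathrm{BG}_n^M(y)|} \sum_{\ell \in \mathrm{BG}_n^M(y)} \Phi_{k+1}^{(q^*)}(g^\ell y) \;\geq\; \chi' - 3\epsilon
\]
for all $y \in E_n$ and all $n$ large enough. Integrating against $\lambda_n$ and passing to the weak-$*$ limits as $n \to \infty$ (along $\mathcal{N}$) and $M \to \infty$ gives $\mu(\Phi_{k+1}^{(q^*)}) \geq (\chi' - 3\epsilon)\,\mu(1)$.

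Now $\Phi_{k+1}^{(q)}$ is bounded by $\log M_f$ and converges $\mu$-a.e.\ to $\max\{0, -\chi_{k+1}\}$ as $q \to \infty$ by Oseledec's theorem, so dominated convergence combined with $q^*(\epsilon) \to \infty$ and $\epsilon \to 0$ yields $\mu(\max\{0, -\chi_{k+1}\}) \geq \chi'\,\mu(1)$. Since $\max\{0, -\chi_{k+1}\}$ is $f$-invariant, the symmetrization identity from Lemma \ref{fromGtoF} translates this to $\widehat{\mu}(\max\{0, -\chi_{k+1}\}) = \mu(\max\{0, -\chi_{k+1}\})/\mu(1) \geq \chi'$. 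A Markov-type estimate using the pointwise upper bound $\log M_f$ then gives $\widehat{\mu}(\{x : \chi_{k+1}(x) \leq -\chi\}) \geq (\chi' - \chi)/(\log M_f - \chi) > 0$, providing an $f$-invariant set of positive $\widehat{\mu}$-mass. Any ergodic component of $\widehat{\mu}$ supported in this set is G-$u$-Gibbs (first remark after Definition \ref{GuGibbs}), has exactly $k$ positive Lyapunov exponents (all $> \chi$) with $(k+1)$-th exponent $\leq -\chi$, and is therefore the required $\chi$-hyperbolic SRB measure.

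The main technical obstacle is the uniform-in-$y$ choice of $q^*$: the definition of $\varkappa_{k+1}^-$ provides, for each $y$, only a $y$-dependent subsequence of valid $q$'s via a $\limsup$. My approach is to Egorov-reduce to a subset of $\mathrm{Hyp}_\chi^k$ where the minimal valid $q = q(y,\epsilon,\Delta)$ is bounded by some $Q(\epsilon)$, and then pigeonhole among the at most $Q(\epsilon)$ possible values to fix a single $q^*$; the $\Vol$-loss is absorbed before the construction is run, and the retained set $\mathrm{Hyp}^*$ still carries a disk with positive disk-volume of expanding points by Lemma \ref{theDiskD}, so that the entire construction of \textsection\ref{constMu}--\textsection\ref{smoothCondSect} applies verbatim.
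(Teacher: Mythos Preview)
Your argument has a genuine gap in the passage from $\mu(\Phi_{k+1}^{(q^*)})\geq(\chi'-3\epsilon)\mu(1)$ to $\mu(\max\{0,-\chi_{k+1}\})\geq\chi'\mu(1)$. The dominated convergence step requires the inequality $\mu(\Phi_{k+1}^{(q)})\geq(\chi'-3\epsilon)\mu(1)$ to hold along a sequence $q\to\infty$ for a \emph{fixed} measure $\mu$. But in your scheme the set $\mathrm{Hyp}^*$ (and hence the entire construction of $E_n$, $\lambda_n$, $\mu_n^M$, $\mu$) depends on the particular choice of $q^*$: each $q^*$ yields a potentially different $\mathrm{Hyp}^*$ and a different limit measure $\mu=\mu_{q^*}$. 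You therefore obtain a family of inequalities $\mu_{q^*}(\Phi_{k+1}^{(q^*)})\geq(\chi'-3\epsilon)\mu_{q^*}(1)$, one for each $q^*$, and there is no mechanism to transfer these to a single measure. The pigeonhole you describe guarantees only that for each $Q$ there exists some $q^*\geq Q$ with $\Vol(G_{q^*})>0$; it does not give a common positive-volume set on which the bound holds for a sequence $q\to\infty$, nor does it give a lower bound on $\Vol(G_{q^*})$ that would allow a compactness argument on the resulting measures.

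The paper circumvents this entirely by first invoking Proposition~\ref{limsupToLiminf}: the hypothesis $\varkappa_{k+1}^-(y)>\chi$ forces $\beta_{k+1}(y)<-\chi$, i.e.\ every $\nu\in p\omega(y)$ satisfies $\chi_{k+1}<-\chi$ $\nu$-a.e. This reformulation removes the scale $q$ altogether. The paper then Egorov-reduces to a compact $E''$ on which $x\mapsto p\omega(x)$ is continuous and empirical measures converge uniformly to $p\omega(E'')$, runs the construction once inside $E''$, and shows that the (un-restricted) empirical limit $\widetilde{\mu}=\lim\int\nu_x^{(np)}\,d\lambda_n$ is a barycenter $\int\xi\,d\vartheta(\xi)$ with $\vartheta$ supported on $p\omega(E'')$. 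Since each such $\xi$ has $\chi_{k+1}<-\chi$ a.e., so does $\widetilde{\mu}$, and since $\mu\leq p\cdot\widetilde{\mu}$ the same holds for $\widehat{\mu}$ everywhere (not merely on a positive-measure set, which is also needed for the subsequent basin corollary). Your direct approach via $\Phi_{k+1}^{(q)}$ could perhaps be salvaged by establishing some $q$-monotonicity of $\int\Phi_{k+1}^{(q)}\,d\nu$ uniform over invariant $\nu$, but no such monotonicity is available.
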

\begin{proof} We provide a proof for a stronger statement, where we only use the fact that for $x\in \mathrm{Hyp}_\chi^k$, we have  $\lambda_k(x)>\chi$ and $\beta_{k+1}(x)<-\chi$ (recall Proposition \ref{limsupToLiminf}).

\medskip
For all $N\in \mathbb N^*$ we let $\mathcal{C}_N\subseteq B_{\|\cdot\|_\mathrm{Lip}}(0,N)$ s.t.  $\bigcup_{h\in \mathcal{C}_N}B_{\|\cdot\|_{\mathrm{Lip}}}(h,\frac{1}{N})$ is a finite cover of $B_{\|\cdot\|_\mathrm{Lip}}(0,N)\subseteq \mathrm{Lip}(\mathbf{M})$. Then $$d(\nu,\eta):=\sum_{N\geq 1 }\sum_{h\in \mathcal{C}_N}\frac{1}{2^N\#\mathcal{C}_N}|\nu(h)-\eta(h)|$$ defines a convex metric  on $\mathbb{P}(\mathbf M)$ compatible with the weak-$*$ topology, i.e.  $d(a\eta+(1-a)\nu, a\eta'+(1-a)\nu')\leq a d(\eta,\eta')+(1-a)d(\nu, \nu')$ for any $a\in [0,1]$ and any $\nu,\nu'\in \mathbb{P}(\mathbf M)$. 

\medskip
Let $\mathcal K(\mathbb P(\mathbf M))$ be the set of compact subsets of $\mathbb P(\mathbf M)$ endowed with the Hausdorff-Gromov distance. 

\medskip
Let $E:=\{x\in \mathbf M : \beta_{k+1}(x)<-\chi \}$.  Since the map $x\mapsto p\omega(x)$ is Borel (see for example \cite[Appendix~B]{Burguetphysical}), there is a compact subset $E'\subseteq E$ with a positive measure such that $x\mapsto p\omega(x)$ is continuous on $E'$. Then $p\omega(E'):=\bigcup_{x\in E' }p\omega(x)$ is compact. By Egorov's theorem, there exists a compact subset of positive measure $E''\subseteq E'$, s.t.  $d(\nu_x^{(n)}, p\omega(E'))\xrightarrow[n\to\infty]{\text{uniformly}}0$ on $E''$. We carry out the construction of $\widehat\mu$ and $\mu$ as in \textsection \ref{constMu} by the set $E''$, choosing $E_{n_j}\subseteq E''$.  We prove now that $\chi_{k+1}<-\chi$ $\widehat{\mu}$-a.e.

\medskip
It is enough to show that any weak limit of $\widetilde{\mu}_n:=\int \nu_x^{(n\cdot p)}\, d\lambda_n$ has its  $k+1$-exponent negative a.e., since $\forall M$, $\mu_n^M\leq p\cdot \widetilde{\mu}_n$. By taking a subsequence of $\{n_j\}_{j\geq 0}$ we may assume that $\widetilde\mu_{n_j}$ is converging to a probability $\widetilde{\mu}$. Hence, $\mu\leq p\cdot \widetilde{\mu}$. 
\medskip

Recall the measure $\lambda_n$ is supported on $E_n^n$. 
By Lemma \ref{sameEmpLims}, for all $x\in E_n$ and $y\in P_n^n(x)$, we have  $d(\nu_y^{(n)},p\omega(x))\leq d(\nu_x^{(n)},p\omega(x))+\frac{1}{\sqrt n}\sum_{N\geq 1}\frac{N}{2^N}$. Therefore
$\sup_{y\in E_n^n}d(\nu_y^{(n)}, p\omega(E'))\xrightarrow{n\to\infty}0$.
We ``discretize" the measure $\widetilde{\mu}_n$ : let $\alpha_n$ be a partition of $E_n^n$ and fix $x_n^A\in A\subset E_n^n$, for all $A\in \alpha_n$, such that $\widetilde{\mu}_n':=\sum_{A\in \alpha_n}\lambda_n(A)\cdot \nu_{x_n^A}^{(n\cdot p)}$ satisfies  $d(\widetilde{\mu}_n,\widetilde\mu_n')\xrightarrow{n\to \infty}0$.

\medskip
Let $\xi_n^A\in p\omega(E')$, such that $d(\nu_{x_n^A}^{(n\cdot  p)},\xi_n^A)=d(\nu_{x_n^A}^{(n\cdot p)},p\omega(E'))$. Write $\xi_n:=\sum_{A\in \alpha_n}\lambda_n(A)\cdot \xi_n^A$. Then we have,
\begin{align*}
d(\widetilde{\mu}_n', \xi_n)&\leq \sum_{A\in \alpha_n}\lambda_n(A)\cdot d(\nu_{x_n^A}^{(n\cdot p)},\xi_n^A)\leq \sup_{x\in E''}d( \nu_x^{(n\cdot p)},p\omega(E'))\xrightarrow{n\to \infty}0.
\end{align*}

\medskip
Let $\vartheta\in \mathbb P(p\omega(E')$  be a limit point of $\{\sum_{A\in \alpha_n}\lambda_n(A)\cdot \delta_{\xi_n^A}\}_{n>0}$ (recall that $\mathbb{P}(p\omega(E'))$ is compact). Then $\widetilde\mu=\int \xi\,  d\vartheta(\xi)$, and so $\chi_{k+1}<-\chi$ $\widetilde{\mu}$-a.e., as $\vartheta$-a.e. $\xi$ belongs to $p\omega(E')$, where by the definition of $\beta_{k+1}$, we have   $\xi$-a.e. $\chi_{k+1}<-\chi$.

\medskip
Therefore, it follows that $\chi_{k+1}<-\chi$ $\mu$-a.e., and so $\widehat{\mu}$-a.e., as the Lyapunov exponents are $f$-invariant functions. Since almost every ergodic component of $\widehat{\mu}$ is also a G-$u$-Gibbs measure with a disintegration on disks of dimension $k$, and it admits $k$ exponents larger than $\chi$ almost everywhere, it follows that almost every such an ergodic component is therefore a $\chi$-hyperbolic SRB measure with exactly $k$-positive Lyapunov exponents. \end{proof}

\medskip

\noindent\textbf{Remark:} The same proof applies to show that (note that $\beta_{k+1}$ may be non-positive),
\begin{enumerate}
\item If the set $[\lambda_k>\chi\geq\alpha\geq  \beta_{k+1}]$ has positive volume for some $\chi>0$ and for some $\alpha\in\mathbb{R}$ ($\alpha$ may be non-negative), there is a $u$-Gibbs measure $\mu$ with exactly $k$ Lyapunov exponents greater 
 than $\chi$, i.e. $\mu$ has absolutely continuous conditionals along strong unstable manifolds of dimension $k$, and whose $(k+1)$-th Lyapunov exponent (with multiplicity) is less or equal to $\alpha$. 
\item If the set $[\lambda_k>\chi>0\geq \beta_{k+1} ]$ has a positive volume,  there is an SRB measure (a-priori perhaps not hyperbolic) with exactly $k$ positive Lyapunov exponents, and they are greater than $\chi$. In particular this provides an alternative proof of Claim \ref{excodim} which does not involve the entropic characterization of SRB measures.
\end{enumerate}

We conclude now the proofs of Theorem \ref{strongv} and Theorem \ref{codim} by establishing the basin property. 

\begin{cor}\label{fin}
$\mathrm{Vol}$-a.e. $x\in\mathrm{Hyp}_\chi^k$ (resp. $\mathrm{Vol}$-a.e. $x$ with $\lambda_{d-1}(x)>0$) lies in the basin of attraction of an ergodic $\chi$-hyperbolic SRB measure with exactly $k$ positive Lyapunov exponent almost everywhere.
\end{cor}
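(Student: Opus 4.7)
The plan is a contradiction argument that reduces the basin statement to Theorem \ref{SVC} combined with Pesin's absolute continuity of stable foliations. Suppose the set $\Omega\subseteq\mathrm{Hyp}_\chi^k$ of points \emph{not} lying in the basin of any ergodic $\chi$-hyperbolic SRB measure with exactly $k$ positive Lyapunov exponents has positive Riemannian volume. First, replay Lemma \ref{theDiskD} with $\Omega$ in place of $[\lambda_k>0]$: a density-point/Fubini argument at a Lebesgue density point of $\Omega$, combined with the measurability of the relevant Lyapunov flag, produces a $k$-disk $D^*$ and a subset $E^*\subseteq\Omega\cap D^*$ with $\Vol_{D^*}(E^*)>0$ and $\chi_{A_{k,p}}(x,\iota(T_xD^*))>\chi$ for $x\in E^*$. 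Running the entire construction of \textsection\ref{posDensOfGeoTimes}--\textsection\ref{propOfMuSect} and Theorem \ref{SVC} through $(D^*,E^*)$ yields an ergodic $\chi$-hyperbolic SRB measure $\widehat{\mu}^*$ with exactly $k$ positive Lyapunov exponents.

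By Corollary \ref{everyGuGibbsIsSmooth}, the conditionals of $\widehat{\mu}^*$ on the $k$-dimensional unstable disks $\mathrm{Im}(\varpi)$ (for $\widehat{\mathbf p}^*$-a.e.\ $\varpi$) are equivalent to $\Vol_{\mathrm{Im}(\varpi)}$, so $\widehat{\mu}^*$-a.e.\ point $y$ is both Birkhoff-generic and admits a $(d-k)$-dimensional local stable manifold $W^s_\mathrm{loc}(y)$. Fixing a Pesin block $\Lambda$ with $\widehat{\mu}^*(\Lambda)>0$ on which these stable manifolds have uniform size, Pesin's absolute continuity theorem applied along the transversal family $\{\mathrm{Im}(\varpi)\}$ forces $\mathcal B(\widehat{\mu}^*)\supseteq\bigcup_{y\in\Lambda^{\mathrm{gen}}}W^s_\mathrm{loc}(y)$ to have positive Vol. The key claim is then that $\Omega\cap \mathcal B(\widehat{\mu}^*)$ also has positive Vol, which directly contradicts the defining property of $\Omega$.

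The main obstacle is establishing this last claim, since ``$\mathcal B(\widehat{\mu}^*)$ has positive Vol'' in the abstract does not a priori place the basin near $\Omega$. Here one uses that $\widehat{\mu}^*$ is \emph{constructed from} $E^*$: by the proof of Theorem \ref{SVC}, $\widehat{\mu}^*$ is dominated by $p\cdot\widetilde{\mu}^*$ where $\widetilde{\mu}^*=\lim_j\int\nu_x^{(n_jp)}\,d\lambda_{n_j}$ is carried by $E^*$, and ergodicity of $\widehat{\mu}^*$ together with the ergodic decomposition of $\widetilde{\mu}^*$ imply that the individual empirical measures $\nu_x^{(np)}$ of a positive $\Vol_{D^*}$-fraction of $x\in E^*$ accumulate on $\widehat{\mu}^*$. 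Such $x$ therefore visit a tubular neighborhood of $\Lambda$ with positive frequency; a standard shadowing estimate on Pesin blocks places some iterate $f^\ell(x)$ in $W^s_\mathrm{loc}(y)$ for some $y\in\Lambda^\mathrm{gen}$, placing $x$ itself in $\mathcal B(\widehat{\mu}^*)$. Sliding $D^*$ through the transverse family of parallel $k$-planes used in Lemma \ref{theDiskD} and applying Fubini upgrades the resulting $\Vol_{D^*}$-positivity to $\Vol(\Omega\cap\mathcal B(\widehat{\mu}^*))>0$, the desired contradiction. Theorem \ref{codim} follows \emph{mutatis mutandis} with $k=d-1$, using Claim \ref{excodim} (which produces the SRB measure from $\lambda_{d-1}>0$ alone, with no condition on $\varkappa^-_d$) together with Claim \ref{SRB2} to exclude the source case.
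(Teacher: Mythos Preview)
Your overall strategy (contradiction, run the construction on a disk contained in the bad set, invoke Pesin absolute continuity) matches the paper's, but the argument has a genuine gap at the decisive step and an unnecessary detour at the end.

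The gap is your ``standard shadowing estimate on Pesin blocks''. Knowing that $\nu_x^{(np)}$ accumulates on $\widehat{\mu}^*$ only tells you that $f^\ell x$ visits a tubular neighborhood of the Pesin block $\Lambda$ with positive frequency; in the non-uniformly hyperbolic setting this does \emph{not} force any iterate of $x$ to lie on an actual local stable manifold $W^s_\mathrm{loc}(y)$. There is no off-the-shelf shadowing lemma that converts ``positive-frequency proximity to a Pesin block'' into ``hits a stable leaf'', and indeed this can fail. The paper does not shadow: it exploits the specific geometry produced by the construction. At a $\mathrm{BG}$ time $\ell$, the Yomdin atom $\varpi_\ell(x)$ is a $k$-disk of definite size (it contains an $\epsilon$-graph, since $\underline{\mathfrak u}^\beta_\ell(x)=0$) and, by the uniform backward expansion at $\mathrm{BG}$ times (Lemma \ref{BGtimesAreHyp}), is uniformly transverse to the stable foliation of $\Lambda$ with angle at least some $\alpha_T>0$. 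The paper then uses Pesin absolute continuity on the \emph{transversal} $\varpi_\ell(x)$, together with the Lebesgue density lemma along the Yomdin partition (Claim \ref{Leb}), to conclude that a positive disk-volume portion of $E\cap g^{-\ell}[\varpi_\ell(x)]$ is carried into stable leaves of generic points. This volume argument on the transversal is what actually places points of $E$ in the basin; your proposal skips it.

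The final Fubini step is both unnecessary and ill-posed. You do not need $\Vol(\Omega\cap \mathcal B(\widehat{\mu}^*))>0$: since $E^*\subset\Omega$, exhibiting a positive $\Vol_{D^*}$-subset of $E^*$ in the basin is already a contradiction. Moreover, sliding $D^*$ to a nearby parallel plane would require rerunning the entire construction and would in general yield a \emph{different} SRB measure, so the Fubini integration over transverse planes does not compute the $\Vol$-measure of $\mathcal B(\widehat{\mu}^*)$ as written.
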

\begin{proof}
Assume for contradiction that there is a positive volume subset $G$ of $\mathrm{Hyp}_\chi^k$ s.t. no point in it belongs to the basin of attraction of a $\chi$-hyperbolic SRB measure with exactly $k$ positive Lyapunov exponents. In particular, there is a $k$-disk $D$, and a subset $F$ of $G\cap D$ of positive disk volume s.t.   $\chi_{A_k,p_0}(x,\iota(T_xD))>\chi$ for all $x\in F$, for some $p_0\in \mathbb{N}$ (recall \textsection \ref{theDiskAndSubset}). 
By Claim \ref{Leb}, there is a subset $E$ of $F$ with positive disk volume  such  that 
\begin{align}\label{fiif}
\frac{\lambda(P^{\ell}(x)\cap F)}{\lambda(P^{\ell}(x))}\xrightarrow{\ell \to \infty}1,
\end{align} 
 uniformly  in  $x\in E$. Carry out the construction of a hyperbolic SRB measure $\mu$ with with exactly $k$ positive Lyapunov exponents  by this subset $E$ (recall Theorem \ref{SVC}).  Recall that $\mu\geq \mu^0=\int \varrho_\varpi\circ \varpi^{-1}d p^0(\varpi)$ with $\varrho_\varpi\circ \varpi^{-1}=\nu_\varpi$ being equivalent to the induced Riemannian volume on $\varpi$. 
 
 Let $\Lambda$ be a Pesin block with $\mu^0(\Lambda)>\frac{\beta}{2}$. For $p^0$ a.e. $\varpi$, for all $n$ large enough, there exist $x_n\in E$ and $\mathrm{BG}_n(x_n)\ni \ell_n\xrightarrow{n\to \infty}\infty$ such that $\varpi_{\ell_n}(x_n)$ goes to $\varpi$ in the $C^{r-1,1}$ topology. Fix such a $\varpi$ with $\nu_\varpi(\Lambda)>\frac{\beta}{2}$. 
 
 Observe that $\nu_\varpi$-a.e. point lies in the basin of an ergodic component $\nu$ of $\mu$
which is also a $\chi$-hyperbolic SRB measure with exactly $k$ positive exponents. This is since ergodic components are saturated by unstable leaves, by the pointwise ergodic theorem.  

 Set $\Lambda_\nu:=\Lambda\cap \mathcal B(\nu)$, and so, in particular, $\nu_\varpi(\Lambda_\nu)= \nu_\varpi(\Lambda)>\frac{\beta}{2}$. 
For a subset $K$ of $\mathbf M$ we denote its stable set  by $V^s(K):=\Big\{x\in \mathbf M: \ \exists y\in K \text{ s.t. }
 \limsup_{m}\frac{1}{m}\log d(f^{m}(x),f^{m}(y))<0\Big\}$. For all $\ell \in \mathbb N$ we have $f^{-\ell}[V^s(\Lambda_\nu)]\subseteq f^{-\ell}[V^s(\mathcal B(\nu))]\subseteq \mathcal B(\nu)$. 

The stable foliation is transversal to $\varpi$ as this disk is contained in an unstable leaf by Corollary \ref{SRB1}; and $\varpi_{\ell_n}(x_n)\xrightarrow[]{C^{r-1,1}}\varpi$. Then, by the absolute continuity of the stable foliation, there exists $c>0$ such that,  $$\liminf_n\lambda_{\varpi_{\ell_n}(x_n)}\left(\varpi_{\ell_n}(x_n)\cap V^s(\Lambda_\nu)\right)\geq c.$$

Thus, as $\ell_n$ lies in $\mathrm{BG}_n(x_n)$ (i.e. it admits a uniform lower bound on its induced volume), we have for some $c'>0$,
\begin{align*}
\liminf_n\frac{\lambda_{\varpi_{\ell_n}(x_n)}\left(\varpi_{\ell_n}(x_n)\cap V^s(\Lambda_\nu)\right)}{\lambda_{\varpi_{\ell_n}(x_n)}(\varpi_{\ell_n}(x_n))}\geq c'.
\end{align*}
By the bounded distortion property (\ref{bdist}), we have 
\begin{align}\label{eiie}\liminf_n\frac{\lambda\left(P^{\ell_n}(x_n)\cap \mathcal B(\nu)\right)}{\lambda(P^{\ell_n}(x_n))}&\geq\liminf_n\frac{\lambda\left(P^{\ell_n}(x_n)\cap f^{-\ell_n}[V^s(\Lambda_\nu)]\right)}{\lambda(P^{\ell_n}(x_n))}\nonumber\\
&\geq \frac{1}{4}\liminf_n\frac{\lambda_{\varpi_{\ell_n}(x_n)}\left(\varpi_{\ell_n}(x_n)\cap V^s(\Lambda_\nu)\right)}{\lambda_{\varpi_{\ell_n}(x_n)}(\varpi_{\ell_n}(x_n))} \nonumber\\
&\geq \frac{c'}{4}.
\end{align}

 By combining \eqref{eiie} and \eqref{fiif} we get   $\varnothing \neq \mathcal B(\nu)\cap F\subset \mathcal B(\nu)\cap G$. This contradicts our assumption on $G$.

\end{proof}

\medskip
Finally, we sketch the proof of Corollary \ref{viana}. 
\begin{proof}[Proof of Corollary \ref{viana}]
Recall, we consider a partially hyperbolic attractor $\Lambda$ of the form 
$T_\Lambda M=E^u\oplus E^c\oplus E^s$ with $\limsup_n\frac{1}{n}\log \mathrm{Jac}(d_xf|_{E^c(x)})\leq 0$ for all $x\in \Lambda$. Let $k=\mathrm{dim}(E^u)$. By Remark 
(2)  following Lemma \ref{limOnP}, we have $\lambda_{k+1}(x)=\limsup_{n}\frac{1}
{n}\log \|d_xf^n|_{(E^c\oplus E^s)(x)}\|=\limsup_{n}\frac{1}{n}\log 
\|d_xf^n|_{E^c(x)}\|$. Assume that $\limsup_{n}\frac{1}{n}\log \|d_xf^n|_{E^c(x)}\|$ is 
larger than $\chi>0$ on  a set of positive volume. Following the construction in 
\textsection \ref{constMu} we get a generalized $u$-Gibbs measure  over $(k+1)$-disks $\mu$ supported on the 
attractor $\Lambda$. In particular $\mu$ has at least one 
central   exponent larger than $\chi$ almost everywhere. By the assumption on the Jacobian of the restriction of the dynamics to the central bundle (i.e. dissipative),  $\mu$ also admits
a central exponent which is less than $-\chi$ almost everywhere. Therefore,  $\mu$  is a $\chi$-hyperbolic 
SRB measure. Finally, the property of the basin is obtained in the same way as in the proof of Corollary \ref{fin}.

\end{proof}

\begin{appendix}

\section{Ergodic components of generalized $u$-Gibbs measures}

\begin{claim}\label{ergGuGibbs}
    Almost every ergodic component of a generalized $u$-Gibbs measure over $k$-disks, is a generalized $u$-Gibbs measure over $k$-disks.
\end{claim}
\begin{proof}
Let $P_x$ be the ergodic component of $x$ w.r.t. $f^{-1}$ :
\begin{align*}
    P_x:=\Big\{&y\in \mathbf M: \ \forall \phi\in C(\mathbf M), \\&\limsup_{n\to\infty}\Big|\frac{1}{n}\sum_{0\leq k<n}\phi(f^{-k}(x))-\phi(f^{-k}(y))\Big|=0\Big\}.
\end{align*}
It is well-known that the associated partition $P=\{P_x\}_{x\in\mathbf{M}}$ is a measurable partition of the measured space $(\mathbf M, \widehat \mu)$ and that the associated conditional measures $\{\widehat{\mu}_{P_x}\}_{x\in\mathbf{M}}$ are the ergodic components of $\widehat \mu$. For $\widehat{\mathbf p}$-a.e. $\varpi\in \mathcal W$ we have $\mathrm{Im}(\varpi)\subseteq P_{\varpi(0)}$, $0\in [0,1]^k$ (where $\mu_\varpi$-a.e. $x$ admits $V^u(x)\supseteq \mathrm{Im}(\varpi)$). Therefore $Q=\Phi^{-1}P$ is a measurable partition of $(\mathcal W, \widehat{\mathbf p})$ with  $\Phi:\mathcal W\to \mathbf M$, $\varpi\mapsto \varpi(0)$. Then, by denoting  the associated conditional measures by  $\widehat{\mathbf p}_{Q_{\varpi}}$ we have for $\widehat{\mathbf{p}}$-a.e. $\varpi'$ 
$$\widehat \mu_{\varpi'(0)}=\int \mu_\varpi d\widehat{\mathbf p}_{Q_{\varpi'}}(\varpi). $$
Thus any ergodic component of the generalized $u$-Gibbs measure $\widehat{\mathbf \mu}$  is itself a generalized $u$-Gibbs measure.
\end{proof}

\section{Angle estimates}\label{anglee}

\subsection{Exterior Powers}

We endow $\mathbb{R}^{d}$ with the Euclidean norm $\|\cdot \|$ and denote by $\langle\cdot,\cdot \rangle$ the associated inner product. For each $k=1, \ldots, d-1$, the induced Euclidean norm on the $k$-th exterior power $\wedge^{k}\left(\mathbb{R}^{d}\right)$ is defined for indecomposable vectors $Z$ of $\wedge^{k}\left(\mathbb{R}^{d}\right)$, i.e., $Z=z_{1} \wedge \ldots \wedge z_{k}$ with $z_{i} \in \mathbb{R}^{d}$, by the determinant of the Gram matrix of $z_{1}, \ldots, z_{k}$:

\[
\|Z\|^{2}=\operatorname{det}\left(\langle z_{i}, z_{j} \rangle\right)_{i, j},
\]

and then extended by bilinearity. The associated inner product will also be denoted by $\langle, \rangle$ and the corresponding operator norm on $\wedge^{k} \mathcal{L}\left(\mathbb{R}^{d}, \mathbb{R}^{d}\right)$ and $\mathcal{L}\left(\mathbb{R}^{d}, \wedge^{k} \mathcal{L}\left(\mathbb{R}^{d}, \mathbb{R}^{d}\right)\right)$ by $\|\cdot \|$.

\subsection{Grassmanian}\label{grass}

The Grassmanian $\operatorname{Grass}(k, d)$ is the set of $k$-planes in $\mathbb{R}^{d}$. When $Z=z_{1} \wedge \ldots \wedge z_{k}$ is an indecomposable vector of $\wedge^{k}\left(\mathbb{R}^{d}\right)$, we denote by $g(Z)$ the $k$-plane spanned by $z_{1}, \ldots, z_{k}$. Notice that a $k$-plane is defined, up to a scalar, by a unique indecomposable vector of $\wedge^{k}\left(\mathbb{R}^{d}\right)$. This is the so-called Plücker embedding of the Grassmanian $\operatorname{Grass}(k, d)$ in the projective space $\mathbb P \wedge^{k}\left(\mathbb{R}^{d}\right)$ which we denote by $\iota: \operatorname{Grass}(k, d) \rightarrow \mathbb P \wedge^{k}\left(\mathbb{R}^{d}\right)$. This embedding with the Euclidean norm on $\wedge^{k}\left(\mathbb{R}^{d}\right)$ induced a distance on $\operatorname{Grass}(k, d)$, called the Cayley distance, as follows:

\[
\cos d_{c}\left(\iota(Z), \iota\left(Z^{\prime}\right)\right)=\frac{\langle Z, Z^{\prime} \rangle}{\|Z\| \|Z^{\prime}\|}.
\]

\subsubsection{Stationary Angles Between Planes}

Given two $k$-planes $F$ and $G$, we define the angle $\angle F, G$ between $F$ and $G$ as follows:

\[
\angle F, G=\max _{u \in F} \min _{v \in G} \angle u, v.
\]

This angle is related to the orthogonal projectors onto $F$ and $G$:

\begin{lemma}
\[
\sin \angle E, F=\left\|p_{E}-p_{F}\right\|.
\]
with $p_{E}$ and $p_{F}$ the orthogonal projector on $E$ and $F$, and $\| \|$ the operator norm induced by the Euclidean norm.
\end{lemma}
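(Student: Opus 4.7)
The plan is to reduce the identity to a computation with orthogonal projectors, using the definition of $\angle F,G$ to reinterpret the sine as the operator norm of $(I-p_G)p_F$, and then exploiting an orthogonal decomposition of $p_E-p_F$ into two pieces whose adjoints multiply to zero.

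First I would verify that, for any unit vector $u\in F$, the minimum of $\angle u,v$ over $v\in G$ is attained (when $p_Gu\neq0$) at $v=p_Gu$, so that $\sin\min_{v\in G}\angle u,v=\|u-p_Gu\|=\|(I-p_G)u\|$. Taking the sup over unit vectors in $F$ and remembering that $(I-p_G)p_F$ annihilates $F^\perp$ gives
\[
\sin\angle F,G=\sup_{u\in F,\|u\|=1}\|(I-p_G)u\|=\|(I-p_G)p_F\|.
\]
This identifies the left-hand side of the lemma with an operator norm and is the starting point for the projector-algebra computation.

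Next I would write the telescoping decomposition
\[
p_E-p_F=p_E(I-p_F)-(I-p_E)p_F=:A-B,
\]
and record the two clean cancellations
\[
A^*B=(I-p_F)p_E(I-p_E)p_F=0,\qquad B^*A=p_F(I-p_E)p_E(I-p_F)=0,
\]
which yield $(p_E-p_F)^*(p_E-p_F)=A^*A+B^*B$. The range of $A^*A$ sits in $E$ and the range of $B^*B$ in $E^\perp$, so this self-adjoint sum is block-diagonal with respect to $E\oplus E^\perp$, and therefore
\[
\|p_E-p_F\|^2=\max\bigl(\|A\|^2,\|B\|^2\bigr)=\max\bigl(\|(I-p_F)p_E\|^2,\|(I-p_E)p_F\|^2\bigr),
\]
using $\|A\|=\|A^*\|=\|(I-p_F)p_E\|$. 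By Step~1 these are exactly $\sin\angle E,F$ and $\sin\angle F,E$.

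The only delicate point, which is also the main obstacle, is that the two quantities $\|(I-p_F)p_E\|$ and $\|(I-p_E)p_F\|$ need not coincide when $\dim E\neq\dim F$; in the present paper, however, $E$ and $F$ are always tangent spaces to $k$-disks of the same dimension. Under $\dim E=\dim F=k$, I would finish by noting that the nonzero eigenvalues of $p_Ep_Fp_E|_E$ and $p_Fp_Ep_F|_F$ coincide (since the nonzero spectrum of $UV$ equals that of $VU$), and equal dimensions upgrade this to an equality of full spectra. Hence $\|A^*A\|=\|B^*B\|$, and $\|p_E-p_F\|=\sin\angle E,F=\sin\angle F,E$, closing the lemma.
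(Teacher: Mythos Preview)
Your proof is correct. The paper states this lemma without proof (it is a standard fact from the linear algebra of principal angles), so there is nothing to compare against.

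One small slip worth noting: you claim that $A^*A+B^*B$ is block-diagonal with respect to $E\oplus E^\perp$, but in fact $A^*A=(I-p_F)p_E(I-p_F)$ has range in $F^\perp$ and $B^*B=p_F(I-p_E)p_F$ has range in $F$, so the block-diagonal decomposition is with respect to $F\oplus F^\perp$. This does not affect the conclusion $\|p_E-p_F\|^2=\max(\|A\|^2,\|B\|^2)$. Alternatively, since $p_E-p_F$ is self-adjoint, $(p_E-p_F)^2=AA^*+BB^*$ as well (using $AB^*=p_E(I-p_F)p_F(I-p_E)=0$ and $BA^*=0$), and $AA^*+BB^*$ \emph{is} block-diagonal with respect to $E\oplus E^\perp$, which is presumably what you had in mind.
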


In fact, one can define a sequence of $k$-angles $\left(\theta_{k}\right)_{k=1, \ldots, d}$ between $F$ and $G$, known as the stationary angles. They are built by induction as follows:

\[
\theta_{1}=\min _{u \in F} \min _{v \in G} \angle u, v=\angle u_{1}, v_{1}.
\]

and for all $i=1, \ldots, d-1$:

\[
\theta_{i+1}=\min _{u \in F, u \perp \operatorname{Vect}\left(u_{1}, \ldots, u_{i}\right)} \min _{v \in G, v \perp \operatorname{Vect}\left(v_{1}, \ldots, v_{i}\right)} \angle u, v=\angle u_{i}, v_{i}.
\]

This sequence does not depend on the choice of the vectors $u_{i} \in F$ and $v_{i} \in G$ which realize the minima in the previous constructions. Observe that $\theta_{1} \leq \ldots \leq \theta_{d}$. The last stationary angle $\theta_{d}$ coincides with the angle $\angle F, G$. The Cayley distance is related to the stationary angles and the angle $\angle$ as follows:

\[
\cos d_{c}(F, G)=\prod_{i=1}^{d} \cos \left(\theta_{i}\right) \leq \cos \angle F, G.
\]

\subsection{Estimates}
For a linear map $A:\mathbb R^k\to \mathbb R^d$ we let $V_A=A(\mathbb R^k)$ and by abuse of notations we write  $\wedge^k(A)$ for $\wedge^k(A)(e_1\wedge\cdots \wedge e_k)\in \wedge^k(\mathbb R^d)$. 
\begin{lemma}\label{angl}
Let $A, B$ be linear map from $\mathbb R^k$ to  $\mathbb R^d$. We assume that \\ $\left\|\wedge^{k}(A)-\wedge^{k}(B)\right\| \leq K\left\|\wedge^{k}(A)\right\|$ with $K \leq \frac{1}{2}$. Then,

\[
d_{c}\left(V_A,V_B\right) \leq \arccos\left(1-\frac{K^{2}}{2(1-K)}\right).
\]
In particular for $K=1/10$ we get $\angle V_A,V_B \leq d_{c}\left(V_A,V_B\right) < \pi/6$.
\end{lemma}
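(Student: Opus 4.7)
The plan is to reduce everything to a direct computation with the indecomposable $k$-vectors $Z:=\wedge^k(A)$ and $Z':=\wedge^k(B)$ in $\wedge^k(\mathbb R^d)$, and then invoke the two facts on the Plücker embedding recalled just before the statement, namely the Cayley distance formula $\cos d_c(V_A,V_B)=\tfrac{\langle Z,Z'\rangle}{\|Z\|\,\|Z'\|}$ and the product relation $\cos d_c(V_A,V_B)=\prod_i\cos\theta_i\le\cos\angle V_A,V_B$.

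First I would rewrite the inner product via the polarization identity
\[
\langle Z,Z'\rangle=\tfrac12\bigl(\|Z\|^2+\|Z'\|^2-\|Z-Z'\|^2\bigr),
\]
so that, dividing by $\|Z\|\,\|Z'\|$,
\[
\cos d_c(V_A,V_B)=\tfrac12\!\left(\tfrac{\|Z\|}{\|Z'\|}+\tfrac{\|Z'\|}{\|Z\|}\right)-\tfrac{\|Z-Z'\|^2}{2\|Z\|\,\|Z'\|}.
\]
Since $\tfrac12(a+a^{-1})\ge1$ for any $a>0$, this immediately gives the lower bound
\[
\cos d_c(V_A,V_B)\ge 1-\tfrac{\|Z-Z'\|^2}{2\|Z\|\,\|Z'\|}.
\]

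Next I would use the hypothesis $\|Z-Z'\|\le K\|Z\|$ together with the reverse triangle inequality $\|Z'\|\ge\|Z\|-\|Z-Z'\|\ge(1-K)\|Z\|$ (which is nontrivial precisely because $K\le\tfrac12$) to get
\[
\tfrac{\|Z-Z'\|^2}{2\|Z\|\,\|Z'\|}\le\tfrac{K^2\|Z\|^2}{2(1-K)\|Z\|^2}=\tfrac{K^2}{2(1-K)},
\]
whence $\cos d_c(V_A,V_B)\ge 1-\tfrac{K^2}{2(1-K)}$ and $d_c(V_A,V_B)\le\arccos\!\bigl(1-\tfrac{K^2}{2(1-K)}\bigr)$, as desired.

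For the numerical corollary with $K=1/10$, I would just compute $1-\tfrac{K^2}{2(1-K)}=1-\tfrac{1}{180}$; since $\arccos(1-\tfrac{1}{180})<\pi/6$ (indeed $\cos(\pi/6)=\sqrt3/2<1-\tfrac{1}{180}$), the stated inequality $d_c(V_A,V_B)<\pi/6$ follows, and the bound $\angle V_A,V_B\le d_c(V_A,V_B)$ is the product-of-cosines inequality recalled in the preceding paragraph of the appendix. There is no real obstacle here—the only point to watch is that the hypothesis $K\le1/2$ is exactly what is needed to keep $\|Z'\|$ bounded away from zero so that the ratio in the lower bound is well-defined.
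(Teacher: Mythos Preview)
Your proof is correct and follows essentially the same approach as the paper: both expand $\|Z-Z'\|^2$ via polarization, use the reverse triangle inequality to get $\|Z'\|\ge(1-K)\|Z\|$, and then invoke $\tfrac{\|Z\|^2+\|Z'\|^2}{2\|Z\|\,\|Z'\|}\ge1$ to conclude. One minor remark: your parenthetical that $K\le\tfrac12$ is ``exactly what is needed'' to keep $\|Z'\|>0$ is a slight overstatement, since any $K<1$ would suffice for that step; the constraint $K\le\tfrac12$ is only used to ensure the right-hand side stays in $[-1,1]$ so that the $\arccos$ is defined.
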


\begin{proof}
We first remark that

\[
\begin{aligned}
\left\|\wedge^{k}(B)\right\| & \geq\left\|\wedge^{k}(A)\right\|-\left\|\wedge^{k}(A)-\wedge^{k}(B)\right\| \\
& \geq(1-K)\left\|\wedge^{k}(A)\right\|.
\end{aligned}
\]

It follows that

\[
\begin{aligned}
\left\|\wedge^{k}(A)-\wedge^{k}(B)\right\|
& \leq \frac{K}{\sqrt{1-K}} \sqrt{\left\|\wedge^{k}(A)\right\|\left\|\wedge^{k}(B)\right\|}.
\end{aligned}
\]

But the square of the left member can be rewritten as

\[
\left\|\wedge^{k}(A)-\wedge^{k}(B)\right\|^{2}=\left\|\wedge^{k}(A)\right\|^{2}+\left\|\wedge^{k}(B)\right\|^{2}-2\langle\wedge^{k}(B), \wedge^{k}(A)\rangle.
\]

Therefore we conclude that

\[
\begin{aligned}
\frac{\langle\wedge^{k}(A), \wedge^{k}(B)\rangle}{\left\|\wedge^{k}(A)\right\|\left\|\wedge^{k}(B)\right\|} & =\frac{\left\|\wedge^{k}(A)\right\|^{2}+\left\|\wedge^{k}(B)\right\|^{2}}{2\left\|\wedge^{k}(A)\right\|\left\|\wedge^{k}(B)\right\|}-\frac{\left\|\wedge^{k}(B)-\wedge^{k}(A)\right\|^{2}}{2\left\|\wedge^{k}(A)\right\|\left\|\wedge^{k}(B)\right\|} \\
& \geq 1-\frac{K^{2}}{2(1-K)}.
\end{aligned}
\]
\end{proof}

\begin{lemma}\label{graph}
Let $D$ be a $C^1$ embedded $k$-disk of $\mathbb R^d$ an $E$ be a vector subspace of $\mathbb R^d$ of dimension $k$ with $\angle T_xD,E<\frac{\pi}{6}$ for all $x\in D$. 

Then, for any $x$ there is $\epsilon>0$ such that  $D\cap B(x,\epsilon)=\Gamma_\psi$ where   $\psi: E_x\subset E\to E^\bot$ is a $C^1$ function with $\|d_\cdot \psi \|\leq 1$.
\end{lemma}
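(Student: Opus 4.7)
The plan is to realize $D$ near $x$ as a graph over $E$ by composing a local parameterization with the inverse of the orthogonal projection onto $E$, and to read off the derivative bound from a trigonometric translation of the angle hypothesis. Let $\sigma:U\to\mathbb{R}^d$ be a $C^1$ local parameterization of $D$ with $\sigma(0)=x$, and let $\pi_E$, $\pi_{E^\perp}$ denote the orthogonal projections onto the affine subspaces $x+E$ and $x+E^\perp$ respectively.

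First I would check that $\pi_E|_{T_yD}:T_yD\to E$ is a linear isomorphism for every $y\in D$. By definition of $\angle T_yD,E$, every unit vector $u\in T_yD$ satisfies $\min_{v\in E}\angle(u,v)<\pi/6$, so $|\pi_E(u)|\geq \cos(\pi/6)=\sqrt{3}/2>0$; hence $\pi_E|_{T_yD}$ is injective and, by equality of dimensions, invertible. Applying the inverse function theorem to $\pi_E\circ\sigma$ at $0$ produces some $\epsilon>0$, an open neighborhood $E_x\subset E$ of $0$, and a $C^1$ inverse $\phi$ of $\pi_E\circ\sigma$ on $E_x$. Setting $\psi(t):=\pi_{E^\perp}(\sigma(\phi(t)))$ for $t\in E_x$, the identity $\sigma\circ\phi=t+\psi(t)$ furnishes the graph representation $D\cap B(x,\epsilon)=\Gamma_\psi$.

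For the Lipschitz estimate I would differentiate the identity $\pi_E\circ(\sigma\circ\phi)=\mathrm{Id}_E$: for each $e\in E$, the vector $w:=d_t(\sigma\circ\phi)(e)\in T_yD$ with $y=\sigma(\phi(t))$ decomposes orthogonally as $w=e+d_t\psi(e)$, so $d_t\psi(e)$ is precisely the $E^\perp$-component of the unique preimage in $T_yD$ of $e$ under $\pi_E$. For any $w\in T_yD$ one has $|\pi_{E^\perp}(w)|\leq \tan(\angle T_yD,E)\cdot|\pi_E(w)|$, and since $\angle T_yD,E<\pi/6$ one obtains $\tan<1/\sqrt{3}$, hence $\|d_t\psi\|\leq 1/\sqrt{3}<1$.

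There is no substantive obstacle: the lemma is a routine inverse function theorem argument combined with a trigonometric identity relating the operator norm of $d\psi$ to the angle between $T_yD$ and $E$. The one point that deserves care is the inequality $|\pi_{E^\perp}(w)|\leq \tan\theta\cdot|\pi_E(w)|$ for $w\in T_yD$ when $\angle T_yD,E=\theta<\pi/2$, which follows from the characterization $\sin\theta=\|p_{T_yD}-p_E\|$ recalled in Appendix \ref{anglee} together with a direct computation on the orthogonal decomposition $w=\pi_E(w)+\pi_{E^\perp}(w)$.
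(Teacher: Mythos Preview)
Your proof is correct and follows essentially the same approach as the paper: both decompose a tangent vector $v\in T_yD$ as $v=v_E+v_{E^\perp}$ and use the identity $\sin\angle T_yD,E=\|p_{T_yD}-p_E\|$ to bound $\|v_{E^\perp}\|/\|v_E\|$, which is exactly the derivative of the graph map once the inverse function theorem is invoked. The paper's one-line sketch leaves the inverse function theorem implicit and records the slightly cruder estimate $\|v_E\|\geq \|v\|(1-\sin\theta)$ rather than your sharper $\|v_E\|\geq \|v\|\cos\theta$, but both yield $\|d_\cdot\psi\|<1$ for $\theta<\pi/6$.
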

\begin{proof}It is enough to see that any vector $v\in T_xD$ writes as $v=v_E+v_{E^\bot}$ with $v_E\in E$, $v_{E^\bot}\in E^\bot$ where  $\|v_E\|=\|x\|(1\pm \|p_{T_xD}-p_E\| )$  and $\|v_{E^\bot}\|\leq \|x\|\cdot\|p_{T_xD}-p_E\| $. 
\end{proof}

\section{Covering number of $C^r$ smooth disks}
For a subset $E$ of $\mathbb R^d$ we let $\mathrm{Cov}(E, R)$ be the minimal  number of  euclidean balls of radius $R$  covering $E$. 
\begin{lemma}\label{cover}
Let $\sigma:[0,1]^k\to \mathbb R^d$ be a $C^r$ map witk $k\leq d$. Then  there is a constant $C_{r,d}$ depending on $r$ and $d$ such that 
$$\mathrm{Cov}\left(\mathrm{Im}(\sigma),1\right)\leq C_{r,d}\max\left(1,\|d^r\sigma\|\right)^{k/r} \max_{0\leq l\leq k}\|\wedge^{l} d\sigma \|.  $$
\end{lemma}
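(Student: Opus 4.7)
\textbf{Proof proposal for Lemma \ref{cover}.} The approach is a standard Yomdin-type reduction to the polynomial case via Taylor approximation, followed by an integral-geometric estimate for polynomial maps. Set the scale $\delta := \max(1,\|d^r\sigma\|)^{-1/r}\in(0,1]$, and partition $[0,1]^k$ into $N\lesssim \delta^{-k}=\max(1,\|d^r\sigma\|)^{k/r}$ axis-aligned subcubes $Q$ of side $\delta$, centered at points $x_Q$. The final covering estimate will be obtained by summing, over $Q$, a uniform bound of order $\max_{0\leq l\leq k}\|\wedge^l d\sigma\|$ on $\mathrm{Cov}(\mathrm{Im}(\sigma|_Q),1)$.

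First, on each subcube $Q$, replace $\sigma|_Q$ by its order-$(r{-}1)$ Taylor polynomial $P_Q$ at $x_Q$. By the Taylor remainder estimate, $\|\sigma-P_Q\|_{L^\infty(Q)}\lesssim_r \delta^r\|d^r\sigma\|\leq 1$ by the choice of $\delta$, hence $\mathrm{Im}(\sigma|_Q)$ lies in a universally bounded neighborhood of $\mathrm{Im}(P_Q)$ and the two have comparable $1$-covering numbers. After the affine rescaling $y\mapsto x_Q+\delta y$ taking $[-1/2,1/2]^k$ onto $Q$, one obtains a polynomial $\widetilde P_Q$ of degree $\leq r-1$ on the unit cube which shares the image of $P_Q$. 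The derivatives of $\widetilde P_Q$ at the origin agree (up to a factor $\delta^j$) with those of $\sigma$ at $x_Q$ through order $r-1$; combining this with the Taylor error bound and the trivial estimate $\delta\leq 1$ shows
\[\|\wedge^l d\widetilde P_Q\|_{L^\infty([-1/2,1/2]^k)}\lesssim_{r,d} \max\bigl(1,\|\wedge^l d\sigma\|_{L^\infty}\bigr)\leq \max_{0\leq j\leq k}\|\wedge^j d\sigma\|\]
for every $0\leq l\leq k$ (using the convention $\|\wedge^0 d\sigma\|=1$ for the $l=0$ case).

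Next, invoke the core Yomdin-type polynomial covering estimate: for any polynomial map $P:[-1/2,1/2]^k\to\mathbb R^d$ of total degree at most $r-1$,
\[\mathrm{Cov}(\mathrm{Im}(P),1)\leq C_{r,d}\,\max_{0\leq l\leq k}\|\wedge^l dP\|_{L^\infty}.\]
This is a classical consequence of the Cauchy--Crofton formula applied to semi-algebraic sets of bounded complexity: stratifying $[-1/2,1/2]^k$ by the rank of $dP$, the $l$-dimensional component of the image has $l$-volume at most $\|\wedge^l dP\|_{L^\infty}$ up to a combinatorial factor depending only on $r,d$ (the number of strata being bounded in terms of the degree), and for semi-algebraic sets of bounded complexity the $1$-covering number is controlled by volumes in all intermediate dimensions. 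Applied to each $\widetilde P_Q$ and combined with the bound on $\|\wedge^l d\widetilde P_Q\|$ above, this yields $\mathrm{Cov}(\mathrm{Im}(\sigma|_Q),1)\lesssim_{r,d}\max_{0\leq l\leq k}\|\wedge^l d\sigma\|$. Summing over the $N\lesssim \max(1,\|d^r\sigma\|)^{k/r}$ subcubes gives the claimed bound.

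The routine part is the Taylor/rescaling bookkeeping; the real substance of the argument, and therefore the main obstacle, is the polynomial covering estimate in the previous paragraph. The delicate point is tracking the combinatorial constant $C_{r,d}$: one needs a version that depends only on $r$ and $d$ and not on any further features of $P$. This is exactly the content of the semi-algebraic covering lemmas underlying Yomdin--Gromov theory (cf.\ Lemma~\ref{alg} together with Bezout-type bounds on the number of strata of $\{\mathrm{rank}(dP)=l\}$), and it is what makes the estimate genuinely quantitative.
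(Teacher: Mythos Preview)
Your proposal is correct and follows essentially the same route as the paper: subdivide $[0,1]^k$ into cubes of side $\max(1,\|d^r\sigma\|)^{-1/r}$, replace $\sigma$ on each by its Taylor (equivalently, one-point Lagrange) polynomial, and invoke the known polynomial covering estimate (which the paper cites from \cite{BurguetSard,Yom1} rather than sketching via Cauchy--Crofton). One small imprecision: your intermediate inequality $\|\wedge^l d\widetilde P_Q\|\lesssim \max(1,\|\wedge^l d\sigma\|)$ for a \emph{fixed} $l$ is not quite right, since expanding $\wedge^l(d\widetilde\sigma_Q+E)$ with $\|E\|\lesssim 1$ produces cross terms involving $\|\wedge^j d\sigma\|$ for all $j\leq l$ (this is exactly what the paper handles via Lemma~6.2 of \cite{Yom1}); however, your final bound after taking $\max_{0\leq j\leq k}$ absorbs these terms and is correct.
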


\begin{proof}
By Lemma 4 in \cite{BurguetSard} (or Chapters 5 and 7 of  \cite{Yom1}) it holds for any polynomial map $\sigma$ of degree less than $r$. We deduce the general case by polynomial interpolation. We first subdivide $[0,1]^k$ into subcubes $\mathbf C$ of size $\max\{1,\|d^r\sigma\|\}^{-1/r}$. Let $\sigma_\mathbf C=\sigma\circ \theta_\mathbf C $ with $\theta_\mathbf C:[0,1]^l\to \mathbf C$ being an affine  reparametrization of $\mathbf C$. Then $\|d^r\sigma_\mathbf C\|\leq 1$. Consider the Lagrange interpolation polynomial $P_\mathbf C$ of $\sigma_\mathbf C$ at the center of $\mathbf C$. 
We have $\|P_\mathbf C-\sigma_\mathbf C\|_{r}\leq 1$. Moreover
by Lemma 6.2 of \cite{Yom1}  we have  for all $x\in [0,1]^n$, for all $l=1,...,k$  and for some constant $C_d$ depending only on $d$ with the convention $\|\wedge^0\sigma_\mathbf C\|=1$  
\begin{align*}\label{pedro}
\|\wedge^lP_{\mathbf C}(x)\|& \leq C_{d}\sum_{j=0}^l\|\wedge^jd\sigma_\mathbf C(x)\|\|d_x\sigma_\mathbf C-d_xP_{\mathbf C}\|^{l-j},\\
& \leq  dC_{d}\max_{0\leq l\leq k}\|\wedge^{l} d\sigma \|.
\end{align*} 

Therefore we have  for some constants $C_{r,d}$ which may change at each step 
\begin{align*}\mathrm{Cov}(\mathrm{Im}(\sigma_\mathbf C),1)&\leq 2^d \mathrm{Cov}(\mathrm{Im}(P_\mathbf C),2)\\
&\leq C_{r,d} \max_{0\leq l\leq k}\max\|\wedge^{l} dP_\mathbf C\|\\ 
&\leq  C_{r,d} \max_{0\leq l\leq k}\max\|\wedge^{l} d\sigma_\mathbf C\|.
\end{align*} 
Thus,
$$\mathrm{Cov}(\mathrm{Im}(\sigma), 1)\leq  C_{r,d}\|d^r\sigma\|^{k/r} \max_{l\leq k}\|\wedge^{l} d\sigma_\mathbf C. $$

\end{proof}

\end{appendix}

\bibliographystyle{abbrv}
\bibliography{Refs}

\Addresses

\end{document}